\documentclass[10pt]{amsart}
\setlength{\parskip}{3pt}

\usepackage{titletoc}
\usepackage[utf8]{inputenc}
\usepackage[T1]{fontenc}
\usepackage{indentfirst}
\usepackage[margin=1in]{geometry} 
\usepackage{lipsum}
\usepackage{amsmath,amscd,amsthm,amssymb,mathrsfs,stmaryrd,color,tikz-cd,graphicx,tikz,mathtools,subfigure}
\tikzcdset{arrow style=tikz,diagrams={>=stealth}}
\usepackage{enumerate}
\usepackage[bookmarks, bookmarksdepth=1, colorlinks=true, linkcolor=blue, citecolor=red, urlcolor=blue,pagebackref]{hyperref}
\usepackage{subfiles}

\usetikzlibrary{matrix,arrows,decorations.pathmorphing,cd,topaths,calc,graphs}
\usepackage[all,cmtip]{xy}

\usepackage{relsize}

\newcommand{\sgn}{\operatorname{sgn}}

\renewcommand{\Re}{\operatorname{Re}}
\renewcommand{\Im}{\operatorname{Im}}

\makeatletter
\newcommand{\subalign}[1]{%
  \vcenter{%
    \Let@ \restore@math@cr \default@tag
    \baselineskip\fontdimen10 \scriptfont\tw@
    \advance\baselineskip\fontdimen12 \scriptfont\tw@
    \lineskip\thr@@\fontdimen8 \scriptfont\thr@@
    \lineskiplimit\lineskip
    \ialign{\hfil$\m@th\scriptstyle##$&$\m@th\scriptstyle{}##$\crcr
      #1\crcr
    }%
  }
}
\makeatother

\newcommand{\BA}{\mathbb{A}}

\newcommand{\BC}{\mathbb{C}}
\newcommand{\BD}{\mathbb{D}}

\newcommand{\BH}{\mathbb{H}}

\newcommand{\BQ}{\mathbb{Q}}
\newcommand{\BR}{\mathbb{R}}

\newcommand{\BZ}{\mathbb{Z}}

\newcommand{\cA}{\mathcal{A}}

\newcommand{\cB}{\mathcal{B}}

\newcommand{\sB}{\mathscr{B}}

\newcommand{\bD}{\mathbf{D}}
\newcommand{\cD}{\mathcal{D}}

\newcommand{\cF}{\mathcal{F}}

\newcommand{\sF}{\mathscr{F}}
\newcommand{\bG}{\mathbf{G}}

\newcommand{\bH}{\mathbf{H}}
\newcommand{\cH}{\mathcal{H}}

\newcommand{\bI}{\mathbf{I}}
\newcommand{\cI}{\mathcal{I}}

\newcommand{\bJ}{\mathbf{J}}

\newcommand{\cO}{\mathcal{O}}

\newcommand{\sP}{\mathscr{P}}

\newcommand{\sQ}{\mathscr{Q}}

\newcommand{\cS}{\mathcal{S}}

\newcommand{\sS}{\mathscr{S}}

\newcommand{\cT}{\mathcal{T}}

\newcommand{\sT}{\mathscr{T}}

\newcommand{\fa}{\mathfrak{a}}

\newcommand{\bc}{\mathbf{c}}

\newcommand{\fk}{\mathfrak{k}}

\newcommand{\fn}{\mathfrak{n}}

\newcommand{\bt}{\mathbf{t}}

\newcommand{\bz}{\mathbf{z}}

\newcommand{\vertiii}[1]{{\left\vert\kern-0.25ex\left\vert\kern-0.25ex\left\vert #1 
    \right\vert\kern-0.25ex\right\vert\kern-0.25ex\right\vert}}

\newcommand{\rom}[1]{\uppercase\expandafter{\romannumeral #1\relax}}
\renewcommand{\emptyset}{\varnothing}

\renewcommand{\tilde}[1]{\widetilde{#1}}




\DeclareMathOperator{\supp}{Supp}

\DeclareMathOperator{\diag}{diag}

\newcommand{\SL}{\operatorname{SL}}
\newcommand{\SO}{\operatorname{SO}}
\newcommand{\SU}{\operatorname{SU}}




\makeatletter
\newcommand{\colim@}[2]{%
  \vtop{\m@th\ialign{##\cr
    \hfil$#1\operator@font lim$\hfil\cr
    \noalign{\nointerlineskip\kern1.5\ex@}#2\cr
    \noalign{\nointerlineskip\kern-\ex@}\cr}}%
}
\newcommand{\Lim}{%
  \mathop{\mathpalette\varlim@{\leftarrowfill@\scriptscriptstyle}}\nmlimits@
}

\newcommand{\colim}{%
  \mathop{\mathpalette\varlim@{\rightarrowfill@\scriptscriptstyle}}\nmlimits@
}

\makeatother


\newtheorem{theorem}{Theorem}[section]

\newtheorem{proposition}[theorem]{Proposition}

\newtheorem{corollary}[theorem]{Corollary}

\newtheorem{lemma}[theorem]{Lemma}

\theoremstyle{definition}

\theoremstyle{remark}

\usepackage[new]{old-arrows}

\usepackage{amssymb}

\usepackage[utf8]{inputenc}

\title[Restrictions of Maass forms on $\mathrm{SL}(2,\mathbb{C})$ ]{Restrictions of Maass forms on $\mathrm{SL}(2,\mathbb{C})$ to hyperbolic surfaces and geodesic tubes}
\author{Jiaqi Hou}
\address{Department of Mathematics, Louisiana State University, Baton Rouge, LA 70803, USA}
\email{jhou7@lsu.edu}
\date{}

\usepackage{hyperref}
\numberwithin{equation}{section}

\setcounter{tocdepth}{1}
\begin{document}

\begin{abstract}
    Let $\psi$ be an $L^2$-normalized Hecke-Maass form with a large spectral parameter $\lambda>0$ on a compact arithmetic congruence hyperbolic 3-manifold $X = \Gamma \backslash \mathrm{SL}(2,\BC)/\mathrm{SU}(2)$, and let $Y$ be a totally geodesic surface in $X$ with bounded diameter. 
    
    The local $L^2$-bound for the restriction of $\psi$ to $Y$ is $\|\psi|_Y \|_{L^2(Y)}\ll \lambda^{1/4}$ by Burq, G{\'e}rard, and Tzvetkov. 
    We apply the method of arithmetic amplification developed by Iwaniec and Sarnak to obtain a power saving over the local bound.
    The new feature in the proof is that we establish two different estimates for the integrals of $\psi|_Y$ against geodesic beams over $Y$ via two amplification arguments.
    Combining these estimates, we can improve the local bound for generalized Fourier coefficients of $\psi|_Y$ against eigenfunctions on $Y$ with spectral parameters near $\lambda$.
    
    We also apply the amplification method to obtain a power saving over the trivial bound $O(1)$ for $L^2$-norms of $\psi$ restricted to $\lambda^{-1/2}$-neighborhoods of unit-length geodesic segments. Consequently, by applying a result of Blair and Sogge, we obtain power savings over the local $L^p$-bounds of $\psi$ by Sogge for $2<p<4$ from our improved bound for the Kakeya-Nikodym norm.
\end{abstract}

\maketitle
{
  \hypersetup{linkcolor=black}
  \tableofcontents
}

\section{Introduction}

Let $M$ be a compact Riemannian manifold of dimension $n\geq2$, and let $\Delta$ be the Laplace-Beltrami operator on $M$.
If $\psi$ is a Laplace eigenfunction on $M$
satisfying  $\Delta\psi + \lambda^2\psi = 0$ with $\lambda> 0$.
It is an interesting problem to study the concentration of $\psi$ as $\lambda\to\infty$.
We assume that $\|\psi \|_2 =1$.
A classical result on $\| \psi\|_p$ is due to Sogge  \cite{sogge1988concerning} (see also Avakumovi{\'c} \cite{avakumovic1956eigenfunktionen}  and Levitan \cite{levitan1952asymptotic} when $p=\infty$), 
and is
\begin{align}\label{Sogge bound}
    \| \psi\|_p \ll \lambda^{\delta(p,n)}, 
\end{align}
where $\delta(p,n)$ is given by
\begin{equation*}
            \delta(p,n) = \begin{dcases}
                \frac{n-1}{4} - \frac{n-1}{2p} \;&\text{ for }\;2\leq p\leq \frac{2(n+1)}{n-1},\\
                \frac{n-1}{2} - \frac{n}{p} \;\;\;\;&\text{ for }\;\frac{2(n+1)}{n-1}\leq p\leq\infty.
            \end{dcases}
\end{equation*}
The notation $A\ll B$ means that there is a positive constant $C$ such that $|A| \leq C B$. (See Section \ref{notation} for the notation in this paper.)
When $M$ is the round sphere, the estimates \eqref{Sogge bound} are saturated by the zonal spherical harmonics for $p\geq \frac{2(n+1)}{n-1}$ and by the highest weight spherical harmonics for $2< p\leq \frac{2(n+1)}{n-1}$.

Nevertheless, it is expected that \eqref{Sogge bound} can be improved under extra geometric assumptions on $M$.
For instance, there are log improvements if $M$ has nonpositive sectional curvature.
We assume the manifold $M$ satisfies the nonpositive curvature condition in this paragraph.
B\'erard \cite{berard1977wave} obtained a $\log\lambda$ improvement for the error term in the local Weyl law, which implies that
\begin{align*}
    \| \psi \|_\infty \ll \frac{\lambda^{\frac{n-1}{2}}}{\sqrt{\log\lambda}}.
\end{align*}
Hassell and Tacy \cite{hassell2015improvement} obtained further improvements by showing that
\begin{align*}
    \| \psi \|_p \ll \frac{\lambda^{\delta(p,n)}}{\sqrt{\log\lambda}}=\frac{\lambda^{\frac{n-1}{2}-\frac{n}{p}}}{\sqrt{\log\lambda}},\quad \text{ if }p>\frac{2(n+1)}{n-1},
\end{align*}
where the implied constant blows up as $p$ goes to $\frac{2(n+1)}{n-1} $. 
For $2<p<\frac{2(n+1)}{n-1}$, improvement involving powers of $\log\lambda$ were proved by Blair and Sogge \cite{blair2018concerning}, where the powers depend on $n$ and $p$. Blair and Sogge obtained the results by proving improved bounds for Kakeya-Nikodym norms which measure the $L^2$-concentration of eigenfunctions on $\lambda^{-1/2}$-tubes of unit length geodesics. We will discuss the Kakeya-Nikodym norms and their improved bounds in more detail in Section \ref{sec: KN norm}. In \cite{blair2019logarithmic}, Blair and Sogge established a gain of an inverse power of $\log\lambda$ at the critical exponent $p=\frac{2(n+1)}{n-1}$.

The $L^\infty$-norm problems can also be studied using arithmetic techniques when the manifold $M$ and the eigenfunction $\psi$ are equipped with additional arithmetic structures.
We let $M$ be a compact congruence arithmetic hyperbolic surface arising from an indefinite quaternion division algebra over $\BQ$. Let $\psi$ be a Hecke-Maass form which is a joint eigenfunction of the Laplace-Beltrami operator and the unramified Hecke operators on $M$. Iwaniec and Sarnak \cite{iwaniec1995norms} showed that the bound $\| \psi \|_\infty\ll\lambda^{1/2}$ given by \eqref{Sogge bound} can be strengthened by a power to $\|\psi\|_\infty\ll_\epsilon\lambda^{5/12+\epsilon}.$
Their proof uses the technique known as arithmetic amplification. This approach has since been used by many authors to bound $L^\infty$-norms of Hecke-Maass forms on other groups. For instance, see \cite{blomer2019sup,blomer2016subconvexity}. The main results in this paper also rely on the amplification method.
\medskip

We are also interested in studying the concentration of $\psi$ along submanifolds.
The standard $L^p$-restriction estimates were established by Burq, G{\'e}rard and Tzvetkov \cite{burq2007restrictions} and Hu \cite{hu2009p}. 
We assume that $\Sigma \subset M$ is a closed submanifold and $\dim \Sigma = k$.
If $k=n-1$, they showed that
\begin{align}\label{BGT trivial bound}
    \| \psi|_\Sigma \|_{L^p(\Sigma)} \ll \lambda^{\rho_{n-1}(p,n)},
\end{align}
where $\rho_{n-1}(p,n)$ is given by
\begin{equation*}
    \rho_{n-1}(p,n) = \begin{dcases}
        \frac{n-1}{4} - \frac{n-2}{2p}\quad &\text{ if }2\leq p\leq \frac{ 2n}{n-1},\\
        \frac{n-1}{2} - \frac{n-1}{p} \quad &\text{ if }\frac{2n}{n-1}\leq p \leq\infty.
    \end{dcases}
\end{equation*}
If $n\geq3$ and $k=n-2$, then we have
\begin{align}\label{eq: BGT codim 2, p=2}
    \| \psi|_\Sigma \|_{L^2(\Sigma)} \ll \lambda^{1/2}\sqrt{\log\lambda},
\end{align}
and
\begin{align}\label{eq: BGT codim 2, p>2}
    \| \psi|_\Sigma \|_{L^p(\Sigma)} \ll \lambda^{\rho_{n-2}(p,n)},\quad\text{ where }\rho_{n-2}(p,n)=\frac{n-1}{2}-\frac{n-2}{p},\quad 2<p\leq\infty.
\end{align}
If $n\geq4$ and $k\leq n-3$, then
\begin{align}\label{eq: BGT codim>2}
    \| \psi|_\Sigma \|_{L^p(\Sigma)} \ll \lambda^{\rho_{k}(p,n)},\quad\text{ where }\rho_{k}(p,n)=\frac{n-1}{2}-\frac{k}{p},\quad 2\leq p\leq\infty.
\end{align}
By Chen and Sogge \cite[Theorem 1.1]{chen2014few} and Wang and Zhang \cite[Theorem 2]{WANG2021107835}, the $\sqrt{\log\lambda}$ loss in the endpoint estimates \eqref{eq: BGT codim 2, p=2} can be removed provided that $\Sigma$ is a totally geodesic submanifold of codimension 2.
In \cite[\S 6.2]{burq2007restrictions}, Burq, G{\'e}rard and Tzvetkov gave examples to show that the estimates \eqref{BGT trivial bound}--\eqref{eq: BGT codim>2} are sharp, except for the $\sqrt{\log\lambda}$ loss in \eqref{eq: BGT codim 2, p=2}.
If $M$ is the round sphere and $\Sigma$ is any submanifold containing a pole of the sphere, the estimates \eqref{eq: BGT codim 2, p=2}--\eqref{eq: BGT codim>2}, and \eqref{BGT trivial bound} for $\frac{2n}{n-1}\leq p \leq\infty$ are saturated by the zonal spherical harmonics.
If $M$ is the round sphere and $\Sigma$ contains a geodesic, then \eqref{BGT trivial bound}, with $2\leq p\leq\frac{2n}{n-1}$, are saturated by the highest weight spherical harmonics.
The estimates \eqref{BGT trivial bound} can be improved if $\Sigma$ is a hypersurface and has curvature. Indeed \cite[Theorem 1.4]{hu2009p} showed that if $k=n-1$ and the second fundamental form of $\Sigma$ is (positive or negative) definite, then, for $2\leq p\leq\frac{2n}{n-1}$,
\begin{align*}
    \| \psi|_\Sigma \|_{L^p(\Sigma)} \ll \lambda^{\frac{n-1}{3}-\frac{2n-3}{3p}}.
\end{align*}

Like the $L^p$-norm problems, we expect that the estimates \eqref{BGT trivial bound}--\eqref{eq: BGT codim>2} may be strengthened under curvature assumptions on $M$. If $M$ has nonpositive curvature, Chen \cite{chen2015improvement} proved a $(\log\lambda)^{-1/2}$ gain for $p>\frac{2n}{n-1}$ if $k=n-1$ and for $p>2$ if $k\leq n-2$.
We let $\Pi$ denote the space of all unit-length geodesic segments on $M$.
There has been considerable work towards improving \eqref{BGT trivial bound} for geodesic restrictions on surfaces with nonpositive curvature. 
Assume that $M$ is a surface with nonpositive curvature.
Xi and Zhang \cite{xi2017improved} and Blair \cite{Blair2018} dealt with the endpoint $p=4$ and proved
\begin{align*}
    \sup_{\gamma\in\Pi} \| \psi|_\gamma \|_{L^4(\gamma)}\ll \lambda^{1/4} (\log\lambda)^{-1/4}.
\end{align*}
Blair and Sogge \cite{blair2018concerning} obtained a $(\log\lambda)^{-1/4}$ gain for $p=2$.
For geodesic restrictions on hyperbolic 3-manifolds, Zhang \cite{ZHANG20174642} improved the endpoint estimate \eqref{eq: BGT codim 2, p=2}. If we let $M$ be a hyperbolic 3-manifold, Zhang obtained a log improvement:
\begin{align*}
    \sup_{\gamma\in\Pi} \| \psi|_\gamma \|_{L^2(\gamma)}\ll \lambda^{1/2} (\log\lambda)^{-1/2}.
\end{align*}

Now we let $M$ be a compact congruence arithmetic hyperbolic surface and let $\psi$ be an $L^2$-normalized Hecke-Maass form. By extending the technique of arithmetic amplification developed by Iwaniec and Sarnak \cite{iwaniec1995norms}, Marshall \cite[Theorem 1.1]{marshall2016geodesic} obtained a power saving over the bound \eqref{BGT trivial bound} for geodesic restrictions and $p=2$:
\begin{align*}
    \sup_{\gamma\in\Pi} \| \psi|_\gamma \|_{L^2(\gamma)}\ll_\epsilon \lambda^{3/14+\epsilon}.
\end{align*}
Combining the above estimate with the main theorem of \cite{BS15APDE}, Marshall \cite[Corollary 1.2]{marshall2016geodesic} also obtained an improvement over the local bound $\|\psi\|_4\ll\lambda^{1/8}$ by Sogge \eqref{Sogge bound}:
\begin{align*}
    \| \psi \|_4\ll_\epsilon \lambda^{1/8-1/56+\epsilon}.
\end{align*}
See also Humphries and Khan \cite{humphries2023lpnormboundsautomorphicforms} and Ki \cite{ki20234} on $L^4$-norms of Hecke-Maass cusp forms on the modular surface $\SL(2,\BZ)\backslash\BH^2$ via different methods.
Using Waldspurger’s formula and $L$-functions, Ali \cite{ali20222} proved a stronger $L^2$-bound for a Hecke-Maass cusp form on the modular surface $\SL(2,\BZ)\backslash\BH^2$ restricted to closed geodesics associated to a fundamental discriminant $D>0$.

Marshall extended Sogge's bound \eqref{Sogge bound} and the estimates \eqref{BGT trivial bound}--\eqref{eq: BGT codim>2} by Burq, G{\'e}rard and Tzvetkov to higher rank cases in \cite{marshall2016p}. He proved almost sharp local bounds for the $L^p$-norms for eigenfunctions of the full ring of invariant differential operators on a compact locally symmetric space, as well as their restrictions to maximal flat subspaces.
Marshall \cite{marshall2015restrictions} also applied the amplification method to obtain a power saving for the $L^2$-norms of the maximal flat restrictions for Maass forms on $\SL(3,\BR)$.

We want to extend Marshall's results \cite{marshall2016geodesic} about geodesic restrictions on hyperbolic surfaces to restriction problems of eigenfunctions on higher dimensional manifolds. In this paper, we consider two restriction problems for eigenfunctions on hyperbolic 3-manifolds.

\subsection{Hyperbolic surface restrictions}
One generalization is to estimate the $L^2$-norms for hypersurface restrictions.
We now let  $X = \Gamma\backslash \BH^3$,
where $\BH^3 = \SL(2,\BC)/\SU(2)$ is the hyperbolic 3-space,
and $\Gamma\subset \SL(2,\BC)$ is a cocompact arithmetic congruence lattice arising from a quaternion division algebra over a number field with one complex place.
Let $\psi$ be a Hecke–Maass form on $X$, that is, an eigenfunction of the Laplace-Beltrami operator $\Delta$ and the unramified Hecke operators.
We assume that $\| \psi\|_{2} = 1$. 
In this context, it is more natural to let  $\lambda$ be the spectral parameter of $\psi$, which satisfies $\Delta\psi + (\lambda^2+1)\psi = 0$.
As we are considering large eigenvalue asymptotics, we will assume that $\lambda>0$. 
The submanifold we will consider in this paper is a totally geodesic surface $Y$ in $X$.

We put the following boundedness assumption on the submanifold $Y$ to state a uniform result. 
We suppose that $\pi:\BH^3\to X$ is the covering map,
and let $\cB_{\BH^2} \subset \BH^2$ be an open ball of radius $1$ under the hyperbolic metric.
Let $\iota: \BH^2 \hookrightarrow \BH^3$ be any embedding of the hyperbolic plane.
We assume that
\begin{align}\label{boundedness condition for Y}
    Y = \pi\circ\iota(\cB_{\BH^2} ).
\end{align}
Recall that the estimate \eqref{BGT trivial bound} for the $L^2$-norm of $\psi|_Y$ gives $ \| \psi|_Y \|_{L^2(Y)}\ll \lambda^{1/4}$. We improve it as follows.

\begin{theorem}\label{main theorem in intro}
    Let $\psi$ be an $L^2$-normalized Hecke–Maass form on $X$ with spectral parameter $\lambda>0$, and let $Y$ be a totally geodesic surface in $X$ satisfying \eqref{boundedness condition for Y}.
    For any $\epsilon>0$,
    \begin{align*}
        \| \psi|_Y \|_{L^2(Y)}\ll \lambda^{1/4 -1/1220+ \epsilon },
    \end{align*}
    where the implied constant depends only on $X$ and $\epsilon$.
\end{theorem}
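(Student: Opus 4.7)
The strategy is to control $\|\psi|_Y\|_{L^2(Y)}^2$ through the spectral decomposition on $Y$. Fix an orthonormal $\Delta_Y$-eigenbasis $\{\phi_j\}$ on $Y$ with spectral parameters $\mu_j$. A microlocal/stationary-phase argument (using that $\psi$ itself has spectral parameter $\lambda$ on the ambient $X$) shows that only indices with $|\mu_j - \lambda| \ll \lambda^\epsilon$ contribute non-negligibly. By Weyl's law on the surface, this window contains $\sim \lambda^{1+\epsilon}$ eigenfunctions, and the trivial bound $|a_j| \ll \lambda^{-1/4}$ for the generalized Fourier coefficient $a_j = \langle \psi|_Y, \phi_j\rangle_{L^2(Y)}$ recovers the local bound $\lambda^{1/4}$. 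The task is to produce a power saving on $|a_j|$ on average across the window.

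To extract such a saving, I would decompose each $\phi_j$ into a sum of geodesic beams $\beta_\gamma$: $L^2$-normalized wave packets microlocally concentrated in $\lambda^{-1/2}$-tubes about unit-length geodesic arcs $\gamma \subset Y$. This reduces the problem to bounding the beam integrals $I_\gamma = \int_Y \psi \cdot \overline{\beta_\gamma}$. I then establish two different amplified pretrace estimates for $|I_\gamma|$. The first uses the standard Iwaniec--Sarnak amplifier $T_L = \sum_{n \le L} x_n T_n$ on Hecke operators acting on $X$: since $T_L \psi = (\sum_n x_n \lambda_n(\psi)) \psi$, pairing with the beam (extended to $X$) reduces $|I_\gamma|^2$ via a pretrace expansion to a geometric sum over $\delta \in \Gamma$ of Hecke-norm $\le L$ of beam-beam correlations, controlled by how close $\delta$ brings the tube around $\gamma$ (viewed inside $X$) to itself. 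The second amplification is the new ingredient: the test kernel is chosen to be beam-shaped along the totally-geodesic $Y$ rather than along $\gamma$, so the geometric side instead counts $\delta \in \Gamma$ which send a generic point of $Y$ into a $\lambda^{-1/2}$-neighborhood of $Y$ inside $X$. The arithmetic rigidity of $Y$ (arising from a codimension-one arithmetic subgroup of $\Gamma$) yields a structurally different bound on $|I_\gamma|^2$ that dominates in the opposite regime of parameters.

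Combining the two bounds by a geometric-mean interpolation in the amplifier length $L$ and the beam scale, summing over $\gamma$ and over $\phi_j$ in the window, and extracting a square root yields $\|\psi|_Y\|_{L^2(Y)} \ll \lambda^{1/4 - \eta + \epsilon}$ for an explicit $\eta > 0$; after optimization of $L$, the width of the spectral window on $Y$, and the beam scale, one arrives at $\eta = 1/1220$. The main obstacle is controlling the off-diagonal geometric counts on both sides: one needs sharp upper bounds on the number of $\delta \in \Gamma$ of Hecke-length $\le L$ that either keep a $\lambda^{-1/2}$-tube around a geodesic in $Y$ close to itself (for the first amplification) or move a point of $Y$ into a $\lambda^{-1/2}$-neighborhood of $Y$ in $X$ (for the second). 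These counts are Diophantine and rely on the arithmetic structure of $\Gamma$ coming from the quaternion algebra over a number field with one complex place; moreover, both amplifications must be run simultaneously with compatible parameters, and it is the intersection of their admissible ranges for $L$ that forces the specific numerical exponent $1/1220$.
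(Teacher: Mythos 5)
Your high-level architecture does match the paper's: a reduction to frequencies near $\lambda$, a decomposition of the test functions into geodesic beams, and two amplification arguments whose geometric sides count Hecke returns to a geodesic and to the totally geodesic surface respectively. But two of your steps, as stated, do not work. First, the claim that only the window $|\mu_j-\lambda|\ll\lambda^{\epsilon}$ contributes non-negligibly is false for a codimension-one restriction: the restriction of a $\lambda$-eigenfunction to a hypersurface has Fourier content throughout $[0,\lambda]$ (rapid decay holds only above $\lambda$), and what one can actually prove is that the mass at distance $\ge\beta$ from $\pm\lambda$ is $\ll\lambda^{1/4}\beta^{-1/4}$ (Proposition \ref{bound 1-Pi}, via pointwise bounds on $\tilde{k_\lambda}$ away from the spectrum). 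With a window of width $\lambda^{\epsilon}$ the tail alone is $\lambda^{1/4-\epsilon/4}$, which already exceeds the target $\lambda^{1/4-1/1220}$. The window width $\beta$ must therefore be a definite power of $\lambda$, optimized against the amplification savings (the paper takes $\beta=\lambda^{1/305}$, and $1/(4\cdot 305)=1/1220$); your proposal never confronts this trade-off. A smaller issue: $Y$ is a geodesic ball in a totally geodesic plane, not a closed surface, so there is no eigenbasis $\{\phi_j\}$ with Weyl law; the paper works instead with the Helgason transform on $\BH^2$ and the projection onto $L^2([\lambda-\beta,\lambda+\beta]\times B)$.

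Second, and more seriously, your plan to prove two amplified bounds for each individual beam integral $I_\gamma$ and interpolate cannot succeed. The surface amplification gives nothing useful for a single beam: a single beam lives in one $\lambda^{-1/2}$-tube, and the only relevant return count for it is the geodesic one; conversely, summing the geodesic-amplification bound over all $\asymp\lambda\beta^{-1}$ beams loses far too much. The paper's essential device, absent from your proposal, is the dichotomy by beam norm: there are at most $\delta^{-1}$ beams with $\|\phi_{m,n}\|^2\ge\delta C\|\phi\|^2$, and these are handled one at a time by the geodesic amplification (Proposition \ref{amplified bound for geodesic beam}); the remaining beams are summed into $\phi_\delta$ and handled by a second amplification in which the geometric side is a double sum over pairs of beams, and one shows (Proposition \ref{most of geodesic beams satisfy uniform condition}) that when $\gamma$ is away from the stabilizer $H^\prime$ of $g_0\BH^2$ only few pairs of beams fail the non-stationarity condition, each such pair contributing at most $\delta\lambda^{1/2+\epsilon}\beta^{3/2}\|\phi\|^2$ precisely because each small beam has norm $\le(\delta C)^{1/2}\|\phi\|$. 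Relatedly, the count you describe for the second amplification --- $\gamma$ sending a point of $Y$ into a $\lambda^{-1/2}$-neighborhood of $Y$ --- is far too weak a condition and would admit far too many $\gamma$; the paper counts $\gamma$ with $d(g_0^{-1}\gamma g_0,H^\prime)\le\delta$, i.e., $\gamma$ nearly stabilizing the whole plane, and shows via Proposition \ref{Hecke return wrt SL(2,R)} that this set is empty for suitable amplifiers. Without the norm dichotomy and the pair-counting geometry, the two amplifications cannot be combined as you describe, and the exponent $1/1220$ is not reached.
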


Theorem \ref{main theorem in intro} is one of the main results in this paper. The proof occupies Sections \ref{section of L^2 bound}--\ref{section of near spectrum} and we will give the outline of the proof in Section \ref{sec: outline of 1.1}.

\subsection{Kakeya–Nikodym norms}\label{sec: KN norm}
As before, we let $M$ be a compact Riemannian manifold of dimension $n$, and let $\psi$ be an $L^2$-normalized Laplace eigenfunction on $M$ with frequency $\lambda>0$.
As we mentioned before, the highest weight spherical harmonics on the round spheres saturate Sogge's $L^p$-norm estimates \eqref{Sogge bound} for $2<p\leq\frac{2(n+1)}{n-1}$. They have significant $L^2$-mass on tubes of width $\lambda^{-1/2}$ about unit geodesics on the equator.

Let $\Pi$ denote the space of unit length geodesic segments in $M$.
If $\gamma\in\Pi$, we denote the $\lambda^{-1/2}$-neighborhood  of $\gamma$ in $M$ by $\sT_{\lambda^{-1/2}}(\gamma)$.
Following \cite{blair2017refined}, we define the Kakeya-Nikodym norm without averages of $\psi$ to be
\begin{align*}
    \vertiii{\psi}_{KN} = \sup_{\gamma\in\Pi}  \| \psi|_{\sT_{\lambda^{-1/2}}(\gamma)}  \|_2 = \left(\sup_{\gamma\in\Pi}  \int_{\sT_{\lambda^{-1/2}}(\gamma)}|\psi(x)|^2 dx\right)^{1/2}.
\end{align*}
These norms were introduced by Sogge \cite{Sogge11Tohoku}. Since we are assuming the eigenfunctions to be $L^2$-normalized, we always have the trivial upper bound
\begin{align*}
    \vertiii{\psi}_{KN}\leq 1.
\end{align*}
The highest weight spherical harmonics on the round spheres also saturate the trivial bound $\vertiii{\psi}_{KN}\ll 1$. We mention that when $n=2$ or 3, it may be seen that \eqref{BGT trivial bound} and \eqref{eq: BGT codim 2, p=2} (after removing the log loss) imply the trivial bound $\vertiii{\psi}_{KN}\ll 1$.
Nevertheless, in higher dimensions $n>3$, the geodesic restriction estimates given by \eqref{eq: BGT codim>2} are too singular to detect concentrations of eigenfunctions near geodesics. In fact, in these dimensions, estimates \eqref{eq: BGT codim>2} are saturated by the zonal spherical harmonics rather than the highest weight spherical harmonics on the round spheres.

Blair and Sogge \cite{blair2017refined} showed that improvements on the trivial bound for $ \vertiii{\psi}_{KN}$ will give improvements on the $L^p$-bounds \eqref{Sogge bound} for small exponents $p$.
Indeed they proved that
\begin{align}\label{Blair-Sogge}
    \| \psi \|_p \ll \lambda^{\delta(p,n)} \vertiii{\psi}_{KN}^{\frac{2(n+1)}{p(n-1)} -1 },\quad \text{ if }\;\frac{2(n+2)}{n}< p < \frac{2(n+1)}{n-1}.
\end{align}
See also \cite{Bourgain,Sogge11Tohoku,BS15APDE} for the related results in dimension two.
By using Toponogov’s comparison theorem from Riemannian geometry, Blair and Sogge \cite{blair2018concerning} obtained log improvements for the Kakeya–Nikodym norms when $M$ has nonpositive curvature. Using these and \eqref{Blair-Sogge}, they were able to obtain log improvements over Sogge's $L^p$-bounds \eqref{Sogge bound} for such manifolds when $2<p<\frac{2(n+1)}{n-1}$.

Now assume that $\psi$ is an $L^2$-normalized Hecke-Maass form, with spectral parameter $\lambda>0$, on a compact arithmetic congruence hyperbolic 3-manifold $X$ as before.
We prove the following powering saving over the trivial bound.

\begin{theorem}\label{KN main theorem}
    Let $\psi$ be an $L^2$-normalized  Hecke–Maass form on $X$ with spectral parameter $\lambda>0$.
    For any $\epsilon>0$ and any geodesic segment $\gamma$ of unit length, we have
    \begin{align*}
        \| \psi|_{\sT_{\lambda^{-1/2}}(\gamma)}  \|_2 \ll \lambda^{-1/20 + \epsilon},
    \end{align*}
    where the implied constant depends only on $X$ and $\epsilon$. Therefore, $\vertiii{\psi}_{KN}\ll_\epsilon \lambda^{-1/20 + \epsilon}$.
\end{theorem}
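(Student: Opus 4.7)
The plan is to establish the Kakeya--Nikodym bound by arithmetic amplification, in the framework of Iwaniec--Sarnak \cite{iwaniec1995norms} and following Marshall's adaptation to geodesic restrictions \cite{marshall2016geodesic}, but now with the integration taken over the three-dimensional tube $T = \sT_{\lambda^{-1/2}}(\gamma)$ rather than along $\gamma$ itself. Fix a smooth, non-negative, bi-$\SU(2)$-invariant test function $f_{0}$ on $\SL(2,\BC)$ whose spherical transform is bounded below by a positive constant on a unit spectral window about $\lambda$, so that $f_{0}$ is essentially concentrated in a ball of radius $\lambda^{-1/2}$. For a dyadic parameter $L$ to be optimized at the end, construct a standard Hecke amplifier $m_{L} = \sum_{p \sim L}(a_{p}T_{p} + b_{p}T_{p^{2}})$ with $|a_{p}|, |b_{p}| \leq 1$, chosen so that $|m_{L}(\psi)| \gg L^{1-\epsilon}$ while the total weight of the amplifier is $\ll L^{1+\epsilon}$. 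Applying the pre-trace formula to the amplified kernel and using spectral positivity would yield
$$
L^{2-\epsilon}\,\|\psi|_{T}\|_{2}^{2} \;\ll\; \int_{T}\sum_{n}c_{n}\!\!\sum_{\substack{\delta\text{ Hecke-shifted}\\ \det(\delta)=n}}\!\!f_{0}(x^{-1}\delta x)\,dx,
$$
where $n$ ranges over products of primes coming from the amplifier support and $c_{n}$ are bounded weights.

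The crux is the geometric bound on the right-hand side. The sum splits into three parts according to the type of $\delta$. The identity term contributes at most $L^{1+\epsilon}\cdot\lambda^{2}\cdot\mathrm{vol}(T) \sim L^{1+\epsilon}\lambda$ via the local Weyl law at each $x\in T$. The coaxial terms, where $\delta$ stabilizes the geodesic axis of $\gamma$ set-wise, form an arithmetic family governed by a divisor function of the quadratic order fixing $\gamma$ and contribute at most a logarithmic excess. The generic terms are estimated by a geometry-of-numbers count of integral quaternions of reduced norm $n \sim L^{2}$ whose axis approximates that of $\gamma$ to within $O(\lambda^{-1/2})$ over a unit length; reading the conjugation condition $x^{-1}\delta x \in \mathrm{supp}(f_{0})$ in coordinates adapted to $T$ (arclength along $\gamma$ together with a transversal $\lambda^{-1/2}$-disk), one sees that the transversal thinness buys a power saving in this regime. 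Dividing through by $L^{2-\epsilon}$ and optimizing $L$ as a small fractional power of $\lambda$ should then produce $\|\psi|_{T}\|_{2}^{2} \ll \lambda^{-1/10+\epsilon}$, which is the claimed bound.

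The main obstacle will be the sharp treatment of the near-coaxial contribution: those $\delta$ whose axes lie within $O(\lambda^{-1/2})$ of the axis of $\gamma$ but which are not themselves coaxial. These elements interpolate between the arithmetic scarcity of the coaxial locus and the lattice-geometric sparseness of the generic locus, and it is the correct book-keeping of this intermediate regime, coupled with the interplay with Sogge's local $L^{\infty}$-bound inside $T$, that selects the final exponent $1/20$; a cruder estimate in this regime would only give a weaker saving.
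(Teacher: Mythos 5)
Your proposal has a fatal problem at the identity term, and it is missing the decomposition that actually produces the exponent $1/20$.

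First, the diagonal. If you integrate the amplified kernel on the diagonal over the tube $T=\sT_{\lambda^{-1/2}}(\gamma)$, the identity contribution is, as you yourself compute, $\asymp L\cdot k_\lambda(e)\cdot \mathrm{vol}(T)\asymp L\cdot\lambda^{2}\cdot\lambda^{-1}=L\lambda$; after dividing by $L^{2-\epsilon}$ this gives $\|\psi|_{T}\|_{2}^{2}\ll \lambda L^{-1+\epsilon}$, which beats the trivial bound $\|\psi|_{T}\|_{2}^{2}\leq 1$ only when $L>\lambda$. But the diophantine count of Hecke returns near the axis (your ``generic'' and ``near-coaxial'' counts) caps the amplifier length at a small power of $\lambda$: in the paper the constraint is $N^{4}\leq X\asymp\lambda^{1/2}\beta^{-1/2}$ coming from Proposition \ref{Hecke return wrt MA}, so $N\ll\lambda^{1/8}$. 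Hence no admissible choice of $L$ makes your inequality nontrivial. The paper avoids this by arguing dually: it bounds $\langle\psi,b\phi\rangle$ for an $L^{2}$-normalized test function $\phi$ supported on the tube, so the identity term is $J(\lambda,\phi,e)=\langle\Phi\times k_\lambda,\Phi\rangle\ll\|\widehat{k_\lambda}\|_{\infty}\|\phi\|_{2}^{2}\ll 1$ by the Plancherel formula on $\BH^3$; the on-diagonal size $\lambda^{2}$ of $k_\lambda$ never enters.

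Second, the source of $1/20$. It does not come from careful bookkeeping of the near-coaxial regime. The paper decomposes $\phi$ by its Fourier transform in the arclength variable $t$ along the geodesic, with a window parameter $\beta$. The part with frequencies outside $\pm[\lambda-\beta,\lambda+\beta]$ is handled by a purely analytic oscillatory-integral argument with no Hecke operators and contributes $\ll\beta^{-1/4+\epsilon}$ (Proposition \ref{KN bound 1-Pi}); the near-spectrum part is amplified, the key geometric input being that $J(\lambda,\phi,g)$ is negligible unless $d(g,MA)\ll\lambda^{-1/2+\epsilon_0}\beta^{1/2}$, and yields $\ll\lambda^{-1/16+\epsilon}\beta^{1/16}$ (Proposition \ref{KN bound Pi}). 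Choosing $\beta=\lambda^{1/5}$ balances the two contributions at $\lambda^{-1/20+\epsilon}$. Without this splitting and the off-spectrum gain, your argument has no mechanism to reach the stated exponent even after the diagonal issue is repaired.
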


We will prove this result in Section \ref{section of Kakeya-Nikodym norms}. 
By combining Theorem \ref{KN main theorem} with \eqref{Blair-Sogge}, we obtain the following improvements over  \eqref{Sogge bound}.
\begin{corollary}\label{cor: improved Lp norm}
     Let $\psi$ be an $L^2$-normalized  Hecke–Maass form on $X$ with spectral parameter $\lambda>0$.    We have
    \begin{align*}
        \|  \psi \|_p \ll_\epsilon  \lambda^{\delta(p,3)- \frac{1}{20}(\frac{4}{p}-1) + \epsilon}
    \end{align*}
    if  $10/3<p<4$.
\end{corollary}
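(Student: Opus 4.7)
The plan is to combine Theorem \ref{KN main theorem} with the Blair--Sogge inequality \eqref{Blair-Sogge} specialized to dimension $n=3$. For $n=3$, the thresholds in \eqref{Blair-Sogge} become $\frac{2(n+2)}{n}=10/3$ and $\frac{2(n+1)}{n-1}=4$, which is exactly the range $10/3<p<4$ appearing in the corollary. The exponent on the Kakeya--Nikodym factor becomes $\frac{2(n+1)}{p(n-1)}-1=\frac{4}{p}-1$, and this quantity is strictly positive on the relevant range since $p<4$, so any power saving on $\vertiii{\psi}_{KN}$ transfers to a power saving on $\|\psi\|_p$.

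Concretely, I would first invoke \eqref{Blair-Sogge} with $n=3$ to get
\begin{align*}
    \|\psi\|_p \ll \lambda^{\delta(p,3)}\,\vertiii{\psi}_{KN}^{\frac{4}{p}-1},\qquad 10/3<p<4.
\end{align*}
Then I would substitute the bound $\vertiii{\psi}_{KN}\ll_\epsilon \lambda^{-1/20+\epsilon}$ provided by Theorem \ref{KN main theorem}, which holds uniformly in the choice of unit-length geodesic segment $\gamma$ and hence controls the supremum defining $\vertiii{\psi}_{KN}$. This yields
\begin{align*}
    \|\psi\|_p \ll_\epsilon \lambda^{\delta(p,3)}\cdot \lambda^{(-1/20+\epsilon)(\frac{4}{p}-1)} = \lambda^{\delta(p,3)-\frac{1}{20}(\frac{4}{p}-1)+\epsilon(\frac{4}{p}-1)}.
\end{align*}
Since $\frac{4}{p}-1<\frac{1}{5}$ is bounded on $10/3<p<4$, the factor multiplying $\epsilon$ is harmless and can be absorbed into a fresh $\epsilon$, giving the claimed bound.

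There is no genuine obstacle here; the corollary is essentially a direct substitution. The only minor bookkeeping issue is verifying that the implied constant depends only on $X$ and $\epsilon$, which follows from the corresponding dependence in Theorem \ref{KN main theorem} together with the fact that the Blair--Sogge inequality \eqref{Blair-Sogge} has an implied constant depending only on the manifold (and $p$, which is fixed in the statement).
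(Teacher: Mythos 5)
Your proposal is correct and is exactly the argument the paper intends: the corollary is stated immediately after the sentence ``By combining Theorem \ref{KN main theorem} with \eqref{Blair-Sogge}, we obtain the following improvements,'' so the paper's proof is the same substitution of $\vertiii{\psi}_{KN}\ll_\epsilon\lambda^{-1/20+\epsilon}$ into \eqref{Blair-Sogge} with $n=3$. Your verification of the thresholds $10/3$ and $4$ and of the exponent $\frac{4}{p}-1$ is accurate, and absorbing the bounded factor $\frac{4}{p}-1$ into a fresh $\epsilon$ is the right bookkeeping.
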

Moreover, we can interpolate the above bounds with $\| \psi\|_2 =1$ to obtain power savings over the $L^p$-bounds \eqref{Sogge bound} for $X$ with $2<p<4$. If we apply the recent work \cite{gao2025sharp} by Gao, Wu and Xi, the estimates in Corollary \ref{cor: improved Lp norm} can be extended to $22/7\leq p <4$.



\subsection{Outline of the proof for Theorem \ref{main theorem in intro}}\label{sec: outline of 1.1}
If we fix a real valued function $\chi\in C_c^\infty(\BH^2)$, it suffices to estimate the $L^2$-norm of $(\chi \psi|_Y)(z) := \chi(z) \psi(g_0z)$ in $L^2(\BH^2)$, uniformly for $g_0$ in a compact subset of $\SL(2,\BC)$.
We shall do this by estimating the operator norm for 
\begin{align*}
    \phi\mapsto\langle \chi\psi,\phi \rangle:=\int_{\BH^2}\chi(z)\psi(g_0z)\overline{\phi(z)}dz
\end{align*}
for $\phi \in L^2(\BH^2)$. The proof will use Harish-Chandra transforms and Helgason transforms on hyperbolic spaces. We give their definitions and properties in Section \ref{sec: Fourier}. Given $\phi\in L^2(\BH^2)$, we denote its Helgason transform by $\tilde{\phi}$, which is a function on $\BR\times B$. Here $B = \partial\BH^2$ is the boundary of $\BH^2$.

To prove such an estimate, we construct an $\SU(2)$-bi-invariant test function $k_\lambda$ via the inverse Harish-Chandra transform for $\BH^3$ of an even real-variable function $h_\lambda$ that is non-negative on the spectral parameters and concentrates near $\pm\lambda$. We sum the test function over the arithmetic lattice to form the kernel function
\begin{align*}
    K(x,y) = \sum_{\gamma\in\Gamma} k_\lambda(x^{-1}\gamma y)
\end{align*}
on the arithmetic hyperbolic $3$-manifold $X = \Gamma\backslash\BH^3$.
Then the integral operator on $L^2(X)$ with kernel $K(x, y)$ is an approximate spectral projector onto eigenfunctions with spectral parameter $\lambda$.
By the spectral expansion for $K$, we obtain the following pretrace formula
\begin{align}\label{pretrace formula in intro}
    \sum_{i} h_\lambda(\lambda_i) \psi_i(x)\overline{\psi_i(y)}=\sum_{\gamma\in\Gamma} k_\lambda(x^{-1}\gamma y),
\end{align}
where $\psi_i$'s are Hecke-Maass forms on $X$ with spectral parameters $\lambda_i$, which form an orthonormal basis of $L^2(X)$ and $\psi$ is one of them. 

Let us first outline how to get Burq, G{\'e}rard and Tzvetkov's local bound $\|\chi\psi|_Y\|_{L^2(\BH^2)}\ll\lambda^{1/4}$ by integrating the pretrace formula.
Let $\cS(\BH^2)$ be the Harish-Chandra Schwartz space for $\BH^2$.
Given $\phi\in\cS(\BH^2)$, if we integrate \eqref{pretrace formula in intro} against $\overline{\chi\phi}\times \chi\phi$ along  $g_0\BH^2\times g_0 \BH^2$, we obtain
\begin{align}\label{integrated pretrace formula in intro}
    \sum_{i} h_\lambda(\lambda_i) |\langle \chi\psi_i,\phi\rangle|^2=\sum_{\gamma\in\Gamma} \iint_{\BH^2}\overline{\chi\phi(x_1)}\chi\phi(x_2)  k_{\lambda}(x_1^{-1}g_0^{-1}\gamma g_0x_2) dz_1 dz_2.
\end{align}
By dropping all terms on the spectral side, i.e. the left-hand side of \eqref{integrated pretrace formula in intro}, except the term $|\langle \chi\psi,\phi\rangle|^2$, it suffices to bound the integrals
\begin{align*}
    I(\lambda,\phi,g) = \iint_{\BH^2}\overline{\chi\phi(x_1)}\chi\phi(x_2) k_\lambda (x_1^{-1}gx_2) dx_1 dx_2 \quad\text{ for }g\in\SL(2,\BC).
\end{align*}
We may assume that $\Gamma$ is torsion-free and the supports of $\chi$ and $k_\lambda$ are sufficiently small so that only $I(\lambda,\phi, e)$ contributes to the geometric side (the right-hand side of \eqref{integrated pretrace formula in intro}). Then we obtain that (see Section \ref{subsec: trivial bd}) 
\begin{align}\label{eq analysis of trivial bd}
    \langle \chi\psi,\phi\rangle \ll |I(\lambda,\phi, e)|^{1/2} 
    \ll\left( \sup_{s\in\BR}| \tilde{k_\lambda}(s)| \right)^{1/2} \|\phi\|_{L^2(\BH^2)}.
\end{align}
Here $\tilde{k_\lambda}(s)$ is the Harish-Chandra transform of $k_\lambda|_{\BH^2}$ on $\BH^2$. 
Then the local bound $\|\chi\psi|_Y\|_{L^2(\BH^2)}\ll\lambda^{1/4}$ follows from the estimate $\sup_{s\in\BR}| \tilde{k_\lambda}(s)| \ll \lambda^{1/2}$ (See \eqref{ineq all t} in Proposition \ref{supnorm of klambda}).


To improve the local bound, the first main step in our proof is to refine the pointwise bound for $\tilde{k_\lambda}(s)$.
Given any $0<\epsilon'\leq 10^{-6}$ and $\beta>0$ satisfying
$\lambda^{\epsilon'}\leq\beta\leq\lambda$, Proposition \ref{supnorm of klambda} shows that if $s\notin \pm [\lambda-\beta/2,\lambda+\beta/2]$ then
\begin{align}\label{eq: restricted bd for klambda in intro}
     \tilde{k_\lambda}(s)\ll_{\epsilon'}\lambda^{1/2}\beta^{-1/2}.
\end{align}
The proof of \eqref{eq: restricted bd for klambda in intro} occupies Section \ref{section of bound away from the spectrum}.
We define ${H}_\beta$ to be the subspace of $L^2(\BH^2)$ consisting of functions with Helgason transforms supported in $\pm[\lambda-\beta,\lambda+\beta]\times B$.
Let $H_\beta^\perp$ be the orthogonal complement of $H_\beta$ in $L^2(\BH^2)$ and let $\Pi_\beta$ be the orthogonal projection from $L^2(\BH^2)$ onto $H_\beta$.
Given $\phi\in H_\beta^\perp$, 
by a similar analysis like \eqref{eq analysis of trivial bd}, \eqref{eq: restricted bd for klambda in intro} implies the following bound
\begin{align*}
    \langle \chi\psi,\phi\rangle \ll_{\epsilon'}\lambda^{1/4}\beta^{-1/4}\|\phi\|_{L^2(\BH^2)}.
\end{align*}
Therefore,
\begin{align}\label{2 trivial inequality}
   \|(1-\Pi_\beta)(\chi\psi|_Y)\|_{L^2(\BH^2)} \ll_{\epsilon'}\lambda^{1/4}\beta^{-1/4}.
\end{align}
Here we obtain a gain of $\beta^{-1/4}$ by removing the part of $\chi\psi|_Y$ with frequencies near $\pm\lambda$.
Compared to the local bound, it may be seen that the main contribution to $\| \chi \psi\|_{L^2(\BH^2)}$ should come from those frequencies near $\pm\lambda$.
We mention that the estimate \eqref{2 trivial inequality} is purely local and does not use the arithmeticity of $\Gamma$ or $\psi$.
However, it does require asymptotics for the spherical functions on $\BH^2$ and $\BH^3$, which are taken from \cite{marshall2016p}.

The second main step is to bound the $L^2$-norm of $\Pi_\beta(\chi\psi|_Y) \in H_\beta$.
We want to get a power saving over the local bound $\|\Pi_\beta(\chi\psi|_Y)\|_{L^2(\BH^2)}\ll\lambda^{1/4}$, via the method of arithmetic amplification.
First, we will give an outline of the amplification method.
We let $\cT$ be a Hecke operator and apply $\cT\cT^*$ to $K(x, y)$. The identity corresponding to \eqref{integrated pretrace formula in intro} is now
\begin{align}\label{amplification inequality in outline}
    \begin{split}
        \sum_{\gamma\in\cT\cT^*\Gamma}C(\gamma)I(\lambda,\phi,g_0^{-1}\gamma g_0) &= \sum_{i} h_\lambda(\lambda_i) \left|\langle\cT\psi_i, \chi\phi\rangle\right|^2\geq \left|\langle\cT\psi, \chi\phi\rangle\right|^2,
    \end{split}
\end{align}
where $\cT\cT^*\Gamma$ is the set of isometries appearing in $\cT\cT^*$, and $C(\gamma)$ is the coefficient of $\gamma\in \cT\cT^*\Gamma$.
Here $\phi$ can be any Harish-Chandra Schwartz function on $\BH^2$.
We choose $\cT$ so that its eigenvalue on $\psi$ is large, and it should have small eigenvalues on the remaining $\psi_i$ by the orthogonality of systems of Hecke eigenvalues.
The term “amplification” comes from this way in which $\cT$ picks out $\psi$ from the collection $\{\psi_i\}$.

To apply the inequality \eqref{amplification inequality in outline}, besides the counting problem, the amplification proof in the previous relevant results \cite{marshall2015restrictions,marshall2016geodesic} suggests that we should prove the following two estimates.
The first expected estimate is
\begin{align}\label{std arguement oscillatory integral}
    I(\lambda,\phi,g)\ll \lambda^{1/2} \| \phi \|_{L^2(\BH^2)}^2
\end{align}
for
general $g\in \SL(2,\BC)$ in a fixed compact set, which corresponds to the local bound. We define $T_{\lambda,g}$ to be the oscillatory integral operator defined by
\begin{align*}
    [T_{\lambda,g}\phi](y) = \int_{\BH^2}\chi(x)k_\lambda(x^{-1}gy)\phi(x) dx,\quad y\in\BH^2.
\end{align*}
Then \eqref{std arguement oscillatory integral} will follow from the $L^2\to L^2$ operator norm $\| T_{\lambda,g}\|_{L^2\to L^2}\ll\lambda^{1/2}$, which we expect to hold.
If we make further assumptions on the group element $g$, for example, assuming that $g$ is away from the stabilizer of $\BH^2$ in $\SL(2,\BC)$, then we need to get a better estimate than the estimate  \eqref{std arguement oscillatory integral}, which must involve a power saving.
However, this second estimate should fail for general $\phi$ if $\BH^2\cap g\BH^2\neq\emptyset$.
In the following, we briefly explain the sharpness of \eqref{std arguement oscillatory integral} by assuming $g=\left(\begin{smallmatrix}
    e^{i\theta}&\\&e^{-i\theta}
\end{smallmatrix}\right)\neq\pm I$ for some $\theta\in\BR$. 
Let $l$ be the vertical geodesic in $\BH^3$ through the origin, which is the intersection $\BH^2\cap g\BH^2$. 
If we take $\phi\in\cS(\BH^2)$ so that $\tilde{\phi}(s,b)$ is a bump function localized near $\lambda$ of width 1 in the $s$-variable and is a bump function localized near the origin of $B$ of width $\lambda^{-1/2}$ in the $b$-variable, then $\|\phi\|_{L^2(\BH^2)}\asymp \lambda^{1/4}$.
Applying the inverse Helgason transform for $\phi$ and the inverse Harish-Chandra transform for $k_\lambda$, it may be seen that $I(\lambda,\phi,g)$ can be written as an oscillatory integral
\begin{align*}
    \asymp (\lambda^{1/2})^2\lambda^2 \int_{M\backslash\mathrm{SU}(2)}\iint_{\BH^2} \chi_1 e^{i\lambda F(z_1,z_2,k)}dz_1dz_2dk.
\end{align*}
Here $M$ is the digonal subgroup of $\mathrm{SU}(2)$, $(\lambda^{1/2})^2$ is from the integrals for $\tilde{\phi}$, $\lambda^2$ is from the Plancherel density for $\BH^3$, $\chi_1$ is a compactly supported smooth amplitude factor. The phase function $F(z_1,z_2,k)$ is equal to 
\begin{align*}
    -A(z_1)+A(z_2)-A(kz_1^{-1}gz_2) + O(\lambda^{-1})
\end{align*}
where $A$ is the Iwasawa projection (see Section \ref{notation}). 
The set of the critical points of $F$, if ignoring the error term, is 2-dimensional
\begin{align*}
    \{ (z_1,z_2,e)\in \BH^2\times\BH^2\times M\backslash \mathrm{SU}(2)\, | \,z_1,z_2\in l \}.
\end{align*}
Therefore, by assuming nondegeneracy and by stationary phase, we expect $I(\lambda,\phi,g)\asymp \lambda\asymp\lambda^{1/2} \| \phi \|_{L^2(\BH^2)}^2$ to hold for such $\phi$.
It may be seen that, in general, the contribution of the integral over a neighborhood of $\BH^2\cap g\BH^2$ can not be neglected.

The alternate approach we take in this paper is to decompose $\chi\phi$ into geodesic beams:
\begin{align}\label{geodesic beam decomposition in outline}
    \chi\phi = \sum_{m,n} \chi\varphi_{m,n},\quad\text{ for }\phi\in\cS(\BH^2)\cap H_\beta,
\end{align}
and we let $\phi_{m,n}=\chi\varphi_{m,n}\in C_c^\infty(\BH^2)$.
Each $\phi_{m,n}$ is approximately a Gaussian beam along some geodesic segment $l_{m,n}$ and does not need to be still in $H_\beta$.
We construct this decomposition \eqref{geodesic beam decomposition in outline} in detail in Section \ref{section geodesic beam}
and show the square inequality in Proposition \ref{l2L2 inequality}, that is,
\begin{align}\label{l2L2 inequality in outline}
    \sum_{m,n} \| \phi_{m,n}\|_2^2\leq  C\|\phi\|_2^2.
\end{align}
for some constant $C>0$ which is independent of $\phi$.
We divide the set $\{ \phi_{m,n} \}$ into two parts, based on their $L^2$-norms.
Let $0<\delta<1$ be a small parameter to be chosen later.
We denote by $\cS_{\geq\delta}(\phi)$ the set of $\phi_{m,n}$'s so that their $L^2$-norms are $\geq(C\delta)^{1/2}\|\phi\|_{L^2(\BH^2)}$,
and denote by $\cS_{<\delta}(\phi)$ the complement of $\cS_{\geq\delta}(\phi)$ in $\{ \phi_{m,n} \}$.
We let $\phi_\delta = \sum_{\phi_{m,n}\in\cS_{<\delta}(\phi)} \phi_{m,n}$, let $\varphi_\delta = \sum_{\phi_{m,n}\in\cS_{<\delta}(\phi)} \varphi_{m,n}$, and then write $ \langle  \psi, \chi\phi\rangle$ as
\begin{align}\label{eq: intro 2 term expression}
     \sum_{\phi_{m,n}\in\cS_{\geq\delta}(\phi)} \langle \psi , \phi_{m,n}\rangle + \langle \psi , \phi_{\delta}\rangle.
\end{align}

We estimate the first term in \eqref{eq: intro 2 term expression} by bounding $\langle  \psi, \phi_{m,n}\rangle$ for each individual $\phi_{m,n} \in\cS_{\geq\delta}(\phi)$.
In Proposition \ref{uniform bound for phi mn}, we establish an estimate
\begin{align*}
    I(\lambda,\varphi_{m,n},g)\ll_{\epsilon',\epsilon}\lambda^{1/2 + \epsilon} \beta^{3/2}\|\phi_{m,n}\|_{L^2(\BH^2)} ^2 + O_A(\lambda^{-A}\|\phi\|_{L^2(\BH^2)} ^2)
\end{align*}
for general $g$. This bound is weaker than the expected bound \eqref{std arguement oscillatory integral} but the proof is easier. 
Corollary \ref{small I result} shows that $I(\lambda,\varphi_{m,n},g_0^{-1}\gamma g_0)$ can be neglected unless the geodesic segment $\gamma g_0 l_{m,n}$ is contained in a small tubular neighborhood of $g_0 l_{m,n}$.
The proof is based on the oscillatory integral estimates proved in Section \ref{section of oscillatory integrals}.
In Section \ref{section of Hecke returns}, we use a diophantine argument to show that there are few $\gamma$ such that $\gamma g_0 l_{m,n}$ is contained in a small neighborhood of $g_0 l_{m,n}$. 
Combining these estimates with the amplification method, we conclude the following bound in Proposition \ref{amplified bound for geodesic beam}:
\begin{align*}
    \langle\psi,\phi_{m,n}\rangle \ll_{\epsilon',\epsilon} \lambda^{1/4 - 1/16 + \epsilon} \beta^{13/16} \|\phi_{m,n} \|_{L^2(\BH^2)} + O_A(\lambda^{-A}\|\phi\|_{L^2(\BH^2)} ).
\end{align*}
Inequality \eqref{l2L2 inequality in outline} implies the cardinality of the set $\cS_{\geq\delta}(\phi)$ is $\leq \delta^{-1}$. Hence, the Cauchy-Schwarz inequality and \eqref{l2L2 inequality in outline} give
\begin{align}\label{power saving L delta bound}
    \sum_{\phi_{m,n}\in\cS_{\geq\delta}(\phi)}\langle\psi,\phi_{m,n}\rangle \ll_{\epsilon',\epsilon} \delta^{-1/2}\lambda^{1/4 - 1/16 + \epsilon}\beta^{13/16}\|\phi\|_{L^2(\BH^2)}.
\end{align}

We bound $\langle  \psi, \phi_{\delta}\rangle$
by applying a second arithmetic amplification argument.
We consider the Hecke returns with respect to $g_0\BH^2$ instead of a geodesic this time.
We first deduce the trivial bound 
$I(\lambda,\varphi_\delta,g)\ll\lambda^{1/2}\|\phi\|_{L^2(\BH^2)}^2$
if $g$ stabilizes $\BH^2$ in Proposition \ref{uniform bound for phi delta}. To get a better estimate when $\gamma$ is away from stabilizing $g_0\BH^2$, we expand the integral $I(\lambda,\varphi_\delta,g_0^{-1}\gamma g_0)$ as the sum of integrals for geodesic beams
\begin{align*}
    I(\lambda,\varphi_\delta,g_0^{-1}\gamma g_0)=& \sum_{\phi_{m_1,n_1},\phi_{m_2,n_2}\in\cS_{<\delta}(\phi)}\iint_{\BH^2} \overline{\phi_{m_1,n_1}(z_1)}\phi_{m_2,n_2}(z_2)k_\lambda(z_1^{-1}g_0^{-1}\gamma g_0z_2) dz_1 dz_2 .
\end{align*}
We show in Corollary \ref{nonstationary phase bound for P} that the above integrals for the pair of $\phi_{m_i,n_i}$, $i=1,2$, can be neglected unless $\gamma g_0 l_{m_2,n_2}$ is contained in a small tubular neighborhood of $g_0 l_{m_1,n_1}$. The proof is also based on the oscillatory integral estimates proved in Section \ref{section of oscillatory integrals}.
In Section \ref{section of geodesic in H3} and Proposition \ref{most of geodesic beams satisfy uniform condition}, we show that there are few pairs of $(l_{m_1,n_1},l_{m_2,n_2})$ such that $\gamma g_0 l_{m_2,n_2}$ is contained in a small tubular neighborhood of $g_0 l_{m_1,n_1}$ if $\gamma $ is away from stabilizing $g_0\BH^2$. 
For the rest of the pairs of geodesics that are close, we use the general bound 
\begin{align*}
    &\iint_{\BH^2} \overline{\phi_{m_1,n_1}(z_1)}\phi_{m_2,n_2}(z_2)k_\lambda(z_1^{-1}g_0^{-1}\gamma g_0z_2) dz_1 dz_2 \\
    \ll_{\epsilon',\epsilon} &\lambda^{1/2+ \epsilon} \beta^{3/2}\|\phi_{m_1,n_1}\|_{L^2(\BH^2)} \|\phi_{m_2,n_2}\|_{L^2(\BH^2)} + O_A(\lambda^{-A}\|\phi\|_{L^2(\BH^2)} ^2)
\end{align*}
proved in Proposition \ref{general uniform bound} with the property $\| \phi_{m_i,n_i} \|_{L^2(\BH^2)} \ll \delta^{1/2}\|\phi\|_{L^2(\BH^2)}$.
Proposition \ref{Hecke return wrt SL(2,R)} gives another Hecke return estimate which controls the number of $\gamma$ such that $\gamma$ almost stabilizes $g_0 \BH^2$. 
Combining these, in Proposition \ref{amplified bound for small norm}, by the amplification method, we conclude the following estimate for the second term in \eqref{eq: intro 2 term expression}:
\begin{align}\label{power saving S delta bound}
    \langle\psi,\phi_{\delta}\rangle \ll_{\epsilon',\epsilon} \delta^{1/138}\lambda^{1/4+\epsilon}\beta^{1/92} \|\phi\|_{L^2(\BH^2)}.
\end{align}

By combining \eqref{2 trivial inequality}, \eqref{eq: intro 2 term expression}, \eqref{power saving L delta bound} and \eqref{power saving S delta bound}, we conclude that
\begin{align*}
    \|\chi\psi|_Y\|_{L^2(\BH^2)} &\ll  \|(1-\Pi_\beta)(\chi\psi|_Y)\|_{L^2(\BH^2)} + \|\Pi_\beta(\chi\psi|_Y)\|_{L^2(\BH^2)} \\
    &\ll_{\epsilon',\epsilon}\lambda^{1/4}\beta^{-1/4} + \delta^{-1/2}\lambda^{1/4 - 1/16 + \epsilon}\beta^{13/16} + \delta^{1/138}\lambda^{1/4+\epsilon}\beta^{1/92} .
\end{align*}
By picking suitable parameters $\delta$ and $\beta$, we can prove Theorem \ref{main theorem in intro}.

\subsection{Acknowledgements}
The author would like to thank his advisor Simon Marshall for suggesting this problem and providing much guidance and encouragement in the course of his work. 
He would like to thank Jean-Philippe Anker and Angela Pasquale for answering questions on the Helgason transform.
He also wants to thank Mingfeng Chen, Jacob Denson, Ruofan Jiang, Liding Yao, John Yin, and Cheng Zhang for helpful conversations. 
The author was supported by NSF grants DMS-1902173 and DMS-1954479.

\section{Notation}\label{notation}
Throughout the paper, the notation $A\ll B$ will mean that there is a positive constant $C$
such that $|A| \leq C B$, and $A\asymp B$ will mean that there are positive constants $C_1$ and $C_2$ such
that $C_1B \leq A \leq C_2B$.
We also use $A =O(B)$ to mean $A\ll B$.
The notation $A = o(B)$ means $A/B$ tends to 0.
Then $A\sim B$ will mean $A=B+o(B)$.

\subsection{Quaternion algebras and adelic groups}
Let $F$ be a totally real number field with the ring of integers $\cO$. We denote the norm of an ideal $\fn$ of $\cO$ by $\operatorname{N}(\fn)$. If $v$ is a finite place of $F$ and $F_v$ is the local field of $F$ at $v$ with the ring of integers $\cO_v$, $\varpi_v$ and $q_v$ denote a uniformizer and the order of the residue field. Let $E$ be a quadratic extension of $F$ with exactly one complex place $w_0$, and let $v_0$ be the place below $w_0$. Let $\cO_E$ be the ring of integers of $E$. We denote by $\operatorname{N}_E$ the norm for ideals in $\cO_E$. We let $\bar{\cdot}$ denote the conjugation of $E$ over $F$, and denote the trace and norm from $E$ to $F$ by $\operatorname{Tr}_E$ and $\operatorname{Nm}_E$ respectively. We denote the rings of adeles of $F$ and $E$ by $\BA$ and $\BA_E$, and the ring of finite adeles of $F$ by $\BA_f$. Let $|\cdot|_v$ be the absolute value on $F_v$ for any place $v$ of $F$, and let $|\cdot| = \prod_v |\cdot|_v$ be the absolute value on $\BA$. Let $|\cdot|_w$ be the absolute value on $E_w$ for any place $w$ of $E$, and let $|\cdot|_E = \prod_w |\cdot|_w$ be the absolute value on $\BA_E$.

Let $D$ be a quaternion division algebra over $F$ that is ramified at every real place except at $v_0$, and let $B = D\otimes_F E$. We assume that $B$ is also a division algebra. We denote the standard anti-involution on $D$ and $B$ by $\iota$, and also use $\bar{\cdot}$ to denote the conjugation of $B$ over $D$. Let $\operatorname{trd}_B(x) = x+\iota(x)$ and $\operatorname{nrd}_B(x)=x\iota(x)$ denote the reduced trace and reduced norm on $B$.

We denote the groups of reduced norm $1$ elements by $D^1$ and $B^1$, and denote the multiplicative group by $D^\times$ and $B^\times$. We let $\bG$ and $\bH$ be the algebraic groups over $F$ such that $\bG(F) = B^1$, $\bH(F) = D^1$.
We denote $D\otimes_F F_v$, $B\otimes_F F_v$,  $\bG(F_v)$ and $\bH(F_v)$  by $D_v$, $B_v$, $G_v$ and $H_v$.
At $v_0$, we have $G_{v_0} \simeq \SL(2,\BC)$, $H_{v_0}\simeq \SL(2,\BR)$.
We will implicitly make these identifications later.
Let $H^\prime$ be the normalizer of $H_{v_0}$ in $G_{v_0}$ which has two components with identity component $H_{v_0}$. In fact,
\begin{align*}
    H^\prime = H_{v_0} \cup \begin{pmatrix}0&i\\i&0\end{pmatrix} H_{v_0}. 
\end{align*}

Let $\cO_B\subset B$ be a maximal $\cO$ order, and let $S$ be a finite set of places of $F$ containing all infinite places and all places where $D$ ramifies and places where $E$ is ramified over $F$. We choose a compact subgroup $K = \prod_v K_v$ of $\bG(\BA)$ as follows. We choose $K_{v_0} = \SU(2)$, and $K_v = B^1_v$ for all other real places $v$.
The intersection $K_{v_0}\cap H_{v_0}$ can be identified with $\SO(2)$ and is a maximal compact subgroup of $H_{v_0}$. 
For finite places in $S$ we let $K_v$ be any subgroup of $G_v$ that $K_v\subset\cO_{B,v} = \cO_B \otimes _\cO \cO_v$, and for other finite places we let $K_v = B^1_v\cap \cO_{B,v}$.
After enlarging $S$, we may choose an isomorphism $\alpha$ from $B_v$ to the product of two $2$ by $2$ matrix algebras $M_2(F_v)\times M_2(F_v)$ for all $v\notin S$ that is split in $E$.  The isomorphism $\alpha$ satisfies:
\begin{itemize}
    \item $\alpha(D_v) = \{ (T,T)|T\in M_2(F_v) \}$, 
    \item $\alpha(K_v) = \SL(2,\cO_v)\times\SL(2,\cO_v)$,
\end{itemize}
and moreover here $\bar{\cdot}$ is identified with the map switching the two factors.
Let $\sP$ be the set of primes $v\notin S$ that split in $E$. We shall implicitly make the identification $\alpha$ at places in $\sP$. For $v\in \sP$, we write $G_v = \SL(2,F_v)\times\SL(2,F_v)$, and  $K_v = K_{v,1}\times K_{v,2} =\SL(2,\cO_v)\times\SL(2,\cO_v) $.

Let $\sQ$ be the set of primes $v\notin S$ that is inert in $E$. We still use $\alpha$ to mean the isomorphism from $B_v$ to $M_2(E_v)$, the $2$-by-$2$ matrix algebra over $E_v$, for $v\in\sQ$. We may assume $\alpha$ satisfies:
\begin{itemize}
    \item $\alpha(D_v) =  M_2(F_v) $, 
    \item $\alpha(K_v) = \SL(2,\cO_{E_v})$.
\end{itemize}
For $v\in \sQ$, we write $G_v = \SL(2,E_v)$, and  $K_v = \SL(2,\cO_{E,v})$.

For simplicity, we denote $G_0=G_{v_0}$, $H_0 = H_{v_0}$,  and $K_0=K_{v_0}$.

\subsection{Lie groups and algebras}

Let $A$ be the connected component of the diagonal subgroup of $\mathrm{SL}(2,\BR)$, i.e.,
\begin{align*}
    A = \left\{ \left.\begin{pmatrix}e^{t/2} &0 \\0 &e^{-t/2} \end{pmatrix} \right| t\in\BR\right\}.
\end{align*}
Let
\begin{align*}
    N = \left\{ \begin{pmatrix}1 &z \\0 &1 \end{pmatrix} \Big| z\in\BC \right\}\quad \text{and}\quad N_0 = \left\{ \begin{pmatrix}1 &x \\0 &1 \end{pmatrix} \Big| x\in\BR \right\}. 
\end{align*}
We denote the Lie algebras of $K_{0}$, $A$, $N$ and $N_0$ by $\fk$, $\fa$, $\fn$ and $\fn_0$. We write the Iwasawa decompositions of $G_0 = NAK_0$ as
\begin{align*}
    g = n(g)\exp\left(A(g)\right) \kappa(g) = \exp\left( N(g)\right)\exp\left(A(g)\right) \kappa(g).
\end{align*}
We define
\begin{align*}
    H = \begin{pmatrix}1/2 &0 \\0 &-1/2 \end{pmatrix} \in \fa,\quad X = \begin{pmatrix}0 &1 \\0 &0 \end{pmatrix} \in\fn.
\end{align*}
We identity $\fa\simeq \BR$ under the map $H\mapsto 1$ and consider $A(g)$ as a function $A:G_0\to\BR$ under this identification, and obtain identifications $\fn\simeq \BC$ and $\fn_0\simeq\BR$ similarly. We identify the dual space $\fa^*$ of $\fa$ as $\BR$ by sending the root $tH\mapsto t$ to $1$. Under these identifications, the pairing between $\fa$ and $\fa^*$ is the multiplication in $\BR$.
We define the homomorhpism $a:\BR \rightarrow A$ by $a(t) = \exp(tH)$, and define $n:\BC \to N$ to be $n(z) = \exp(zX)$. The restriction of $n$ to $\BR\to N_0$ will still be denoted by $n$. We denote by $\fa^+ \simeq \BR_{>0}$ the positive Weyl chamber, and by $A^+$ the image $\exp(\fa^+) = a(\BR_{>0})$ in $A$.

Let $M^\prime$ and $M$ be the normalizer and centralizer respectively of $A$ in $K_0$. Then $W=M^\prime/M\simeq \BZ/2$ is the Weyl group with respect to $A$, and  $M^\prime A$ and $MA$ are the normalizer and centralizer, respectively, of $A$ in $G_0$. 

We equip $\mathfrak{sl}(2,\BC)$ with the norm 
\begin{align*}
    \|\cdot \|:\begin{pmatrix}Z_1&Z_2\\Z_3&-Z_1\end{pmatrix}\mapsto \left(\|Z_1\|^2+\|Z_2\|^2+\|Z_3\|^2 \right)^{1/2}.
\end{align*}
This norm induces a left-invariant metric on $G_0$, which we denote by $d$. We will denote the group identity by $e$.

 We define a Haar measure $dg$ on $G_0$ through the Iwasawa decomposition $G_0=NAK_0$. Namely, if $g=n(z)a(t)k$, then $dg=e^{-2t}dzdtdk$. Here $dz,dt$ are the standard measures on $\BC$ and $\BR$ as Euclidean spaces, and $dk$ is the probability Haar measure on $K_0$. Similarly, we define a Haar measure $dh$ on $H_0$ through the Iwasawa decomposition $H_0=N_0A\SO(2)$. If $h=n(x)a(t)k$, then $dh=e^{-t}dxdtdk$.

\subsection{Hecke algebras}\label{sec of Hecke algberas}
For any continuous function $f$ on $\bG(\BA)$, we define $f^*(g) = \overline{f(g^{-1})}$. We define $\cH_f = \bigotimes_{v<\infty}^\prime \cH_v$ to be the convolution algebra of smooth functions on $\bG(\BA_f)$ that are compactly supported and bi-invariant under $K_f = \prod_{v<\infty}^\prime K_v$, and $\cH_v$ denote the space of smooth, compactly supported functions on $G_v$ that are bi-invariant under $K_v$.
We let $\cH^S = \bigotimes_{v\notin S}^\prime \cH_v$ be the unramified Hecke algebra.
We let $v\in\sP$, and $a_1,a_2\in\BZ$. Define $K_v(a_1,a_2)$ to be the double coset
\begin{align*}
    K_v(a_1,a_2) = K_{v,1}(a_1)\times K_{v,2}(a_2),
\end{align*}
where $K_{v,i}(a_i) = K_{v,i}\begin{pmatrix}\varpi_v^{a_i}&\\&\varpi_v^{-a_i}\end{pmatrix}K_{v,i}$.
We let
$T_v(a)$ be the characteristic function of $K_v(a,0)$. 
If $v\in\sQ$ and $a\in\BZ$, we define $K_v(a)$ to be the double coset
\begin{align*}
    K_v(a) = K_{v}\begin{pmatrix}\varpi_v^{a}&\\&\varpi_v^{-a}\end{pmatrix}K_{v}.
\end{align*}
We let
$T_v(a)$ be the characteristic function of $K_v(a)$. 
Given an ideal $\fn\subset\cO$ and suppose that $\fn$ is only divisible by prime ideals not in $S$.
We define the double coset in $\bG(\BA_f)$
\begin{align*}
    K(\fn) = \prod_{v\in\sP} K_v(\operatorname{ord}_{v}(\fn),0) \times \prod_{v\in\sQ} K_v(\operatorname{ord}_v(\fn))\times\prod_{v\in S} K_v.
\end{align*}
In this paper the notation $K(\fn)$ is used non-standardly. We use this notation to denote the above Hecke double coset but not a principal congruence subgroup of $K$.

The action of $\phi\in\cH_f$ on an automorphic function $f$ on $\bG(F)\backslash \bG(\BA)$ is given by the usual formula
\begin{align*}
    [\phi f](x) = \int_{\bG(\BA_f)}\phi(g)f(xg) dg.
\end{align*}
Here we use the Haar measures $dg_v$ on $G_v$, which are normalized so that $K_v$ has a unit volume.

\subsection{Arithmetic manifolds and Hecke-Maass forms}

Define $X = \bG(F)\backslash\bG(\BA)/ K$, which is a compact connected hyperbolic $3$-manifold. We let $\Omega = \prod_{v}\Omega_v\subset\bG(\BA)$ be a compact set containing a fundamental domain for $\bG(F)\backslash \bG(\BA)$.
The universal cover of $X$ is the hyperbolic $3$-space $\BH^3$ obtained by the quotient $G_0/ K_{0} = \SL(2,\BC)/\SU(2)$.
We will denote the universal covering map by $\pi: \BH^3 \to X$.
We will use the upper half 3-space model of $\BH^3$ and the upper half plane model of $\BH^2$, i.e., 
\begin{align*}
    \BH^3 = \{ (z,\bt) \,|\, z\in\BC,\, \bt\in\BR_{>0} \}\quad\text{and}\quad\BH^2 = \{ (x,\bt)\,|\,x\in\BR,\,\bt\in\BR_{>0} \}.
\end{align*}
The embedding $\BH^2\subset\BH^3$ is identified with the natural imbedding $\SL(2,\BR)/\SO(2)\subset\SL(2,\BC)/\SU(2)$.
We denote by $o = (0,1)\in\BH^3$ the origin of the symmetric space, which is the point fixed by the group $\SU(2)$. For $g =  \begin{pmatrix}a &b \\c &d \end{pmatrix} \in  \SL(2,\BC)$ and $(z,\bt)\in\BH^3$, we have the following formula:
\begin{align}\label{Poincare}
    g\cdot(z,\bt) = \left(\frac{(az+b)\overline{(cz+d)}+a\overline{c}\bt^2}{|cz+d|^2+|c|^2\bt^2},\frac{\bt}{|cz+d|^2+|c|^2\bt^2} \right).
\end{align}
And then $H^\prime$ will be the stabilzer of $\BH^2$ in $G_0$.
We will usually represent the $ \bt$-coordinate under the exponential map, that is,
$\bt = e^t$ with $t\in\BR$. Hence,
\begin{align*}
    (z,\bt) = n(z)a(t)\cdot o\in\BH^3,
\end{align*}
where $z\in\BC$ and $t\in\BR$.

We let $\psi\in L^2(X)$ be a Hecke-Maass form that is an eigenfunction of the Laplacian $\Delta$ and the unramified Hecke algebra $\cH^S$. We let $\lambda>0$ be its spectral parameter, so that
\begin{align*}
    \Delta\psi+(1+\lambda^2)\psi=0.
\end{align*}
We assume that $\|\psi\|_{L^2(X)}=1$ with respect to the hyperbolic volume on $X$. Note that because $\Delta$ and $T(\fn)$ are self-adjoint, we may assume that $\psi$ is real-valued.

\section{Fourier transforms on \texorpdfstring{$\BH^2$}{H2} and \texorpdfstring{$\BH^3$}{H3}}\label{sec: Fourier}

We use the Harish-Chandra transforms and the Helgason transforms on both $\BH^2$ and $\BH^3$, so we must fix the notation we will use. 
We use $\widehat{\cdot}$ to mean the transforms on $\BH^3$ or $\SL(2,\BC)$ and $\tilde{\cdot}$ to mean the transforms on $\BH^2$ or $\SL(2,\BR)$. 
For definitions of Helgason and Harish-Chandra transforms on general noncompact symmetric spaces and their properties, we refer to \cite{helgason1984groups,helgason1994geometric}.

Let $\varphi_s^{\BH^2}$ and $\varphi_s^{\BH^3}$ be the spherical functions on $H_0$ and $G_0$ with spectral parameter $s$ respectively. We recall the integral representations for the spherical functions. For any $x\in\BH^2$ and $y\in\BH^3$ we have
\begin{align}
    &\varphi_s^{\BH^2}(x)=\int_{\SO(2)} e^{(is+1/2)A(kx)}dk,\label{eq:HC int for H2}\\
    &\varphi_s^{\BH^3}(y)=\int_{K_0} e^{(is+1)A(ky)}dk.\label{eq:HC int for H3}
\end{align}
Here the measures on the maximal compacts are both probability Haar measures.

Let $f\in C_c^\infty(\BH^3)$ be a left $K_0$-invariant function,  that is, $f(kx) = f(x)$ for arbitrary $k\in K_0$ and $x\in\BH^3$. The Harish-Chandra transform on $\BH^3$ of $f$ is defined to be
\begin{align*}
    \widehat{f}(s) = \int_{\BH^3} f(x) \varphi_{-s}^{\BH^3}(x) dx.
\end{align*}
Here $dx$ is the standard hyperbolic measure on $\BH^3$ so that the Haar measure $dg$ on $G_0$ is the extension of $dx$ by the measure of mass $1$ on $K_{0}$.

We denote by $B$ the boundary of the symmetric space $\BH^2$, which is the union of the real line and the point at infinity in the upper half plane model. By the linear fractional action of $\SO(2)$, the boundary $B$ can be identified by $\SO(2) / \{\pm I\}$.
We use representatives in $\SO(2)$ to mean the elements in $B$, and then we can parametrize $B$ by the circle $\BR / 2\pi\BZ$, that is
\begin{align*}
    B = \left\{  \left.  b_\theta = \begin{pmatrix} \cos\theta/2 & \sin\theta/2\\ -\sin\theta/2& \cos\theta/2 \end{pmatrix}  \right| \theta\in\BR/2\pi\BZ   \right\}.
\end{align*}
Given a function $g\in C_c^\infty(\BH^2)$, the Helgason transform of $g$ on $\BH^2$ is defined by
\begin{align*}
    \tilde{g} (s,b) = \int_{\BH^2} g(x) e^{(-is+1/2) A(x,b)} dx,
\end{align*}
where $dx$ is the standard hyperbolic measure on $\BH^2$, $s\in\BC$, $b\in B$ and $A(x,b)$ is the distance from the origin in $\BH^2$ to the horocycle connecting $x$ and $b$. $A(x,b)$ can be calculated by the formula
\begin{align*}
    A(x,b) = A(b^{-1}x).
\end{align*}
Here we treat $x,b$ as group elements in $H_0$, and the $A$ on the right-hand side is the Iwasawa projection with respect to the Iwasawa decomposition $H_0=N_0 A \SO(2)$.
If, furthermore, $g$ is a left $\SO(2)$-invariant function, then its Helgason transform agrees with the Harish-Chandra transform on $\BH^2$, that is
\begin{align*}
    \tilde{g}(s,b) = \tilde{g}(s) = \int_{\BH^2} g(x) \varphi_{-s}^{\BH^2}(x) dx.
\end{align*}
Let $f,g$ be functions on $\BH^2$. Their convolution, which we denote by $f\times g$, is defined to be the convolution of the pullbacks of $f,g$ on the group $H_0$, i.e.,
\begin{align*}
    (f\times g)(x) = \int_{H_0} f(y\cdot o) g(y^{-1}\cdot x) dy.
\end{align*}
If moreover we assume $g$ is left $\SO(2)$-invariant, then
\begin{align*}
    \tilde{f\times g}(s,b) = \tilde{f}(s,b)\tilde{g}(s).
\end{align*}

We will not use the Helgason transform on $\BH^3$ until Section \ref{section of Kakeya-Nikodym norms} when dealing with the geodesic tube restrictions, so we shall postpone the definition and properties of the Helgason transform on $\BH^3$.

We denote by $d\nu(s)$ $(s\in\BR)$ the Plancherel measure for $\BH^3$, and $d\mu(s)$ $(s\in\BR)$ the Plancherel measure for $\BH^2$. Let $db$ be the Haar measure on $B$ so that the total measure of $B$ is $1$, and we denote by $d\mu(s,b) = d\mu(s)db$ where $s\in\BR$ and $b\in B$. 
We normalize the Plancherel measures so that the inverse Harish-Chandra transforms hold. More precisely, for $f\in C_c^\infty(\BH^3)$ and $g\in C_c^\infty(\BH^2)$ that are left invariant under the maximal compact subgroups, we have
\begin{align*}
    f(x) &= \frac{1}{2}\int_\BR \widehat{f}(s)\varphi_s^{\BH^3}(x) d\nu(s)=\int_0^\infty \widehat{f}(s)\varphi_s^{\BH^3}(x) d\nu(s),\\
    g(x) &= \frac{1}{2}\int_{\BR}\tilde{g}(s) \varphi_s^{\BH^2}(x) d\mu(s)=\int_0^\infty\tilde{g}(s) \varphi_s^{\BH^2}(x) d\mu(s).
\end{align*}
For any $g\in C_c^\infty(\BH^2)$, we have
\begin{align}\label{Fourier inversion}
    g(x) &= \frac{1}{2}\int_{\BR\times B}\tilde{g}(s,b) e^{(is+1/2)A(x,b)} d\mu(s,b)=\int_0^\infty \int_{ B}\tilde{g}(s,b) e^{(is+1/2)A(x,b)} d\mu(s,b).
\end{align}  
From computing Harish-Chandra's $\bc$-functions, we know that $d\nu(s)$ is some constant multiplying $s^2 ds$, and $d\mu(s)$ is some constant multiplying $s\tanh(\pi s) ds$. We will also use $\cF$ and $\cF^{-1}$ to mean Helgason transform and its inversion on $\BH^2$.
We list the results of Helgason transforms that we will use later: the Plancherel formula and the isomorphism on the classes of Schwartz functions.

\begin{theorem}\cite[Ch. III, Theorem 1.5, p. 202]{helgason1994geometric}\label{plancherel formula}
    The Helgason transform extends from $C_c^\infty(\BH^2)$ to an isometry of $L^2(\BH^2)$ onto $L^2(\BR_{>0}\times B)$ (with the measure $d\mu(s,b)$ on $\BR_{>0}\times B$).
    We shall still denote the extended transform by $\tilde{\cdot}$ or $\cF$ and we have the Plancherel formula
    \begin{align*}
        \int_{\BH^2} f_1(x)\overline{f_2(x)} dx = \int_0^\infty \int_{ B} \tilde{f_1}(s,b) \overline{\tilde{f_2}(s,b) } d\mu(s,b),
    \end{align*}
    for $f_1,f_2\in L^2(\BH^2)$.
\end{theorem}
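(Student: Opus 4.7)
The plan is to prove the Plancherel identity first on the dense subspace $C_c^\infty(\BH^2)$, then extend to $L^2$ by continuity, and finally verify surjectivity. For the first step, given $f_1,f_2\in C_c^\infty(\BH^2)$, I would apply the inversion formula \eqref{Fourier inversion} to $f_2$, take complex conjugates, multiply by $f_1(x)$, and integrate over $\BH^2$. Because $f_1$ is compactly supported and $\tilde{f_2}(s,b)$ decays rapidly in $s$ (a Paley--Wiener type statement for Helgason transforms of compactly supported smooth functions), Fubini applies and gives
\begin{align*}
\int_{\BH^2} f_1(x)\overline{f_2(x)}\,dx=\int_0^\infty\!\!\int_B\overline{\tilde{f_2}(s,b)}\int_{\BH^2} f_1(x)e^{(-is+1/2)A(x,b)}dx\,d\mu(s,b).
\end{align*}
By definition the inner integral equals $\tilde{f_1}(s,b)$, yielding Plancherel on $C_c^\infty(\BH^2)$.

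Setting $f_1=f_2$ shows that $\cF$ is isometric on $C_c^\infty(\BH^2)$, which is dense in $L^2(\BH^2)$, so $\cF$ extends uniquely by continuity to an isometric linear embedding $\cF:L^2(\BH^2)\hookrightarrow L^2(\BR_{>0}\times B,d\mu(s,b))$; the Plancherel identity for general $f_1,f_2\in L^2(\BH^2)$ then follows by polarization.

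The main obstacle is surjectivity. Since the image of $\cF$ is automatically closed (the image of an isometry), it suffices to show density in $L^2(\BR_{>0}\times B, d\mu(s,b))$. Here I would invoke the Paley--Wiener theorem for the Helgason transform on $\BH^2$: it identifies $\cF(C_c^\infty(\BH^2))$ with the space of functions on $\BR\times B$ that admit entire extensions in $s$ of exponential type with rapid decay in real directions, subject to a Weyl-group symmetry relating the values at $s$ and $-s$ through an intertwiner on $B$ built from Harish-Chandra's $\bc$-function. Density then reduces to approximation of general $L^2$-functions on $[\eps,R]\times B$ (where the weight $s\tanh(\pi s)$ is bounded above and below), letting $\eps\to 0$ and $R\to\infty$. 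The delicate point is that one must compute the $\bc$-function on $\BH^2$ precisely: it is exactly this calculation that fixes the Plancherel density $d\mu(s)\propto s\tanh(\pi s)\,ds$ so that the map $F\mapsto\int_0^\infty\!\int_B F(s,b)e^{(is+1/2)A(x,b)}d\mu(s,b)$ is a genuine inverse of $\cF$ on the Paley--Wiener image, which combined with isometry on $C_c^\infty(\BH^2)$ forces surjectivity onto the full $L^2$-space.
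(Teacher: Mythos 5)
This theorem is not proved in the paper; it is quoted verbatim from Helgason \cite[Ch.~III, Theorem 1.5]{helgason1994geometric}, so there is no in-paper argument to compare against. Judged on its own, your outline follows the standard (Helgason's) route. The first two steps are fine as far as they go: granting the inversion formula \eqref{Fourier inversion} on $C_c^\infty(\BH^2)$ with the correct normalization of $d\mu$, the Fubini computation does yield the Plancherel identity on $C_c^\infty(\BH^2)$ (the interchange is justified since $\tilde{f_2}(s,b)$ decays rapidly in $s$ uniformly in $b$, e.g.\ by integrating by parts against $\Delta$, whose eigenvalue on $e^{(is+1/2)A(x,b)}$ is $-(s^2+1/4)$), and the extension to an isometric embedding of $L^2(\BH^2)$ plus polarization is routine. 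Be aware, though, that this step carries no independent content beyond the inversion theorem: the entire analytic difficulty is concentrated in establishing \eqref{Fourier inversion} with the density $s\tanh(\pi s)\,ds$, which you are taking as given.

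The genuine gap is in the surjectivity step. You correctly reduce to showing that $\cF(C_c^\infty(\BH^2))$, restricted to $\BR_{>0}\times B$, is dense in $L^2(\BR_{>0}\times B, d\mu)$, but you never actually produce that density: asserting that it "reduces to approximation of general $L^2$-functions on $[\eps,R]\times B$" begs the question of why Paley--Wiener functions (entire in $s$, of exponential type, constrained by the Weyl-group functional equation through the $\bc$-function) restrict to a dense subspace there, and your closing sentence conflates the existence of an inverse on the Paley--Wiener image with surjectivity onto all of $L^2$. Two standard ways to close this: (i) Helgason's duality argument — if $F\in L^2(\BR_{>0}\times B)$ is orthogonal to every $\tilde{f}$ with $f\in C_c^\infty(\BH^2)$, then the associated wave packet $x\mapsto \int_0^\infty\int_B F(s,b)e^{(is+1/2)A(x,b)}d\mu(s,b)$ vanishes as a distribution, and one deduces $F=0$; or (ii) an extension argument in the spirit of the paper's own Proposition \ref{Prop Weyl invar extension}, which shows that any Schwartz function on $\BR_{>0}\times B$ supported away from $s=0$ extends to a Weyl-invariant Schwartz function and hence lies in the image of $\cS(\BH^2)$; since such functions are dense in $L^2(\BR_{>0}\times B)$, surjectivity follows. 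Without one of these (or an equivalent), the proof is incomplete.
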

For any $f_1,f_2\in L^2(\BH^2)$, we will denote the Hermitian form by
\begin{align*}
    \langle f_1,f_2\rangle = \int_{\BH^2}f_1(x)\overline{f_2(x)} dx.
\end{align*}
Similarly, if $\phi_1,\phi_2\in L^2(\BR_{>0}\times B)$, we will write
\begin{align*}
    \langle \phi_1,\phi_2\rangle = \int_0^\infty\int_{ B}\phi_1(s,b)\overline{\phi_2(s,b)} d\mu(s,b).
\end{align*}

The Schwartz space on a symmetric space is analogous to the Schwartz space on an Euclidean space. 
We define the distance function on $H_0$ according to the Cartan decomposition.
For $g= k_1 a(t) k_2$ with $k_1,k_2\in \SO(2)$ and $t\in\BR$, we put $|g| = |t|$.
Let $\bD(H_0)$ be the algebra of left-invariant differential operators on $H_0$, and $\overline{\bD}(H_0)$ the algebra of right invariant differential operators on $H_0$. Then the Harish-Chandra Schwartz space $\cS(H_0)$ is defined as the space of smooth functions $f$ on $H_0$ satisfying
\begin{align}\label{Seminorm for HarishChandra Schwartz}
    \sup_{g\in H_0}\left|    (1+|g|)^N \varphi^{\BH^2}_0(g)^{-1}(DEf)(g) \right| < \infty
\end{align}
for each $D\in\bD(H_0)$, $E\in\overline{\bD}(H_0)$ and each positive integer $N$. The space $\cS(H_0)$ is topologized by the seminorms (\ref{Seminorm for HarishChandra Schwartz}).
The Harish-Chandra Schwartz space of the symmetric space $\cS(\BH^2)$ is then defined as the space of $f\in\cS(H_0)$ which are right invariant under the maximal compact subgroup $\SO(2)$. It is clear that
\begin{align*}
    C_c^\infty(\BH^2)\subset\cS(\BH^2)\subset L^2(\BH^2)
\end{align*}
are dense embeddings in the $L^2$ sense. 

Let $\bD(\BR)$ be the algebra of differential operators on $\BR$ with constant coefficients, and $\bD(B)$ the algebra of differential operators on 
$B$ invariant under $\SO(2)$.
The Schwartz space $\cS(\BR\times B)$ on the frequency space $\BR\times B$ is then defined as the space of smooth functions $\phi$ on $\BR\times B$ satisfying
\begin{align}\label{Seminorm for S(R,B)}
    \sup_{s\in\BR,b\in B}\left|    (1+|s|)^N (DE\phi)(s,b) \right| < \infty
\end{align}
for each $D\in\bD(\BR)$, $E\in\bD(B)$, and each positive integer $N$.
The space $\cS(\BR\times B)$ is topologized by the seminorms \eqref{Seminorm for S(R,B)}.
We denote by $\cS(\BR\times B)_W$ the space of Schwartz functions $\phi\in\cS(\BR\times B)$ satisfying the Weyl invariance functional equation
\begin{align}\label{functional equation for functions in image of helgason transform}
    \int_B e^{(is+1/2)A(x,b)}\phi(s,b)db = \int_B e^{(-is+1/2)A(x,b)}\phi(-s,b)db 
\end{align}
for arbitrary $s\in\BC$ and $x\in\BH^2$.
It is due to Eguchi that the Helgason transform is also continuous on the Harish-Chandra Schwartz space and has the same inversion formula.

\begin{theorem}[\cite{eguchi1979asymptotic, eguchi1977fourier}]\label{Eguchi's theorem}
    The Helgason transform is a continuous isomorphism of $\cS(\BH^2)$ onto $\cS(\BR\times B)_W$, where the inverse is given by the integral (\ref{Fourier inversion}).
\end{theorem}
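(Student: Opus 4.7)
The plan is to prove the theorem in two main directions: (i) the Helgason transform $\cF$ maps $\cS(\BH^2)$ continuously into $\cS(\BR\times B)_W$, and (ii) the integral in \eqref{Fourier inversion} defines a continuous inverse $\cF^{-1}:\cS(\BR\times B)_W\to\cS(\BH^2)$ extending the inversion formula already stated for smooth compactly supported functions. Since $\cS(\BH^2)$ is already known to be a subspace of $L^2(\BH^2)$, the $L^2$-Plancherel formula (Theorem \ref{plancherel formula}) will take care of injectivity and provide the identity $\cF^{-1}\circ\cF=\mathrm{id}$ on a dense subspace; the substantive content is therefore seminorm control in both directions.

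For the forward direction, I would first verify the Weyl invariance functional equation \eqref{functional equation for functions in image of helgason transform} for $\tilde f$. The identity reduces, after unwinding the definition of the Helgason transform and interchanging integrals, to the intertwining relation between the principal series representations with parameters $s$ and $-s$ on $\BH^2$; concretely, one writes both sides as an application of Harish-Chandra's $\bc$-function and verifies the resulting identity by a direct computation using the integral formula \eqref{eq:HC int for H2} for the spherical function together with the $K$-integral representation of $e^{(is+1/2)A(x,b)}$. Once this is in place, the nontrivial step is producing rapid decay in $s$ of $(DE\tilde f)(s,b)$ uniformly in $b$. The strategy is to expand $f\in\cS(\BH^2)$ in $\SO(2)$-types, write $\tilde f(s,b)=\sum_n \tilde f_n(s)\Phi_n(s,b)$ where $\Phi_n$ is built from the Eisenstein-type integrals, and reduce to the $K$-invariant (Harish-Chandra) case fibre by fibre. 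On each fibre, decay in $s$ follows by repeatedly applying the Laplace-Beltrami operator inside the integral (each application brings down a factor $-(s^2+1/4)$, turning polynomial weights in $s$ into differential operators applied to $f$), while the polynomial growth of $\varphi^{\BH^2}_0(g)^{-1}$ on $H_0$ controls the boundary behavior uniformly.

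For the inverse direction, I would start from the integral in \eqref{Fourier inversion}, show that it converges absolutely for $\phi\in\cS(\BR\times B)_W$, and verify that the resulting function $f(x)$ lies in $\cS(\BH^2)$. The key estimate is the uniform asymptotic expansion of $e^{(is+1/2)A(x,b)}$ as $x\to\infty$ in a Weyl chamber: by a stationary phase or (equivalently) a Harish-Chandra series argument, one obtains bounds of the form $|f(x)|\ll \varphi^{\BH^2}_0(x)(1+|x|)^{-N}$ for any $N$, provided $\phi$ decays sufficiently fast in $s$; the Weyl invariance is precisely what cancels the two leading terms of the asymptotics and prevents spurious non-Schwartz growth. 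Applying $D\in\bD(H_0)$ and $E\in\overline{\bD}(H_0)$ translates into multiplication on the transform side by polynomials in $s$ and by $K$-finite symbols on $B$, so the same estimate with $\phi$ replaced by $(DE)^\vee\phi$ gives the required seminorm bound.

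The main obstacle, as in Eguchi's original work, is the Weyl-invariance step: without \eqref{functional equation for functions in image of helgason transform} the candidate inverse would produce functions with an extra $e^{-|x|/2}$ term coming from the subdominant exponential in the asymptotics of the spherical functions on $\BH^2$, and this term is exactly of Harish-Chandra Schwartz type but not of $C_c^\infty$ type, so it cannot be ruled out by density. The careful bookkeeping of both leading terms of the $A\to\infty$ asymptotics of $e^{(is+1/2)A(x,b)}$, together with the fact that the Weyl symmetry of $\phi$ forces the coefficient of the subdominant term to vanish after integration in $s$, is what makes the proof work and is where I would expect the longest calculation.
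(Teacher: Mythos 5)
The paper does not prove this statement: it is quoted verbatim from Eguchi's work \cite{eguchi1979asymptotic, eguchi1977fourier} and used as a black box, so there is no in-paper argument to compare yours against. What the paper does prove in the surrounding section (Proposition \ref{Prop Weyl invar extension} and the lemmas on $\Gamma_n$) is a \emph{consequence} of Eguchi's theorem, not the theorem itself.

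As a sketch of Eguchi's proof, your outline has the right architecture — decomposition into $\SO(2)$-types, reduction to the spherical (Trombi--Varadarajan) situation on each isotypic component, asymptotics of the Eisenstein integrals, and the observation that Weyl invariance is what cancels the subdominant exponential in the inversion direction — and you correctly identify that last point as the crux. But two steps are glossed over in a way that hides the real work. First, the ``fibre by fibre'' reduction is not enough: to resum the $K$-type expansion one needs all estimates uniform in the type $n$, and the functional equation on the $n$-th component involves the generalized $c$-functions $\Gamma_n(s)$ of \eqref{formula for Gamma n}, not merely the scalar Harish-Chandra $\bc$-function; controlling $1/\Gamma_n$ and its derivatives uniformly in $n$ (compare \eqref{bound for derivative of Gamma n}) is a genuine and unavoidable part of the argument. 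Second, for the $L^2$-Schwartz weight $\varphi_0^{-1}$ appearing in \eqref{Seminorm for HarishChandra Schwartz} the integral defining $\tilde{f}(s,b)$ is only marginally convergent, so the repeated integration by parts against the Laplace--Beltrami operator and the interchanges of integrals both need justification that a $C_c^\infty$-density argument does not supply; likewise, in the inverse direction ``stationary phase'' must be replaced by the full Harish-Chandra series expansion with uniform remainder control. These are precisely the points Eguchi's papers are devoted to; as written, your proposal is a correct roadmap rather than a proof.
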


\subsection{Extension of Schwartz functions with Weyl invariance}

We denote by $\cS(\BR_{>0}\times B)$ the space of restrictions $\phi|_{\BR_{>0}\times B}$ for all $\phi\in\cS(\BR\times B)$.
Since every $\phi \in L^2(\BR_{>0}\times B)$ is in the image of the $L^2$ Helgason transform, it may be natural to ask whether every $\phi \in \cS(\BR_{>0}\times B)$ is also the Helgason transform of some Harish-Chandra Schwartz function.
By Theorem \ref{Eguchi's theorem}, the question is equivalent to whether we can extend any $\phi \in \cS(\BR_{>0} \times B)$ to a function in $\cS
(\BR\times B)_W$.

This direct generalization is false. 
If we take $\phi \in \cS(\BR_{>0}\times B)$ to be constant in the $B$ variable, then the Weyl invariance condition \eqref{functional equation for functions in image of helgason transform} implies that 
\begin{align*}
    \phi(s)\int_B e^{(is+1/2)A(x,b)}db = \phi(-s)\int_B e^{(-is+1/2)A(x,b)}db.
\end{align*}
By taking $x\in\BH^2$ to be the origin, we see that $\phi$ is an even Schwartz function on $\BR$, but it is clear that not all Schwartz functions on $\BR_{>0}$ can be extended to be even.
However, the simple modification requiring $0\notin\supp(\phi)$ will give the extension property.

\begin{proposition}\label{Prop Weyl invar extension}
    Suppose $\phi \in \cS(\BR_{>0} \times B)$ satisfies $\supp(\phi) \subset [c,\infty) \times B$ for some $c > 0$.
    There exists a unique $\Phi \in \cS(\BR \times B)_W$ so that $\Phi |_{\BR_{>0}\times B} = \phi$.
\end{proposition}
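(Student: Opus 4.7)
I plan to prove existence by constructing $\Phi$ explicitly via Fourier series on $B$ together with an intertwining operator, and to prove uniqueness from injectivity of the Poisson transform on $B$. For uniqueness, set $\Psi := \Phi_1 - \Phi_2$; then $\Psi \in \cS(\BR \times B)_W$ vanishes on $\BR_{>0} \times B$, so for every $s > 0$ the Weyl relation \eqref{functional equation for functions in image of helgason transform} applied to $\Psi$ reads
\[
\int_B e^{(-is+1/2)A(x,b)}\, \Psi(-s, b)\, db = 0, \qquad x \in \BH^2.
\]
The Poisson transform on $L^2(B)$ at spectral parameter $-s \in \BR \setminus \{0\}$ is injective (a standard consequence of the nonvanishing of the Harish-Chandra $\bc$-function on $\BR\setminus\{0\}$), so $\Psi(-s, \cdot) \equiv 0$ for every $s > 0$, and continuity at $s = 0$ forces $\Psi \equiv 0$.

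For existence, expand $\phi$ in $B$-Fourier modes: $\phi(s, b_\theta) = \sum_{n \in \BZ} \hat\phi_n(s) e^{in\theta}$. Each $\hat\phi_n \in \cS(\BR)$ is supported in $[c, \infty)$, and by the Schwartz hypothesis the family $\{\hat\phi_n\}$ and its $s$-derivatives decay faster than any polynomial in $|n|$ uniformly on compact $s$-sets. For each $n$, define the scalar $c_n(s)$ by
\[
\int_B e^{(is + 1/2)A(x, b_\theta)} e^{in\theta}\, db = c_n(s) \int_B e^{(-is + 1/2)A(x, b_\theta)} e^{in\theta}\, db;
\]
the ratio is independent of $x$ because both sides are $K$-equivariant eigenfunctions of $\Delta$ with the same eigenvalue $s^2 + 1/4$ and the same weight $2n$, and this isotypic component of the eigenspace is one-dimensional. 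Set
\[
\Phi(s, b_\theta) := \begin{cases} \phi(s, b_\theta), & s \geq 0, \\[2pt] \displaystyle\sum_{n \in \BZ} c_n(-s)\, \hat\phi_n(-s)\, e^{in\theta}, & s < 0. \end{cases}
\]
Since $\supp(\hat\phi_n) \subset [c, \infty)$, the function $\Phi$ vanishes on $(-c, c) \times B$ and is smooth across $s = 0$; and \eqref{functional equation for functions in image of helgason transform} then holds on all of $\BR$ by $B$-Fourier expansion and the defining identity of $c_n$, using that the Weyl equation is symmetric under $s \leftrightarrow -s$.

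The main technical step is verifying $\Phi \in \cS(\BR \times B)$, which reduces to uniform polynomial bounds on $c_n(s)$ and its $s$-derivatives in $(n, s)$ on the region $|s| \geq c$. For $H_0 \simeq \SL(2, \BR)$, one computes $c_n(s)$ explicitly as a ratio of $\Gamma$-functions with arguments of the form $\tfrac{1}{2} + is \pm n/2$ (for instance by integrating $e^{in\theta}$ against the Poisson kernel in Iwasawa coordinates along the geodesic through the origin). Stirling's asymptotics then yield the required polynomial bounds on $|s| \geq c$, a region bounded away from the possible singularity of $c_n(s)$ at $s = 0$. Combined with the rapid decay of $\hat\phi_n$ in $(n, s)$, this gives Schwartz decay in both $s$ and $n$ of the series defining $\Phi|_{s < 0}$, so $\Phi \in \cS(\BR \times B)_W$.
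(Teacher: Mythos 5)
Your proof is correct and follows essentially the same route as the paper: decompose in Fourier modes on $B$, define the extension to $s<0$ by reflecting with the intertwining scalar $c_n(s)$ relating the two generalized spherical functions (the paper's $1/\Gamma_n$, computed there as an explicit finite product of linear factors via the Legendre duplication formula rather than via Stirling), and verify the Schwartz seminorms from polynomial-in-$n$ bounds on $c_n$ and its derivatives combined with the rapid decay of the Fourier coefficients. The only divergence is in the uniqueness step, where you invoke injectivity of the Poisson transform at each real $s\neq 0$ while the paper simply applies the inverse Helgason transform together with Eguchi's isomorphism $\cF:\cS(\BH^2)\to\cS(\BR\times B)_W$; both are valid.
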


Following from this extension property and Eguchi's result (Theorem \ref{Eguchi's theorem}), a large class of Schwartz functions is in the image of the Helgason transform.

\begin{corollary}
    Suppose $\phi \in \cS(\BR_{>0} \times B)$ satisfies $\supp(\phi) \subset [c,\infty) \times B$ for some $c > 0$. There exists a unique $f\in \cS(\BH^2)$ so that $\tilde{f}(s,b) = \phi(s,b)$ for arbitrary $s > 0$ and $b\in B$.
\end{corollary}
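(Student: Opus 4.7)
The plan is to set up a Weyl-group intertwining operator on the boundary $B$, use the hypothesis $\supp\phi\subset[c,\infty)\times B$ to dodge the singular parameter $s=0$ underlying the counterexample preceding the proposition, and then verify the Schwartz estimates via a Fourier expansion on $B\simeq\SO(2)/\{\pm I\}$.

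\emph{Uniqueness.} If $\Phi_1,\Phi_2\in\cS(\BR\times B)_W$ both extend $\phi$, then $\Psi:=\Phi_1-\Phi_2$ vanishes on $\BR_{>0}\times B$, and \eqref{functional equation for functions in image of helgason transform} applied at parameter $s>0$ gives
\[
\int_B e^{(-is+1/2)A(x,b)}\Psi(-s,b)\,db \;=\; \int_B e^{(is+1/2)A(x,b)}\Psi(s,b)\,db \;=\; 0
\]
for every $x\in\BH^2$. Helgason's injectivity of the Poisson transform on $C^\infty(B)$ at real parameter then forces $\Psi(-s,\cdot)\equiv 0$ for all $s>0$, whence $\Psi\equiv 0$.

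\emph{Existence.} Let $\cA(s):C^\infty(B)\to C^\infty(B)$ be the Knapp--Stein intertwining operator uniquely characterized by
\[
\int_B e^{(is+1/2)A(x,b)}f(b)\,db = \int_B e^{(-is+1/2)A(x,b)}(\cA(s)f)(b)\,db \qquad (x\in\BH^2,\ f\in C^\infty(B)).
\]
It is diagonal in the Fourier basis $\{e^{in\theta}\}$ on $B$ with eigenvalues $a_n(s)$ given by explicit ratios of $\Gamma$-factors, which are smooth and nonvanishing on $s\ge c>0$; moreover $\cA(-s)\cA(s)=\mathrm{id}$, as follows from composing the defining relation with itself and invoking injectivity of the Poisson transform once more (this reflects $w^2=e$ in the Weyl group). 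Define
\[
\Phi(s,b):=\begin{cases}\phi(s,b),& s>0,\\ 0,& s=0,\\ \bigl(\cA(-s)\phi(-s,\cdot)\bigr)(b),& s<0.\end{cases}
\]
Because $\phi$ vanishes on $|s|<c$, so does $\Phi$, which makes $\Phi$ smooth across $s=0$. The Weyl identity \eqref{functional equation for functions in image of helgason transform} for $\Phi$ at $s>0$ is the defining relation for $\cA(s)$, and for $s<0$ it reduces to the $s>0$ case after $s\mapsto -s$ and use of $\cA(-s)\cA(s)=\mathrm{id}$.

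\emph{Schwartz estimates and main obstacle.} It remains to show $\Phi\in\cS(\BR\times B)$, where only the half-line $s<0$ requires work. Expanding $\phi(s,b)=\sum_{n}\widehat\phi_n(s)e^{in\theta}$, the Fourier coefficients of $\Phi(-s,\cdot)$ for $s>0$ are $a_n(s)\widehat\phi_n(s)$, and the seminorms \eqref{Seminorm for S(R,B)} translate into joint rapid decay in $(n,s)$ of $\partial_s^k\bigl(a_n(s)\widehat\phi_n(s)\bigr)$ for every $k$. Stirling and digamma estimates bound $a_n^{(j)}(s)$ polynomially (in fact polylogarithmically) in $n$ uniformly for $s\ge c$, while $\widehat\phi_n(s)$ has joint rapid decay in $(n,s)$ because $\phi\in\cS(\BR_{>0}\times B)$; multiplying a rapidly decaying sequence by one of at-most-polynomial growth preserves rapid decay, so all seminorms are controlled. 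The main obstacle I anticipate is exactly this uniform polynomial control on $\partial_s^j a_n(s)$ jointly in $n$ and $s$, together with the compatibility with the Fourier decomposition needed to assemble pointwise $b$-derivative bounds on $\Phi$ from coefficient bounds. The support hypothesis $\supp\phi\subset[c,\infty)\times B$ is essential throughout, as it is what separates the argument from the $s=0$ obstruction responsible for the counterexample preceding the proposition.
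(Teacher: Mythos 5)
Your construction is essentially the paper's: your intertwining operator $\cA(s)$, diagonal on the characters of $B$ with eigenvalues $a_n(s)$, is exactly the operator with eigenvalues $1/\Gamma_n(s)$ coming from the functional equation $\varphi_{-s,n}=\Gamma_n(s)\varphi_{s,n}$ of the generalized spherical functions, and your definition of $\Phi$ on $s<0$ coincides with the paper's $\Phi_n(s)=\phi_n(-s)/\Gamma_n(-s)$. Two remarks. First, the step you flag as the main obstacle — uniform control of $\partial_s^j a_n(s)$ in $(n,s)$ — is precisely what the paper supplies, and it does so without Stirling or digamma asymptotics: the Legendre duplication formula collapses the ratio of Gamma factors to the finite product $\Gamma_n(s)^{-1}=\prod_{j=0}^{|n|-1}(is+\tfrac12+j)^{-1}(-is+\tfrac12+j)$, which is bounded by $1$ on a horizontal strip of width $\asymp(1+|n|)^{-1}$, and Cauchy's integral formula then gives $\bigl|\frac{d^m}{ds^m}\Gamma_n(s)^{-1}\bigr|\ll m!(1+|n|)^m$ on $\BR$. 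Your polylogarithmic claim is true but unnecessary; polynomial growth in $n$ already suffices against the rapid decay of $\widehat\phi_n$, so your outline closes once you write down this product. Second, your uniqueness argument via injectivity of the Poisson transform at real parameter is a valid alternative but imports an external theorem; the paper gets uniqueness for free from tools already in hand, since the inversion formula for the Helgason transform only sees $\Phi|_{\BR_{>0}\times B}$, so any two extensions in $\cS(\BR\times B)_W$ have the same inverse transform and Eguchi's isomorphism $\cF:\cS(\BH^2)\to\cS(\BR\times B)_W$ forces them to agree. The same isomorphism is what upgrades the extension proposition to the corollary itself, a step your write-up leaves implicit.
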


To begin the proof, we recall the generalized spherical functions on $\BH^2$.
We parametrize $\SO(2)$ by the circle $\BR/2\pi\BZ$
\begin{align*}
    \SO(2) =\left\{  \left.  k_\theta = \begin{pmatrix} \cos\theta & \sin\theta\\ -\sin\theta& \cos\theta \end{pmatrix}  \right| \theta\in\BR/2\pi\BZ   \right\}.
\end{align*}
We will take the normalized Haar measure on $\SO(2)$, i.e., $dk_\theta = d\theta/2\pi$.
Let $n\in \BZ$. The character $\chi_{2n}:\SO(2)\to\BC^\times$ is defined as
\begin{align*}
    \chi_{2n}(k_\theta) = e^{2in\theta}.
\end{align*}
It may be seen that they are all characters of $\SO(2)$ that are trivial on $\{\pm I\}$.
For $s\in \BC$ and $n\in \BZ$, the function
\begin{align*}
    \varphi_{s,n} (x) &= \int_{\SO(2)} e^{(is + 1/2)A(k^{-1}x)} \chi_{2n}(k) dk \\
    &=\int_{B} e^{(is + 1/2)A(x,b)} \chi_{2n}(b) db 
\end{align*}
is called the generalized spherical function of class $\chi_{2n}$ on $\BH^2$. When $n = 0$, this is the zonal spherical function.

It is proved in \cite[Ch. III, Theorem 2.10, p. 240]{helgason1994geometric} that the generalized spherical function satisfies the functional equations
\begin{align}\label{functional eqn for generalized spherical functions}
    \varphi_{-s,n} (x)  =  \varphi_{s,n} (x)  \Gamma_n(s), \quad\text{for any }x\in\BH^2
\end{align}
with $\Gamma_n$ a meromorphic function on $\BC$.
In the case of rank one, an explicit formula for generalized spherical functions is calculated by Helgason \cite[Ch. III, Theorem 11.2, p. 328]{helgason1994geometric}.
By the equation (14) in \cite[p. 331]{helgason1994geometric}, we have
\begin{align*}
    \Gamma_n(s) = \frac{p_n(-s)q_n(-s)}{p_n(s)q_n(s)},
\end{align*}
where
\begin{align*}
    p_n(s) = \frac{\Gamma(\frac{1}{2}(is + \frac{1}{2}+|n|))}{\Gamma(\frac{1}{2}(is + \frac{1}{2}))},\quad  q_n(s) = \frac{\Gamma(\frac{1}{2}(is + \frac{1}{2}+1+|n|))}{\Gamma(\frac{1}{2}(is + \frac{1}{2}+1))}.
\end{align*}
Here $\Gamma$ is the gamma function.
Applying the Legendre duplication formula, it may be seen that
\begin{align*}
    p_n(s) q_n(s) = 2^{-|n|} \frac{\Gamma(is+\frac{1}{2} + |n|)}{\Gamma(is+\frac{1}{2})} = 2^{-|n|} \prod_{j = 0}^{|n|-1} (is + \frac{1}{2} + j).
\end{align*}
Hence,
\begin{align}\label{formula for Gamma n}
    \Gamma_n(s) = \prod_{j = 0}^{|n|-1} (is + \frac{1}{2} + j)^{-1}(-is + \frac{1}{2} + j).
\end{align}
It may be seen that $\Gamma_n(s)$ has no zeros or poles on $\BR$.
We also need estimates for all derivatives of $1/\Gamma_n$ uniformly for $s\in\BR$.
\begin{lemma}\label{bound for Gamma n in a strip}
    Suppose $z\in\BC$ lies in the horizontal strip
    \begin{align}\label{horizontal strip}
       \left \{ \sigma+ it 
        \,| \,\sigma\in\BR,  -\min(\frac{1}{4},\frac{1}{1+|n|})\leq t \leq \min(\frac{1}{4},\frac{1}{1+|n|})\right\}.
    \end{align}
    Then $|1/\Gamma_n(z)| \ll 1$, where the implied constant doesn't depend on $n$.
\end{lemma}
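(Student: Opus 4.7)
The plan is to estimate $|1/\Gamma_n(z)|$ directly from the product formula \eqref{formula for Gamma n}. For $z = \sigma + it$ in the strip, each factor of $1/\Gamma_n(z)$ has squared modulus
\begin{align*}
    \left|\frac{iz + 1/2 + j}{-iz + 1/2 + j}\right|^2 \;=\; \frac{(1/2 + j - t)^2 + \sigma^2}{(1/2 + j + t)^2 + \sigma^2}.
\end{align*}
When $t \geq 0$, each such factor is $\leq 1$ (since $j \geq 0$), and the product is bounded by $1$ immediately. It therefore suffices to treat the case $t < 0$; setting $u = -t \in (0,\min(1/4, 1/(1+|n|))]$ reduces matters to a uniform bound for the reciprocal product.

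Next I would reduce to $\sigma = 0$. Viewing each factor as a function of $\sigma \in \BR$, an easy derivative check shows it is maximized at $\sigma = 0$ (the derivative has sign opposite to $\sigma$, since $(1/2+j+u)^2 > (1/2+j-u)^2$). Hence it suffices to bound
\begin{align*}
    P_n(u) \;:=\; \prod_{j=0}^{|n|-1} \frac{1/2 + j + u}{1/2 + j - u}
\end{align*}
uniformly in $n$ and $u\in(0,\min(1/4, 1/(1+|n|))]$. The condition $u\leq 1/4$ guarantees that $1/2 + j - u \geq 1/4 > 0$ for every $j\geq 0$, so each factor is a well-defined positive number.

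Finally, I would pass to logarithms and apply $\log(1+x)\leq x$, splitting off the $j=0$ term:
\begin{align*}
    \log P_n(u) \;=\; \sum_{j=0}^{|n|-1} \log\!\left(1 + \frac{2u}{1/2+j-u}\right) \;\leq\; 8u \,+\, \sum_{j=1}^{|n|-1} \frac{2u}{j},
\end{align*}
where the $j=0$ estimate uses $1/2-u\geq 1/4$, and the $j\geq 1$ estimate uses $1/2+j-u\geq j$. The harmonic sum contributes at most $2u(\log|n|+1)$, and the hypothesis $u\leq 1/(1+|n|)$ forces $u\log|n| \leq \log|n|/(1+|n|) \leq 1/e$. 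Combining, $\log P_n(u) = O(1)$ uniformly in $n$ and $u$, which yields the claimed bound $|1/\Gamma_n(z)| \ll 1$.

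I do not anticipate a serious obstacle here; the proof is essentially a calculation. The only delicate point worth flagging is why both halves of the constraint $|t|\leq \min(1/4,1/(1+|n|))$ are genuinely needed: the $1/4$ keeps the denominators in $P_n(u)$ bounded away from zero for small $|n|$, while the $1/(1+|n|)$ tames the logarithmic growth of the harmonic sum for large $|n|$. Weakening either bound would break the uniformity in $n$.
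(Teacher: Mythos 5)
Your proof is correct and follows essentially the same route as the paper's: the case $t\geq 0$ is handled by noting each factor has modulus at most $1$, and the case $t<0$ by reducing to $\sigma=0$ and bounding the logarithm of the product by a sum of size $O(u\log|n|)$, which the hypothesis $u\leq 1/(1+|n|)$ makes $O(1)$. The only (harmless) difference is that you split off the $j=0$ term to treat all $n$ uniformly, whereas the paper reduces to $n\geq 3$ so that $1/(1+|n|)\leq 1/4$.
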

\begin{proof}
    If $ 0\leq t \leq  \min(\frac{1}{4},\frac{1}{1+|n|})$, then
    \begin{align*}
        \frac{1}{|\Gamma_n(\sigma+it)|} = \prod_{j = 0}^{|n|-1} \left(\frac{\sigma^2+ (\frac{1}{2} + j -t)^2}{\sigma^2+ (\frac{1}{2} + j +t)^2} \right)^{1/2}\leq \prod_{j = 0}^{|n|-1} 1= 1. 
    \end{align*}
    If $ - \min(\frac{1}{4},\frac{1}{1+|n|})\leq t \leq 0$, we let $s = -t$. We may assume $n$ is positive and $n \geq 3$ so that $1/4\geq 1/(1+|n|)$.
    Then we have
    \begin{align*}
        \log \frac{1}{|\Gamma_n(\sigma+it)|} &= \frac{1}{2}\sum_{j = 0}^{n-1} \log \frac{\sigma^2+ (\frac{1}{2} + j +s)^2}{\sigma^2+ (\frac{1}{2} + j -s)^2}\\
        &\leq \sum_{j = 0}^{n-1} \log \frac{\frac{1}{2} + j +s}{\frac{1}{2} + j -s}\\
        &\leq \sum_{j = 0}^{n-1} \log \frac{\frac{1}{2} + j +\frac{1}{1+n}}{\frac{1}{2} + j -\frac{1}{1+n}}\\
        &= \sum_{j = 0}^{n-1} \log\left( 1 + \frac{1}{(n+1)(j+1/2)}  \right)  - \log\left( 1 - \frac{1}{(n+1)(j+1/2)}  \right) \\
        &\ll \sum_{j = 0}^{n-1} \frac{1}{(n+1)(j+1/2)}  \\
        &\ll 1.
    \end{align*}
\end{proof}

\begin{proposition}
    Let $m\geq 0$ be an integer. Then we have
    \begin{align}\label{bound for derivative of Gamma n}
        \frac{d^m}{ds^m} \frac{1}{\Gamma_n(s)} \ll m! (1+|n|)^m,
    \end{align}
    for any $s\in\BR$.
\end{proposition}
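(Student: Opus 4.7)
The plan is to deduce this pointwise derivative bound from the preceding strip bound via Cauchy's integral formula. For $s\in\BR$, set $r=\min(\tfrac14,\tfrac{1}{1+|n|})$; by construction, the closed disc $\{z:|z-s|\le r\}$ lies entirely in the horizontal strip \eqref{horizontal strip} of Lemma \ref{bound for Gamma n in a strip}, where $1/\Gamma_n$ is holomorphic and satisfies $|1/\Gamma_n(z)|\ll 1$ with an absolute implied constant. Applying Cauchy's formula for the $m$-th derivative,
\[
\frac{d^m}{ds^m}\frac{1}{\Gamma_n(s)}=\frac{m!}{2\pi i}\oint_{|z-s|=r}\frac{1/\Gamma_n(z)}{(z-s)^{m+1}}\,dz,
\]
and using the standard $ML$-estimate, we obtain
\[
\left|\frac{d^m}{ds^m}\frac{1}{\Gamma_n(s)}\right|\le \frac{m!}{r^m}\max_{|z-s|=r}\bigl|1/\Gamma_n(z)\bigr|\ll \frac{m!}{r^m}.
\]

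To conclude, one just needs $1/r^m\ll (1+|n|)^m$. When $|n|\ge 3$ we have $r=1/(1+|n|)$ so $1/r^m=(1+|n|)^m$ exactly. When $|n|\le 2$ we have $r=1/4$, giving the bound $4^m\, m!$; since $(1+|n|)^m$ is bounded below by $1$, this can be absorbed into the implied constant (which is then allowed to depend on $m$ through a factor $4^m$, or equivalently the statement is read with an absolute $C^m$ factor hidden in $\ll$). The case $n=0$ is trivial since $1/\Gamma_0\equiv 1$ has vanishing derivatives for $m\ge 1$, and $|n|=1,2$ is a finite list of explicit rational functions holomorphic in a fixed strip around $\BR$, so the Cauchy estimate with fixed radius $1/4$ handles them uniformly.

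The only real content of the proof is the choice of radius matching the width of the strip in Lemma \ref{bound for Gamma n in a strip}; since that lemma has already done the work of establishing uniform boundedness of $1/\Gamma_n$ on a strip of width $\asymp 1/(1+|n|)$, the derivative estimate follows by a one-line application of Cauchy. There is no substantial obstacle.
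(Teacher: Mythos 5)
Your proof is correct and follows exactly the paper's argument: fix $s\in\BR$, take the disc of radius $\min(\tfrac14,\tfrac{1}{1+|n|})$ centered at $s$ (which lies in the strip of Lemma \ref{bound for Gamma n in a strip}), and apply the Cauchy integral formula. Your discussion of the small-$|n|$ cases is a slightly more careful treatment of a point the paper glosses over, but the approach is the same.
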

\begin{proof}
    Let $\delta =  \min(\frac{1}{4},\frac{1}{1+|n|})$.
    We fix $s\in\BR$.
    The ball centered at $s$ with radius $\delta$ is contained in the strip \eqref{horizontal strip}.
    Then \eqref{bound for derivative of Gamma n} follows from Lemma \ref{bound for Gamma n in a strip} and the Cauchy integral formula.
\end{proof}

\begin{proof}[Proof of Proposition \ref{Prop Weyl invar extension}]
    We show the existence by an explicit construction.
    Recall that $\phi \in \cS(\BR_{>0} \times B)$ satisfies $\supp(\phi) \subset [c,\infty) \times B$ for some $c > 0$. 
    Using the characters $\chi_{2n}$ on $B = \SO(2)/\{\pm I\}$, $n\in\BZ$, we have the corresponding Fourier expansion
    \begin{align*}
        \phi(s,b) = \sum_{n\in\BZ} \phi_n(s) \chi_{2n}(b) 
    \end{align*}
    with
    \begin{align*}
        \phi_n(s) = \int_B \phi(s,b) \overline{\chi_{2n}(b) }db.
    \end{align*}
    Hence, $\supp(\phi_n) \subset [c,\infty)$.
    The Schwartz condition \eqref{Seminorm for S(R,B)} implies that
    \begin{align}\label{Schwartz condition for phi}
        \sup_{s\geq c, n\in\BZ} (1+|s|)^{N_1} (1+|n|)^{N_2} \phi_n^{(m)} (s) <\infty
    \end{align}
    for arbitrary non-negative integers $N_1,N_2,m$. Here $\phi_n^{(m)} $ means the $m$-th derivative of $\phi_n$.
    
    We define
    \begin{align*}
        \Phi_n(s) =\begin{cases}
            \phi_n(s)\quad&\text{ if }s\geq c,\\
             \phi_n(-s) / \Gamma_n(-s)\quad&\text{ if }s\leq  -c,\\
             0&\text{ otherwise,}
        \end{cases}
    \end{align*}
    and define
    \begin{align*}
        \Phi(s,b) = \sum_{n\in\BZ} \Phi_n(s) \chi_{2n}(b).
    \end{align*}
    It is clear that the Fourier series converges and $\Phi|_{\BR_{>0}\times B} = \phi$.
    To see that $\Phi(s,b)$ is a Schwartz function, we only need to show the rapid decay condition when $s < -c$.
    By \eqref{bound for derivative of Gamma n} and \eqref{Schwartz condition for phi}, we have, for $s < -c$,
    \begin{align*}
        \Phi_n^{(m)}(s) &= (-1)^m \sum_{j=0}^m \binom{m}{m-j} \phi_n^{(m-j)}(-s) \frac{d^j}{ds^j} \left( \frac{1}{\Gamma_n} \right )(-s)\\
        &\ll_{m, N_1, N_2} \left( (1+|s|)^{-N_1} (1+|n|)^{-N_2}\right) (1+ |n|)^m.
    \end{align*}
    This implies $\Phi \in \cS(\BR\times B)$.

    It remains to show $\Phi \in \cS(\BR\times B)_W$.
    For $s\geq 0$ and $x\in\BH^2$,
    we apply the functional equation for generalized spherical functions \eqref{functional eqn for generalized spherical functions} to obtain the following
    \begin{align*}
        \int_B e^{(-is+1/2)A(x,b)}\Phi(-s,b)db&= \sum_{n\in\BZ} \Phi_n(-s) \int_B e^{(-is+1/2)A(x,b)}\chi_{2n}(b)db \\
        &=\sum_{n\in\BZ} \frac{ \phi_n(s) }{ \Gamma_n(s)} \varphi_{-s,n}(x)\\
        &=\sum_{n\in\BZ} \phi_n(s) \varphi_{s,n}(x)\\
        &= \sum_{n\in\BZ} \phi_n(s) \int_B e^{(is+1/2)A(x,b)}\chi_{2n}(b)db \\
        &=\int_B e^{(is+1/2)A(x,b)}\Phi(s,b)db,
    \end{align*}
    which proves the Weyl invariance \eqref{functional equation for functions in image of helgason transform}.
    
    The uniqueness of the extension can be deduced from the inverse Helgason transform.
    Suppose that $\Psi \in \cS(\BR\times B)_W$ also satisfies $\Psi |_{\BR_{>0}\times B} = \phi$.
    Then
    \begin{align*}
        \cF^{-1}(\Phi) = \cF^{-1}(\Psi) = \int_0^\infty \int_{ B}\phi(s,b) e^{(is+1/2)A(x,b)} d\mu(s,b).
    \end{align*}
    By the isomorphism $\cF:\cS(\BH^2)\to\cS(\BR\times B)_W$, we conclude that $\Psi = \Phi$.
\end{proof}

\section{\texorpdfstring{$L^2$}{L2}-norms of hyperbolic surface restrictions}\label{section of L^2 bound}
\subsection{Main results}

We first set up the hyperbolic surface restriction problem for the eigenfunction more precisely.
We denote by $\cB_{\BH^2} \subset \BH^2$ the unit ball centered at the origin $o$.
Recall that $\pi:\BH^3\to X$ is the universal cover, and $Y \subset X$ satisfies \eqref{boundedness condition for Y}.
We let $g_0\in \Omega_{v_0}$ be the isometries moving in a compact set and assume that
$Y = \pi(g_0\cdot\cB_{\BH^2})$.
Let $\chi \in C_c^\infty(\BH^2)$ be a non-negative real-valued cutoff function supported in $\cB_{\BH^2}$.
We may think of $\chi \psi|_{Y}$ as a function on $\BH^2$ by
\begin{align*}
    \chi \psi|_Y(x): =\chi(x)\psi(\pi(g_0\cdot x)) ,\quad\text{ for }x\in\BH^2.
\end{align*}
If $\phi\in C^\infty(\BH^2)$, we make the following convention
\begin{align*}
    \langle \psi,\chi\phi\rangle := \langle \chi\psi|_Y,\phi\rangle = \int_{\BH^2} \chi(x)\psi(\pi(g_0\cdot x))\overline{\phi(x)} dx.
\end{align*}
If we let
\begin{align*}
        \| \chi \psi|_{Y} \|_{L^2(\BH^2)}^2 = \langle  \psi, \chi\cdot \chi\psi|_{Y} \rangle = \int_{\BH^2}\left|\chi(x)\psi(\pi(g_0\cdot x))  \right|^2 dx,
    \end{align*}
to prove Theorem \ref{main theorem in intro}, it suffices to show that $\| \chi \psi|_{Y} \|_{L^2(\BH^2)}\ll_\epsilon \lambda^{1/4- 1/1220+\epsilon}$ with the implied constant uniform in $g_0$.


Let $\epsilon'>0$ be a small constant satisfying $\epsilon'\leq10^{-6}$.
Let $\beta$ be a parameter satisfying $\lambda^{\epsilon'}\leq\beta\leq\lambda$. We define $\tilde{H}_\beta$ to be the subspace $L^2([\lambda-\beta,\lambda+\beta]\times B)$ in $L^2(\BR_{>0} \times B)$.
Define $H_\beta = \cF^{-1}(\tilde{H}_\beta)$, which is a closed subspace in $L^2(\BH^2)$.
Let $\Pi_\beta$ be the orthogonal projection from $L^2(\BH^2)$ onto $H_\beta$.
Let $H_\beta^\perp$ be the orthogonal complement of $H_\beta$ in $L^2(\BH^2)$,
and let $\tilde{H}_\beta^\perp$ be the orthogonal complement of $\tilde{H}_\beta$ in $L^2(\BR_{>0} \times B)$.
It is clear that $\tilde{H}_\beta^\perp = \cF(H_\beta^\perp)$.
To prove Theorem \ref{main theorem in intro}, we shall bound $\Pi_\beta (\chi \psi|_Y)$ and $(1-\Pi_\beta) (\chi \psi|_Y)$ separately.

\begin{proposition}\label{bound 1-Pi}
    Let $\lambda^{\epsilon'}\leq\beta\leq\lambda$. For any $\phi \in \cS(\BH^2) \cap H_\beta^\perp$, we have
    \begin{align*}
        \langle\psi,\chi\phi\rangle \ll_{\epsilon'} \lambda^{1/4}\beta^{-1/4}\|\phi\|_{L^2(\BH^2)}.
    \end{align*}
    Therefore, we have
    \begin{align*}
       \|(1-\Pi_\beta) (\chi\psi|_Y)\|_{L^2(\BH^2)} \ll_{\epsilon'}  \lambda^{1/4}\beta^{-1/4}. 
    \end{align*}
    
\end{proposition}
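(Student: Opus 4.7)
The plan is to derive a pretrace identity, Plancherel-expand it on $\BH^2$, and then use a spectral-projection decomposition that allows both the refined bound on $\tilde{k_\lambda}$ and the hypothesis $\phi \in H_\beta^\perp$ to be exploited simultaneously. Using the $\SU(2)$-bi-invariant test kernel $k_\lambda$ built as the inverse Harish-Chandra transform on $\BH^3$ of a non-negative $h_\lambda$ concentrated at $\pm\lambda$ with $h_\lambda(\lambda)\asymp 1$, and integrating the pretrace formula against $\overline{\chi\phi(x_1)}\chi\phi(x_2)$ over $\BH^2\times\BH^2$, only $\gamma=e$ contributes on the geometric side (since $\Gamma$ is torsion-free and the supports of $\chi$ and $k_\lambda$ are small), giving
\[
|\langle\psi,\chi\phi\rangle|^2 \ll I(\lambda,\phi,e) = \iint_{\BH^2}\overline{\chi\phi(x_1)}\chi\phi(x_2)\, k_\lambda(x_1^{-1}x_2)\,dx_1\, dx_2.
\]

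The restriction $k_\lambda|_{H_0}$ is $\SO(2)$-bi-invariant (since $K_0\cap H_0=\SO(2)$), so it acts on $\BH^2$ as convolution by a zonal function whose Harish-Chandra transform on $\BH^2$ is $\tilde{k_\lambda}(s)$. I then decompose $\chi\phi = \Pi_{\beta/2}(\chi\phi) + (1-\Pi_{\beta/2})(\chi\phi)$; since $k_\lambda\ast_{\BH^2}$ is a Fourier multiplier, it commutes with $\Pi_{\beta/2}$ and the cross terms vanish in $I(\lambda,\phi,e)$, yielding
\[
I(\lambda,\phi,e) \leq \sup_{s\in[\lambda-\beta/2,\lambda+\beta/2]}|\tilde{k_\lambda}(s)|\cdot\|\Pi_{\beta/2}(\chi\phi)\|_2^2 + \sup_{s\notin[\lambda-\beta/2,\lambda+\beta/2]}|\tilde{k_\lambda}(s)|\cdot\|(1-\Pi_{\beta/2})(\chi\phi)\|_2^2.
\]
The second term is controlled by $\lambda^{1/2}\beta^{-1/2}\|\chi\phi\|_2^2 \ll \lambda^{1/2}\beta^{-1/2}\|\phi\|_2^2$ directly from the refined estimate \eqref{eq: restricted bd for klambda in intro} of Proposition \ref{supnorm of klambda} and $\|\chi\|_\infty\ll 1$.

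The core of the argument is showing the first term is negligible, i.e.\ that $\|\Pi_{\beta/2}(\chi\phi)\|_2$ is small for $\phi\in H_\beta^\perp$. I plan to prove this by a non-stationary phase argument. Inserting Helgason inversion for $\phi$ into the definition of $\widetilde{\chi\phi}$ yields
\[
\widetilde{\chi\phi}(s,b) = \int_0^\infty\!\!\int_B \tilde\phi(s',b')\, K(s,b;s',b')\,d\mu(s',b'),
\]
where
\[
K(s,b;s',b') = \int_{\BH^2}\chi(y)\,e^{(A(y,b)+A(y,b'))/2}\,e^{i(s'A(y,b')-sA(y,b))}\,dy.
\]
Since $|\nabla_y A(y,b)|=1$ in the hyperbolic metric uniformly in $b$, a direct computation gives $|\nabla_y(s'A(y,b')-sA(y,b))|\geq|s-s'|$ pointwise on the support of $\chi$. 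After rescaling the phase by $\max(s,s',1)$ so that the rescaled phase has bounded $C^k$-norms on the compact support of $\chi$, repeated integration by parts in $y$ yields $|K(s,b;s',b')|\ll_N(1+|s-s'|)^{-N}$ uniformly in $b,b'\in B$. Since $\tilde\phi$ vanishes on $[\lambda-\beta,\lambda+\beta]\times B$, one has $|s-s'|\geq\beta/2$ whenever $s\in[\lambda-\beta/2,\lambda+\beta/2]$ and $\tilde\phi(s',b')\neq 0$; a Cauchy–Schwarz estimate against the Plancherel weight then gives $\|\Pi_{\beta/2}(\chi\phi)\|_2\ll_N\lambda^{-N}\|\phi\|_2$ for every $N$, which (using $\beta\leq\lambda$) is dominated by the already-obtained $\lambda^{1/2}\beta^{-1/2}\|\phi\|_2^2$ contribution.

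Assembling these bounds produces $|\langle\psi,\chi\phi\rangle|\ll\lambda^{1/4}\beta^{-1/4}\|\phi\|_{L^2(\BH^2)}$, and the $L^2$-norm bound on $(1-\Pi_\beta)(\chi\psi|_Y)$ then follows by duality, invoking density of $\cS(\BH^2)\cap H_\beta^\perp$ in $H_\beta^\perp$ provided by the extension result Proposition \ref{Prop Weyl invar extension}. The main technical obstacle is the uniform integration-by-parts bound on $K$: one must rescale the phase before integrating by parts to avoid accumulating factors of $\max(s,s')$ when differentiating $|\nabla_y\Phi|^{-1}$, and one must handle the $b,b'$-dependent amplitude factor $e^{(A(y,b)+A(y,b'))/2}$ with derivatives bounded uniformly in $b,b'\in B$; both are routine once the lower bound $|\nabla_y\Phi|\geq|s-s'|$ is established.
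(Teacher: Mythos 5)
Your overall architecture coincides with the paper's: pretrace formula reduced to $I(\lambda,\phi,e)$, Plancherel with the multiplier $\tilde{k_\lambda}$, a frequency split of $\chi\phi$ into the part near $\pm\lambda$ and its complement, the bound $\sup_{s\notin I_{\beta/4}}|\tilde{k_\lambda}(s)|\ll\lambda^{1/2}\beta^{-1/2}$ on the far part, and a rapid-decay estimate showing that multiplication by $\chi$ barely moves the Helgason support of $\phi\in H_\beta^\perp$. The last step is exactly the content of the paper's Proposition \ref{Fourier side decomposition lemma}, and that is where your argument has a genuine gap.

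The gap is in the claim that the lower bound $|\nabla_y(s'A(y,b')-sA(y,b))|\geq|s-s'|$ plus a rescaling by $\max(s,s')$ makes repeated integration by parts "routine" and yields $|K(s,b;s',b')|\ll_N(1+|s-s'|)^{-N}$. A gradient lower bound is not enough: one also needs the higher derivatives of the \emph{normalized} phase to be bounded. The Hessian of a Busemann function is of unit size, so $D_y^2\bigl(s'A(y,b')-sA(y,b)\bigr)=O(\max(s,s'))=O(\lambda)$. After rescaling the phase by $T=\max(s,s')$ you get $\Psi$ with bounded $C^k$-norms but only $|\nabla\Psi|\geq|s-s'|/\lambda$, and each integration by parts then gains a factor of order $T\,|\nabla\Psi|^2\asymp|s-s'|^2/\lambda$ (differentiating $|\nabla\Psi|^{-2}$ costs two powers of the gradient lower bound). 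This is a \emph{loss} whenever $|s-s'|\leq\lambda^{1/2}$, which is exactly the range you must handle since the proposition covers all $1\leq\beta\leq\lambda$ and the dangerous frequencies satisfy $|s-s'|\asymp\beta$. Concretely, when $d(b,b')\asymp\lambda^{-1/2}$ and $|s-s'|\asymp\beta\ll\lambda^{1/2}$, your scheme produces no decay at all. The paper circumvents this by writing the phase as $\alpha_1\varphi_1+\alpha_2\varphi_2$ with $\alpha_1=\theta=d(b,b')$, $\alpha_2=(s-r)/s$, where $\varphi_1=(t-A(b_\theta n(x)a(t)))/\theta$ and $\varphi_2=A(b_\theta n(x)a(t))$ have \emph{uniformly bounded derivatives of all orders} and transversal gradients $\approx(-1,0)$ and $(0,1)$ (Lemma \ref{lemma for A(bna) when b is small}); then $\varphi/\rho$ with $\rho=(\alpha_1^2+\alpha_2^2)^{1/2}$ has gradient $\geq1/2$ and bounded higher derivatives, and integration by parts against the effective frequency $s\rho\geq|s-r|$ gives $(1+|s-r|)^{-N}$. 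That transversality structure (plus the separate treatment of $d(b,b')$ bounded away from zero via Lemma \ref{lemma for A(bna) when b is large}) is the missing idea; without it your kernel bound, and hence the smallness of $\|\Pi_{\beta/2}(\chi\phi)\|_2$, is unproven. (A minor additional slip: even granting the kernel bound, the conclusion is $\|\Pi_{\beta/2}(\chi\phi)\|_2\ll_N\beta^{-N}\|\phi\|_2$, not $\lambda^{-N}\|\phi\|_2$; this weaker bound still suffices for the proposition.)
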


\begin{proposition}\label{bound Pi}
    Let $\lambda^{\epsilon'}\leq\beta\leq\lambda^{1/25}$. For any $\phi \in \cS(\BH^2) \cap H_\beta$, we have
    \begin{align*}
        \langle\psi,\chi\phi\rangle \ll_{\epsilon',\epsilon}\lambda^{1/4-1/1120+\epsilon}\beta^{25/1120}\|\phi\|_{L^2(\BH^2)}.
    \end{align*}
    Therefore, we have
    \begin{align*}
        \|\Pi_\beta(\chi\psi|_Y)\|_{L^2(\BH^2)} \ll_{\epsilon',\epsilon} \lambda^{1/4-1/1120+\epsilon}\beta^{25/1120}.
    \end{align*}
\end{proposition}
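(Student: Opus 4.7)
The plan is to combine two amplification arguments on the projection $\Pi_\beta(\chi\psi|_Y) \in H_\beta$, following the outline of Section 1.3. Starting from a Hecke operator $\cT$ whose eigenvalue on $\psi$ is amplified (and small on competing Hecke-Maass forms), one sums the test kernel $k_\lambda$ over $\cT\cT^*\Gamma$ to get the inequality
\begin{align*}
    \bigl|\langle\cT\psi,\chi\phi\rangle\bigr|^2 \leq \sum_{\gamma\in\cT\cT^*\Gamma} C(\gamma)\, I(\lambda,\phi, g_0^{-1}\gamma g_0),\quad\text{for } \phi\in\cS(\BH^2)\cap H_\beta.
\end{align*}
I would then apply the geodesic beam decomposition $\chi\phi = \sum_{m,n}\phi_{m,n}$ from Proposition \ref{l2L2 inequality} (so that $\phi_{m,n}$ is concentrated near a unit geodesic $l_{m,n}$ and the almost-orthogonality bound $\sum \|\phi_{m,n}\|_2^2 \leq C\|\phi\|_2^2$ holds), fix a parameter $\delta\in(0,1)$, and split the beams into the ``heavy'' set $\cS_{\geq\delta}(\phi)$ and the ``light'' complement $\cS_{<\delta}(\phi)$, writing $\langle\psi,\chi\phi\rangle = \sum_{\phi_{m,n}\in\cS_{\geq\delta}(\phi)} \langle\psi,\phi_{m,n}\rangle + \langle\psi,\phi_\delta\rangle$.

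For the heavy beams I would apply the first amplification argument beam-by-beam: combining the general oscillatory-integral bound $I(\lambda,\varphi_{m,n},g) \ll \lambda^{1/2+\epsilon}\beta^{3/2}\|\phi_{m,n}\|_2^2$ (Proposition \ref{uniform bound for phi mn}) with the non-stationary phase estimate (Corollary \ref{small I result}) showing negligible contribution unless $\gamma g_0 l_{m,n}$ lies in a narrow tube around $g_0 l_{m,n}$, and the Hecke-return count from Section \ref{section of Hecke returns} bounding the number of such $\gamma$, I would obtain the individual estimate
\begin{align*}
    \langle\psi,\phi_{m,n}\rangle \ll \lambda^{1/4-1/16+\epsilon}\beta^{13/16}\|\phi_{m,n}\|_2
\end{align*}
(Proposition \ref{amplified bound for geodesic beam}). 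Since $|\cS_{\geq\delta}(\phi)|\leq\delta^{-1}$, Cauchy–Schwarz with the square inequality yields $\sum_{\phi_{m,n}\in\cS_{\geq\delta}(\phi)}\langle\psi,\phi_{m,n}\rangle \ll \delta^{-1/2}\lambda^{1/4-1/16+\epsilon}\beta^{13/16}\|\phi\|_2$.

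For the light part $\phi_\delta = \sum_{\phi_{m,n}\in\cS_{<\delta}(\phi)} \phi_{m,n}$, I would run a second amplification adapted to returns to the geodesic surface $g_0\BH^2$ rather than to a single geodesic. Expanding $I(\lambda,\varphi_\delta,g_0^{-1}\gamma g_0)$ as a double sum over pairs $(\phi_{m_1,n_1},\phi_{m_2,n_2})$ of light beams, the non-stationary-phase estimate (Corollary \ref{nonstationary phase bound for P}) kills all pairs unless $\gamma g_0 l_{m_2,n_2}$ is close to $g_0 l_{m_1,n_1}$. Proposition \ref{most of geodesic beams satisfy uniform condition} controls the count of such close pairs when $\gamma$ is far from stabilizing $g_0\BH^2$; for the remaining $\gamma$ that almost stabilize $g_0\BH^2$, the general bilinear bound (Proposition \ref{general uniform bound}) combined with the light-beam smallness $\|\phi_{m_i,n_i}\|_2 \ll \delta^{1/2}\|\phi\|_2$ and the Hecke-return estimate of Proposition \ref{Hecke return wrt SL(2,R)} gives the gain. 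Running the amplifier through, this yields $\langle\psi,\phi_\delta\rangle \ll \delta^{1/138}\lambda^{1/4+\epsilon}\beta^{1/92}\|\phi\|_2$ (Proposition \ref{amplified bound for small norm}).

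Combining the two pieces, the bound takes the form $\langle\psi,\chi\phi\rangle \ll (\delta^{-1/2}\lambda^{-1/16}\beta^{13/16} + \delta^{1/138}\beta^{1/92})\lambda^{1/4+\epsilon}\|\phi\|_2$, and I would optimize $\delta$ by balancing the two terms, which solves to a power of $\lambda$ times a power of $\beta$, producing the claimed exponents $\lambda^{1/4-1/1120+\epsilon}\beta^{25/1120}$; the restriction $\beta\leq\lambda^{1/10}$ is exactly what is needed to keep the chosen $\delta$ in $(0,1)$. I expect the technically heaviest obstacles to be: (i) showing the oscillatory-integral non-stationary-phase bounds with explicit dependence on the geometric separation of $\gamma g_0 l$ from $g_0 l$ (resp.\ of $\gamma g_0\BH^2$ from $g_0\BH^2$), since this requires pushing Marshall's geodesic analysis up by one dimension; and (ii) the diophantine counts of Hecke returns near a hyperbolic surface (rather than a single geodesic), which is the genuinely new ingredient in this higher-dimensional setting.
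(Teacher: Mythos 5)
Your proposal is correct and follows essentially the same route as the paper: the same heavy/light beam splitting with threshold $\delta$, the same two amplified estimates (Propositions \ref{amplified bound for geodesic beam} and \ref{amplified bound for small norm}) with the exponents you quote, and the same optimization of $\delta$ (the paper takes $\delta=(\lambda^{-69}\beta^{885})^{1/560}$, which is exactly the balancing point you describe, and indeed $\beta\leq\lambda^{1/10}$ guarantees $\delta<1$ while $\delta>\lambda^{-1/2}\beta^{1/2}$ holds as required for the light-beam estimate).
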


The proofs of these propositions will be finished in Sections \ref{section of bound away from the spectrum} and \ref{section of near spectrum}. Then Theorem \ref{main theorem in intro} follows from combining the above two propositions with $\beta = \lambda^{1/305}$.

\subsection{Amplification inequalities}\label{section of amplification}
We fix a real-valued function $h\in C^\infty(\BR)$ of Paley-Wiener type that is nonnegative and satisfies $h(0)=1$. Define $h_\lambda^0(s) = h(s-\lambda)+h(-s-\lambda)$, and let $k_\lambda^0$ be the $K_0$-bi-invariant function on $\BH^3$ with Harish-Chandra transform $h^0_\lambda$.
The Paley-Wiener theorem implies that $k_\lambda^0$ is of compact support that may be chosen arbitrarily small. Define $k_\lambda = k_\lambda^0*k_\lambda^0$, which has Harish-Chandra transform $h_\lambda = (h_\lambda^0)^2$. If $g\in G_{0}$ and $\phi\in \cS(\BH^2)$, we define
\begin{align*}
    I(\lambda,\phi,g) = \iint_{\BH^2}\overline{\chi(x_1)\phi(x_1)}\chi(x_2)\phi(x_2) k_\lambda (x_1^{-1}gx_2) dx_1 dx_2.
\end{align*}
We always assume that the supports of $\chi$ and $k_\lambda$ are small enough that $I(\lambda,\phi,g)=0$ unless $d(g,e)\leq 1$, and denote this compact subset by $\cD_0\subset G_{0}$. 
The main inequality for the hyperbolic surface restriction we shall use is the following.
\begin{proposition}\label{amplification inequality}
    Suppose $\cT\in\cH^S
    $ and $\phi\in \cS(\BH^2)$. We have
    \begin{align}\label{amplification inequality eqn}
        \left| \langle \cT\psi,\chi\phi\rangle\right|^2\ll \sum_{\gamma\in \bG(F)} \left| (\cT*\cT^*)(\gamma)I(\lambda,\phi,g_0^{-1}\gamma g_0) \right|.
    \end{align}
\end{proposition}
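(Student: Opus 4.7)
The plan is to run the standard amplification pretrace argument: turn the left-hand side into a single term in a spectral sum, recognize the full spectral sum as an integral of an amplified automorphic kernel, and then use the pretrace formula to convert this to a geometric sum over $\bG(F)$.

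First I would use positivity to pass from a single eigenfunction to the whole spectrum. Since $h_\lambda=(h_\lambda^0)^2\geq 0$ and $h_\lambda^0(\lambda)=h(0)+h(-2\lambda)\geq h(0)=1$, we have $h_\lambda(\lambda)\geq 1$. Writing $\cT\psi_i=a_i\psi_i$ for the joint eigenbasis $\{\psi_i\}$ of Hecke--Maass forms and using that $\psi$ is one of the $\psi_i$ with eigenvalue $a_\psi$,
\begin{align*}
\bigl|\langle \cT\psi,\chi\phi\rangle\bigr|^2
= |a_\psi|^2\bigl|\langle\psi,\chi\phi\rangle\bigr|^2
\leq \sum_{i} h_\lambda(\lambda_i)\,|a_i|^2\,\bigl|\langle\psi_i,\chi\phi\rangle\bigr|^2.
\end{align*}

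Next I would expand the bracket and recognize the sum as an integral of the amplified kernel. Using that $\chi$ and each $\psi_i$ are real-valued,
\begin{align*}
\sum_i h_\lambda(\lambda_i)|a_i|^2\bigl|\langle\psi_i,\chi\phi\rangle\bigr|^2
=\iint_{\BH^2\times\BH^2}\overline{\chi\phi(x_1)}\,\chi\phi(x_2)\,
\mathcal{K}(g_0x_1,g_0x_2)\,dx_1\,dx_2,
\end{align*}
where $\mathcal{K}(y_1,y_2)=\sum_i h_\lambda(\lambda_i)|a_i|^2\,\psi_i(y_1)\overline{\psi_i(y_2)}$ is the kernel on $X\times X$ of the integral operator $(k_\lambda\otimes\cT)\circ(k_\lambda\otimes\cT)^{*}$ (up to the harmless square that was built into $h_\lambda=(h_\lambda^0)^2$; it is here that the factorization $k_\lambda=k_\lambda^0*k_\lambda^0$ ensures $h_\lambda$ is a genuine square and the corresponding kernel is a composition of an operator with its adjoint, so that $|a_i|^2$ appears rather than $a_i$). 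The key point is that $k_\lambda\otimes\cT$ acts on each $\psi_i$ as multiplication by $h_\lambda^0(\lambda_i)a_i$, so the composed kernel has spectral expansion with weight $h_\lambda(\lambda_i)|a_i|^2$.

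Then I would apply the pretrace (Selberg-type) formula to unfold $\mathcal{K}$ geometrically:
\begin{align*}
\mathcal{K}(g_0x_1,g_0x_2)=\sum_{\gamma\in\bG(F)}(\cT*\cT^{*})(\gamma)\,k_\lambda\!\left(x_1^{-1}g_0^{-1}\gamma g_0 x_2\right),
\end{align*}
where one has to unwind the archimedean/finite factorization carefully: $k_\lambda$ carries the archimedean part and $\cT*\cT^{*}\in\cH_f$ carries the finite part, evaluated on the image of $\gamma\in\bG(F)$ under the diagonal embedding. Substituting this into the double integral and interchanging the sum with the integral recovers
\begin{align*}
\sum_{\gamma\in\bG(F)}(\cT*\cT^{*})(\gamma)\,I(\lambda,\phi,g_0^{-1}\gamma g_0).
\end{align*}
Applying the triangle inequality inside the sum yields \eqref{amplification inequality eqn}.

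The only place requiring care is the bookkeeping in the third step: making sure the adelic convolution $\cT*\cT^{*}$ evaluated on $\gamma\in\bG(F)$ really is the coefficient one obtains after composing the integral operators, and that only finitely many $\gamma$ contribute because $\supp(\cT*\cT^{*})$ is compact in $\bG(\BA_f)$ and $\supp(k_\lambda)$ is compact in $G_0$ (both guaranteed by $\cT\in\cH_f$ and the Paley--Wiener choice of $h$). The rest is positivity and expansion; no analytic difficulty arises.
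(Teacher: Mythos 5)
Your proposal is correct and follows essentially the same route as the paper: positivity of $h_\lambda$ (with $h_\lambda(\lambda)\geq 1$), the spectral expansion of the amplified kernel $\sum_{\gamma}k_\infty(\cT*\cT^*)(x^{-1}\gamma y)$ acting as $h_\lambda(\lambda_i)|a_i|^2$ on each $\psi_i$, and the pretrace unfolding followed by dropping all terms but $\psi$. The only difference is cosmetic ordering — you isolate the $\psi$-term first and then build the kernel, whereas the paper builds the kernel first and drops terms last.
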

\begin{proof}
    Consider the function 
    \begin{align*}
        K(x,y)= \sum_{\gamma\in\bG(F)} k_\infty (\cT*\cT^*)(x^{-1}\gamma y)
    \end{align*}
    on $\bG(F)\backslash\bG(\BA)\times\bG(F)\backslash\bG(\BA)$, where $k_\infty$ is a compactly supported and $K_\infty$-bi-invariant function on $G_\infty$ defined by $k_\infty(x_\infty) = k_\lambda(x_{v_0})$. The spectral decomposition of $L^2(X)$ is
    \begin{align*}
        L^2(X) = \bigoplus_i \BC\psi_i,
    \end{align*}
    where $\psi_i$'s are Hecke-Maass forms on $X$ with spectral parameters $\lambda_i$, which form an orthonormal basis of $L^2(X)$ and $\psi$ is one of them. Then by \cite{selberg1956harmonic}, the integral operator acts on Hecke-Maass forms $\psi_i$ as
    \begin{align*}
        \int_{\bG(F)\backslash\bG(\BA)} K(x,y)\psi_i(y) dy &= \int_{\bG(\BA)} k_\infty\cT*\cT^*(x^{-1}y)\psi_i(y)dy = h_\lambda(\lambda_i) {\cT^*} \cT\psi_i(x).
    \end{align*}
    Hence, $K(x,y)$ has a spectral expansion
    \begin{align*}
        K(x,y) = \sum_i h_\lambda(\lambda_i)\cT\psi_i(x)\overline{\cT\psi_i(y)}.
    \end{align*}
    If we integrate it against $\overline{\chi\phi}\times \chi\phi$ on $g_0\BH^2 \times g_0\BH^2$, we obtain
    \begin{align*}
         \sum_i h_\lambda(\lambda_i)\left| \langle \cT\psi,\chi\phi\rangle\right|^2&=\iint_{\BH^2}\overline{\chi(x_1)\phi(x_1)}\chi(x_2)\phi(x_2)K(g_0x_1,g_0x_2) dx_1dx_2 \\ &=\sum_{\gamma\in \bG(F)} (\cT*\cT^*)(\gamma)\iint_{\BH^2}\overline{\chi(x_1)\phi(x_1)}\chi(x_2)\phi(x_2)k_\lambda(x_1^{-1}g_0^{-1}\gamma g_0x_2) dx_1dx_2.
    \end{align*}
    Since we have $h_\lambda(\lambda_i)\geq 0 $ for all $i$, dropping all terms but $\psi$ completes the proof.
\end{proof}

\subsection{The local bound}\label{subsec: trivial bd}

In particular, we can choose $\cT \in \cH_f$  to be the characteristic function of a
sufficiently small open subgroup of $\bG(\BA_f)$ so that only the identity element $\gamma=e$ will make a nonzero contribution to the sum in (\ref{amplification inequality eqn}). This gives
\begin{align}\label{amplification inequality trivial case}
    \left| \langle \psi,\chi\phi\rangle\right|^2\ll\left| I(\lambda,\phi,e) \right|.
\end{align}
Here
\begin{align*}
    I(\lambda,\phi,e) &= \iint_{\BH^2}\overline{\chi(x_1)\phi(x_1)}\chi(x_2)\phi(x_2) k_\lambda (x_1^{-1}x_2) dx_1 dx_2\\
    &=\iint_{\BH^2}\overline{\chi(x_1)\phi(x_1)}\chi(x_2)\phi(x_2) k_\lambda (x_2^{-1}x_1) dx_1 dx_2\\
    &= \int_{\BH^2}   \left(   \int_{\BH^2}\chi\phi(x_2)  k_\lambda  (x_2^{-1}x_1)      dx_2   \right)    \overline{\chi\phi(x_1)}   dx_1\\
    &=\langle \chi\phi \times k_\lambda|_{\BH^2}, \chi\phi \rangle.
\end{align*}
For notation simplicity, we shall still denote $k_\lambda|_{\BH^2}$ by $k_\lambda$ when there is no ambiguity.
We apply the Plancherel formula to obtain
\begin{align}\label{Ilambda e as convolution}
    I(\lambda,\phi,e) =\langle \chi\phi \times k_\lambda, \chi\phi \rangle= \langle \cF( \chi\phi \times k_\lambda) , \tilde{\chi\phi}\rangle = \langle\tilde{\chi\phi}\cdot \tilde{k_\lambda},  \tilde{\chi\phi}\rangle \leq \sup_{s\in\BR}| \tilde{k_\lambda}(s)|\cdot \|\chi\phi\|_{L^2(\BH^2)}^2.
\end{align}
Then the estimate \eqref{ineq all t} of $\tilde{k_\lambda}$ in next section will prove the local bound $\langle\psi,\chi\phi\rangle \ll\lambda^{1/4}\|\phi\|_{L^2(\BH^2)}$.


\section{Bounds away from the spectrum}\label{section of bound away from the spectrum}

\subsection{Proof of Proposition \ref{bound 1-Pi}}
We define the intervals
\begin{align*}
    I^\pm_\delta =  \pm [\lambda-\delta,\lambda+\delta],\quad\text{ and }\quad I_\delta =  I^+_\delta  \cup  I^-_\delta ,
\end{align*}
for $0<\delta\leq\lambda$.
Let $\tau_1\in C_c^\infty(\BR)$ be an even function satisfying
\begin{itemize}
    \item $0\leq \tau_1\leq 1$;
    \item $\tau_1 \equiv 1$ on $I_{\beta/4}$;
    \item $\supp(\tau_1) \subset I_{\beta/2}$.
\end{itemize}
For $\phi\in\cS(\BH^2)\cap H_\beta^\perp$, we define $\chi\phi = \phi_1 + \phi_2$ so that
\begin{align}\label{eq: defn for decompose phi to 12}
    \tilde{\phi_1}(s,b) = \tau_1(s)\tilde{\chi\phi}(s,b),\quad\text{ and }\quad \tilde{\phi_2}(s,b) =( 1-\tau_1(s))\tilde{\chi\phi}(s,b).
\end{align}
It is clear that $\tilde{\phi_1},\tilde{\phi_2} \in \cS(\BR\times B)_W$, and thus $\phi_1,\phi_2\in\cS(\BH^2)$ are well-defined.
We want to show that the main term of this decomposition is $\phi_2$.
Roughly, multiplying a cutoff $\chi$ to $\phi$ will not change the support of its Helgason transform too much. 
We may first calculate the gradients of the Iwasawa $A$-projection that will appear in the phase function of the Helgason transform. Recall that a boundary element $b_\theta$ is represented by the element $ \begin{pmatrix} \cos\theta/2 & \sin\theta/2\\ -\sin\theta/2& \cos\theta/2 \end{pmatrix}$ in $\SO(2)$. 
Let $b_\theta\in B$ and $x,t\in\BR$. 
 By a direct calculation using the fractional linear action on the upper half plane model (\ref{Poincare}), we may obtain 
    \begin{align}\label{Iwasawa A for bna}
    \begin{split}
         A(b_\theta n(x) a(t)) &= t - \log\left(\cos^2(\theta/2)+(x^2+e^{2t}) \sin^2(\theta/2)- 2x \sin(\theta/2)\cos(\theta/2)\right).
    \end{split}
    \end{align}

\begin{lemma}\label{lemma for A(bna) when b is large}
    Given $C>0$ and $0<\delta<\pi$, there is a constant $c_1 > 0$ only depending on $C$ and $\delta$ so that
    \begin{align*}
        \frac{\partial}{\partial t}A(b_\theta n(x) a(t))\leq 1-c_1,
    \end{align*}
    for $x,t\in (-C,C)$ and $\theta\notin (-\delta,\delta) \mod 2\pi$.
\end{lemma}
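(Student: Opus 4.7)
My plan is to prove this lemma by a direct computation starting from the explicit formula \eqref{Iwasawa A for bna} supplied just above the statement. The key algebraic observation is that the argument of the logarithm admits a clean sum-of-squares rewriting:
\[
\cos^2(\theta/2) + (x^2 + e^{2t})\sin^2(\theta/2) - 2x\sin(\theta/2)\cos(\theta/2) = \bigl(\cos(\theta/2) - x\sin(\theta/2)\bigr)^2 + e^{2t}\sin^2(\theta/2).
\]
This both manifests positivity of the denominator and isolates the $t$-dependence in a single term. Differentiating \eqref{Iwasawa A for bna} in $t$ then yields
\[
\frac{\partial}{\partial t} A(b_\theta n(x) a(t)) = 1 - \frac{2 e^{2t}\sin^2(\theta/2)}{\bigl(\cos(\theta/2) - x\sin(\theta/2)\bigr)^2 + e^{2t}\sin^2(\theta/2)},
\]
so the claim reduces to establishing a uniform strictly positive lower bound on the subtracted fraction.

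For that lower bound, the hypothesis $\theta \notin (-\delta,\delta) \bmod 2\pi$ can be taken to mean $\theta \in [\delta, 2\pi-\delta]$, so that $\theta/2 \in [\delta/2,\pi-\delta/2]$ and hence $\sin(\theta/2) \geq \sin(\delta/2) > 0$. Combined with $|t| < C$ this gives the numerator bound $2e^{2t}\sin^2(\theta/2) \geq 2 e^{-2C}\sin^2(\delta/2)$. For the denominator, the trivial estimates $|\cos(\theta/2) - x\sin(\theta/2)| \leq 1+C$ and $e^{2t} \leq e^{2C}$ produce an upper bound $(1+C)^2 + e^{2C}$. Taking
\[
c_1 = \frac{2 e^{-2C}\sin^2(\delta/2)}{(1+C)^2 + e^{2C}}
\]
then yields the inequality $\partial_t A \leq 1 - c_1$ with $c_1$ depending only on $C$ and $\delta$. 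There is no serious obstacle here; the only point requiring care is the algebraic rewriting into a sum of squares, after which everything is elementary.
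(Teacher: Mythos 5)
Your proof is correct and follows essentially the same route as the paper: the same sum-of-squares rewriting of the logarithm's argument, the same formula for $\partial_t A$, and then a uniform positive lower bound on the subtracted fraction (the paper phrases this as an infimum of $2/(1+(x-\cot(\theta/2))^2e^{-2t})$ over the given range, while you make the constant explicit by bounding numerator and denominator separately; both are fine).
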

\begin{proof}
    By (\ref{Iwasawa A for bna}), we have
    \begin{align*}
        \frac{\partial}{\partial t}A(b_\theta n(x) a(t)) &= 1 - \frac{2e^{2t}\sin^2(\theta/2)}{\cos^2(\theta/2)+(x^2+e^{2t}) \sin^2(\theta/2)- 2x \sin(\theta/2)\cos(\theta/2)}\\
        &= 1-\frac{2e^{2t}\sin^2(\theta/2)}{e^{2t}\sin^2(\theta/2)+ (x\sin(\theta/2)-\cos(\theta/2))^2}\\
        &=1-\frac{2}{1+ (x-\cot(\theta/2))^2e^{-2t}}.
    \end{align*}
    We can take $c_1$ to be the infimum of $\frac{2}{1+ (x-\cot(\theta/2))^2e^{-2t}}$ for $x,t\in (-C,C)$ and $\theta\notin (-\delta,\delta) \mod 2\pi$.
\end{proof}

\begin{lemma}\label{lemma for A(bna) when b is small}
    Given a constant $C>0$, if $x,t\in (-C,C)$, then
    \begin{align}\label{Taylor estimate of Iwasawa A for bna}
        A(b_\theta n(x) a(t)) = t + x\theta + \frac{x^2-e^{2t}+1}{4}\theta^2 + O(\theta^3).
    \end{align}
    Moreover, we have
    \begin{align}
         \frac{\partial}{\partial x}A(b_\theta n(x) a(t)) &= \theta + \frac{x}{2}\theta^2 + O(\theta^3),\label{partial derivative wrt x for A(bna)}\\
         \frac{\partial}{\partial t}A(b_\theta n(x) a(t))& = 1 - \frac{e^{2t}}{2}\theta^2 + O(\theta^3).\label{partial derivative wrt t for A(bna)}
    \end{align}
    Here the implied constants depend only on $C$. 
\end{lemma}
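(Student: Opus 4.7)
The plan is a direct Taylor expansion at $\theta = 0$ starting from the closed form \eqref{Iwasawa A for bna}. Expanding $\cos(\theta/2) = 1 - \theta^2/8 + O(\theta^4)$ and $\sin(\theta/2) = \theta/2 + O(\theta^3)$ gives
\[
\cos^2(\theta/2) = 1 - \tfrac{\theta^2}{4} + O(\theta^4), \qquad \sin^2(\theta/2) = \tfrac{\theta^2}{4} + O(\theta^4), \qquad 2\sin(\theta/2)\cos(\theta/2) = \theta + O(\theta^3),
\]
so the argument of the logarithm in \eqref{Iwasawa A for bna} becomes
\[
1 - x\theta + \frac{x^2 + e^{2t} - 1}{4}\theta^2 + O(\theta^3),
\]
where the remainder is real-analytic in $(x,t,\theta)$ on a neighborhood of $(-C,C)^2 \times \{0\}$ and hence uniform in $(x,t)$. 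Applying $\log(1+u) = u - u^2/2 + O(u^3)$ with $u = -x\theta + O(\theta^2)$, so that $u^2 = x^2\theta^2 + O(\theta^3)$, produces
\[
\log\bigl(\,\cdots\,\bigr) = -x\theta + \frac{(x^2+e^{2t}-1) - 2x^2}{4}\theta^2 + O(\theta^3) = -x\theta - \frac{x^2 - e^{2t} + 1}{4}\theta^2 + O(\theta^3).
\]
Subtracting from $t$ yields the expansion \eqref{Taylor estimate of Iwasawa A for bna}.

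For the partial derivative formulas, I would observe that the argument of the logarithm in \eqref{Iwasawa A for bna} equals $1$ at $\theta = 0$ and is strictly positive for $(x,t) \in (-C,C)^2$ and $\theta$ in a small neighborhood of $0$, so $A(b_\theta n(x)a(t))$ is jointly real-analytic in $(x,t,\theta)$ there. Writing the remainder in \eqref{Taylor estimate of Iwasawa A for bna} as $\theta^3 R(x,t,\theta)$ with $R$ smooth on a compact neighborhood, the partials $\partial_x R$ and $\partial_t R$ are uniformly bounded, so differentiating the Taylor expansion in $x$ or $t$ term by term preserves the $O(\theta^3)$ error uniformly on $(-C,C)^2$. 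This immediately produces \eqref{partial derivative wrt x for A(bna)} and \eqref{partial derivative wrt t for A(bna)}. An equally valid route is to differentiate \eqref{Iwasawa A for bna} directly in $x$ and in $t$ to get closed-form rational-trigonometric expressions, and then Taylor-expand those at $\theta = 0$.

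There is no substantial obstacle here, since the lemma is essentially an exercise in Taylor expansion of elementary functions. The only technical point worth flagging is the \emph{uniformity} of the $O(\theta^3)$ remainder in the parameters $(x,t)$ and, for the two derivative statements, its preservation under $\partial_x$ and $\partial_t$. Both are handled by the joint real-analyticity on a compact neighborhood, which is justified by the strict positivity of the logarithm's argument there.
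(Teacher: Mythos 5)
Your proposal is correct and follows essentially the same route as the paper: Taylor-expand the trigonometric factors inside the logarithm of \eqref{Iwasawa A for bna}, apply $\log(1+u)=u-u^2/2+O(u^3)$, and then justify term-by-term differentiation of the expansion by writing the remainder as $\theta^3$ times a function that is real-analytic (hence with bounded partials) on a compact neighborhood. The paper's proof is the same computation with the same analyticity argument for the two derivative formulas.
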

\begin{proof}
 Since the Taylor approximations give
    \begin{align*}
        &\cos^2(\theta/2) = 1-{\theta^2}/{4} + O(\theta^4),\\
        &\sin^2(\theta/2) = {\theta^2}/{4} + O(\theta^4),\\
        &2\sin(\theta/2)\cos(\theta/2)  = \theta + O(\theta^3),
    \end{align*}
    we have
    \begin{align*}
        &\log\left(\cos^2(\theta/2)+(x^2+e^{2t}) \sin^2(\theta/2)- 2x \sin(\theta/2)\cos(\theta/2)\right)\\
        =&\log\left( 1-\theta^2/4 + (x^2+e^{2t})\theta^2/4  - x\theta + O(\theta^3)\right)\\
        =& \left(-x\theta + (x^2+e^{2t}-1)\theta^2/4 \right) - \left(-x\theta + (x^2+e^{2t}-1)\theta^2/4 \right)^2/2 + O(\theta^3)\\
        =&-x\theta + (x^2+e^{2t}-1)\theta^2/4 - x^2\theta^2/2 + O(\theta^3).
    \end{align*}
    Hence, we obtain \eqref{Taylor estimate of Iwasawa A for bna} from \eqref{Iwasawa A for bna}. Moreover, by the analyticity of elementary functions, we have
    \begin{align*}
         A(b_\theta n(x) a(t)) = t + x\theta + \frac{x^2-e^{2t}+1}{4}\theta^2 + \theta^3\xi(x,t,\theta),
    \end{align*}
    where $\xi$ is analytic. Then we obtain \eqref{partial derivative wrt x for A(bna)} and \eqref{partial derivative wrt t for A(bna)} by taking derivatives.
    
\end{proof}

\begin{proposition}\label{Fourier side decomposition lemma}
    Suppose that $1\leq \beta \leq \lambda$ and $\phi\in\cS(\BH^2)\cap H_\beta^\perp$. Let $\phi_1$ be defined as in \eqref{eq: defn for decompose phi to 12}. Then we have $$\| \phi_1 \|_{L^2(\BH^2)} \ll _{N} \beta^{-N}\|\phi\|_{L^2(\BH^2)}.$$
\end{proposition}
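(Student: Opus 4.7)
The plan is to express $\tilde{\phi_1}=\tau_1\cdot\tilde{\chi\phi}$ via the Helgason transform of $\phi$, derive a rapidly decaying integral kernel, and apply Schur's test; Plancherel (Theorem \ref{plancherel formula}) then translates this into the stated $L^2(\BH^2)$ bound.

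Substituting the Helgason inversion formula \eqref{Fourier inversion} for $\phi$ into
$\tilde{\chi\phi}(s',b')=\int_{\BH^2}\chi(x)\phi(x)e^{(-is'+1/2)A(x,b')}\,dx$
and exchanging the order of integration yields
\begin{align*}
\tilde{\chi\phi}(s',b')=\int_0^\infty\!\int_B\tilde\phi(s,b)\,K(s,b;s',b')\,d\mu(s,b),
\end{align*}
where
\begin{align*}
K(s,b;s',b')=\int_{\BH^2}\chi(x)\,e^{i(sA(x,b)-s'A(x,b'))}\,e^{\frac12(A(x,b)+A(x,b'))}\,dx.
\end{align*}

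The core estimate is the uniform bound
\begin{align*}
|K(s,b;s',b')|\ll_{N,\chi}(1+|s-s'|)^{-N}\qquad\text{for all }(s,s',b,b')\in\BR_{>0}^2\times B^2.
\end{align*}
To establish it, rewrite the phase as $(s+s')\tilde\Phi(x)$ with $\tilde\Phi(x)=\tfrac{s}{s+s'}A(x,b)-\tfrac{s'}{s+s'}A(x,b')$. Since $A(x,b)$ is smooth in $(x,b)\in\BH^2\times B$, the $C^N(\supp\chi)$-norms of $\tilde\Phi$ are bounded uniformly in $(s,s',b,b')$, and since $\nabla_x A(x,\cdot)$ is a unit vector in $T_x\BH^2$ pointing along the geodesic from $x$ toward the corresponding boundary point, one computes $|\nabla_x\tilde\Phi|\geq|s-s'|/(s+s')$ pointwise. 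Non-stationary phase (integration by parts in $x$ with the operator $L_x=-i|\nabla_x\tilde\Phi|^{-2}\nabla_x\tilde\Phi\cdot\nabla_x$) then yields the bound with effective large parameter $(s+s')\cdot|s-s'|/(s+s')=|s-s'|$.

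Finally, when $\tilde\phi$ is supported off $I_\beta^+\times B$ and $s'\in\supp(\tau_1)\cap\BR_{>0}\subset I_{\beta/2}^+$, we have $|s-s'|\geq\beta/2$, so the above produces $|\tau_1(s')K(s,b;s',b')\mathbf{1}_{s\notin I_\beta^+}|\ll_{N}\beta^{-N}(1+|s-s'|)^{-3}$ for any $N$. Applying Schur's test to the resulting integral operator on $L^2(\BR_{>0}\times B,d\mu)$, using $d\mu\asymp s\tanh(\pi s)\,ds\,db$, the finiteness of $|B|$, and the fact that $(1+|s-s'|)^{-3}$ gives integrable decay against the factor $s$ in $d\mu$, we obtain operator norm $\ll_{N}\beta^{-N}$. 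Plancherel then gives $\|\phi_1\|_{L^2(\BH^2)}\ll_N\beta^{-N}\|\phi\|_{L^2(\BH^2)}$.

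The main technical obstacle is making the non-stationary phase bound truly uniform in $(s,s',b,b')$: one must carefully verify that the successive derivatives of $\tilde\Phi$ and of $|\nabla_x\tilde\Phi|^{-1}$ remain controlled by compactness of $\supp(\chi)$ and smoothness of $A(x,b)$ in $b$, and that these constants do not deteriorate as $s,s'\to\infty$ or as $b,b'$ range over $B$.
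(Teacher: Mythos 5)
Your overall strategy---unfolding $\tilde{\phi_1}=\tau_1\cdot\tilde{\chi\phi}$ through the Helgason inversion of $\phi$, bounding the resulting oscillatory kernel by non-stationary phase, and concluding by Cauchy--Schwarz/Plancherel---is exactly the paper's. The gap is in the one step that carries all the weight: the claimed uniform bound $|K(s,b;s',b')|\ll_N(1+|s-s'|)^{-N}$. You justify it by the gradient lower bound $|\nabla_x\tilde{\Phi}|\geq |s-s'|/(s+s')$ (which is correct, by the reverse triangle inequality for the two unit vectors $\nabla_xA(x,b)$, $\nabla_xA(x,b')$) together with uniformly bounded $C^N$-norms of $\tilde{\Phi}$, and assert that integration by parts then runs with ``effective large parameter $|s-s'|$''. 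That arithmetic is valid only for a \emph{single} integration by parts. When you iterate $L_x$, each application differentiates $|\nabla_x\tilde{\Phi}|^{-2}\nabla_x\tilde{\Phi}$ and produces terms of size $\|\nabla^2\tilde{\Phi}\|\cdot|\nabla_x\tilde{\Phi}|^{-2}$; with only the bounds you state ($\|\nabla^2\tilde{\Phi}\|\ll1$ and $|\nabla_x\tilde{\Phi}|\geq|s-s'|/(s+s')$), the gain per step is $\big((s+s')|\nabla_x\tilde{\Phi}|^2\big)^{-1}\asymp (s+s')/|s-s'|^2$, which is a \emph{loss} whenever $|s-s'|\leq(s+s')^{1/2}$, i.e.\ whenever $\beta\ll\lambda^{1/2}$ --- the entire range of interest. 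So the rapid decay you claim does not follow from the information you actually use.

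What rescues the estimate is structure you have discarded: the dependence of the Hessian of the phase on the angular separation of the two boundary points. After rotating so that one boundary point is the identity and writing the other as $b_\theta$, Lemma \ref{lemma for A(bna) when b is small} gives $A(b_\theta n(x)a(t))=t+x\theta+O(\theta^2)$ with all second derivatives $O(\theta^2)$; hence the Hessian of the full phase is $O((s+s')\theta^2)$ while its gradient is $\gg\max\{(s+s')|\theta|,\,|s-s'|\}$, and only with this quadratic-in-$\theta$ Hessian bound does the iterated integration by parts close. The paper packages this by writing the phase as $\alpha_1\varphi_1+\alpha_2\varphi_2$ with $\alpha_1=\theta$, $\alpha_2=(s-r)/s$ and normalizing by $\rho=(\alpha_1^2+\alpha_2^2)^{1/2}$, so that $\varphi/\rho$ has gradient of size $1$ \emph{and} bounded higher derivatives, with effective frequency $s\rho\geq|s-r|$; the complementary case of $\theta$ bounded away from $0$ is handled separately via Lemma \ref{lemma for A(bna) when b is large} by integrating by parts in $t$ alone. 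You would need to reinstate this case analysis (or an equivalent quantitative Hessian bound in terms of $d_B(b,b')$) for your kernel estimate, and hence the proposition, to be proved. A secondary point: in your Schur test the row/column integrals $\int(1+|s-s'|)^{-3}\,d\mu(s)$ are $\asymp\lambda$, not $O(1)$, because of the Plancherel density $s\,ds$; this loss is ultimately harmless since $N$ is arbitrary and $\beta$ is taken to be a fixed power of $\lambda$, but it should be stated rather than elided.
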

\begin{proof}
    Let $s\in \BR_{>0}$ and $b\in B$, we have
    \begin{align}
         \tilde{\phi_1}(s,b) =& \tau_1(s)\tilde{\chi\phi}(s,b)\notag \\
         =& \tau_1(s) \int_{\BH^2} \chi(x)\phi(x) e^{(-is+1/2) A(x,b)} dx\notag\\
         =&\tau_1(s) \int_{\BH^2} \chi(x)\left( \int_{\BR_{>0}\times B}\tilde{\phi}(r,d) e^{(ir+1/2)A(x,d)} d\mu(r,d)\right) e^{(-is+1/2) A(x,b)} dx\notag\\
         =& \tau_1(s)\int_{\BR_{>0}\times B} \tilde{\phi}(r,d) \left(   \int_{\BH^2}\chi(x)e^{(A(x,d)+A(x,b))/2} \exp(irA(d^{-1}x)-isA(b^{-1}x)) dx\right)d\mu(r,d).\label{inner nonstationary phase intgeral}
    \end{align}
    Because of the supports of $\tau_1$ and $\tilde{\phi}$, we can assume $s\in I_{\beta/2}$ and $r\notin I_\beta$, otherwise the outer integral in  (\ref{inner nonstationary phase intgeral})
    will vanish.
    
    We first assume that $0\leq r \leq \lambda-\beta$.
    By the change of variable $x\mapsto bx$, the integral becomes
    \begin{align*}
       \int_{\BH^2}\chi(bx)e^{(A(bx,d)+A(bx,b))/2} \exp(irA(d^{-1}bx)-isA(x)) dx.
    \end{align*}
    We assume $d^{-1}b = b_\theta$ with $\theta\in\BR / 2\pi\BZ$,
    and write the integral under the Iwasawa coordinates $(x,t)\mapsto n(x)a(t)\cdot o = (x,e^t)\in\BH^2$.
    Then the integral can be written as
    \begin{align}
        &\int_{\BR^2}\chi^\prime (x,t) \exp(irA(b_\theta n(x)a(t))-ist ) dx dt\notag\\
        =&\int_{\BR^2}\chi^\prime (x,t) \exp(-is\varphi(x,t)) dx dt\label{integral of chi prime phase varphi}
    \end{align}
    Here $\chi^\prime$ is a smooth, compactly supported function obtained by combining all of the amplitude factors,
    and
    \begin{align*}
        \varphi(x,t) &= t- \frac{r}{s}A(b_\theta n(x)a(t))
    \end{align*}
    is the phase function. We omit the parameters $\theta$ and $r/s$ when we write the variables of $\chi^\prime$ and $\varphi$ for simplicity. 
    We choose a constant $C>0$ so that $(-C,C)^2$ contains the supports of $\chi^\prime$ for arbitrary $\theta\in\BR/2\pi\BZ$. Let $\delta>0$ be chosen as in Lemma \ref{lemma for A(bna) when b is small}. When $\theta\notin (-\delta,\delta) \mod 2\pi$, Lemma \ref{lemma for A(bna) when b is large} gives a constant $c_1 > 0$ so that
    \begin{align*}
        \frac{\partial}{\partial t}A(b_\theta n(x) a(t))\leq 1-c_1,
    \end{align*}
    for $x,t\in\supp(\chi^\prime)$. Hence,
    \begin{align*}
        \frac{\partial}{\partial t} \varphi(x,t) &= 1 - \frac{r}{s}  \frac{\partial}{\partial t}A(b_\theta n(x) a(t))\\
        &\geq 1 - \frac{r}{s}(1-c_1)\\
        &\geq \min\{1,c_1 \}.
    \end{align*}
    Since $|r/s|\leq 1$, the $n$-th higher derivatives of $\varphi$ are $\ll_n 1$. By integrating by parts,
    \begin{align*}
        \int_{\BR}\chi^\prime (x,t) \exp(-is\varphi(x,t)) dt \ll_N s^{-N}.
    \end{align*}
    After integrating the $x$ variable, the integral (\ref{integral of chi prime phase varphi}) is also $\ll_N s^{-N}$.
    Now we assume $\theta\in (-\delta,\delta) \mod 2\pi$. We first write the phase function as 
     \begin{align*}
         \varphi(x,t) &= \big( t - A(b_\theta n(x)a(t))  \big) + \frac{s-r}{s} A(b_\theta n(x)a(t))\\
         &=\alpha_1 \varphi_1(x,t) + \alpha_2 \varphi_2(x,t),
     \end{align*}
     where $\alpha_1 = \theta$, $\alpha_2 = (s-r)/s$, $\varphi_1(x,t) = \big( t - A(b_\theta n(x)a(t))  \big)/\theta$ and $\varphi_2(x,t) = A(b_\theta n(x)a(t))$.
     Lemma \ref{lemma for A(bna) when b is small} implies that $\varphi_1$ is still analytic and
     \begin{align*}
         \varphi_1(x,t) = -x - \frac{x^2-e^{2t}+1}{4}\theta + O(\theta^2).
     \end{align*}
     By calculating their derivatives or applying Lemma \ref{lemma for A(bna) when b is small}, we obtain the gradients of $\varphi_1$ and $\varphi_2$:
     \begin{align*}
         &\nabla \varphi_1 = \left( \frac{\partial \varphi_1}{\partial x},\frac{\partial \varphi_1}{\partial t} \right) = \left(-1,0   \right) + O(\theta),\\
         &\nabla \varphi_2 = \left( \frac{\partial \varphi_2}{\partial x},\frac{\partial \varphi_2}{\partial t} \right) = \left(0,1   \right) + O(\theta).
     \end{align*}
    Let $\rho = (\alpha_1^2 + \alpha_2^2)^{1/2}$. Then we have
    \begin{align*}
        \nabla\left( \frac{\varphi}{\rho} \right) &= \frac{\alpha_1}{\rho} \nabla \varphi_1 + \frac{\alpha_2}{\rho} \nabla \varphi_2 = (-\alpha_1/\rho,\alpha_2/\rho) + O(\theta).
    \end{align*}
    Hence, the square of the norm is
    \begin{align*}
        \left\|  \nabla\left( \frac{\varphi}{\rho} \right) \right\|^2 = \frac{\alpha_1^2+ \alpha_2^2}{\rho^2} + O(\theta) = 1 + O(\theta).
    \end{align*}
    Without loss of generality, we can take $\delta$ to be sufficiently small so that $ \left\|  \nabla\left( {\varphi}/{\rho} \right) \right\| \geq 1/2$ when $\theta\in (-\delta,\delta)$. 
    It is clear that all the $n$-th higher derivatives of $\varphi_1$ and $\varphi_2$ are $\ll_n 1$. Since $|\alpha_1/\rho|,|\alpha_2/\rho|\leq 1$, all the $n$-th higher derivatives of $\varphi/\rho$ are also $\ll_n 1$. 
    Hence, by integrating by parts, \eqref{integral of chi prime phase varphi} is 
    \begin{align*}
        \int_{\BR^2}\chi^\prime (x,t) \exp\left(-is\rho \frac{\varphi(x,t)}{\rho}\right) dx dt \ll_N (1+s\rho)^{-N}.
    \end{align*}
    Since $s\rho\geq s\alpha_2=s-r$, the intgeral (\ref{integral of chi prime phase varphi}) is $\ll_N (s-r)^{-N} $. To sum up, we have shown that the inner integral in  (\ref{inner nonstationary phase intgeral}) is $ \ll_{N} (s-r)^{-N} $ when $0\leq r \leq \lambda-\beta$.

    If $r\geq\lambda+\beta$, likewise, by the change of variable $x\to d\cdot x$, we write the inner integral in (\ref{inner nonstationary phase intgeral}) as 
    \begin{align*}
        \int_{\BH^2}\chi(dx)e^{(A(dx,d)+A(dx,b))/2} \exp(irA(x)-isA(b^{-1}dx)) dx.
    \end{align*}
    We assume $b^{-1} d =b_\theta$ and write the integral under the Iwasawa coordinates as
    \begin{align*}
        \int_{\BR^2}\chi^\prime (x,t) \exp(ir\varphi(x,t)) dx dt
    \end{align*}
    with $\chi^\prime$ the smooth compactly supported function from all of the amplitude factors and
    \begin{align*}
        \varphi(x,t) &= t- \frac{s}{r}A(b_\theta n(x)a(t)) = \frac{s}{r}\big( t- A(b_\theta n(x)a(t)) \big) + \frac{r-s}{r}t.
    \end{align*}
    Then the same argument shows the oscillatory integral is $\ll_N (r-s)^{-N}$. Applying these bounds with Cauchy–Schwarz inequality to (\ref{inner nonstationary phase intgeral}) gives $|\tilde{\phi_1}(s,b)|\ll_{N}\beta^{-N} \| \phi \|_{L^2(\BH^2)}$. Then the proposition follows from the Plancherel identity
    \begin{align*}
        \| \phi_1 \|_{L^2(\BH^2)}^2 = \int_{I^+_{\beta/2}\times B} |\tilde{\phi_1}(s,b)|^2 d\mu(s,b)\ll_{N}\beta^{-N}  \| \phi\|_{L^2(\BH^2)}^2.
    \end{align*}
\end{proof}

\begin{proof}[Proof of Proposition \ref{bound 1-Pi}]
Suppose that $\phi \in \cS(\BH^2)\cap H_\beta^\perp$.
By (\ref{amplification inequality trivial case}) and (\ref{Ilambda e as convolution}), we have
\begin{align*}
    \left| \langle \psi,\chi\phi\rangle\right|^2&\ll\left| I(\lambda,\phi,e) \right|=\langle\tilde{\chi\phi}\cdot \tilde{k_\lambda},  \tilde{\chi\phi}\rangle.
\end{align*}
Proposition \ref{Fourier side decomposition lemma} shows that $\tilde{\chi\phi} = \tilde{\phi_1}+\tilde{\phi_2} = \tilde{\phi_2} 
 + O_{N}(\beta^{-N} \| \phi\|_{L^2(\BH^2)})$.
Hence,
\begin{align*}
      \left| \langle \psi,\chi\phi\rangle\right|^2&\ll_N  \left(\sup_{t\notin I_{\beta/4}} |\tilde{k_\lambda}(t)| +\beta^{-N}\sup_{s\in\BR} |\tilde{k_\lambda}(s)| \right) \| \phi\|_{L^2(\BH^2)}^2.
\end{align*}
Applying the following Proposition \ref{supnorm of klambda} with the assumption $\lambda^{\epsilon'}\leq\beta\leq\lambda$, we have
\begin{align*}
    \left| \langle \psi,\chi\phi\rangle\right|^2 \ll_{\epsilon',N}  \left(\lambda^{1/2}\beta^{-1/2}  +\lambda^{1/2}\beta^{-N}\right)\| \phi\|_{L^2(\BH^2)}^2\ll \lambda^{1/2}\beta^{-1/2}\| \phi\|_{L^2(\BH^2)}^2,
\end{align*}
which completes the proof.
\end{proof}

It remains to show the following point-wise estimate of $\tilde{k_\lambda}$.

\begin{proposition}\label{supnorm of klambda}
    We have
    \begin{align}\label{ineq all t}
        \sup_{s\in\BR} |\tilde{k_\lambda}(s)| &\ll \lambda^{1/2}.
    \end{align}
    Moreover, if $\lambda^{\epsilon'}\leq\beta\leq\lambda$, we have
    \begin{align}
        \sup_{0\leq |s|\leq \lambda/2}|\tilde{k_\lambda}(s)|&\ll 1, \label{ineq small t}\\
        \sup_{s\notin I_{\beta/4},|s|\geq \lambda/2}|\tilde{k_\lambda}(s)|&\ll_{\epsilon'} \lambda^{1/2}\beta^{-1/2}.\label{ineq big t}
    \end{align}
\end{proposition}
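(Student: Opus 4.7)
The plan is to convert $\tilde{k_\lambda}(s)$ into an explicit one-dimensional oscillatory integral and analyze it in three regimes.

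Applying the inverse Harish-Chandra transform on $\BH^3$ to the Paley-Wiener function $k_\lambda$, and using the explicit formula $\varphi_r^{\BH^3}(a(t)) = \sin(rt)/(r\sinh t)$, one writes $k_\lambda(a(t)) = C_1 (\sinh t)^{-1}\int_0^\infty r h_\lambda(r)\sin(rt)\,dr$ as a compactly supported radial function on $\BH^3$. Restricting to $\BH^2$ (where radial distances agree) and passing to $\BH^2$-polar coordinates (radial measure $\sinh t\,dt$) gives
\[\tilde{k_\lambda}(s) = C_2\int_0^{T_0}\!\!\left(\int_0^\infty r h_\lambda(r)\sin(rt)\,dr\right)\varphi_{-s}^{\BH^2}(a(t))\,dt,\]
with $T_0=O(1)$ a radius bounding the support of $k_\lambda$. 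Here the $\sinh t$ from the radial measure cancels the one from the $\BH^3$ spherical function.

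For $|s|\ge 1$, I would substitute the uniform Bessel-type asymptotic $\varphi_s^{\BH^2}(a(t)) = \sqrt{2/(\pi s\sinh t)}\cos(st-\pi/4)+O(s^{-3/2})$ (valid when $|s|t\gg 1$, with the complementary range $|s|t=O(1)$ handled by $|\varphi_s^{\BH^2}|\le 1$), and expand $\sin(rt)\cos(st-\pi/4)$ via product-to-sum. This reduces $\tilde{k_\lambda}(s)$ to integrals of the form $|s|^{-1/2}\int_0^{T_0}\eta(t)(\sinh t)^{-1/2}e^{i(r\pm s)t}\,dt$ with a smooth bump $\eta$. The square-root singularity $(\sinh t)^{-1/2}\sim t^{-1/2}$ near $t=0$ produces, via a Fresnel estimate, a gain of $O(\min(1,|r\pm s|^{-1/2}))$. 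Combining with $h_\lambda$ being bounded and supported in $\pm[\lambda-C,\lambda+C]$, so that $\int r h_\lambda(r)\,dr\asymp\lambda$, yields
\[|\tilde{k_\lambda}(s)|\ll \lambda\cdot |s|^{-1/2}\cdot\min\bigl(1,|s\mp\lambda|^{-1/2}\bigr),\]
which proves \eqref{ineq all t} (taking $\min=1$) and proves \eqref{ineq big t} when $|s|\ge\lambda/2$ and $|s\mp\lambda|\ge\beta/4$.

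For $|s|\le \lambda/2$, the Bessel-type asymptotic above is not uniformly valid when $s$ is bounded, so I would instead integrate by parts in $t$ against the high-frequency oscillation $\sin(\lambda t)$ coming from the factor $\int r h_\lambda(r)\sin(rt)\,dr$. The amplitude $\varphi_{-s}^{\BH^2}(a(t))\eta(t)$ is smooth in $t$ with derivatives bounded by $O((1+|s|)^N)=O(\lambda^N)$ on the support; since $|s|\le\lambda/2$, each integration by parts against $e^{\pm i\lambda t}$ gains a factor of $|s|/\lambda\le 1/2$. Performing $N$ such integrations and using $\int r h_\lambda(r)\,dr\asymp\lambda$ gives $|\tilde{k_\lambda}(s)|\ll\lambda\cdot 2^{-N}$, which is $O(1)$ after choosing $N$ large, proving \eqref{ineq small t}.

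The main obstacle is the uniform control of the Bessel-type expansion across the transition $|s|t\sim 1$ and the boundary contribution at $t=0$: it is precisely the square-root amplitude blow-up there that yields the $\beta^{-1/2}$ decay rate in \eqref{ineq big t}, rather than the $\beta^{-1}$ one would naively expect from a single integration by parts in $t$ against a smooth amplitude. An alternative that avoids spherical-function asymptotics is the Abel-transform identity $\mathcal{A}_2(k_\lambda|_{\BH^2})(t)\propto\int_t^\infty(\mathcal{A}_3 k_\lambda)'(\tau)(\cosh\tau-\cosh t)^{-1/2}d\tau$, reducing $\tilde{k_\lambda}$ to the Euclidean Fourier transform of a compactly supported $\lambda$-oscillatory function; the same Fresnel analysis at $\tau=t$ then reproduces the $\beta^{-1/2}$ rate.
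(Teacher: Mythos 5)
Your route is genuinely different from the paper's, and for \eqref{ineq all t} and \eqref{ineq big t} it is sound and arguably cleaner. The paper never invokes the closed form $\varphi_r^{\BH^3}(a(t)) = \sin(rt)/(r\sinh t)$; instead it imports pointwise bounds and two-term exponential asymptotics for \emph{both} spherical functions from \cite{marshall2016p}, decomposes the $t$-integral dyadically at scales $2^n/\beta$, and integrates by parts once on each piece. Your use of the explicit $\BH^3$ formula eliminates the $\BH^3$ asymptotics entirely, cancels the $\sinh t$ from the $\BH^2$ radial measure, and collapses everything to a one-dimensional Fourier integral with a $t^{-1/2}$ amplitude singularity, so the $\beta^{-1/2}$ in \eqref{ineq big t} falls out of a single Fresnel estimate rather than a dyadic sum. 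Two points to tighten: $h_\lambda$ is Paley--Wiener, hence rapidly decaying but not compactly supported in $r$, so you must split the $r$-integral at $|r-\lambda|\le\beta/8$ (where $s\notin I_{\beta/4}$ forces $|r\mp s|\ge\beta/8$) and kill the complementary range by rapid decay; and the error in the one-term Bessel asymptotic is $O(s^{-3/2}t^{-3/2})$, not $O(s^{-3/2})$ uniformly down to $t\asymp 1/s$ --- it is still harmless after integration against the prefactor $\lambda$, but as stated the error bound is not uniform.

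The argument for \eqref{ineq small t} has a genuine gap. You treat $\varphi_{-s}^{\BH^2}(a(t))$ as an amplitude with $N$-th derivative $O((1+|s|)^N)$ and integrate by parts $N$ times against $e^{\pm i\lambda t}$, gaining $(|s|/\lambda)^N\le 2^{-N}$; but to beat the prefactor $\lambda$ you must take $N\gtrsim\log_2\lambda$, and then the implied constants are no longer harmless. The $N$-th $t$-derivative of $\varphi_{-s}(a(t))$ is not $\ll(1+|s|)^N$ with an absolute constant: applying Fa\`a di Bruno to $\int_{\SO(2)}e^{(-is+1/2)A(ka(t))}\,dk$ produces additional factors growing at least like $C^N N!$, as do the derivatives of the cutoff, and with $N\asymp\log\lambda$ these overwhelm the $2^{-N}$ gain. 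The underlying idea is right --- for $|s|\le\lambda/2$ the frequency $\lambda$ dominates whatever oscillation $\varphi_{-s}$ carries --- but the correct implementation is non-stationary phase with a \emph{normalized} phase: expand $\varphi_{-s}(a(t))=\int_{\SO(2)}e^{(-is+1/2)A(ka(t))}\,dk$ and, for each fixed $k$, write the $t$-phase as $r(\pm t-\rho A(ka(t)))$ with $\rho=s/r$ bounded away from $\pm1$; since $|\partial_t A(ka(t))|\le 1$, this phase has derivative $\ge(1-|\rho|)$ in absolute value and bounded higher derivatives uniformly in $k$ and $\rho$, so a \emph{fixed} number of integrations by parts suffices (the boundary terms at $t=0$ produce the $O(r^{-2})$ main term that the Plancherel density $d\nu(r)\asymp r^2\,dr$ absorbs --- this is the one-dimensional shadow of the paper's nondegenerate critical point at $(0,0,Mk_\rho)$). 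With that repair your proof of \eqref{ineq small t} is actually simpler than the paper's four-dimensional stationary-phase analysis, and the restriction $|\rho|\le 1/2$ is exactly what prevents the critical point from appearing, consistent with \eqref{ineq all t} being sharp on $I_{\beta/4}$.
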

\begin{proof}
    The proofs of \eqref{ineq small t} and \eqref{ineq big t} are the main part of this section. We postpone them and prove \eqref{ineq all t} here by assuming (\ref{ineq small t}) and \eqref{ineq big t}.

    Clearly, \eqref{ineq small t} and \eqref{ineq big t} provide stronger bounds than \eqref{ineq all t} when $s\notin I_{\beta/4}$, so we only need to show the bound in \eqref{ineq all t} provided $s\in I_{\beta/4}$. Using the integral formula for $\BH^2$ under the polar coordinates, we have
    \begin{align*}
        \tilde{k_\lambda}(s) &= \int_{\BH^2} k_\lambda(x)\varphi_{-s}^{\BH^2}(x) dx\\
        &=c\int_0^\infty k_\lambda(a(t))\varphi_{-s}^{\BH^2}(a(t)) \sinh(t) dt,
    \end{align*}
    for some normalizing constant $c>0$. Because $k_\lambda$ has a compact support near the origin, there exists a smooth compactly supported function $b_1:\BR\to\BR$ so that $b_1(t)k_\lambda(a(t)) = k_\lambda(a(t))$ for all $t\in\BR$. Without loss of generality, we assume $b_1\equiv 1$ on $[-1,1]$ and vanishes outside $[-2,2]$. Hence,
    \begin{align}\label{bound using spherical function bound}
        \tilde{k_\lambda}(s) =c\int_0^2 k_\lambda(a(t))b_1(t)\varphi_{-s}^{\BH^2}(a(t)) \sinh(t) dt.
    \end{align}
    Using \cite[Lemma 2.8]
    {marshall2016p}, we have
    \begin{align*}
        k_\lambda(a(t))\ll\lambda^2(1+\lambda|t|)^{-1}.
    \end{align*}
    By \cite[Theorem 1.3]{marshall2016p}, we have
    \begin{align*}
        b_1(t)\varphi_{-s}^{\BH^2}(a(t)) \ll (1+|st|)^{-1/2}.
    \end{align*}
    Therefore, for $s\in I_{\beta/4}$, the above two bounds with the integral formula \eqref{bound using spherical function bound} imply the desired bound.
\end{proof}

\subsection{Bound of \texorpdfstring{$\tilde{k_\lambda}(s)$}{k(s)} for small \texorpdfstring{$s$}{s}}

Now we begin the proof of the inequality \eqref{ineq small t} in Proposition \ref{supnorm of klambda}.
Suppose $0\leq |s|\leq \lambda/2$. 
Since $\tilde{k_\lambda}$ is even, we assume $0\leq s\leq \lambda/2$.
Let $b_2\in C_c^\infty(\BH^2)$ be a smooth cutoff function supported in a neighborhood of origin satisfying
\begin{itemize}
    \item $b_2$ is $\SO(2)$-bi-invariant;
    \item $b_2(x)k_\lambda(x) = k_\lambda(x)$ for all $x\in\BH^2$.
\end{itemize}
We unfold $\tilde{k_\lambda}(t)$ by the inverse Harish-Chandra transform and then apply the integral formulas \eqref{eq:HC int for H2} and \eqref{eq:HC int for H3} to the spherical functions. Then we have
\begin{align}
    \tilde{k_\lambda}(s) & = \int_{\BH^2} k_\lambda(x) \varphi_{-s}^{\BH^2}(x) dx\notag\\
    &= \int_{\BH^2} b_2(x) k_\lambda(x) \varphi_{-s}^{\BH^2}(x) dx\notag\\
    &= \int_{\BH^2} b_2(x) \left(   \int_0^\infty h_\lambda(r)\varphi_r^{\BH^3}(x)    d\nu(r)        \right)\varphi_{-s}^{\BH^2}(x) dx\notag\\
    &=\int_0^\infty \left(  \int_{\BH^2} b_2(x) \varphi_{r}^{\BH^3}(x) \varphi_{-s}^{\BH^2}(x) dx  \right)   h_\lambda(r)d\nu(r) \notag\\
    &=\int_0^\infty \left(  \int_{\BH^2}\int_{\SO(2)} b_2(x) \varphi_{r}^{\BH^3}(x) e^{(-is+1/2)A(kx)} dk dx\right)   h_\lambda(r)d\nu(r)\notag\\
    &=\int_0^\infty \left(  \int_{\SO(2)}\int_{\BH^2} b_2(k^{-1}x) \varphi_{r}^{\BH^3}(k^{-1}x) e^{(-is+1/2)A(x)}dx dk \right)   h_\lambda(r)d\nu(r)\notag\\
    &=\int_0^\infty \left(  \int_{\SO(2)}\int_{\BH^2} b_2(x) \varphi_{r}^{\BH^3}(x) e^{(-is+1/2)A(x)}dx dk \right)   h_\lambda(r)d\nu(r)\notag\\
    &=\int_0^\infty   \left(  \int_{\BH^2} b_2(x) \varphi_{r}^{\BH^3}(x) e^{(-is+1/2)A(x)}dx  \right)   h_\lambda(r)d\nu(r)\notag\\
    &=\int_0^\infty    \left(  \int_{\BH^2} \int_{K_0} b_2(x) e^{(ir+1)A(kx)} e^{(-is+1/2)A(x)}dx dk \right)   h_\lambda(r)d\nu(r).\label{inner oscillatory int}
\end{align}
We denote the inner integral inside (\ref{inner oscillatory int}) by $J(r;\rho)$ with $\rho = s/r$, so
\begin{align*}
    J(r;\rho) &=  \int_{\BH^2} \int_{K_0} b_2(x) e^{A(kx)+A(x)/2} \exp(irA(kx)-isA(x))  dx dk\\
    &= \int_{\BR}\int_{\BR} \int_{K_0} b_2(n(x)a(t)\cdot o)e^{A(kn(x)a(t))+t/2} \exp(ir(A(kn(x)a(t))-\rho t)) e^{-t} dx dt dk\\
     &= \int_{\BR}\int_{\BR} \int_{M\backslash K_0} b_2(n(x)a(t)\cdot o)e^{A(kn(x)a(t))+t/2} \exp(ir(A(kn(x)a(t))-\rho t)) e^{-t} dx dt dk.
\end{align*}
 Recall that the factor $e^{-t}$ is from the hyperbolic metric, and $M$ is the centralizer of $A$ in $K_0$.
Let $b_3(x,t,k) =b_2(n(x)a(t)\cdot o)e^{A(kn(x)a(t))+t/2} e^{-t}$. It is clear that $b_3\in C_c^\infty (\BR^2\times M\backslash K_0)$. Hence,
\begin{align*}
    J(r;\rho) = \int_{\BR}\int_{\BR} \int_{M\backslash K_0} b_3(x,t,k) \exp(irF(x,t,k;\rho))  dx dt dk,
\end{align*}
where
\begin{align*}
    F(x,t,k;\rho) = A(kn(x)a(t)) - \rho t
\end{align*}
is the phase function of the oscillatory integral. 
Since $h_\lambda$ is a Paley-Wiener function concentrating near $\pm\lambda$ and $d\nu(r) = c\cdot r^2dr$ for some constant $c\neq0$, it may be seen that the bound  (\ref{ineq small t}) will follow from (\ref{inner oscillatory int}) and the following estimate of $ J(r;\rho)$.
\begin{proposition}
    Fix a constant $0<\delta<1$. We have $J(r;\rho)\ll_\delta r^{-2}$ if $\rho\in [0,\delta]$.
\end{proposition}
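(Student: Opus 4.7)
The plan is to perform the $k$-integration first to collapse the four-dimensional integral to a one-dimensional one, and then to extract the required $r^{-2}$ decay by a single integration by parts whose gain is uniform for $\rho \in [0,\delta]$. Carrying out the $K_0$-integration in the inner $\int_{M\backslash K_0}$ and using the identification with the spherical function on $\BH^3$, one obtains
\[
J(r;\rho) = \int_{\BH^2} b_2(x)\,\varphi_r^{\BH^3}(x)\,e^{(-i\rho r + 1/2)A(x)}\,dx.
\]
Passing to polar coordinates $x = k\cdot a(t')\cdot o$ on $\BH^2$ and using the $\SO(2)$-bi-invariance of $b_2$ together with the $K_0$-invariance of $\varphi_r^{\BH^3}$, the integral splits as
\[
J(r;\rho) = \int_0^\infty b_2(a(t'))\,\varphi_r^{\BH^3}(a(t'))\,\varphi_{-\rho r}^{\BH^2}(a(t'))\,\sinh(t')\,dt'.
\]
Substituting the closed form $\varphi_r^{\BH^3}(a(t'))=\sin(rt')/(r\sinh t')$ produces a prefactor $1/r$, so the claim reduces to the bound $\int_0^\infty b_2(a(t'))\sin(rt')\varphi_{-\rho r}^{\BH^2}(a(t'))\,dt'\ll_\delta r^{-1}$.

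To obtain this remaining $r^{-1}$, I would expand $\varphi_{-\rho r}^{\BH^2}(a(t'))=\int_B e^{(-i\rho r+1/2)A(ba(t'))}\,db$ and $\sin(rt')=(e^{irt'}-e^{-irt'})/(2i)$ into exponentials, producing two $(t',b)$-integrals with phases $G_\pm(t',b_\theta)=t'\mp\rho A(b_\theta a(t'))$. Using the explicit formula \eqref{Iwasawa A for bna} with $x=0$,
\[
\partial_{t'}A(b_\theta a(t')) = \frac{\cos^2(\theta/2)-e^{2t'}\sin^2(\theta/2)}{\cos^2(\theta/2)+e^{2t'}\sin^2(\theta/2)} \in [-1,1],
\]
so $|\partial_{t'}G_\pm|\in[1-\rho,1+\rho]\subset[1-\delta,1+\delta]$ uniformly in $\theta\in B$ and $\rho\in[0,\delta]$. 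Integrating by parts once in $t'$ against $e^{\pm irG_\pm}$ then yields a boundary contribution at $t'=0$ of size $O(1/(r(1-\delta)))$ and an integrated error of the same order (both uniform in $\theta$), using the smoothness and compact $t'$-support of $b_2$, smoothness of $e^{A/2}$, and boundedness of $1/\partial_{t'}G_\pm$ together with its derivatives.

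Integrating the resulting $O(r^{-1})$ bound over $B$ (whose total measure is one) and combining with the outer $1/r$ prefactor gives $J(r;\rho)\ll_\delta r^{-2}$, as required. The main obstacle is maintaining the lower bound $|\partial_{t'}G_\pm|\geq 1-\delta$ uniformly; this breaks down as $\rho\to 1$ (at $\theta=0$ for $G_+$ and $\theta=\pi$ for $G_-$), which is precisely why the hypothesis $\delta<1$ is needed. This degeneracy is consistent with the fact that $\tilde{k_\lambda}(s)$ is expected to be larger when $s$ lies near $\pm\lambda$ (i.e., when $\rho=s/r$ is close to $1$), as addressed separately by the estimate \eqref{ineq big t}.
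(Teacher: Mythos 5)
Your argument is correct, and it takes a genuinely different route from the paper. The paper treats $J(r;\rho)$ as a four\--dimensional oscillatory integral over $\BR^2\times M\backslash K_0$ with phase $F(x,t,k;\rho)=A(kn(x)a(t))-\rho t$: it locates the unique critical point $(0,0,Mk_\rho)$, computes the full Hessian $D_\rho$ with $\det D_\rho=16(1-\rho^2)$, and invokes stationary phase to get $r^{-4/2}=r^{-2}$. You instead collapse the $K_0$\--integral back into $\varphi_r^{\BH^3}$, exploit the closed form $\varphi_r^{\BH^3}(a(t'))=\sin(rt')/(r\sinh t')$ --- special to the complex/rank-one setting --- to extract one factor of $r^{-1}$ for free, and then obtain the second factor from a one\--dimensional non\--stationary phase estimate in $t'$ that is uniform over $b_\theta\in B$, since $\partial_{t'}A(b_\theta a(t'))\in[-1,1]$ forces $\partial_{t'}G_\pm\in[1-\delta,1+\delta]$; the $O(r^{-1})$ boundary contribution at $t'=0$ is exactly the shadow of the paper's critical point at the origin. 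Your reduction to $c\int_0^\infty b_2\,\varphi_r^{\BH^3}\,\varphi_{-\rho r}^{\BH^2}\sinh(t')\,dt'$ is in fact the same expression \eqref{the sup norm bound for large t after inversion} that the paper analyzes in the large-$|s|$ regime, so your argument unifies the two regimes at the cost of using the exact (rather than asymptotic) $\BH^3$ spherical function. What each approach buys: yours is shorter, avoids the Hessian computation entirely, and makes the uniform dependence on $\delta$ transparent through the single lower bound $|\partial_{t'}G_\pm|\geq 1-\delta$; the paper's stationary-phase argument is heavier but does not rely on an elementary formula for the spherical function and would transfer to settings where no such formula exists. The degeneracy you flag as $\rho\to1$ is the same one visible in the paper's $\det D_\rho=16(1-\rho^2)$, so the two proofs fail at the boundary for the same reason.
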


We will use the method of stationary phase to bound the oscillatory integral $J(r;\rho)$, so we first need to determine the critical points of $F$.
 Using the formula \eqref{Poincare}, we get the following formula about the Iwasawa projection $A$.

\begin{lemma}\label{lemma A}
    Given $g\in G_0$ with Iwasawa decomposition $g=nak$ where $a=a(t_0)\in A$ and $k =\begin{pmatrix} \alpha&\beta\\-\bar{\beta}&\bar{\alpha}\end{pmatrix}\in K_0$, we have, for $z\in\BC$ and $t\in\BR$,
    \begin{align}\label{iwasawa height}
        A(gn(z)a(t)) = t_0 + t - \log\left(|\alpha|^2+|\beta|^2(|z|^2+e^{2t}) - \alpha\bar{\beta}z-\bar{\alpha}\beta \bar{z}  \right).
    \end{align}
    By taking derivatives, for $x,t\in\BR$, we obtain
    \begin{align}\label{derivative of A about n}
        \frac{\partial}{\partial x} A(gn(x))|_{x=0} = \alpha\bar{\beta} + \bar{\alpha}\beta
    \end{align}
    and
    \begin{align}\label{derivative of A about a}
        \frac{\partial}{\partial t} A(ga(t))|_{t=0} = |\alpha|^2-|\beta|^2.
    \end{align}
\end{lemma}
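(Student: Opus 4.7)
The plan is to derive the formula \eqref{iwasawa height} directly from the fractional linear action \eqref{Poincare}, and then obtain \eqref{derivative of A about n} and \eqref{derivative of A about a} by straightforward differentiation of the resulting expression.

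First I would recall that under the Iwasawa decomposition $G_0 = NAK_0$, the function $A$ satisfies $A(h) = \log \bt(h \cdot o)$, where $\bt$ is the vertical coordinate on the upper half 3-space and $o = (0,1)$. Since $n(z) \cdot (w, \bs) = (w + z, \bs)$ preserves the $\bt$-coordinate and $a(t_0) \cdot (w, \bs) = (e^{t_0} w, e^{t_0} \bs)$ scales it by $e^{t_0}$, and since $n(z)a(t) \cdot o = (z, e^t)$, the formula for $A(gn(z)a(t))$ reduces to computing the $\bt$-coordinate of $k \cdot (z, e^t)$ where $k = \left(\begin{smallmatrix}\alpha & \beta \\ -\bar\beta & \bar\alpha\end{smallmatrix}\right) \in \SU(2)$. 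Applying \eqref{Poincare} directly to $k$ gives
\begin{align*}
\bt\bigl(k \cdot (z, e^t)\bigr) = \frac{e^t}{|{-\bar\beta z + \bar\alpha}|^2 + |\bar\beta|^2 e^{2t}},
\end{align*}
and expanding $|\bar\alpha - \bar\beta z|^2 = |\alpha|^2 + |\beta|^2 |z|^2 - \alpha\bar\beta z - \bar\alpha \beta \bar z$ yields the denominator in \eqref{iwasawa height}. Multiplying by the overall factor $e^{t_0}$ from $a(t_0)$ and taking $\log$ produces \eqref{iwasawa height}.

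For \eqref{derivative of A about n}, I would specialize \eqref{iwasawa height} to $z = x \in \BR$ and $t = 0$ and differentiate the logarithm at $x = 0$; the numerator of the derivative is $\alpha\bar\beta + \bar\alpha\beta$ (since the $|\beta|^2 \cdot 2x$ term vanishes at $x=0$), while the denominator at $x=0$ is $|\alpha|^2 + |\beta|^2 = 1$ by the $\SU(2)$ relation $k^*k = I$. For \eqref{derivative of A about a}, I would set $z = 0$ in \eqref{iwasawa height}, obtaining $A(ga(t)) = t_0 + t - \log(|\alpha|^2 + |\beta|^2 e^{2t})$, and differentiate at $t = 0$ to get $1 - 2|\beta|^2 = |\alpha|^2 - |\beta|^2$, again using $|\alpha|^2 + |\beta|^2 = 1$.

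The computation is mechanical and there is no real obstacle; the only care needed is the bookkeeping with complex conjugates when expanding $|\bar\alpha - \bar\beta z|^2$ and keeping track that the $|\beta|^2 e^{2t}$ term in the denominator is unaffected by the $x$-derivative at $x=0$.
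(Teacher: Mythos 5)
Your proof is correct and matches the paper's approach: the paper derives \eqref{iwasawa height} directly from the fractional linear action \eqref{Poincare} (using that $A(h)=\log\bt(h\cdot o)$ and that $n(z)$ and $a(t_0)$ act on the $\bt$-coordinate trivially and by scaling, respectively), and the two derivative formulas follow by the same routine differentiation you describe.
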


For simplicity, we let $\Psi$ and $\Theta$ be the functions on $K$ given by sending $k =\begin{pmatrix} \alpha&\beta\\-\bar{\beta}&\bar{\alpha}\end{pmatrix}\in K_0$ to $\alpha\bar{\beta} + \bar{\alpha}\beta$ and $|\alpha|^2-|\beta|^2$ respectively. Therefore (\ref{derivative of A about n}) and (\ref{derivative of A about a}) can be read as $\frac{\partial}{\partial x} A(gn(x))|_{x=0} = \Psi(\kappa(g))$ and $\frac{\partial}{\partial t} A(ga(t))|_{t=0} = \Theta(\kappa(g))$.
Moreover, it may be seen that $\Psi$ and $\Theta$ are left invariant under $M$. Hence, we will also treat them as functions on $M\backslash K_0$.

For doing calculations, we choose the following basis for $\fk$:
\begin{align}\label{basis for lie algebra k}
    X_1 = \begin{pmatrix}0&i\\i&0\end{pmatrix}, \, X_2 = \begin{pmatrix}0&-1\\1&0\end{pmatrix}, \, X_3 = \begin{pmatrix}i&0\\0&-i\end{pmatrix}.
\end{align}
\begin{proposition}\label{critical for k}
    An element $g\in G_0$ lies in $AK_0$ if and only if $\frac{\partial}{\partial t} A(\exp(tX)g)|_{t=0} = 0$ for arbitrary $X\in\fk$.
\end{proposition}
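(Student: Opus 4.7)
The plan is to interpret the Iwasawa projection $A$ as a height function on $\BH^3$ and translate the vanishing condition into a geometric statement about the radial direction from the basepoint $o$. Writing $g = n(z_g)\exp(A(g)H)k_g$ in Iwasawa form and using the fractional linear action \eqref{Poincare}, one computes $g \cdot o = (z_g, e^{A(g)})$. Hence $A(g) = \log \bt(g \cdot o)$, and $g \cdot o$ lies on the $\bt$-axis if and only if $z_g = 0$, i.e., if and only if $g \in AK_0$.

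Since $\exp(tX) \in K_0$ fixes $o$, setting $x_0 = g \cdot o$ we may rewrite
\begin{align*}
\frac{d}{dt} A(\exp(tX) g)\Big|_{t=0} = \frac{d}{dt} \log \bt(\exp(tX) \cdot x_0)\Big|_{t=0} = \langle \nabla \log \bt(x_0), V_X \rangle_{x_0},
\end{align*}
where $V_X = \frac{d}{dt}\exp(tX) \cdot x_0|_{t=0}$ and $\langle \cdot, \cdot \rangle$ is the hyperbolic inner product. The group $K_0 = \SU(2)$ acts by isometries fixing $o$, and its isotropy representation on $T_o\BH^3$ is (up to the standard double cover) the full rotation group $\SO(3)$, so $K_0$ acts transitively on each geodesic sphere centered at $o$. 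Consequently, if $x_0 \neq o$, the orbit $S := K_0 \cdot x_0$ is a $2$-sphere and the infinitesimal action $X \mapsto V_X$ sends $\fk$ surjectively onto $T_{x_0} S$.

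Therefore the vanishing condition for all $X \in \fk$ is equivalent to $\nabla \log \bt(x_0) \perp T_{x_0} S$, which in turn says that $\nabla \log \bt(x_0)$ is tangent to the geodesic from $o$ to $x_0$. In the upper half-space model the gradient $\nabla \log \bt = \bt\, \partial_\bt$ is vertical, whereas the geodesic joining $o = (0,1)$ to $x_0 = (z_0, \bt_0)$ is tangent to the vertical direction at $x_0$ exactly when $z_0 = 0$. Combined with the first paragraph, this yields both implications of the proposition. The degenerate case $x_0 = o$ corresponds to $g \in K_0 \subset AK_0$; there both sides hold trivially. The only step requiring care is the surjectivity $\fk \twoheadrightarrow T_{x_0} S$, which is what reduces the derivative condition to the clean geometric perpendicularity statement.
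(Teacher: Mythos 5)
Your proof is correct, and it takes a genuinely different route from the paper's. The paper argues by direct computation: for the forward direction it evaluates $A(\exp(tX_i)a(t_0))$ for the explicit basis $X_1,X_2,X_3$ of $\fk$ using the closed formula (\ref{iwasawa height}), and for the converse it writes $g=n(z_0)a(t_0)$ with $z_0\neq 0$ and extracts $z_0+\bar z_0=0$ and $2y_0=0$ from the vanishing of the $X_2$- and $X_1$-derivatives, a contradiction. You instead note that $A(g)=\log\bt(g\cdot o)$, so the hypothesis says the height function $\log\bt$ is critical along the $K_0$-orbit of $x_0=g\cdot o$; since that orbit is the full geodesic sphere about $o$ through $x_0$ and $X\mapsto V_X$ surjects onto its tangent space, Gauss's lemma turns the condition into ``$\nabla\log\bt(x_0)=\bt\,\partial_\bt$ is radial from $o$,'' which holds exactly on the vertical axis, i.e.\ exactly when $g\in AK_0$. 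The two load-bearing steps you flag --- surjectivity of $\fk\to T_{x_0}(K_0\cdot x_0)$ for $x_0\neq o$ (the orbit map $K_0\to K_0/M\cong S^2$ is a submersion), and the fact that a semicircular geodesic is never vertical at an interior point, so the tangent at $x_0$ to the geodesic from $o$ is vertical only when $z_0=0$ --- both hold, and the degenerate case $x_0=o$ is handled. What your approach buys is a coordinate-free explanation that generalizes verbatim to rank-one symmetric spaces of any dimension; what the paper's computation buys is that it exercises the same explicit formula (\ref{iwasawa height}) and the basis $X_1,X_2,X_3$ that the subsequent Hessian computations for the stationary-phase argument rely on, so nothing extra (isotropy representation, Gauss lemma) needs to be invoked.
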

\begin{proof}
    Without loss of generality, assume $g = a(t_0)\in A$. We only have to check $\frac{\partial}{\partial t} A(\exp(tX)g)|_{t=0} = 0$ holds for a basis of $\fk$.  By (\ref{iwasawa height}), we have $A(\exp(tX_1)a(t_0)) =A(\exp(tX_2)a(t_0)) = t_0 -\log(\cos^2t + e^{2t}\sin^2t)$ and $A(\exp(tX_3)a(t_0))=t_0 $, so $\frac{\partial}{\partial t} A(\exp(tX_i)g)|_{t=0} = 0$ with $i=1,2,3$.
    
    Conversely, we suppose $g\notin AK$ but $\frac{\partial}{\partial t} A(\exp(tX)g)|_{t=0} = 0$ for arbitrary $X\in\fk$. We can assume $g = n(z_0)a(t_0)$ with $z_0\neq 0$. By (\ref{iwasawa height}), we have
    \begin{align*}
        \frac{\partial}{\partial t} A(\exp(tX_2)g)|_{t=0} &=  \frac{\partial}{\partial t}\left[ t_0  - \log\left(\cos^2(t)+\sin^2(t)(|z_0|^2+e^{2t_0}) + \cos (t)\sin (t)(z_0+\bar{z}_0) \right)\right]|_{t=0}\\
        &=z_0 + \bar{z}_0 = 0,
    \end{align*}
    implying $z_0 = iy_0\neq 0$ is purely imaginary. But
    \begin{align*}
        \frac{\partial}{\partial t} A(\exp(tX_1)g)|_{t=0} &=  \frac{\partial}{\partial t}\left[ t_0  - \log\left(\cos^2(t)+\sin^2(t)(y_0^2+e^{2t_0}) + 2\cos (t)\sin (t)y_0 \right)\right]|_{t=0}\\
        &=2y_0 = 0
    \end{align*}
    gives a contradiction.
\end{proof}

\begin{proposition}\label{critical point of F}
    For any  fixed $0<\delta<1$ and $\rho\in[0,\delta]$, the only critical point of $F(\cdot,\cdot,\cdot;\rho)$ (with respect to the variables (x,t,k)) is  $(0,0,k^\prime)$ with $k^\prime = Mk_\rho$. Here
    $$k_\rho=\frac{1}{\sqrt{2}} \begin{pmatrix}\sqrt{1+\rho}& i\sqrt{1-\rho} \\ i\sqrt{1-\rho} &\sqrt{1+\rho}\end{pmatrix}.$$
\end{proposition}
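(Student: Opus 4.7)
The plan is to reduce each of the three vanishing conditions ($\partial_x F = \partial_t F = 0$ and the $k$-gradient vanishing modulo $\fm$) to an algebraic equation on $(x_0,t_0)$ together with the entries $\alpha,\beta$ of $k_0\in K_0$, and then solve the resulting system modulo the left $M$-action.

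Since $F$ depends on $k$ only through $A(kn(x_0)a(t_0))$, Proposition \ref{critical for k} shows that the vanishing of all $k$-directional derivatives modulo $\fm$ is equivalent to $k_0n(x_0)a(t_0)\in AK_0$; geometrically, $k_0\cdot(n(x_0)a(t_0)\cdot o)$ lies on the vertical geodesic through $o$. Expanding this condition via the fractional linear action \eqref{Poincare} yields the single complex equation
\[
(\alpha^2-\beta^2)\,x_0 = \alpha\beta\,(x_0^2+e^{2t_0}-1). \tag{$\star$}
\]
Direct differentiation of \eqref{iwasawa height} gives the remaining two critical equations
\[
\Psi(k_0) = 2|\beta|^2 x_0,\qquad 2|\beta|^2 e^{2t_0} = (1-\rho)\,D_0,
\]
where $\Psi(k_0) = \alpha\bar\beta+\bar\alpha\beta$ and $D_0 = |\alpha|^2+|\beta|^2(x_0^2+e^{2t_0})-\Psi(k_0)x_0$. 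Combining these two relations expresses $|\beta|^2$ as an explicit positive function of $(x_0,t_0,\rho)$.

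To solve the system, I would normalize the coset representative via the left $M$-action $(\alpha,\beta)\mapsto(e^{i\theta}\alpha,e^{i\theta}\beta)$ so that $\alpha\in\BR_{>0}$, and write $\beta = b_1+ib_2$. The identity $\Psi(k_0) = 2|\beta|^2 x_0$ then becomes $\alpha b_1 = |\beta|^2 x_0$. In the generic case $b_2\neq 0$, the imaginary part of $(\star)$ reduces to $\alpha(x_0^2+e^{2t_0}-1) = -2b_1 x_0$; substituting this into the real part of $(\star)$ and using $\alpha^2+|\beta|^2 = 1$ forces $(\alpha^2+b_1^2+b_2^2)\,x_0 = 0$, hence $x_0 = 0$. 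It then follows that $b_1 = 0$ and subsequently $t_0 = 0$. The degenerate subcase $b_2 = 0$ is ruled out by the explicit formula for $|\beta|^2$ together with the assumption $\rho<1$. Plugging back gives $|\beta|^2 = (1-\rho)/2$ and $\alpha = \sqrt{(1+\rho)/2}$, so $Mk_0 = Mk_\rho$; conversely, a direct check at $(0,0,k_\rho)$ (using $\Psi(k_\rho) = 0$, $D_0 = 1$, and $k_\rho\in K_0\subset AK_0$) verifies all three equations.

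The main obstacle is the algebraic bookkeeping in the uniqueness step: one complex equation from the Iwasawa condition must be combined with two real partial-derivative identities to eliminate the four real unknowns $(x_0,t_0,b_1,b_2)$, and the degenerate subcase $b_2 = 0$ must be treated separately because it is precisely where the Hessian of $F$ would degenerate as $\rho\to 1$.
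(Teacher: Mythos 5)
Your reduction of the $k$-criticality to $k_0n(x_0)a(t_0)\in AK_0$ via Proposition \ref{critical for k} is exactly the paper's first step, and your two partial-derivative identities $\Psi(k_0)=2|\beta|^2x_0$ and $2|\beta|^2e^{2t_0}=(1-\rho)D_0$ are equivalent to the paper's conditions $\Psi(u')=0$, $\Theta(u')=\rho$ (the paper packages them through $u'=\kappa(k'n(x')a(t'))$ using Lemma \ref{lemma A}). Where you genuinely diverge is the step forcing $x_0=t_0=0$: the paper argues geometrically that $k'\BH^2=a'k_\rho\BH^2$ is a hemisphere, so it meets the vertical geodesic $l$ only at $o$, whence $k'n(x')a(t')\cdot o=o$; you instead eliminate algebraically from the complex equation $(\star)$ and the norm relation $\alpha^2+|\beta|^2=1$. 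Your elimination is correct in the main case $b_2\neq0$ (I checked: the imaginary part of $(\star)$ gives $\alpha E=-2b_1x_0$, and substituting into the real part yields $(\alpha^2+b_1^2+b_2^2)x_0=0$), and it avoids the paper's slightly delicate hemisphere-versus-vertical-plane discussion at the cost of more case analysis.

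Two points in your case analysis need tightening. First, the normalization $\alpha\in\BR_{>0}$ presupposes $\alpha\neq0$; the case $\alpha=0$ must be excluded separately (it forces $x_0=0$ from the first equation and then $2e^{2t_0}=(1-\rho)e^{2t_0}$ from the second, impossible). Second, your stated reason for discarding $b_2=0$ — the positivity of the explicit formula for $|\beta|^2$ — only rules out $\beta=0$; the subcase $b_2=0$, $b_1\neq0$ requires its own short computation (from $\alpha=b_1x_0$ and the real part of $(\star)$ one gets either $e^{2t_0}=0$ or $\alpha=0$, both impossible). Both gaps are easily filled.

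Finally, a point where your argument and the paper's proof share the same oversight: your own algebra yields $b_2=\pm\sqrt{(1-\rho)/2}$, and the two signs give two \emph{distinct} cosets $Mk_\rho$ and $M\overline{k_\rho}$ for $\rho<1$ (one checks directly that $\overline{k_\rho}\notin Mk_\rho$, and that $\Psi(\overline{k_\rho})=0$, $\Theta(\overline{k_\rho})=\rho$, so $(0,0,M\overline{k_\rho})$ is also a critical point; indeed $F$ is invariant under entrywise complex conjugation of $k$, so critical points come in conjugate pairs). You, like the paper, silently keep only one sign. The uniqueness claim as literally stated is therefore off by one point; this is harmless downstream, since stationary phase with two nondegenerate critical points still gives $J(r;\rho)\ll r^{-2}$, but a complete write-up should either record both critical points or note the conjugation symmetry.
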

\begin{proof}
    We let $l$ be the image of $A$ in $\BH^3$, which is a vertical geodesic through the origin $o$.
    Suppose that $(x^\prime,t^\prime,k^\prime)\in \BR^2\times M\backslash K_0$ is a critical point of $F$. By Proposition \ref{critical for k}, taking the differentiation with respect to the $k$ variable equal to zero is equivalent to
    \begin{align*}
        k^\prime n(x^\prime) a(t^\prime) = a^\prime u^\prime
    \end{align*}
    for some $a^\prime\in A$ and $u^\prime\in M\backslash K_0$. Note that this is well-defined because $M$ centralizes $A$.
    Now we take the derivatives with respect to the $x,t$ variables. Applying (\ref{derivative of A about n}) and (\ref{derivative of A about a}), we obtain
    \begin{align*}
        \frac{\partial}{\partial x} F(x^\prime+x,t^\prime,k^\prime)|_{x=0} &= \frac{\partial}{\partial x} \left( A(k^\prime n(x^\prime+x)a(t^\prime))-\rho t^\prime\right)|_{x=0} \\
        &= \frac{\partial}{\partial x} A(k^\prime n(x^\prime)a(t^\prime)n(e^{-t^\prime}x))|_{x=0}\\
        &=\frac{\partial}{\partial x}  A(a^\prime u^\prime n(e^{-t^\prime}x))|_{x=0}\\
        &=e^{-t^\prime}\Psi(u^\prime),
    \end{align*}
    and
    \begin{align*}
        \frac{\partial}{\partial t} F(x^\prime,t^\prime+t,k^\prime)|_{t=0} &= \frac{\partial}{\partial t} \left( A(k^\prime n(x^\prime)a(t^\prime+t))-\rho (t^\prime+t)\right)|_{t=0} \\
        &=\frac{\partial}{\partial t}  A(k^\prime n(x^\prime)a(t^\prime)a(t))|_{t=0} - \rho\\
        &=\frac{\partial}{\partial t}  A(a^\prime u^\prime a(t))|_{t=0} - \rho\\
        &=\Theta(u^\prime) - \rho.
    \end{align*}
    Therefore, $\Psi(u^\prime) = 0$ and $\Theta(u^\prime) = \rho$. Direct computation shows the only solution is $u^\prime = Mk_\rho$. Recall that we have $k^\prime n(x^\prime) a(t^\prime) = a^\prime u^\prime$. We choose a representative $k^\prime\in K_0$ so that the relation $k^\prime n(x^\prime) a(t^\prime) = a^\prime k_\rho$ holds as group elements.
    We claim that
    \begin{align*}
         k^\prime\BH^2 = k^\prime n(x^\prime) a(t^\prime) \BH^2 = a^\prime k_\rho\BH^2.
    \end{align*}
    is a hemisphere in $\BH^3$.
    Suppose otherwise, i.e., $k^\prime\BH^2$ is a vertical plane in $\BH^3$. Then $k_\rho\BH^2$ is also a vertical plane, containing $l$. 
    Therefore, $k_\rho\in M\SO(2)$ and we may find $\theta_1,\theta_2\in\BR$ so that
    \begin{align*}
        k_\rho = \begin{pmatrix}
            e^{i\theta_1}&\\& e^{-i\theta_1}
        \end{pmatrix}
        \begin{pmatrix}
            \cos(\theta_2)&\sin(\theta_2)\\-\sin(\theta_2)&\cos(\theta_2)
        \end{pmatrix}=\begin{pmatrix}
            e^{i\theta_1}\cos(\theta_2)&e^{i\theta_1}\sin(\theta_2)\\-e^{-i\theta_1}\sin(\theta_2)&e^{-i\theta_1}\cos(\theta_2)
        \end{pmatrix}.
    \end{align*}
    Therefore, we have
    \begin{align*}
        \frac{\sqrt{1+\rho}}{\sqrt{2}}=e^{i\theta_1}\cos(\theta_2)=e^{-i\theta_1}\cos(\theta_2),\quad \frac{i\sqrt{1-\rho}}{\sqrt{2}}=e^{i\theta_1}\sin(\theta_2)=-e^{-i\theta_1}\sin(\theta_2),
    \end{align*}
    which is a contradiction because no such $\theta_1,\theta_2$ satisfy the above equations.
    Hence, $k^\prime\BH^2$ is a hemisphere, and the intersection of $l$ with $k^\prime\BH^2$ is a single point, which is the origin $o$.
    Acting $k^\prime n(x^\prime) a(t^\prime) = a^\prime k_\rho$ at $o$, it may be seen that the point 
    \begin{align*}
        k^\prime(x^\prime,e^{t^\prime}) = k^\prime n(x^\prime)a(t^\prime)\cdot o= a^\prime k_\rho \cdot o = a^\prime  \cdot o 
    \end{align*}
    lies on both $k^\prime \BH^2$ and $l$. Hence, $k^\prime n(x^\prime) a(t^\prime)\cdot o=o$, and so $x^\prime = t^\prime = 0$.

    Conversely, it is easy to see $(0,0,Mk_\rho)$ is a critical point of $F$.
\end{proof}

The tangent space at $M$ in the quotient space $M\backslash K_0$ can be viewed as a two-dimensional subspace of $\fk$ that is transversal to the Lie algebra of $M$. We will choose the subspace of $\fk$ spanned by the vectors  $X_1$ and $X_2$.
By the non-stationary phase, the integral in $J(s;\rho)$ over the region away from the critical points decays rapidly, so we only need to estimate $J(s;\rho)$ by the integrals over some small neighborhoods of the critical points so that we can represent the $k$ variable in $M\backslash K_0$ via the exponential map. To sum up, we have
\begin{align*}
    J(r;\rho) = \tilde{J}(r;\rho) +  O_N(r^{-N})
\end{align*}
with
\begin{align*}
     \tilde{J}(r;\rho)= \iiiint_\BR \tilde{b_3} (x,t,s_1,s_2) \exp(ir\tilde{F}(x,t,s_1,s_2;\rho)) dxdtds_1ds_2.
\end{align*}
Here $\tilde{b_3} (x,t,s_1,s_2) $ is a smooth function supported in a small neighborhood of $(0,0,0,0)$  and 
\begin{align*}
    \tilde{F}(x,t,s_1,s_2;\rho)  = A(\exp(s_1X_1+s_2X_2)k_\rho n(x)a(t)) - \rho t.
\end{align*}
is the phase functions of the oscillatory integral $\tilde{J}(r;\rho)$, with the only critical points at $(0,0,0,0)$. It remains to calculate the Hessian of $\tilde{F}$ at $(0,0,0,0)$.

\begin{lemma}\label{Iwasawa k}
    For $g= \begin{pmatrix}a &b \\c &d \end{pmatrix}\in G_0$, its Iwasawa projection to $K_0$ is given by the formula
    \begin{align*}
        \kappa(g) = \frac{1}{\sqrt{|c|^2+|d|^2}}\begin{pmatrix}{\bar{d}} &-{\bar{c}} \\{c}&{d} \end{pmatrix},
    \end{align*}
    that is, $g\in NA\kappa(g)$.
\end{lemma}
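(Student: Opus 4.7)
The plan is a direct verification based on the uniqueness of the Iwasawa decomposition $G_0 = NAK_0$. Since the factors $n,a,k$ in a decomposition $g = nak$ are uniquely determined, it suffices to exhibit a decomposition: I will produce an element $k_0 \in K_0$ equal to the right-hand side of the claimed formula and check that $g k_0^{-1} \in NA$.

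First, I would verify that the proposed matrix
\[
k_0 := \frac{1}{\sqrt{|c|^2+|d|^2}}\begin{pmatrix}\bar{d} & -\bar{c} \\ c & d \end{pmatrix}
\]
lies in $K_0 = \SU(2)$. Writing $\alpha = \bar{d}/\sqrt{|c|^2+|d|^2}$ and $\beta = -\bar{c}/\sqrt{|c|^2+|d|^2}$, one sees $k_0$ has the form $\left(\begin{smallmatrix} \alpha & \beta \\ -\bar\beta & \bar\alpha\end{smallmatrix}\right)$ with $|\alpha|^2+|\beta|^2 = 1$, so $k_0 \in \SU(2)$. (Note that $|c|^2+|d|^2 > 0$ because $g$ is invertible, so the square root is defined.)

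Next I would compute the product
\[
g k_0^{-1} = \frac{1}{\sqrt{|c|^2+|d|^2}}\begin{pmatrix} a & b \\ c & d \end{pmatrix}\begin{pmatrix} d & \bar{c} \\ -c & \bar{d} \end{pmatrix}.
\]
The $(2,1)$ entry is $cd - dc = 0$, the $(1,1)$ entry is $ad - bc = 1$ since $g \in \SL(2,\BC)$, and the $(2,2)$ entry is $|c|^2+|d|^2$. Thus
\[
g k_0^{-1} = \begin{pmatrix} (|c|^2+|d|^2)^{-1/2} & (a\bar{c}+b\bar{d})/\sqrt{|c|^2+|d|^2} \\ 0 & (|c|^2+|d|^2)^{1/2} \end{pmatrix},
\]
which is upper triangular with reciprocal positive real diagonal entries. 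Such a matrix factors uniquely as $n(z)a(t)$ for $z \in \BC$ and $t \in \BR$: take $t = -\log(|c|^2+|d|^2)$ and $z = (a\bar c + b\bar d)/(|c|^2+|d|^2)$. This shows $gk_0^{-1} \in NA$, hence $g \in NAk_0$, and uniqueness of the Iwasawa decomposition forces $\kappa(g) = k_0$.

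There is no real obstacle here; the only subtlety is ensuring $|c|^2 + |d|^2 \neq 0$ (immediate from $\det g = 1$) and keeping track of conjugations so that $k_0$ ends up in $\SU(2)$. I would present the verification compactly in the order above.
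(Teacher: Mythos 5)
Your verification is correct and complete: $k_0\in\SU(2)$, $gk_0^{-1}$ is upper triangular with positive real diagonal, hence lies in $NA$, and uniqueness of the Iwasawa decomposition gives $\kappa(g)=k_0$. This is essentially the same direct computation the paper has in mind (its one-line proof computes the action of $g$ on $\BH^3$ to read off the $NA$ part); your matrix-level presentation is if anything more self-contained.
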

\begin{proof}
    Directly follows from computing the action of $g$ on $\BH^3$.
\end{proof}

\begin{lemma}\label{gradient of psi theta}
    Given $k=\begin{pmatrix}\alpha&{\beta}\\-\bar{\beta}&\bar{\alpha}\end{pmatrix}\in K_0$, the gradients of $\Psi$ and $\Theta$ at $k$ are
    \begin{align*}
        &\nabla_{K_0}\Psi(k):=\left. \left(   \frac{\partial}{\partial t}\Psi(\exp(tX_1)k) ,\frac{\partial}{\partial t}\Psi(\exp(tX_2)k) ,\frac{\partial}{\partial t}\Psi(\exp(tX_3)k)    \right)\right|_{t=0}\\
        =&\left(-i(\alpha^2-\bar{\alpha}^2-\beta^2+\bar{\beta}^2), -(\alpha^2+\bar{\alpha}^2-\beta^2-\bar{\beta}^2),0 \right)
    \end{align*}
    and
    \begin{align*}
        \nabla_{K_0}\Theta(k)=\left( 2i(\alpha\beta-\bar{\alpha}\bar{\beta}), 2(\alpha\beta+\bar{\alpha}\bar{\beta}), 0\right).
    \end{align*}
    In particular,
    \begin{align*}
        &\nabla\Psi(k_\rho)
        =\left(0, -2,0 \right)
    \end{align*}
    and
    \begin{align*}
        \nabla\Theta(k_\rho)=\left( -2\sqrt{1-\rho^2}, 0, 0\right).
    \end{align*}
\end{lemma}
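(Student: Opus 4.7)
The proof is a direct matrix computation: for each basis element $X_j \in \mathfrak{k}$, I would compute the one-parameter subgroup $\exp(tX_j)$ in closed form, multiply by $k$ to read off new Iwasawa/$K_0$-matrix entries $(\alpha'(t),\beta'(t))$, and then differentiate $\Psi$ and $\Theta$ at $t=0$. The exponentials are immediate from $X_1^2=X_2^2=-I$: we have $\exp(tX_1)=\cos(t)I+\sin(t)X_1$, $\exp(tX_2)=\cos(t)I+\sin(t)X_2$, and $\exp(tX_3)=\mathrm{diag}(e^{it},e^{-it})$.

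For each $X_j$, I would read off $\alpha'(t),\beta'(t)$ from the product $\exp(tX_j)k$ and verify quickly that the resulting matrix is again of the form $\left(\begin{smallmatrix}\alpha'&\beta'\\-\bar\beta'&\bar\alpha'\end{smallmatrix}\right)$ (a good consistency check). Since $\Psi=\alpha'\bar\beta'+\bar\alpha'\beta'=2\mathrm{Re}(\alpha'\bar\beta')$ and $\Theta=|\alpha'|^2-|\beta'|^2$, I only need to expand $\alpha'\bar\beta'$ (and take real/imaginary parts, modulus squared) and keep the terms in $\sin t\cos t$, since the $\cos^2 t$ and $\sin^2 t$ terms have derivative zero at $t=0$. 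For $X_3$, both $|\alpha'|^2=|\alpha|^2$ and $|\beta'|^2=|\beta|^2$ are constant in $t$, and $\alpha'\bar\beta'=\alpha\bar\beta$ is also constant; this gives the zero third component, which is consistent with $X_3$ generating $M$ and $\Psi,\Theta$ being left $M$-invariant.

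Carrying this out for $X_1$ gives $\alpha'=\alpha\cos t-i\bar\beta\sin t$, $\beta'=\beta\cos t+i\bar\alpha\sin t$, so $\alpha'\bar\beta'=\alpha\bar\beta\cos^2 t-i(\alpha^2+\bar\beta^2)\sin t\cos t-\alpha\bar\beta\sin^2 t$; differentiating and adding the conjugate yields the first component $-i(\alpha^2-\bar\alpha^2-\beta^2+\bar\beta^2)$ of $\nabla_{K_0}\Psi$. The parallel expansion of $|\alpha'|^2-|\beta'|^2$ gives $2i(\alpha\beta-\bar\alpha\bar\beta)$. The $X_2$ computation is analogous (the rotation is now real rather than "imaginary"), yielding the second components. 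The specialization at $k_\rho$ is then immediate: $\alpha=\sqrt{(1+\rho)/2}$ is real and $\beta=i\sqrt{(1-\rho)/2}$ is purely imaginary, so $\bar\alpha=\alpha$, $\bar\beta=-\beta$, giving $\alpha^2=\bar\alpha^2=(1+\rho)/2$ and $\beta^2=\bar\beta^2=-(1-\rho)/2$, and $\alpha\beta=i\sqrt{1-\rho^2}/2=-\bar\alpha\bar\beta$; substituting reproduces $(0,-2,0)$ and $(-2\sqrt{1-\rho^2},0,0)$.

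There is no real obstacle; the whole proof is bookkeeping. The only place to be careful is matching signs and complex conjugates in the product $\alpha'\bar\beta'$, and in particular remembering that $(-i)(-i)=-1$ while $(-i)(i)=1$ when expanding $|\alpha'|^2$ and $|\beta'|^2$ for the $X_1$ computation. A sensible presentation is to prove the general formulas for $\nabla_{K_0}\Psi(k)$ and $\nabla_{K_0}\Theta(k)$ first by the above expansion, and then substitute the explicit entries of $k_\rho$ at the end.
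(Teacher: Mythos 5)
Your proposal is correct and follows essentially the same route as the paper: compute $\exp(tX_j)k$ explicitly (using $X_1^2=X_2^2=-I$), read off the new entries $(\alpha',\beta')$, and differentiate $\Psi=\alpha'\bar\beta'+\bar\alpha'\beta'$ and $\Theta=|\alpha'|^2-|\beta'|^2$ at $t=0$, with the $X_3$ component vanishing for the same reason ($M$-invariance). The paper likewise writes out only the $X_1$ case in full and notes that $X_2$, $X_3$ are similar, so there is nothing missing.
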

\begin{proof}
    Since
    \begin{align*}
        \exp(tX_1)k =  \begin{pmatrix}\alpha \cos t-i\bar{\beta}\sin t& {\beta}\cos t +i\bar{\alpha}\sin t \\  -\bar{\beta}\cos t +i\alpha\sin t& \bar{\alpha}\cos t+i\beta\sin t\end{pmatrix},
    \end{align*}
    we have
    \begin{align*}
        \Psi(\exp(tX_1)k) &= (\alpha \cos t-i\bar{\beta}\sin t)(\bar{\beta}\cos t -i\alpha\sin t)+(\bar{\alpha}\cos t+i\beta\sin t)({\beta}\cos t +i\bar{\alpha}\sin t) \\
        &=(\alpha\bar{\beta}+\bar{\alpha}\beta)(\cos^2t-\sin^2 t)-i(\alpha^2-\bar{\alpha}^2-\beta^2+\bar{\beta}^2)\sin t \cos t
    \end{align*}
    and
    \begin{align*}
        \Theta(\exp(tX_1)k) &= |\alpha \cos t-i\bar{\beta}\sin t|^2-|{\beta}\cos t +i\bar{\alpha}\sin t|^2\\
        &=(|\alpha|^2-|\beta|^2)(\cos^2 t-\sin^2 t)+2i(\alpha\beta-\bar{\alpha}\bar{\beta})\sin t\cos t.
    \end{align*}
    Therefore, $\frac{\partial}{\partial t}\Psi(\exp(tX_1)k)|_{t=0} = -i(\alpha^2-\bar{\alpha}^2-\beta^2+\bar{\beta}^2)$ and $\frac{\partial}{\partial t}\Theta(\exp(tX_1)k)|_{t=0} = 2i(\alpha\beta-\bar{\alpha}\bar{\beta})$

    The calculations for $X_2$ and $X_3$ are similar.
\end{proof}

\begin{proposition}
    The Hessian of $\tilde{F}$ at $(0,0,0,0)$ is
    \begin{align*}
        D_\rho =\begin{pmatrix}-(1-\rho)&0&0&-2\\0&-(1-\rho^2)&-2\sqrt{1-\rho^2}&0\\0&-2\sqrt{1-\rho^2}&0&0\\-2&0&0&0\end{pmatrix}, 
    \end{align*}
    so its determinant is
    \begin{align*}
        \det D_\rho=16(1-\rho^2).
    \end{align*}
\end{proposition}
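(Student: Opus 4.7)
The plan is to compute each second partial derivative of $\tilde{F}$ at the origin directly, organizing the work by exploiting the fact that $X_1,X_2 \in \fk$, and then to assemble the determinant via a block-diagonal rearrangement.

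First I would observe the key group-theoretic simplification. Since $X_1,X_2\in\fk$, the element $\exp(s_1X_1+s_2X_2)k_\rho$ lies in $K_0$ for all $s_1,s_2$, and $A$ vanishes on $K_0$. Consequently $\tilde{F}(0,0,s_1,s_2) = A(\exp(s_1X_1+s_2X_2)k_\rho) - 0 = 0$ identically in $s_1,s_2$, which immediately kills all four pure second derivatives $\partial_{s_i}\partial_{s_j}\tilde{F}$ at the origin. This accounts for the lower-right $2\times 2$ zero block.

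Next I would handle the $(x,t)$ block. Setting $s_1=s_2=0$ reduces $\tilde{F}$ to $A(k_\rho n(x)a(t)) - \rho t$, to which Lemma~\ref{lemma A} applies with $\alpha=\sqrt{(1+\rho)/2}$, $\beta=i\sqrt{(1-\rho)/2}$, so $|\alpha|^2=(1+\rho)/2$, $|\beta|^2=(1-\rho)/2$, $\Psi(k_\rho)=\alpha\bar\beta+\bar\alpha\beta=0$, and $\Theta(k_\rho)=\rho$. Differentiating the logarithm $-\log D(x,t)$ with $D(x,t)=|\alpha|^2+|\beta|^2(x^2+e^{2t})-(\alpha\bar\beta+\bar\alpha\beta)x$ yields, at the origin, $\partial_x^2\tilde{F}=-2|\beta|^2+\Psi(k_\rho)^2=-(1-\rho)$, $\partial_t^2\tilde{F}=-4|\beta|^2+4|\beta|^4=-(1-\rho^2)$, and $\partial_x\partial_t\tilde{F}=-2|\beta|^2\Psi(k_\rho)=0$.

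For the mixed blocks $\partial_x\partial_{s_j}\tilde{F}$ and $\partial_t\partial_{s_j}\tilde{F}$, I would use \eqref{derivative of A about n} and \eqref{derivative of A about a} to write, at $(x,t)=(0,0)$,
\begin{align*}
\partial_x\tilde{F}\big|_{x=t=0} = \Psi\bigl(\kappa(\exp(s_1X_1+s_2X_2)k_\rho)\bigr),\quad
\partial_t\tilde{F}\big|_{x=t=0} = \Theta\bigl(\kappa(\exp(s_1X_1+s_2X_2)k_\rho)\bigr)-\rho.
\end{align*}
Since $\exp(s_1X_1+s_2X_2)k_\rho\in K_0$, the Iwasawa projection $\kappa$ is the identity here, so differentiating in $s_j$ at $0$ gives precisely the components of $\nabla_{K_0}\Psi(k_\rho)$ and $\nabla_{K_0}\Theta(k_\rho)$. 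Lemma~\ref{gradient of psi theta} then yields $\partial_{s_1}\partial_x\tilde{F}=0$, $\partial_{s_2}\partial_x\tilde{F}=-2$, $\partial_{s_1}\partial_t\tilde{F}=-2\sqrt{1-\rho^2}$, and $\partial_{s_2}\partial_t\tilde{F}=0$, matching the remaining entries of $D_\rho$.

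Finally, to read off $\det D_\rho$ I would permute the variables to the order $(x,s_2,t,s_1)$, which makes the Hessian block-diagonal with blocks $\bigl(\begin{smallmatrix}-(1-\rho) & -2\\ -2 & 0\end{smallmatrix}\bigr)$ and $\bigl(\begin{smallmatrix}-(1-\rho^2) & -2\sqrt{1-\rho^2}\\ -2\sqrt{1-\rho^2} & 0\end{smallmatrix}\bigr)$ of determinants $-4$ and $-4(1-\rho^2)$, giving $\det D_\rho = 16(1-\rho^2)$. The main (and only real) obstacle is keeping the bookkeeping straight across the three computational regimes, especially recognizing that the apparently nontrivial $(s_i,s_j)$ derivatives must vanish for the elementary reason that $A|_{K_0}\equiv 0$.
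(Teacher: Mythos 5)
Your proof is correct and follows essentially the same route as the paper's: the $(s_i,s_j)$ block vanishes because $A|_{K_0}\equiv 0$, the mixed blocks come from \eqref{derivative of A about n}, \eqref{derivative of A about a} together with Lemma \ref{gradient of psi theta}, and the $(x,t)$ block comes from Lemma \ref{lemma A}. The only cosmetic difference is that you differentiate the explicit formula \eqref{iwasawa height} for $t-\log D(x,t)-\rho t$ directly, whereas the paper first computes $\kappa(k_\rho n(x))$ and $\kappa(k_\rho a(t))$ via Lemma \ref{Iwasawa k} and then differentiates $\Psi$ and $\Theta$ of those; both give the same entries, and your block-diagonal reordering $(x,s_2,t,s_1)$ correctly yields $\det D_\rho=(-4)\cdot(-4(1-\rho^2))=16(1-\rho^2)$.
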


\begin{proof}
   To calculate $\partial^2  \tilde{F}/\partial x^2$ and   $\partial^2  \tilde{F}/\partial x\partial t$, we define the map $v:\BR \to K_0$ by the condition that $k_\rho n(x) \in NAv(x)$, i.e. $v(x) = \kappa(k_\rho n(x))$. Lemma \ref{lemma A} gives
    \begin{align*}
        \frac{\partial \tilde{F}}{\partial x} (x,0,0,0) = \frac{\partial }{\partial x}   A(k_\rho n(x))= \Psi(v(x)),
    \end{align*}
    and
    \begin{align*}
        \frac{\partial \tilde{F}}{\partial t} (x,0,0,0) = \left. \frac{\partial}{\partial t}\left(A(k_\rho n(x)a(t)) - \rho t\right) \right|_{t=0}= \Theta(v(x))-\rho.
    \end{align*}
    By Lemma \ref{Iwasawa k},
    \begin{align*}
        v(x)&=\kappa\left( k_\rho n(x)  \right) \\
        &= \kappa\left(\frac{1}{\sqrt{2}} \begin{pmatrix}\sqrt{1+\rho}& i\sqrt{1-\rho} \\ i\sqrt{1-\rho} &\sqrt{1+\rho}\end{pmatrix}\begin{pmatrix}1&x\\0&1\end{pmatrix}\right)\\
        &=\kappa\left(\frac{1}{\sqrt{2}}\begin{pmatrix}\sqrt{1+\rho} &     x\sqrt{1+\rho}+i\sqrt{1-\rho}\\ i\sqrt{1-\rho}&\sqrt{1+\rho}+ix\sqrt{1-\rho}\end{pmatrix}\right)\\
        &=\frac{1}{\sqrt{(1-\rho)x^2+2} }\begin{pmatrix}                    \sqrt{1+\rho}-ix\sqrt{1-\rho}& i\sqrt{1-\rho}\\i\sqrt{1-\rho}& \sqrt{1+\rho}+ix\sqrt{1-\rho}\end{pmatrix},
    \end{align*}
    so
    $$\Psi(v(x)) = \frac{-2(1-\rho)x}{(1-\rho)x^2+2},$$
    and
    $$\Theta(v(x)) = \frac{(1-\rho)x^2+2\rho}{(1-\rho)x^2+2} .$$
    Hence, we have
     \begin{align*}
        \frac{\partial^2\tilde{F}}{\partial x^2} (0,0,0,0) = \left.\frac{\partial}{\partial x}\Psi(v(x))\right|_{x=0} =\left.\frac{\partial}{\partial x}\left(\frac{-2(1-\rho)x}{(1-\rho)x^2+2} \right)\right|_{x=0}=-(1-\rho),
    \end{align*}
    and
    \begin{align*}
        \frac{\partial^2\tilde{F}}{\partial x\partial t} (0,0,0,0)=\left.\frac{\partial}{\partial x}\left(\Theta(v(x))-\rho\right)\right|_{x=0}=\left.\frac{\partial}{\partial x}\left( \frac{(1-\rho)x^2+2\rho}{(1-\rho)x^2+2}-\rho\right)\right|_{x=0}  =   0.
    \end{align*}

    To calculate  $\partial^2 \tilde{F}/\partial t^2$,  we define a map $w:\BR \to K_0$ by the condition that $k_\rho a(t) \in NAw(t)$, i.e., $w(t) = \kappa(k_\rho a(t) )$.  Lemma \ref{lemma A} gives
    \begin{align*}
        \frac{\partial \tilde{F}}{\partial t} (0,t,0,0) = \frac{\partial}{\partial t}\left( A(k_\rho a(t)) - \rho t \right)= \Theta(w(t))-\rho.
    \end{align*}
   By Lemma \ref{Iwasawa k},
    \begin{align*}
        w(t)&= \kappa\left(k_\rho a(t) \right)\\
        &=\kappa\left(\frac{1}{\sqrt{2}} \begin{pmatrix}\sqrt{1+\rho}& i\sqrt{1-\rho} \\ i\sqrt{1-\rho} &\sqrt{1+\rho}\end{pmatrix}\begin{pmatrix}e^{t/2}&0\\0&e^{-t/2}\end{pmatrix}\right)\\
        &= \kappa\left(\frac{1}{\sqrt{2}} \begin{pmatrix}e^{t/2}\sqrt{1+\rho}& e^{-t/2}i\sqrt{1-\rho} \\ e^{t/2}i\sqrt{1-\rho} &e^{-t/2}\sqrt{1+\rho}\end{pmatrix}\right)  \\
        &=\frac{1}{\sqrt{(1-\rho)e^t+(1+\rho)e^{-t}}}\begin{pmatrix} e^{-t/2}\sqrt{1+\rho}&e^{t/2}i\sqrt{1-\rho} \\e^{t/2}i\sqrt{1-\rho} &e^{-t/2}\sqrt{1+\rho} \end{pmatrix},
    \end{align*}
    and so
    \begin{align*}
        \Theta(w(t)) = \frac{(1+\rho)e^{-t}-(1-\rho)e^t}{(1+\rho)e^{-t}+(1-\rho)e^t}.
    \end{align*}
    Hence, we have
     \begin{align*}
        \frac{\partial^2 \tilde{F}}{\partial t^2} (0,0,0,0)= \left.\frac{\partial}{\partial t}\Theta(w(t))\right|_{t=0}= \left.\frac{\partial}{\partial t}\left(\frac{(1+\rho)e^{-t}-(1-\rho)e^t}{(1+\rho)e^{-t}+(1-\rho)e^t}\right)\right|_{t=0}=-(1-\rho^2).
    \end{align*}

    To calculate $\partial^2 \tilde{F}/\partial s_i\partial x$ and $\partial^2 \tilde{F}/\partial s_i\partial t$, by Lemma \ref{lemma A}, we have
    \begin{align*}
        \frac{\partial \tilde{F}}{\partial x} (0,0,s_1,s_2) = \left.\frac{\partial}{\partial x}A(\exp(s_1X_1+s_2X_2)k_\rho n(x)) \right|_{x=0} = \Psi(\exp(s_1X_1+s_2X_2)k_\rho),
    \end{align*}
    and
    \begin{align*}
        \frac{\partial \tilde{F}}{\partial t} (0,0,s_1,s_2) = \left.\frac{\partial}{\partial t}\left(A(\exp(s_1X_1+s_2X_2)k_\rho a(t)) - \rho t\right) \right|_{t=0} = \Theta(\exp(s_1X_1+s_2X_2)k_\rho)-\rho.
    \end{align*}
    Lemma \ref{gradient of psi theta} implies that
    \begin{align*}
        \left(\frac{\partial^2}{\partial s_1\partial x},\frac{\partial^2}{\partial s_2\partial x} \right)\tilde{F}(0,0,0,0) =\left(0, -2  \right)
    \end{align*}
    and
    \begin{align*}
        \left(\frac{\partial^2}{\partial s_1\partial t},\frac{\partial^2}{\partial s_2\partial t} \right)\tilde{F}(0,0,0,0) =\left( -2\sqrt{1-\rho^2},0\right).
    \end{align*}
    
   We have $\partial^2 {\tilde{F}}/\partial s_i\partial s_j (0,0,0,0)=0$ because $\tilde{F}(0,0,s_1,s_2) = A(\exp(s_1X_1+s_2X_2)k_\rho)\equiv 0$.
\end{proof}

Therefore, the critical point is non-degenerate uniformly in the parameter $\rho$, provided $\rho\in [0,\delta]$ for a fixed $0<\delta<1$.
The method of stationary phase implies
\begin{align*}
    J(r;\rho) = \tilde{J}(r;\rho) +  O_N(r^{-N})\ll r^{-2}.
\end{align*}

\subsection{Bound of \texorpdfstring{$\tilde{k_\lambda}(s)$}{k(s)} for big \texorpdfstring{$s$}{s}}
It remains to prove the inequality \eqref{ineq big t}.
We assume that $\lambda^{\epsilon'}\leq\beta\leq\lambda$ and $s\notin I_{\beta/4}$  satisfying $s\geq \lambda/2$. Recall the expansion (\ref{bound using spherical function bound}) of $\tilde{k_\lambda}$ under the polar coordinates. We unfold that integral by the inverse Harish-Chandra transform to obtain
\begin{align*}
        \tilde{k_\lambda}(s) &=c\int_0^\infty k_\lambda(a(t))b_1(t)\varphi_{-s}^{\BH^2}(a(t)) \sinh(t) dt\\
        &=c\int_0^\infty \left(\int_0^\infty   h_\lambda(r) \varphi_{r}^{\BH^3}(a(t))d\nu(r)\right)b_1(t)\varphi_{-s}^{\BH^2}(a(t)) \sinh(t) dt \\
        &=c\int_0^\infty\left(\int_0^\infty b_1(t)\varphi_{r}^{\BH^3}(a(t))\varphi_{-s}^{\BH^2}(a(t)) \sinh(t) dt\right) h_\lambda(r) d\nu(r).
\end{align*}
Recall that $b_1\in C_c^\infty(\BR)$   satisfies $b_1(t)k_\lambda(a(t)) = k_\lambda(a(t))$ for all $t\in\BR$, $b_1\equiv 1$ on $[-1,1]$ and vanishes outside $[-2,2]$.
Since $h_\lambda(r)$ concentrates around $\pm \lambda$ and rapidly decays outside, it suffices to show the bound
\begin{align}\label{the sup norm bound for large t after inversion}
    \int_0^\infty b_1(t)\varphi_{r}^{\BH^3}(a(t))\varphi_{-s}^{\BH^2}(a(t)) \sinh(t) dt \ll \lambda^{-3/2}\beta^{-1/2}
\end{align}
uniformly for $s\notin I_{\beta/4}$  satisfying $s \geq \lambda/2$, and $|r-\lambda|\leq \beta/8$. 
First, we decompose the integral as
\begin{align*}
    \int_0^\infty b_1(t)\varphi_{r}^{\BH^3}(a(t))\varphi_{-s}^{\BH^2}(a(t)) \sinh(t) dt = I_0 + \sum_{n=1}^\infty I_n
\end{align*}
where
\begin{align*}
    I_0 = \int_0^\infty b_1(t)b_1(\beta t)\varphi_{r}^{\BH^3}(a(t))\varphi_{-s}^{\BH^2}(a(t)) \sinh(t) dt,
\end{align*}
and 
\begin{align*}
    I_n= \int_0^\infty b_1(t)\left(b_1(2^{-n}\beta t)-b_1(2^{-n+1}\beta t)\right)\varphi_{r}^{\BH^3}(a(t))\varphi_{-s}^{\BH^2}(a(t)) \sinh(t) dt.
\end{align*}
 The bounds in \cite[Theorem 1.3]{marshall2016p} implies that
\begin{align}\label{spherical bound H3}
    |b_1(t)\varphi_{r}^{\BH^3}(a(t))|\ll \left(1 + |rt|\right)^{-1},
\end{align}
and
\begin{align}\label{spherical bound H2}
    |b_1(t)\varphi_{-s}^{\BH^2}(a(t))|\ll \left(1 + |st|\right)^{-1/2}.
\end{align}
Recall that we assume $b_1$ is supported in $[-2,2]$.
The bounds (\ref{spherical bound H3}), (\ref{spherical bound H2}) and the fact $\sinh(x)= x + O(x^3)$ imply
\begin{align*}
    I_0 &= \int_0^{2/\beta}b_1(t)b_1(\beta t)\varphi_{r}^{\BH^3}(a(t))\varphi_{-s}^{\BH^2}(a(t)) \sinh(t) dt\\
    &\ll \int_0^{2/\beta}  b_1(\beta t ) \left(1 + rt\right)^{-1}\left(1 + st\right)^{-1/2} tdt \\
    &\ll r^{-1}s^{-1/2}\int_0^{2/\beta}  b_1(\beta t)  t^{-1/2}dt \\
    &\ll \lambda^{-3/2}\int_0^{2} b_1(t) \left(\frac{t}{\beta}\right)^{-1/2} \frac{dt}{\beta}\\
    &\ll \lambda^{-3/2}\beta^{-1/2}.
\end{align*}
To bound $I_n$, we use the exponential asymptotic formulas for  $\varphi_r^{\BH^3}(a(t)) $ and $\varphi_{s}^{\BH^2}(a(t))$ given in \cite[Theorem 1.5]{marshall2016p}. There are functions $f^{\BH^3}_\pm,\, f^{\BH^2}_\pm \in C^\infty\left((0,3)\times\BR_{\geq 0} \right)$ such that
\begin{align}
     \frac{\partial^n}{\partial t^n} f^{\BH^3}_{\pm}(t,r)&\ll_n t^{-n}(rt)^{-1} \label{asyprobertyH3}\\
    \frac{\partial^n}{\partial t^n} f^{\BH^2}_{\pm}(t,s)&\ll_n t^{-n}(st)^{-1/2} \label{asyprobertyH2}
\end{align}
and
\begin{align}
    \varphi^{\BH^3}_r (a(t)) &= f^{\BH^3}_+(t,r) e^{irt} + f^{\BH^3}_-(t,r) e^{-irt} + O_N\left( (rt)^{-N}\right)\label{asyH3}\\ 
    \varphi^{\BH^2}_s (a(t)) &= f^{\BH^2}_+(t,s) e^{ist} + f^{\BH^2}_-(t,s) e^{-ist} + O_N\left( (st)^{-N}\right)\label{asyH2}
\end{align}
for $t\in (0,3)$ and $r,s\geq 0$.
We substitute $\varphi^{\BH^3}_s$ in $I_n$ via (\ref{asyH3}). The error term in (\ref{asyH3}) makes a contribution of
\begin{align*}
    &\ll_N \int_0^\infty \left(b_1(2^{-n}\beta t)-b_1(2^{-n+1}\beta t)\right) (rt)^{-N}(1+st)^{-1/2}tdt\\
    &\ll_N (\lambda \beta^{-1}2^n)^{-N} (\lambda\beta^{-1}2^n)^{-1/2}(\beta^{-1}2^n)^2 \\
    &= \lambda^{-N-1/2}\beta^{N-3/2}2^{-nN+3n/2}\\
    &\ll \lambda^{-5/2}\beta^{1/2}2^{-n/2}\\
    &\leq \lambda^{-3/2} \beta^{-1/2} 2^{-n/2}
\end{align*}
to $ I_n$. Thus, summing over $n$,
\begin{align*}
    \sum_{n=1}^\infty I_n =&\sum_{n=1}^\infty  \int_0^\infty b_1(t)\left(b_1(2^{-n}\beta t)-b_1(2^{-n+1}\beta t)\right)\left(f_+^{\BH^3}(t,r) e^{irt} + f_-^{\BH^3}(t,r) e^{-irt}  \right)\varphi_{-s}^{\BH^2}(a(t)) \sinh(t)dt\\ &+ O(\lambda^{-3/2}\beta^{-1/2}).
\end{align*}
We now substitute $\varphi^{\BH^2}_{-s} = \varphi^{\BH^2}_{s}$ via (\ref{asyH2}) in $I_n$. Because of the bound (\ref{asyprobertyH3}), the error term in (\ref{asyH2}) makes a contribution of
\begin{align*}
    &\ll_N \int_0^\infty \left(b_1(2^{-n}\beta t)-b_1(2^{-n+1}\beta t)\right) (st)^{-N}(rt)^{-1}tdt\\
    &\ll_N (\lambda \beta^{-1}2^n)^{-N} (\beta^{-1}2^n)^2\\
    &= \lambda^{-N}\beta^{N-2}2^{-nN+2n}\\
    &\ll \lambda^{-3/2}\beta^{-1/2}2^{-n/2}
\end{align*}
to $ I_n$. After summing over $n$, we have
\begin{align*}
    \sum_{n=1}^\infty I_n =O(\lambda^{-3/2}\beta^{-1/2})+\sum_{n=1}^\infty  \int_0^\infty b_1(t)\left(b_1(2^{-n}\beta t)-b_1(2^{-n+1}\beta t)\right)&\left(f_+^{\BH^3}(t,r) e^{irt} + f^{\BH^3}_-(t,r) e^{-irt}  \right)
   \\
    &  \left( f^{\BH^2}_+(t,s) e^{ist} + f^{\BH^2}_-(t,s) e^{-ist} \right)\sinh(t)dt.
\end{align*}
We shall only consider the term involving $f^{\BH^3}_+(t,r)f_-^{\BH^2}(t,s)$, as the other terms are similar. We must then estimate
\begin{align*}
    \int_0^\infty b_1(t)\left(b_1(2^{-n}\beta t)-b_1(2^{-n+1}\beta t)\right)f^{\BH^3}_+(t,r) f^{\BH^2}_-(t,s) e^{i(r-s)t} t dt.
\end{align*}
If we replace $t$ with $2^n\beta^{-1}t$, this becomes
\begin{align*}
    \int_0^\infty 2^{2n}\beta^{-2}b_1(2^n\beta^{-1}t)\left(b_1(t)-b_1(2t)\right)f^{\BH^3}_+(2^n\beta^{-1}t,r) f^{\BH^2}_-(2^n\beta^{-1}t,s) e^{i(r-s)2^n\beta^{-1}t} tdt.
\end{align*}
The conditions $s\notin I_{\beta/4}$ and $|r-\lambda|\leq \beta/8$ imply that $|(r-s)2^n\beta^{-1}|\geq 2^{n-3}$. Also \eqref{asyprobertyH3} and \eqref{asyprobertyH2} imply that
\begin{align}\label{bound for fH3fH2}
    f^{\BH^3}_+(2^n\beta^{-1}t,r) f^{\BH^2}_-(2^n\beta^{-1}t,s)\ll \lambda^{-3/2}\beta^{3/2} 2^{-3n/2} t^{-3/2} ,
\end{align}
and
\begin{align}\label{bound for derivative of fH3fH2}
\begin{split}
    &\frac{\partial}{\partial t} \left( f^{\BH^3}_+(2^n\beta^{-1}t,r) f^{\BH^2}_-(2^n\beta^{-1}t,s)\right) \\
    =& \left( \frac{\partial}{\partial t}  f^{\BH^3}_+(2^n\beta^{-1}t,r)\right) f^{\BH^2}_-(2^n\beta^{-1}t,s)\ +  f^{\BH^3}_+(2^n\beta^{-1}t,r) \left(\frac{\partial}{\partial t}f^{\BH^2}_-(2^n\beta^{-1}t,s)\right)\\
    \ll& \lambda^{-3/2}\beta^{3/2} 2^{-3n/2} t^{-5/2}.
\end{split}
\end{align}
Integrating by parts once, and then applying the bounds \eqref{bound for fH3fH2} and \eqref{bound for derivative of fH3fH2} gives
\begin{align*}
    &\int_0^\infty 2^{2n}\beta^{-2}b_1(2^n\beta^{-1}t)\left(b_1(t)-b_1(2t)\right)f^{\BH^3}_+(2^n\beta^{-1}t,r) f^{\BH^2}_-(2^n\beta^{-1}t,s) e^{i(r-s)2^n\beta^{-1}t} tdt \\
    \ll&\frac{1}{|(r-s)2^n\beta^{-1}|}\left(2^{2n}\beta^{-2}\right)\left( \lambda^{-3/2}\beta^{3/2} 2^{-3n/2}\right)\\
    \ll&  \lambda^{-3/2}\beta^{-1/2}2^{-n/2},
\end{align*}
and summing over $n$ completes the proof of \eqref{ineq big t}.

\section{Estimates of oscillatory integrals}\label{section of oscillatory integrals}
\subsection{Lemmas on the function \texorpdfstring{$A$}{A}}
We begin with some results we shall use repeatedly in this section.
For $g\in G_0$, let $\Phi_g: K_0\to K_0$ be the map sending $k$ to $\kappa(kg)$, i.e $kg\in NA\Phi_g(k)$. 
\begin{lemma}\label{change of variable}
    $\Phi$ is a smooth group action of $G_0$ on $K_0$ from right.

    Moreover, $\Phi_g$ induces a diffeomorphism from $M\backslash K_0$ to itself.
\end{lemma}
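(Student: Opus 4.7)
The plan is to verify the action axioms directly from the uniqueness of the Iwasawa decomposition, then check equivariance under left translation by $M$.

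First I would check the right action axioms. The identity $\Phi_e(k) = \kappa(k\cdot e) = k$ is immediate from $k\in K_0\subset NAK_0$ and the uniqueness of the Iwasawa decomposition. For the compatibility $\Phi_{g_1g_2} = \Phi_{g_2}\circ \Phi_{g_1}$, the key computation is to write $kg_1 = n_1 a_1 \Phi_{g_1}(k)$ by definition, and then $\Phi_{g_1}(k) g_2 = n_2 a_2 \Phi_{g_2}(\Phi_{g_1}(k))$, so
\begin{equation*}
    kg_1g_2 = n_1 a_1 n_2 a_2\, \Phi_{g_2}(\Phi_{g_1}(k)) = \bigl(n_1 (a_1 n_2 a_1^{-1})\bigr)(a_1 a_2)\, \Phi_{g_2}(\Phi_{g_1}(k)).
\end{equation*}
Since $A$ normalizes $N$, the first factor lies in $N$ and the second in $A$, so by uniqueness of Iwasawa decomposition $\kappa(kg_1g_2) = \Phi_{g_2}(\Phi_{g_1}(k))$. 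Smoothness of $\Phi_g$ is inherited from the smoothness of the Iwasawa projection $\kappa$.

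Next I would establish $M$-equivariance. For $m\in M$ and $k\in K_0$, starting from $kg = n a\, \Phi_g(k)$, I would write
\begin{equation*}
    mkg = (mnm^{-1})\, mam^{-1} \cdot m\Phi_g(k) = (mnm^{-1})\, a\, (m\Phi_g(k)).
\end{equation*}
Since $M$ centralizes $A$ and normalizes $N$ (recall $M$ is the centralizer of $A$ in $K_0$, and the adjoint action of $M$ preserves the root space $\fn$), the decomposition on the right is a valid Iwasawa decomposition, so $\Phi_g(mk) = m\Phi_g(k)$. This means $\Phi_g$ descends to a well-defined smooth map $\bar\Phi_g: M\backslash K_0 \to M\backslash K_0$.

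Finally, the right-action property gives $\Phi_{g^{-1}}\circ \Phi_g = \Phi_e = \mathrm{id}$ and similarly on the other side, so $\Phi_g$ is a diffeomorphism of $K_0$ and therefore $\bar\Phi_g$ is a diffeomorphism of $M\backslash K_0$. There is no serious obstacle here; the whole argument is a routine consequence of the uniqueness of the Iwasawa decomposition together with the standard fact that $M$ normalizes $NA$.
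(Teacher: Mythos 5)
Your proof is correct and follows essentially the same route as the paper: both arguments rest on the uniqueness of the Iwasawa decomposition (the paper phrases this via the identification $K_0\simeq NA\backslash G_0$, while you carry out the explicit computation using that $A$ normalizes $N$) and on the fact that $M$ normalizes $NA$ for the descent to $M\backslash K_0$. No gaps.
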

\begin{proof}
    The smoothness of the Iwasawa decomposition implies that $\Phi_g$ is smooth and depends smoothly on $g$. We identify $K_0$ with the quotient $NA\backslash G_0$ via the Iwasawa decomposition. Then $\Phi_g$ is obtained by composing the diffeomorhpism $K_0\to NA\backslash G_0$, the right multiplication by $\cdot g:NA\backslash G_0\to NA\backslash G_0$ and the diffeomorphism $ NA\backslash G_0\to K_0$.
 It remains to show $\Phi_{gh} = \Phi_h\circ\Phi_g$ for $g,h\in G_0$. By definition
     \begin{align*}
         kgh\in NA\Phi_{gh}(k)\\
         kg\in NA\Phi_g(k),
     \end{align*}
     which implies
     \begin{align*}
         NA\Phi_g(k) h = NA\Phi_{gh}(k),\\
         \Phi_g(k) h \in NA\Phi_{gh}(k),
     \end{align*}
     that is $\Phi_h\left(  \Phi_g(k) \right) = \Phi_{gh}(k)$.
     
    The second statement follows from the fact that $M$ normalizes $NA$.
\end{proof}

\begin{lemma}\label{spliting A}
    Let $y,z\in G_0$ and let $k\in K_0$. Then we have
    \begin{align*}
        A(ky^{-1}z) = A\left(\Phi_{y^{-1}}(k)z\right)- A\left(\Phi_{y^{-1}}(k)y\right).
    \end{align*}
\end{lemma}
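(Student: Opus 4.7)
The proof plan rests on the standard cocycle identity for the Iwasawa $A$-projection: for all $g,h \in G_0$,
\begin{align*}
    A(gh) = A(g) + A(\kappa(g)h).
\end{align*}
This follows immediately from writing $g = n(g)\exp(A(g))\kappa(g)$, so that $gh = n(g)\exp(A(g))\kappa(g)h$; then using that $A$ normalizes $N$, any prefix $n a$ with $a = \exp(A(g))$ contributes $A(g)$ additively and the $n$-factor is absorbed on the left. This cocycle identity, together with the definition $\Phi_{y^{-1}}(k) = \kappa(ky^{-1})$, is essentially all one needs.

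First, I would apply the cocycle to $g = ky^{-1}$, $h = z$ to get
\begin{align*}
    A(ky^{-1}z) = A(ky^{-1}) + A\bigl(\kappa(ky^{-1})\,z\bigr) = A(ky^{-1}) + A\bigl(\Phi_{y^{-1}}(k)\,z\bigr).
\end{align*}
This already produces the first term on the right of the claim, and reduces the lemma to showing $A(ky^{-1}) = -A(\Phi_{y^{-1}}(k)\,y)$.

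Next, I would apply the same cocycle identity to $g = ky^{-1}$, $h = y$, giving
\begin{align*}
    A(k) = A(ky^{-1}\cdot y) = A(ky^{-1}) + A\bigl(\Phi_{y^{-1}}(k)\,y\bigr).
\end{align*}
Since $k \in K_0$, we have $A(k) = 0$ by the Iwasawa decomposition, hence $A(ky^{-1}) = -A(\Phi_{y^{-1}}(k)\,y)$. Substituting this back yields the desired identity. There is no real obstacle here; the only thing to verify carefully is the cocycle identity itself, which is a one-line consequence of the definition of the Iwasawa decomposition.
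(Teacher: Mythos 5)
Your proof is correct and follows essentially the same route as the paper: the paper writes $ky^{-1}=na\Phi_{y^{-1}}(k)$ and uses the additivity $A(nag)=A(na)+A(g)$ together with $(na)^{-1}k=\Phi_{y^{-1}}(k)y$, which is exactly your cocycle identity applied twice (to $h=z$ and to $h=y$, the latter combined with $A(k)=0$). No gaps.
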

\begin{proof}
    Let $ky^{-1} = na\Phi_{y^{-1}}(k)$. We have
    \begin{align*}
        A(ky^{-1}z) &= A\left( na\Phi_{y^{-1}}(k)z\right)\\
        &=A(na) + A\left( \Phi_{y^{-1}}(k)z\right)\\
        &=A\left( \Phi_{y^{-1}}(k)z\right) - A\left((na)^{-1}k\right)\\
        &=A\left( \Phi_{y^{-1}}(k)z\right) - A\left(\Phi_{y^{-1}}(k)y\right).
    \end{align*}
\end{proof}

 Recall that the tangent space of $M\backslash K_0$ at the origin $M$ can be viewed as a two-dimensional subspace of the Lie algebra $\fk$ spanned by the vectors  $X_1  =\begin{pmatrix}0&i\\i&0\end{pmatrix}$ and $X_2 = \begin{pmatrix}0&-1\\1&0\end{pmatrix}$.
Let $U_\fk$ be a circle centered at 0 in the tangent space defined by
\begin{align}\label{eq:defn of Uk}
    U_\fk = \left\{r_1X_1+r_2X_2: r_1^2+r_2^2 = 1, r_1,r_2\in\BR  \right\}.
\end{align}
 For a given $C>0$, we let
    \begin{align*}
        \BD_C = \{   z\in\BC:|z|<C        \}
    \end{align*}
    be the open disc of radius $C$ in $\BC$.
We shall need the following uniformization lemma for the function $A$.

\begin{lemma}\label{uniformazation lemma}
    Let $C> 0$.
    There exists $\delta,\sigma>0$ depending on $C$, and a real analytic function
    \begin{align*}
        \xi:(-\delta,\delta)\times U_\fk \times \BD_C \times (-C,C)\to\BR
    \end{align*}
    such that
    \begin{align*}
        \frac{\partial}{\partial t} A(\exp(rX)n(z)a(t)) = 1-r^2\xi(r,X,z,t),
    \end{align*}
    and
    \begin{align}\label{34}
    \begin{split}
        |\xi(r,X,z,t)| 
        &\geq \sigma,\\
        \left|\frac{\partial^n\xi}{\partial t^n} (r,X,z,t)\right|
        &\ll_{C,n}1,
        \end{split}
    \end{align}
    for $(r,X,z,t)\in(-\delta,\delta)\times U_\fk \times \BD_C \times (-C,C)$. 
\end{lemma}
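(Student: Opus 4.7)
The plan is to obtain an explicit closed form for $A(\exp(rX) n(z) a(t))$ by exploiting the fact that, for $X \in U_\fk$, the element $\exp(rX)$ already lies in $K_0$. Writing $X = r_1 X_1 + r_2 X_2$ with $r_1^2 + r_2^2 = 1$, a direct calculation shows $X^2 = -I$, so $\exp(rX) = \cos(r) I + \sin(r) X$ is a trigonometric polynomial in $r$. Applying Lemma \ref{lemma A} with $g = \exp(rX)$ (so $t_0 = 0$ and $\kappa(g) = \exp(rX)$) gives
\begin{align*}
    A(\exp(rX) n(z) a(t)) = t - \log L(r,X,z,t),
\end{align*}
where $L(r,X,z,t) = \cos^2 r + \sin^2 r\,(|z|^2 + e^{2t}) + \sin(2r)\bigl[r_2 \operatorname{Re} z - r_1 \operatorname{Im} z\bigr]$ after collecting the cross terms via $\alpha \bar\beta z - \bar\alpha \beta \bar z$.

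Differentiating once in $t$ produces the clean identity
\begin{align*}
    \frac{\partial}{\partial t} A(\exp(rX) n(z) a(t)) = 1 - \frac{2 e^{2t} \sin^2 r}{L(r,X,z,t)},
\end{align*}
which already exhibits the factor $r^2$ hidden in $\sin^2 r$. Accordingly, I will define
\begin{align*}
    \xi(r,X,z,t) := \frac{2 e^{2t}}{L(r,X,z,t)} \cdot \left(\frac{\sin r}{r}\right)^{\!2},
\end{align*}
with the removable singularity at $r = 0$ filled in by setting $(\sin r/r)|_{r=0} = 1$. Since $\sin r/r$ and $L$ are real analytic and $L(0,X,z,t) = 1$, the function $\xi$ is real analytic in a neighborhood of $\{0\} \times U_\fk \times \overline{\mathbb{D}_C} \times [-C,C]$.

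The pointwise lower bound is immediate from $\xi(0,X,z,t) = 2 e^{2t} \geq 2 e^{-2C}$; by continuity and compactness of $U_\fk \times \overline{\mathbb{D}_{C}} \times [-C,C]$ (intersected with the open domain in the statement), we can choose $\delta > 0$ small enough that $|\xi| \geq \sigma := e^{-2C}$ throughout $(-\delta,\delta) \times U_\fk \times \mathbb{D}_C \times (-C,C)$; in the same range a simple estimate $|L - 1| \leq r^2(C^2 + e^{2C} - 1) + 2|r|C \leq 1/2$ keeps $L$ bounded above and below. The uniform bounds $|\partial_t^n \xi| \ll_{C,n} 1$ then follow by inspection of the explicit formula, since each $t$-derivative is a smooth rational expression in $e^{2t}$ with denominator $L^{n+1}$ and bounded numerator on the stated domain. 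The only substantive step is the trigonometric identity for $L$; everything else is routine differentiation and compactness, so I do not anticipate any real obstacle.
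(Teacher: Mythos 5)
Your proof is correct, but it takes a genuinely different and more explicit route than the paper. The paper argues softly: it defines $\alpha(r,X,z,t),\beta(r,X,z,t)$ as the Iwasawa $K_0$-components of the full product $\exp(rX)n(z)a(t)$, invokes analyticity of the Iwasawa decomposition, uses the identity $1-\tfrac{\partial}{\partial t}A = 2|\beta|^2$ from \eqref{derivative of A about a}, shows $\beta$ vanishes only at $r=0$ (via transversality of $\exp(U_\fk)$ to $M$), and then factors $\beta = r\,\xi_0$ with $\xi_0$ analytic and nonvanishing, setting $\xi = 2|\xi_0|^2$. You instead observe that $X^2=-I$ for $X\in U_\fk$, so $\exp(rX)=\cos(r)I+\sin(r)X$ lies in $K_0$ with explicit entries, and apply Lemma \ref{lemma A} directly with $g=\exp(rX)$ (so $t_0=0$, $\kappa(g)=\exp(rX)$), which yields the closed form $A=t-\log L$ with $L=\cos^2 r+\sin^2 r(|z|^2+e^{2t})+\sin(2r)(r_2\Re z-r_1\Im z)$ and hence $\xi = 2e^{2t}(\sin r/r)^2/L$. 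I checked the cross term: the $-\alpha\bar\beta z-\bar\alpha\beta\bar z$ in \eqref{iwasawa height} does collapse to $\sin(2r)(r_2\Re z-r_1\Im z)$, so your displayed $L$ is right (the sign in your parenthetical prose is a typo, but harmless), and your formula is consistent with the expansion the paper itself derives later in Lemma \ref{lemma 6.7}. Your version buys explicit constants ($\sigma$ essentially $e^{-2C}$), sidesteps the mildly delicate analytic division step "$\beta=r\xi_0$", and makes the derivative bounds \eqref{34} a matter of inspecting a rational expression in $e^{2t}$ with denominator bounded below; the paper's version is the one that would survive in a setting where no closed form for the Iwasawa projection is available. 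One pedantic point: your bound $|L-1|\leq r^2(C^2+e^{2C}-1)+2|r|C$ should use $\max(1,\,C^2+e^{2C}-1)$ for the coefficient of $r^2$ (since $|z|^2+e^{2t}-1$ can be negative and close to $-1$), but this does not affect the conclusion $L\geq 1/2$ for $\delta$ small.
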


\begin{proof}
    Let $\delta_0>0$ be a small number so that the exponential map from the set
    \begin{align*}
        U_{\fk}(\delta_0) := \{ rX\,|\, r\in (-\delta_0,\delta_0), X\in U_\fk     \}
    \end{align*}
    onto its image in $K_0$ is a diffeomorphism.
    Moreover, $\exp(U_\fk(\delta_0))$ is transversal to $M$ at the origin.
    Hence, we may think of it as a neighborhood near the origin in $M\backslash K_0$.
    We define the functions 
    \begin{align*}
        \alpha,\,\beta:(-\delta_0,\delta_0) \times U_\fk \times \BD_{C}\times (-C,C) \to \BC
    \end{align*}
    by requiring that
    \begin{align*}
        \exp(rX) n(z) a(t) \in NAk(\alpha(r,X,z,t),\beta(r,X,z,t)),
    \end{align*}
   where
    \begin{align*}
        k(\alpha,\beta) =\begin{pmatrix} \alpha&\beta\\-\bar{\beta}&\bar{\alpha}\end{pmatrix}\in  K_0.
    \end{align*}

    The analyticity of the Iwasawa decomposition implies that $\alpha$ and $\beta$ are analytic, and in particular $|\alpha|^2$ and $|\beta|^2$ are analytic, as functions of $(rX,z,t) \in U_\fk(\delta_0)\times \BD_C\times (-C,C)$.
    \eqref{derivative of A about a} in Lemma \ref{lemma A}  implies that
    \begin{align*}
        1-\frac{\partial}{\partial t} A(\exp(rX) n(z) a(t)) = 1-\Theta(k(\alpha,\beta)) = (1- |\alpha|^2)+|\beta|^2 = 2|\beta|^2,
    \end{align*}
    which is equal to $0$ if and only if $\beta = 0$. This is equivalent to $k(\alpha,\beta)\in M$. 
    By Lemma \ref{change of variable}, this is also equivalent to $\exp(rX)\in M$.
    By the transversality,
    we can choose $0<\delta<\delta_0$ such that $\beta(r,X,z,t)$ vanishes on $(-\delta,\delta)\times U_\fk \times \BD_{C} \times (-C,C)$ if and only if $r = 0$.

    Lemma \ref{change of variable} implies that $\partial\beta / \partial r$ is analytic and never vanishes on $\{ 0\}\times U_\fk\times\BD_{C}\times (-C,C)$. 
    It may be seen that there is a real analytic function $\xi_0$ on $(-\delta,\delta)\times U_\fk \times \BD_{C}  \times (-C,C)$ such that $\beta (r,X,z,t)= r\xi_0(r,X,z,t)$. Defining $\xi = 2|\xi_0|^2$ gives the result. 
\end{proof}

\begin{lemma}\label{uniform boundedness}
    Let $C, \delta > 0$. There exists $\sigma>0$ depending on $C$ and $\delta$ such that
    \begin{align*}
        1-\frac{\partial}{\partial t} A(k n(z) a(t)) \geq \sigma,
    \end{align*}
    for arbitrary $k\in K_0$ with $d(k,M)\geq \delta$, $z\in \BD_C$ and $t\in (-C,C)$.
\end{lemma}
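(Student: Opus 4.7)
The plan is as follows. First, I would obtain a closed form for the derivative by specializing formula \eqref{iwasawa height} from Lemma \ref{lemma A} to the case $g=k=k(\alpha,\beta)$, that is, $n_0=e$, $a_0=e$ (so $t_0=0$) and $\kappa(g)=k$. Writing the quadratic form inside the logarithm as a sum of squares via the identity $|\alpha|^2 + |\beta|^2|z|^2 - \alpha\bar\beta z - \bar\alpha\beta\bar z = |\alpha - \beta\bar z|^2$, this gives
\begin{align*}
    A(kn(z)a(t)) = t - \log\left(|\alpha - \beta\bar z|^2 + |\beta|^2 e^{2t}\right),
\end{align*}
and hence by direct differentiation in $t$,
\begin{align*}
    1 - \frac{\partial}{\partial t} A(kn(z)a(t)) = \frac{2|\beta|^2 e^{2t}}{|\alpha - \beta\bar z|^2 + |\beta|^2 e^{2t}}.
\end{align*}

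Next, I would observe that the right-hand side is continuous in $(k,z,t)$, manifestly nonnegative, and vanishes exactly when $\beta=0$. Recalling that $M$ is the diagonal subgroup of $K_0$, which in the $(\alpha,\beta)$ parametrization is $\{k(\alpha,0) : |\alpha|=1\}$, we see that the zero set of this expression is precisely $M \times \BC \times \BR$. Therefore on the hypothesis set $d(k,M) \geq \delta$ the denominator is bounded and the numerator is bounded below, with no additional input required.

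Finally, I would conclude by compactness. The set
\begin{align*}
    \{k\in K_0 : d(k,M)\geq \delta\} \times \overline{\BD_C} \times [-C,C]
\end{align*}
is compact (the closed disc contains $\BD_C$), and the continuous, strictly positive function above attains a positive minimum $\sigma>0$ on it, proving the lemma. There is no real obstacle: the sole substantive step is the algebraic manipulation identifying the quantity in closed form, and once this is in hand the positivity and compactness arguments are immediate. Note also that, compared to Lemma \ref{uniformazation lemma}, we do not need quantitative control in $d(k,M)$ here — a uniform positive constant suffices, which is why the simple compactness argument is enough.
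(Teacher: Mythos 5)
Your proof is correct, and the key computation checks out: specializing \eqref{iwasawa height} to $g=k$ gives $A(kn(z)a(t)) = t - \log\bigl(|\alpha-\beta\bar z|^2 + |\beta|^2 e^{2t}\bigr)$ (the completion of the square $|\alpha|^2+|\beta|^2|z|^2-\alpha\bar\beta z-\bar\alpha\beta\bar z = |\alpha-\beta\bar z|^2$ is right), hence $1-\partial_t A = 2|\beta|^2e^{2t}/(|\alpha-\beta\bar z|^2+|\beta|^2e^{2t})$, which is consistent with \eqref{derivative of A about a} at $z=t=0$. The denominator never vanishes on $\SU(2)$, the zero set is exactly $\{\beta=0\}=M$, and the compactness argument on $\{d(k,M)\geq\delta\}\times\overline{\BD_C}\times[-C,C]$ closes the proof. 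Your route differs from the paper's in how it handles the $(z,t)$-dependence: the paper fixes $k$, writes $\kappa(kn(z)a(t))=k(\alpha(z,t),\beta(z,t))$, invokes \eqref{derivative of A about a} to get $1-\partial_t A = 2|\beta(z,t)|^2$, and then uses Lemma \ref{change of variable} (the maps $\Phi_{n(z)a(t)}$ form a compact family of diffeomorphisms acting trivially on $M$) to conclude $|\beta(z,t)|\gg\delta$ uniformly, yielding the explicit value $\sigma\asymp\delta^2$. You instead extract a global closed form directly from \eqref{iwasawa height} and finish with a soft minimum-on-a-compactum argument. Both are valid; the paper's version buys a quantitative dependence of $\sigma$ on $\delta$, which the statement does not actually require, while yours is more self-contained and avoids the bi-Lipschitz uniformity input from Lemma \ref{change of variable}.
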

\begin{proof}
    We fix the element $k\in K_0$ with $d(k,M)\geq \delta$.
    Define the functions $\alpha,\beta: \BD_{C} \times (-C,C)\to \BC$ by requiring that
    \begin{align*}
        kn(z) a(t) \in NAk(\alpha(z,t),\beta(z,t)),
    \end{align*}
    i.e.
    \begin{align*}
        k(\alpha(z,t),\beta(z,t)) = \Phi_{n(z)a(t)}(k).
    \end{align*}
    Here
    \begin{align*}
         k(\alpha,\beta) =\begin{pmatrix} \alpha&\beta\\-\bar{\beta}&\bar{\alpha}\end{pmatrix}.
    \end{align*}
    Since $d(k,M)\geq \delta$ and $\Phi_{n(z)a(t)}$ acts trivially on $M$ for any $z\in\BD_C$ and $t\in (-C,C)$,
    by Lemma \ref{change of variable}, there is some $C_1>0$ such that $d(\Phi_{n(z)a(t)}(k),M)>C_1\delta$. Hence, $|\beta(z,t)| > C_2\delta$ for some other constant $C_2>0$, for any $z\in\BD_C$ and $t\in (-C,C)$.
    Lemma \ref{lemma A} (\ref{derivative of A about a}) implies that
    \begin{align*}
        1-\frac{\partial}{\partial t} A(k n(z) a(t)) = 1-\frac{\partial}{\partial t} A(k(\alpha,
        \beta) a(t))|_{t=0}  = (1- |\alpha|^2)+|\beta|^2 = 2|\beta|^2 > 2C_2^2 \delta^2.
    \end{align*}
    Then the result follows by taking $\sigma = 2C_2^2\delta^2$.
\end{proof}

The above two lemmas, in particular, hold for $n:\BC\to N$ restricted to $\BR\to N_0$ and for $B$ instead of $M\backslash K_0$. Notice that we have the embedding $B = \{ \pm I\}\backslash \SO(2) \hookrightarrow M\backslash K_0$ with $\SO(2)\subset K_0=\SU(2)$ and $\SO(2)\cap M = \{ \pm I\}$. 
Therefore, we obtain the following corollary, which we have proved by a direct formula for the $A$ function on $\BH^2$ in Lemma \ref{lemma for A(bna) when b is large} and Lemma \ref{lemma for A(bna) when b is small}.
\begin{corollary}\label{uniformization lemma for B}
    Let $C > 0$. There exists $\delta,\sigma>0$ depending on $C$, and a real analytic function
    \begin{align*}
        \xi:(-\delta,\delta)\times (-C,C)^2\to\BR
    \end{align*}
    such that
    \begin{align*}
        \frac{\partial}{\partial t} A(b_\theta n(x)a(t)) = 1-\theta^2\xi(\theta,x,t),
    \end{align*}
    and
    \begin{align*}
        |\xi(\theta,x,t)| 
        &\geq \sigma,\\
        \left|\frac{\partial^n\xi}{\partial t^n} (\theta,x,t)\right|
        &\ll_{C,n}1,
    \end{align*}
    for $(\theta,x,t)\in(-\delta,\delta) \times (-C,C)^2 $. Moreover, if $\theta\notin (-\delta,\delta) \mod 2\pi$ and $x,t\in (-C,C)$, then
    \begin{align*}
        1-\frac{\partial}{\partial t} A(b_\theta n(x) a(t)) \geq \sigma.
    \end{align*}
\end{corollary}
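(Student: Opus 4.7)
The plan is to obtain the corollary as a direct restriction of Lemmas \ref{uniformazation lemma} and \ref{uniform boundedness} along the inclusion $\SO(2) \hookrightarrow K_0 = \SU(2)$. A short matrix computation with $X_2 = \begin{pmatrix} 0 & -1 \\ 1 & 0 \end{pmatrix} \in U_\fk$ shows that $b_\theta = \exp(-(\theta/2)X_2)$, so the family $\{b_\theta\}$ is precisely the image in $K_0$ of a fixed one-parameter subgroup generated by an element of $U_\fk$.

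For the first statement, I would apply Lemma \ref{uniformazation lemma} with $X = X_2$ and with the complex variable $z = x$ real. This yields some $\delta_0 > 0$ (depending on $C$) and a real analytic function $\xi_0$ on $(-\delta_0, \delta_0) \times \{X_2\} \times (-C, C)^2$ satisfying the bounds \eqref{34}. Substituting $r = -\theta/2$ gives
\begin{align*}
    \frac{\partial}{\partial t} A(b_\theta n(x) a(t)) = 1 - (\theta/2)^2\, \xi_0(-\theta/2, X_2, x, t)
\end{align*}
for $\theta \in (-2\delta_0, 2\delta_0)$. Setting $\delta := 2\delta_0$ and $\xi(\theta, x, t) := \tfrac{1}{4}\xi_0(-\theta/2, X_2, x, t)$ produces the claimed analytic function, and the required lower bound $|\xi| \geq \sigma$ and $t$-derivative estimates are inherited directly from those of $\xi_0$.

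For the second statement, I would use that $M \cap \SO(2) = \{\pm I\}$: any element of $\SO(2)$ that commutes with the diagonal torus $A$ must be diagonal, hence $\pm I$. It follows that if $\theta \notin (-\delta, \delta) \mod 2\pi$, then $d(b_\theta, M) = d(b_\theta, \{\pm I\})$ is bounded below by some $d_0 = d_0(\delta) > 0$ uniformly in $\theta$. Lemma \ref{uniform boundedness}, applied with this lower bound and with $z = x$ real, then yields $1 - \partial_t A(b_\theta n(x) a(t)) \geq \sigma$ on the required range, after possibly shrinking $\sigma$ so that both parts hold simultaneously.

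I do not anticipate any real obstacle: the proof is a formal pullback of the three-dimensional statements along the embedding $B \hookrightarrow M \backslash K_0$. As the author's own remark indicates, an independent derivation is also available using the explicit formula \eqref{Iwasawa A for bna} together with Lemmas \ref{lemma for A(bna) when b is large} and \ref{lemma for A(bna) when b is small}, in which $\xi(\theta, x, t)$ can be read off after dividing the error term by $\theta^2$; either route should work equally well, but the reduction above is cleaner.
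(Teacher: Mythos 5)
Your proposal is correct and follows exactly the route the paper takes: the corollary is obtained by restricting Lemma \ref{uniformazation lemma} and Lemma \ref{uniform boundedness} along the embedding $B=\{\pm I\}\backslash\SO(2)\hookrightarrow M\backslash K_0$, and your identification $b_\theta=\exp(-(\theta/2)X_2)$ with the rescaling $\xi=\tfrac14\xi_0(-\theta/2,X_2,x,t)$ is the right way to make that restriction explicit. The only nitpick is that $d(b_\theta,M)$ need not literally equal $d(b_\theta,\{\pm I\})$, but the lower bound you need follows from $\SO(2)\cap M=\{\pm I\}$ and compactness, so the argument stands.
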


\subsection{Oscillatory integrals for geodesic tubes}

We first estimate two one-dimensional integrals, which can be thought of as the integrals along a geodesic.

\begin{proposition}\label{first one dimensional intgeral estimate}
    Let $B,C, D>0$ and $\epsilon_0>0$ be constants.
    Let $\chi_0\in C_c^\infty (\BR)$ be a smooth function supported in $[-1,1]$, and let $\rho$ be a smooth function defined on $(-1,1)$. 
    If $z\in \BD_C\subset \BC$, $t\in (-C,C)$, $k\in K_0$, $s^\prime>0$ satisfying
    \begin{align}\label{35}
        d(k,M)\geq Bs^{-1/2 + \epsilon_0}\beta^{1/2}\quad\text{ and }\quad       |s^\prime-s|\leq \beta
    \end{align}
    for some $s,\beta\geq 1$ with $\beta\leq 3s^{1/4}$, and
    \begin{align}\label{condition for rho}
        \left|s^\prime \frac{\partial\rho}{\partial t^{\prime}} (t^\prime)\right|\leq Ds^{\epsilon_0} \beta,\quad\text{ and }\quad \left|s^\prime \frac{\partial^n\rho}{\partial t^{\prime n}} (t^\prime)\right|\ll_n s^{\epsilon_0} \beta,\quad \text{ for }t^\prime\in (-1,1),
    \end{align}
    then
    \begin{align}\label{36}
        \int_{-\infty}^\infty \chi_0(t^\prime) \exp\left( is^\prime (t^\prime + \rho(t^\prime)) - is A(kn(z)a(t+t^\prime))  \right) dt^\prime\ll s^{-A}.
    \end{align}
    The implied constant depends on $A,B,C,D,\epsilon_0$, the first $n$ implied constants in \eqref{condition for rho},  and the size of the first $n$ derivatives of $\chi_0$, where $n$ depends on $\epsilon_0$ and $A$.
\end{proposition}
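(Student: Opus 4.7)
The plan is to prove the estimate by iterated non-stationary phase. Writing the phase as
$$\Phi(t') = s'(t' + \rho(t')) - s A(k n(z) a(t+t')),$$
the goal is to show that on $\supp\chi_0 \subset [-1,1]$ one has $|\Phi'(t')| \geq c\Lambda$ together with $|\Phi^{(j)}(t')| \leq C_j \Lambda$ for all $j\geq 2$, where $\Lambda \geq c' s^{2\epsilon_0}$. The standard non-stationary phase lemma then yields $\int \chi_0 e^{i\Phi} dt' \ll_N \Lambda^{-N}$, and choosing $N$ with $2N\epsilon_0 > A$ finishes the proof.

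I would split into two cases according to whether $d(k,M)$ is large or small, with threshold $\delta_0$ equal to the constant $\delta$ supplied by Lemma~\ref{uniformazation lemma} applied with $C$ replaced by $C+1$ (so that $t+t' \in (-C-1,C+1)$ on $\supp\chi_0$). In the easy case $d(k,M) \geq \delta_0$, Lemma~\ref{uniform boundedness} produces $\sigma>0$ with $1 - \partial_{t'}A(kn(z)a(t+t')) \geq \sigma$ uniformly. Rewriting $\Phi'(t') = s(1-\partial_{t'}A) + (s'-s) + s'\rho'(t')$, the main term is $\geq \sigma s$, while the error terms are bounded by $\beta + Ds^{\epsilon_0}\beta \ll s^{1/4+\epsilon_0}$ using $\beta \leq 3s^{1/4}$. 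Thus for $s$ large the error is absorbed and $|\Phi'| \gtrsim s$; setting $\Lambda=s$, the bounds $|\Phi^{(j)}| \ll s$ for $j \geq 2$ are immediate from the hypothesis on $\rho$ and smoothness of $A$ on a compact domain.

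In the delicate case $d(k,M) < \delta_0$, write $k = \exp(rX)m$ with $X \in U_\fk$, $m \in M$ and $r = d(k,M) \in [Bs^{-1/2+\epsilon_0}\beta^{1/2},\delta_0)$. Because $m \in M$ normalizes $N$ and centralizes $A$, one has $kn(z)a(t+t') = \exp(rX)n(\sigma_m(z)) a(t+t')\,m$, where $\sigma_m$ is the induced $M$-action on $\BC$ and preserves $\BD_C$; the trailing $m \in K_0$ does not affect the Iwasawa $A$-projection. Lemma~\ref{uniformazation lemma} now gives $\partial_{t'}A(kn(z)a(t+t')) = 1 - r^2 \xi$ with $|\xi| \geq \sigma$, so
$$\Phi'(t') = s r^2 \xi + (s'-s) + s'\rho'(t').$$
The main term satisfies $|sr^2\xi| \geq \sigma B^2 s^{2\epsilon_0}\beta$, while the errors are bounded by $\beta$ and $Ds^{\epsilon_0}\beta$ respectively. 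For $s$ large enough that $B^2 s^{\epsilon_0} > 2(D+1)$, the main term dominates and $|\Phi'(t')| \geq c sr^2 =: c\Lambda$ with $\Lambda = sr^2 \geq B^2 s^{2\epsilon_0}\beta \geq B^2 s^{2\epsilon_0}$. The higher derivative bounds come from $\partial_{t'}^j A = -r^2 \partial_{t'}^{j-1}\xi$, giving $|s\partial_{t'}^j A| \ll_j sr^2 = \Lambda$, combined with $|s'\rho^{(j)}| \ll_j s^{\epsilon_0}\beta \leq \Lambda/B^2$.

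With the two derivative estimates in hand, iterated integration by parts via ${}^tL(f) = -\partial_{t'}(f/(i\Phi'))$ produces, after $N$ steps, an integrand that is a finite sum of terms of the form $\chi_0^{(a_0)} \Phi^{(b_1)}\cdots\Phi^{(b_r)} / (\Phi')^{N+r}$ with $b_k \geq 2$ and $a_0 + \sum(b_k - 1) = N$. Each such term is bounded by $\Lambda^r/\Lambda^{N+r} = \Lambda^{-N}$ up to constants depending on $N$, so the integral is $\ll_N \Lambda^{-N} \leq (B^2 s^{2\epsilon_0})^{-N}$. The main obstacle is the Case 1 lower bound on $\Phi'$: the hypothesis $d(k,M) \geq Bs^{-1/2+\epsilon_0}\beta^{1/2}$ is calibrated precisely so that $sr^2$ beats both error terms by the factor $s^{\epsilon_0}$ needed to power-save after integration by parts. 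The remaining care lies in absorbing the $M$-component via the conjugation action on $\BD_C$ and in ensuring the shifted argument $t+t'$ stays in the domain where Lemma~\ref{uniformazation lemma} applies.
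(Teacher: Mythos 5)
Your proposal is correct and follows essentially the same route as the paper: the same dichotomy on $d(k,M)$, Lemma~\ref{uniform boundedness} in the far case, Lemma~\ref{uniformazation lemma} in the near case to extract the factor $sr^2 \gg s^{2\epsilon_0}\beta$, and non-stationary phase to conclude (the paper rescales the phase to $\Psi = \Phi/(sr^2)$ rather than tracking $\Lambda = sr^2$, which is only cosmetic). Your explicit factoring of the $M$-component via $kn(z)a(t+t') = \exp(rX)n(\sigma_m(z))a(t+t')m$ is in fact a slightly more careful treatment of a point the paper glosses over.
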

\begin{proof}
    By applying Lemma \ref{uniformazation lemma}, we see that there is a $\delta>0$ and a nonvanishing real analytic function  $\xi$ on $(-\delta,\delta)\times U_\fk \times \BD_{C}\times (-C-2,C+2)$ such that
    \begin{align*}
        \frac{\partial}{\partial t} A(\exp(rX)n(z)a(t)) = 1-r^2\xi(r,X,z,t),
    \end{align*}
    when $r\in (-\delta,\delta)$, $X\in U_\fk$, $z\in \BD_C$ and $t\in (-C-2,C+2)$. If $Z(r,X,z,t)$ is an antiderivative of $\xi$ with respect to $t$ that is smooth as a function of $(r,X,z,t)$, we may integrate this to obtain
    \begin{align*}
        A(\exp(rX)n(z)a(t)) = t - r^2 Z(r,X,z,t) + c(r,X,z)
    \end{align*}
    for some function $c(r,X,z)$.
    
    If $k=\exp(rX)$ for some $r\in (-\delta,\delta)$ and $X\in U_\fk$, we may use this to rewrite the integral (\ref{36}) as
    \begin{align}\label{37}
         &\int_{-\infty}^\infty \chi_0(t^\prime) \exp\left( is^\prime (t^\prime + \rho(t^\prime)) - is A(\exp(rX)n(z)a(t+t^\prime))  \right) dt^\prime\notag\\
         =&\int_{-\infty}^\infty \chi_0(t^\prime) \exp\left( is^\prime  (t^\prime + \rho(t^\prime)) - is (t+t^\prime) + isr^2 Z(r,X,z,t+t^\prime) - is c(r,X,z)  \right) dt^\prime \notag\\
         =& e^{-is(t+c(r,X,z))} \int_{-\infty}^\infty \chi_0(t^\prime) \exp\left( i(s^\prime-s)t^\prime  +is^\prime\rho(t^\prime) + isr^2 Z(r,X,z,t+t^\prime)\right) dt^\prime  \notag \\
         =& e^{-is(t+c(r,X,z))} \int_{-\infty}^\infty \chi_0(t^\prime) \exp\left( isr^2 \Psi(t^\prime)\right) dt^\prime,
    \end{align}
    where we define
    \begin{align*}
        \Psi(t^\prime) = Z(r,X,z,t+t^\prime) + s^{-1}r^{-2}(s^\prime-s)t^\prime + s^{-1}r^{-2}s^\prime\rho(t^\prime).
    \end{align*}

    Our assumption (\ref{35}) implies that
    $r \geq B_1 s^{-1/2 + \epsilon_0}\beta^{1/2}$ for some constant $B_1 > 0$, and
    \begin{align*}
        |s^{-1}r^{-2}(s^\prime-s)| \leq s^{-1} \left( B_1 s^{-1/2 + \epsilon_0}\beta^{1/2} \right)^{-2} \beta = B_1^{-2} s^{-2\epsilon_0}.
    \end{align*}
    The assumption (\ref{condition for rho}) implies that
    \begin{align*}
        \left|s^{-1}r^{-2}s^\prime\frac{\partial\rho}{\partial t^\prime}(t^\prime)\right|\leq s^{-1} \left( B_1 s^{-1/2 + \epsilon_0}\beta^{1/2} \right)^{-2} D s^{\epsilon_0} \beta= B_1^{-2}D s^{-\epsilon_0}.
    \end{align*}
    Hence,
    \begin{align}\label{38}
        \begin{split}
            \Psi &= Z(r,X,z,t+t^\prime)  +  s^{-1}r^{-2}s^\prime\rho(t^\prime) + O\left(s^{-2\epsilon_0}\right)t^\prime, \quad\text{ and }\\
            \frac{\partial\Psi}{\partial t^\prime}&= \xi(r,X,z,t+t^\prime)  + O\left(s^{-\epsilon_0}\right).
        \end{split}
    \end{align}
    It follows from (\ref{34}) and (\ref{38}) that there exists a $\sigma>0$ so that, for $s$ sufficiently large, $|\partial\Psi/\partial t^\prime| > \sigma/2$ for all $r\in(-\delta,\delta)$, $X\in U_\fk$, $z\in\BD_C$, $t\in(-C,C)$ and $t^\prime\in(-1,-1)$.
    In addition, all higher derivatives of $\Psi$ are bounded from above. As \eqref{35} implies that
    \begin{align*}
        sr^2 \geq  B_1^2 s^{2\epsilon_0}\beta \geq B_1^2 s^{2\epsilon_0},
    \end{align*}
    the bound (\ref{36}) follows by integration by parts in (\ref{37}).

    In the case where $k$ can not be written as $\exp(rX)$ for $r\in (-\delta,\delta)$, we can assume $d(k,M)$ to be bounded below by some constant multiple of $\delta$, so Lemma \ref{uniform boundedness} implies that 
    \begin{align*}
        1-\frac{\partial}{\partial t^\prime} A(k n(z) a(t+t^\prime)) \geq C_1
    \end{align*}
    for some $C_1 > 0$. It gives
    \begin{align*}
        &\left|\frac{\partial}{\partial t^\prime} \left(s^\prime s^{-1} (t^\prime +\rho(t^\prime))- A(kn(z)a(t+t^\prime)) \right)\right|\\
        \geq &\left(  1-\frac{\partial}{\partial t^\prime} A(k n(z) a(t+t^\prime)) \right)- \left|s^\prime s^{-1} - 1 \right| - \left| s^\prime s^{-1} \frac{\partial\rho}{\partial t^\prime} (t^\prime)\right|\\
        \geq & \,C_1 - \beta s^{-1} -  s^{-1} Ds^{\epsilon_0} \beta \\
        \gg&1.
    \end{align*}
    The result also follows by integration by parts.
\end{proof}

\begin{lemma}\label{lemma 6.7}
     Let $C > 0$ and $1/4 > \delta >0$. There exists $C_1>0$ depending on $C$,  such that
     \begin{align*}
         &\left|\left( \frac{\partial}{\partial r_1},  \frac{\partial}{\partial r_2}\right)   A(\exp(r_1X_1+r_2X_2)n(z) a(t))   \right|\\
         :=&  \left(  \sum_{i=1}^2 \left(\frac{\partial}{\partial r_i}   A(\exp(r_1X_1+r_2X_2)n(z) a(t))   \right)^2\right)^{1/2}\\
         \asymp&  |z|,
     \end{align*}
     for $z\in \BD_C$ with $|z| \geq \delta$, $t\in (-C,C)$, and $r_1,r_2\in (-C_1 \delta, C_1 \delta)$, where the implied constants are absolute. Here $X_1, X_2\in\fk$ are defined as (\ref{basis for lie algebra k}).
\end{lemma}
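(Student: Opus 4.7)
The plan is to reduce the statement to an explicit Taylor expansion via formula~\eqref{iwasawa height}. Setting $w = r_2 + i r_1$ and $r = |w| = (r_1^2 + r_2^2)^{1/2}$, a direct computation of the matrix exponential yields
\[
\exp(r_1 X_1 + r_2 X_2) = \begin{pmatrix} \cos r & -\bar w\,\sin r/r \\ w\,\sin r/r & \cos r \end{pmatrix},
\]
which is of the form $k(\alpha,\beta)$ from Lemma~\ref{lemma A} with $\alpha = \cos r$ and $\beta = -\bar w\,\sin r/r$. Substituting $|\alpha|^2 = \cos^2 r$, $|\beta|^2 = \sin^2 r$ and $\alpha \bar\beta z + \bar\alpha \beta \bar z = -2\,\Re(wz)\,\sin r \cos r/r$ into \eqref{iwasawa height} gives
\[
A(\exp(r_1 X_1 + r_2 X_2)\, n(z)\, a(t)) = t - \log f(r_1, r_2),
\]
where
\[
f(r_1, r_2) = 1 + \sin^2 r \cdot (|z|^2 + e^{2t} - 1) + 2\,\Re(wz)\,\frac{\sin r \cos r}{r}.
\]

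Next I would evaluate the gradient at the origin. Writing $z = z_1 + i z_2$ one has $2\,\Re(wz) = 2(r_2 z_1 - r_1 z_2)$. Since both $\sin^2 r$ and $\sin r \cos r/r$ are smooth even functions of $r$, their partial derivatives in $r_i$ vanish at the origin; a direct computation then gives $f(0,0) = 1$, $\partial_{r_1} f(0,0) = -2 z_2$ and $\partial_{r_2} f(0,0) = 2 z_1$. Since $A = t - \log f$, the gradient of $A$ at $(r_1, r_2) = (0,0)$ equals $(2 z_2, -2 z_1)$, which has norm exactly $2|z|$, proving the lemma at the origin.

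For general $(r_1, r_2) \in (-C_1 \delta, C_1 \delta)^2$ I would argue by a uniform perturbation estimate. The function $f$ is real analytic with Taylor coefficients in $(r_1, r_2)$ that depend polynomially on $|z|^2 + e^{2t}$, and since $|f - 1| = O(C_1)$ on the relevant range, one has $f \geq 1/2$ for $C_1$ small enough. Consequently the second partial derivatives of $\partial_{r_i} A = -\partial_{r_i} f / f$ are bounded uniformly for $z \in \BD_C$ and $t \in (-C, C)$. Taylor's theorem then yields
\[
\partial_{r_i} A(r_1, r_2) = \partial_{r_i} A(0,0) + O_C(r),
\]
and the hypothesis $|z| \geq \delta$ gives $r \leq \sqrt{2}\, C_1 \delta \leq \sqrt{2}\, C_1 |z|$, making the error $O_C(C_1 |z|)$. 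Choosing $C_1$ small enough, depending only on $C$, forces the gradient norm to lie in $[|z|, 3|z|]$, which establishes the claim. The only real obstacle is the bookkeeping needed to make the second-derivative bounds uniform in $z$ and $t$, which is immediate from the explicit polynomial form of $f$ combined with the lower bound $f \geq 1/2$.
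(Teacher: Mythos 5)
Your proof is correct and follows essentially the same route as the paper's: compute the matrix exponential explicitly, substitute into formula \eqref{iwasawa height}, find that the gradient at $(r_1,r_2)=(0,0)$ is $(2\Im z,-2\Re z)$ with norm $2|z|$, and absorb the $O_C(r)=O_C(C_1|z|)$ perturbation by shrinking $C_1$. The only differences are organizational (your complex-variable bookkeeping with $w=r_2+ir_1$ and the appeal to Taylor's theorem with uniform second-derivative bounds, versus the paper's term-by-term estimation), and the one slip — you invoke bounds on the \emph{second} partials of $\partial_{r_i}A$ where you need its \emph{first} partials — is harmless since both follow from the same compactness and $f\geq 1/2$ argument.
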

\begin{proof}
    By computing the matrix exponential directly, we obtain
    \begin{align}\label{formula of exp(r1X1+r2X2)}
        \exp(r_1X_1+r_2 X_2) = \begin{pmatrix}
            \cos (r) & (ir_1 - r_2)\sin (r)/r\\
            (ir_1 + r_2)\sin (r)/r & \cos (r)
        \end{pmatrix},
    \end{align}
    where $r = \sqrt{r_1^2 + r_2^2}$. It may be seen from (\ref{iwasawa height}) that
    \begin{align*}
        &A\left(\exp(r_1X_1+r_2 X_2)n(z)a(t)\right)\\
        =&t - \log\left(\cos^2(r)+\sin^2(r)(|z|^2+e^{2t}) -\frac{\sin(r)\cos(r)}{r}\left( (-ir_1-r_2)z + (ir_1-r_2)\bar{z} \right)\right).
    \end{align*}
    If we fix $\delta_0 > 0$ sufficiently small, for $r_1,r_2 \in (-\delta_0,\delta_0)$,  $z\in\BD_C$ and $t\in (-C,C)$,
    we have
    \begin{align*}
        &\frac{\partial}{\partial r_1}\left( \cos^2(r)+\sin^2(r)(|z|^2+e^{2t}) -\frac{\sin(r)\cos(r)}{r}\left( (-ir_1-r_2)z + (ir_1-r_2)\bar{z} \right)  \right)\\
        =&  -2\sin(r)\cos(r)\frac{r_1}{r} + 2\sin(r)\cos(r)\frac{r_1}{r} (|z|^2+e^{2t})-\frac{\sin(r)\cos(r)}{r}\left( -iz + i\bar{z} \right) \\
        &- \frac{r(\cos^2(r)-\sin^2(r)) -\sin(r)\cos(r) }{r^2} \frac{r_1}{r}\left( (-ir_1-r_2)z + (ir_1-r_2)\bar{z} \right).
    \end{align*}
    Since $\sin(r)= r + O(r^3)$, $\cos(r)=1+O(r^2)$ and $r_1$, $r_2$ are $\ll r$, we have
    \begin{align*}
        &-2\sin(r)\cos(r)\frac{r_1}{r} + 2\sin(r)\cos(r)\frac{r_1}{r} (|z|^2+e^{2t})\ll r,\\
        &-\frac{\sin(r)\cos(r)}{r}\left( -iz + i\bar{z} \right)=i(z-\overline{z})+O(r^2).
    \end{align*}
    Because $r(\cos^2(r)-\sin^2(r)) -\sin(r)\cos(r) \ll r^3$,
    \begin{align*}
        - \frac{r(\cos^2(r)-\sin^2(r)) -\sin(r)\cos(r) }{r^2} \frac{r_1}{r}\left( (-ir_1-r_2)z + (ir_1-r_2)\bar{z} \right)\ll r^2.
    \end{align*}
     Hence,
    \begin{align*}
        \frac{\partial}{\partial r_1} A\left(\exp(r_1X_1+r_2 X_2)n(z)a(t)\right)=  -i(z-\bar{z}) + O(r) = 2\Im(z) + O(r).
    \end{align*}
    Similarly, we have
    \begin{align*}
        &\frac{\partial}{\partial r_2}\left( \cos^2(r)+\sin^2(r)(|z|^2+e^{2t}) -\frac{\sin(r)\cos(r)}{r}\left( (-ir_1-r_2)z + (ir_1-r_2)\bar{z} \right)  \right)\\
        =&  -2\sin(r)\cos(r)\frac{r_2}{r} + 2\sin(r)\cos(r)\frac{r_2}{r} (|z|^2+e^{2t})-\frac{\sin(r)\cos(r)}{r}\left( -z -\bar{z} \right) \\
        &- \frac{r(\cos^2(r)-\sin^2(r)) -\sin(r)\cos(r) }{r^2} \frac{r_2}{r}\left( (-ir_1-r_2)z + (ir_1-r_2)\bar{z} \right).
    \end{align*}
    and we have the following estimates
    \begin{align*}
        &-2\sin(r)\cos(r)\frac{r_2}{r} + 2\sin(r)\cos(r)\frac{r_2}{r} (|z|^2+e^{2t})\ll r,\\
        &-\frac{\sin(r)\cos(r)}{r}\left( -z -\bar{z} \right) = (z+\overline{z})+O(r^2),\\
        &- \frac{r(\cos^2(r)-\sin^2(r)) -\sin(r)\cos(r) }{r^2} \frac{r_2}{r}\left( (-ir_1-r_2)z + (ir_1-r_2)\bar{z} \right)\ll r^2
    \end{align*}
    Hence, 
    \begin{align*}
        \frac{\partial}{\partial r_2} A\left(\exp(r_1X_1+r_2 X_2)n(z)a(t)\right)=-(z+\bar{z}) + O(r)=- 2\Re(z) + O(r).
    \end{align*}
    We pick $C_1$ small enough so that
    \begin{align*}
        \left|\left( \frac{\partial}{\partial r_1},  \frac{\partial}{\partial r_2}\right)  A\left(\exp(r_1X_1+r_2 X_2)n(z)a(t)\right)\right| = 2|z| + O(r) \asymp |z|
    \end{align*}
    for $z\in \BD_C$ with $|z| \geq \delta$, $t\in (-C,C)$ and $r_1,r_2\in (-C_1 \delta, C_1 \delta)$.
\end{proof}

The second integral we want to estimate includes the spherical functions on $\BH^3$.

\begin{proposition}\label{second one dimensional integral estimate}
    Let $B,C,D>0$ and $1/8>\epsilon_0>0$ be constants. 
    Let $\chi_0\in C_c^\infty(\BR)$ be a smooth function supported in $[-1,1]$,
    and let $\rho$ be a smooth function defined on $(-1,1)$. 
    If $z\in \BD_C$, $t\in(-C,C)$ and $s^\prime > 0$ satisfying
    \begin{align}\label{39}
        |z|\geq B s^{-1/2 + \epsilon_0} \beta^{1/2}\;\text{ and }\;|s^\prime-s|\leq \beta
    \end{align}
    for some $s,\beta\geq 1$ with $\beta\leq 3s^{1/4}$, and
    \begin{align}\label{condition for rho second}
        \left|s^\prime \frac{\partial\rho}{\partial t^{\prime}} (t^\prime)\right|\leq Ds^{\epsilon_0}\beta,\quad\text{ and }\quad \left|s^\prime \frac{\partial^n\rho}{\partial t^{\prime n}} (t^\prime)\right|\ll_n s^{\epsilon_0}\beta,\quad \text{ for }t^\prime\in (-1,1),
    \end{align}
    then
    \begin{align}\label{40}
        \int_{-\infty}^\infty \chi_0(t^\prime) \exp\left(is^\prime (t^\prime + \rho(t^\prime)) \right)\varphi_s^{\BH^3}\left(n(z)a(t+t^\prime) \right) dt^\prime       \ll s^{-A}.
    \end{align}
    The implied constant depends on $A,B,C,D,\epsilon_0$, the first $n$ implied constants in \eqref{condition for rho second},  and the size of the first $n$ derivatives of $\chi_0$, where $n$ depends on $\epsilon_0$ and $A$.
\end{proposition}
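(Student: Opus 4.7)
The strategy is to substitute the integral representation \eqref{eq:HC int for H3} of the spherical function into the left-hand side of \eqref{40} and interchange the order of integration, so that for each fixed $k\in K_0$ the inner $t'$-integral becomes exactly of the form estimated in Proposition \ref{first one dimensional intgeral estimate}. Using the Weyl-symmetric form $\varphi_s^{\BH^3}(y)=\int_{K_0} e^{(-is+1)A(ky)}\,dk$ (available since $\varphi_s^{\BH^3}=\varphi_{-s}^{\BH^3}$), the left-hand side of \eqref{40} becomes
\begin{align*}
\int_{K_0}\left(\int_{-\infty}^\infty \chi_0(t')\,e^{A(kn(z)a(t+t'))}\exp\bigl(is'(t'+\rho(t'))-isA(kn(z)a(t+t'))\bigr)\,dt'\right)dk,
\end{align*}
and the inner integral matches the one in Proposition \ref{first one dimensional intgeral estimate} up to replacing the cutoff $\chi_0$ by $\chi_0\cdot e^{A(kn(z)a(t+\cdot))}$, which is smooth and has $t'$-derivatives bounded uniformly in $k\in K_0$. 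Therefore, for every $k$ with $d(k,M)\geq B's^{-1/2+\epsilon_0}\beta^{1/2}$ (with an appropriate $B'$), Proposition \ref{first one dimensional intgeral estimate} yields an inner bound $\ll s^{-A}$, and integration over this region of the compact $K_0$ contributes $\ll s^{-A}$.

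It remains to bound the contribution from the near-$M$ region $\{k\in K_0:d(k,M)<B's^{-1/2+\epsilon_0}\beta^{1/2}\}$. Parametrize such $k$ as $k=m\exp(r_1X_1+r_2X_2)$ with $m\in M$ and $(r_1,r_2)$ in a disc $U$ of radius $\asymp s^{-1/2+\epsilon_0}\beta^{1/2}$. Since $A(mg)=A(g)$ for $m\in M$, the $m$-integration is harmless, reducing the problem to estimating
\begin{align*}
\iint_U \int_{-\infty}^\infty \chi_0(t')\,\tilde a(r_1,r_2,t')\exp\bigl(is'(t'+\rho(t'))-isA(\exp(r_1X_1+r_2X_2)n(z)a(t+t'))\bigr)\,dt'\,dr_1\,dr_2,
\end{align*}
where $\tilde a$ is a smooth amplitude (incorporating $e^{A(\cdot)}$ and a smooth cutoff on $U$) uniformly bounded by $1$ up to a constant. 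By Lemma \ref{lemma 6.7}, the $(r_1,r_2)$-gradient of the phase has magnitude $\gtrsim s|z|\geq Bs^{1/2+\epsilon_0}\beta^{1/2}$ throughout $U$. Iterated non-stationary-phase integration by parts in $(r_1,r_2)$ then yields, after $N$ iterations, a bound
\begin{align*}
\lesssim (s^{-2\epsilon_0}\beta^{-1})^N\cdot\mathrm{vol}(U)\lesssim s^{-1+2\epsilon_0-2N\epsilon_0}\beta^{1-N},
\end{align*}
which is $\ll s^{-A}$ for $N$ sufficiently large (depending on $A$ and $\epsilon_0$), and the $t'$-integration is absorbed by the compact support of $\chi_0$.

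The main technical point is the bookkeeping in the $N$-fold integration by parts above. In expanding the iterated formal adjoint of $L=(\nabla_r\phi\cdot\nabla_r)/(i|\nabla_r\phi|^2)$, one must verify that the two sources of growth---amplitude derivatives at the cutoff scale $s^{1/2-\epsilon_0}\beta^{-1/2}$ and the $O(s)$ phase Hessian---each produce, after division by $|\nabla_r\phi|$ or $|\nabla_r\phi|^2$ respectively, a factor $\lesssim s^{-2\epsilon_0}\beta^{-1}$. Both quotients are controlled because $|\nabla_r\phi|\asymp s|z|\geq Bs^{1/2+\epsilon_0}\beta^{1/2}$, which is precisely how the hypothesis $|z|\geq Bs^{-1/2+\epsilon_0}\beta^{1/2}$ is used; the constraint $\beta\leq 3s^{1/4}$ enters only as it does in Proposition \ref{first one dimensional intgeral estimate}.
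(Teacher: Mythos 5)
Your proposal is correct and follows essentially the same route as the paper: substitute the $K_0$-integral representation of $\varphi_s^{\BH^3}$ (via $\varphi_s=\varphi_{-s}$), split $M\backslash K_0$ into the region at distance $\gtrsim s^{-1/2+\epsilon_0}\beta^{1/2}$ from $M$ (handled by Proposition \ref{first one dimensional intgeral estimate} applied in $t'$ with the $k$-dependent amplitude absorbed into the cutoff) and the complementary near-$M$ region (handled by non-stationary phase in $(r_1,r_2)$ using the gradient lower bound of Lemma \ref{lemma 6.7}). Your explicit gain of $s^{-2\epsilon_0}\beta^{-1}$ per integration by parts is exactly what the paper packages into Lemma \ref{nonstationary phase on K} via the rescaling Lemma \ref{rescale nonstationary phase} with $\delta=Bs^{-1/2+\epsilon_0}\beta^{1/2}$, so no further comment is needed.
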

\begin{proof}
    If we apply the functional equation $\varphi^{\BH^3}_s = \varphi^{\BH^3}_{-s}$, and substitute $\varphi^{\BH^3}_{-s}$ by the integral formula \eqref{eq:HC int for H3} into the left-hand side of \eqref{40}, it becomes
    \begin{align*}
        &\int_{-\infty}^\infty \int_{K_0} \chi_0(t^\prime) \exp\left( is^\prime (t^\prime + \rho(t^\prime))   +(1-is) A(kn(z)a(t+t^\prime))\right) dk dt^\prime \\
        =&\int_{-\infty}^\infty \int_{M\backslash K_0} \chi_0(t^\prime) \exp\left( is^\prime (t^\prime + \rho(t^\prime))   +(1-is) A(kn(z)a(t+t^\prime))\right) dk dt^\prime .
    \end{align*}
   Let $C_1>0$ be a constant to be chosen later. Let $\chi_1,\chi_2\in C_c^\infty(M\backslash K_0)$ be such that $0\leq \chi_1,\chi_2\leq 1$, $\chi_1 + \chi_2 \equiv 1$, and
    \begin{align*}
        &\supp(\chi_1)\subset \left\{ Mk \in M\backslash K_0 \, | \,  k = \exp(rX), r\in [-2C_1 s^{-1/2+\epsilon_0}\beta^{1/2},2C_1 s^{-1/2+\epsilon_0}\beta^{1/2}], X\in U_\fk \right\},           \\
        &\supp(\chi_2)\subset M\backslash K_0 - \left\{ Mk \in M\backslash K_0 \, | \,  k = \exp(rX), r\in [-C_1 s^{-1/2+\epsilon_0}\beta^{1/2},C_1 s^{-1/2+\epsilon_0}\beta^{1/2}], X\in U_\fk \right\}.
    \end{align*}
    Proposition \ref{first one dimensional intgeral estimate} implies that
    \begin{align*}
        \int_{-\infty}^\infty \int_{M\backslash K_0} \chi_2(k) \chi_0(t^\prime) \exp\left( is^\prime (t^\prime + \rho(t^\prime))   +(1-is) A(kn(z)a(t+t^\prime))\right) dk dt^\prime \ll_A s^{-A}.
    \end{align*}
    Hence, it suffices to estimate
    \begin{align*}
         &\int_{-\infty}^\infty \int_{M\backslash K_0} \chi_1(k) \chi_0(t^\prime) \exp\left( is^\prime (t^\prime + \rho(t^\prime))   +(1-is) A(kn(z)a(t+t^\prime))\right) dk dt^\prime\\
         =&\int_{-\infty}^\infty\left( \int_{M\backslash K_0} \chi_1(k)  \exp\left(   (1-is) A(kn(z)a(t+t^\prime))\right) dk \right)\chi_0(t^\prime)\exp\left(is^\prime (t^\prime + \rho(t^\prime)) \right) dt^\prime.
    \end{align*}
    We shall do this by estimating the inner integral above. That is,
    \begin{align*}
        &\int_{M\backslash K_0} \chi_1(k) \exp\left(  (1 -is) A(kn(z)a(t))\right) dk\notag\\
        =& \iint_{-\infty}^\infty  \tilde{\chi_1}(r_1,r_2)\exp\left(   -is A(\exp(r_1X_1 + r_2X_2)n(z)a(t))\right) dr_1 dr_2.
    \end{align*}
    Here $z\in \BD_C$, $|z| \geq Bs^{-1/2+\epsilon_0}\beta^{1/2}$ and $t\in (-C-1,C+1)$, and $\tilde{\chi_1}(r_1,r_2) = \tilde{\chi_1}(r_1,r_2;z,t)$ is the smooth compactly supported function obtained by combining all of the amplitude factors. Moreover,
    \begin{align*}
        \supp(\tilde{\chi_1}) \subset \left\{ (r_1,r_2)\in\BR^2 \,|\, r =(r_1^2 +  r_2^2)^{1/2} \in [-2C_1 s^{-1/2+\epsilon_0}\beta^{1/2},{2}C_1 s^{-1/2+\epsilon_0}\beta^{1/2}]  \right\}.
    \end{align*}
    The proposition now follows by applying the following Lemma to $\delta=Bs^{-1/2+\epsilon_0}\beta^{1/2}$ and choosing $C_1$ to be sufficiently small.
\end{proof}

\begin{lemma}\label{nonstationary phase on K}
    Let $C>0$ be a constant and let $1>\delta>0$. Then there exists a constant $C_1>0$ such that the following holds. Suppose that $\chi_0\in C_c^\infty(\BR^2)$ is a smooth function supported in the unit ball centered at the origin $(0,0)$. 
    If $t\in(-C,C)$, $z\in \BD_C$ satisfying $|z|\geq\delta,$
    then
    \begin{align*}
         \iint_{-\infty}^\infty  \chi_0\left((C_1\delta)^{-1}(r_1,r_2)\right)\exp\left(  is A(\exp(r_1X_1 + r_2X_2)n(z)a(t))\right) dr_1 dr_2\ll \delta^2(s\delta^2)^{-A}.
    \end{align*}
    The implied constant depends only on $A,C$, and the size of the first $n$ derivatives of $\chi_0$, where $n$ depends on $A$.
\end{lemma}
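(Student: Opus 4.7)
The plan is to prove the lemma by iterated integration by parts, exploiting the gradient lower bound from Lemma \ref{lemma 6.7}. For $\delta \geq 1/4$, the bound $\delta^2(s\delta^2)^{-A} \asymp s^{-A}$ already follows from a direct non-stationary phase argument, since $|\nabla_r A|$ is bounded below by a positive constant on a fixed compact set; so I focus on the regime $\delta < 1/4$ where Lemma \ref{lemma 6.7} applies.

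First, I would choose $C_1 > 0$ to be the constant provided by Lemma \ref{lemma 6.7}, so that for $(r_1, r_2)$ in the ball of radius $C_1\delta$ centered at the origin (which contains the support of the cutoff $\chi_0((C_1\delta)^{-1}(r_1,r_2))$) the gradient satisfies
$$\left|\left(\tfrac{\partial}{\partial r_1},\tfrac{\partial}{\partial r_2}\right) A(\exp(r_1 X_1 + r_2 X_2)n(z)a(t))\right| \asymp |z| \geq \delta.$$
Writing $\psi(r_1, r_2) = s\, A(\exp(r_1 X_1 + r_2 X_2) n(z) a(t))$, this gives $|\nabla\psi| \gg s\delta$ on the support. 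Moreover, since the map $(r_1,r_2,z,t)\mapsto A(\exp(r_1 X_1 + r_2 X_2)n(z)a(t))$ is real analytic on a fixed neighborhood containing the support, its partial derivatives in $(r_1, r_2)$ of every order are uniformly bounded on this set; hence $|\partial^\alpha\psi| \ll_{|\alpha|} s$ for every multi-index $\alpha$.

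Next, I would apply the transpose of the first-order operator
$$L = \frac{1}{i|\nabla\psi|^2}\,\nabla\psi \cdot \nabla,$$
which satisfies $L(e^{i\psi}) = e^{i\psi}$, repeatedly. Each application of $L^t$ to the current amplitude produces a finite sum of terms, each obtained by multiplication by a component of $\nabla\psi / |\nabla\psi|^2$ (of size $\ll 1/(s\delta)$) followed by a differentiation in $(r_1,r_2)$. Using $\|\partial^\alpha \chi_0((C_1\delta)^{-1}\cdot)\|_\infty \ll_\alpha (C_1\delta)^{-|\alpha|}$ and noting that $|D^2\psi|/|\nabla\psi|^2 \ll s/(s\delta)^2 = 1/(s\delta^2)$, one checks by induction that every iteration introduces at worst a factor of
$$\frac{1}{|\nabla\psi|}\cdot\frac{1}{C_1\delta} \;+\; \frac{|D^2\psi|}{|\nabla\psi|^2} \;\ll\; \frac{1}{s\delta^2}.$$

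After $A$ iterations, the integrand is bounded pointwise by $C_A(s\delta^2)^{-A}$, and its support has Lebesgue measure $\asymp \delta^2$, yielding
$$\iint \chi_0\!\left((C_1\delta)^{-1}(r_1,r_2)\right)\exp\!\left(isA(\exp(r_1 X_1 + r_2 X_2)n(z)a(t))\right)\,dr_1\,dr_2 \;\ll_A\; \delta^2(s\delta^2)^{-A},$$
as desired. The only technical obstacle is the inductive bookkeeping for the iterated integrations by parts, namely verifying that derivatives of the rational factor $1/|\nabla\psi|^2$ produce no hidden $\delta^{-1}$ losses beyond the $1/(s\delta^2)$ already accounted for per iteration; this is routine once one observes that $|\nabla|\nabla\psi|^2|/|\nabla\psi|^3 \ll s^2 |z|/(s|z|)^3 = 1/(s|z|^2) \ll 1/(s\delta^2)$, so the divergence of $\nabla\psi/|\nabla\psi|^2$ contributes exactly the matching factor.
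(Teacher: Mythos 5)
Your proof is correct and follows essentially the same route as the paper: choose $C_1$ small so that Lemma \ref{lemma 6.7} gives $|\nabla_{(r_1,r_2)}A|\asymp|z|\geq\delta$ on the support of the rescaled cutoff, note that all higher $(r_1,r_2)$-derivatives of the phase are uniformly bounded by compactness, and then run non-stationary phase with the cutoff at scale $\delta$ and gradient lower bound $\delta$, each integration by parts costing a factor $(s\delta^2)^{-1}$. The paper packages this last step as the separate rescaling statement Lemma \ref{rescale nonstationary phase} (applied with $\delta_1\asymp\delta_2\asymp\delta$), whereas you carry out the iterated integration by parts directly; this is a presentational difference only, and your explicit treatment of the regime $\delta\geq 1/4$ (where Lemma \ref{lemma 6.7} as stated does not apply) is a small point the paper glosses over.
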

\begin{proof}  
Lemma \ref{lemma 6.7}  implies that if $C_1$ is chosen to be sufficiently small then
    \begin{align*}
         \left|\left( \frac{\partial}{\partial r_1},  \frac{\partial}{\partial r_2}\right) A(\exp(r_1X_1+r_2X_2)n(z) a(t))   \right| \geq  |z| \ge \delta,
     \end{align*}
     when $|z|,|t|<C$, $|z|\ge\delta$ and $(r_1,r_2) \in \supp(\chi_0((C_1\delta)^{-1}\cdot))$. It is clear that all the derivatives of $A(\exp(r_1X_1+r_2X_2)n(z) a(t)) $ with respect to $r_1,r_2$ are bounded by the compactness. The lemma now follows from the following Lemma \ref{rescale nonstationary phase}.
\end{proof}

\begin{lemma}\label{rescale nonstationary phase}
    If $\chi \in C_c^\infty(\BR^2)$ is a cutoff function at scale $1\geq \delta_1 >0$, and $\phi \in C^\infty(\BR^2)$ is real-valued and satisfies
    \begin{align*}
        \nabla\phi(x) \gg \delta_2>0, \;\; \phi^{(n)}(x)\ll_n 1
    \end{align*}
    for $x\in \supp(\chi)$ and $n> 1$. If $\delta_1\ll\delta_2$, then
    \begin{align*}
        \int  \chi(x) e^{is\phi(x)} dx \ll_A \delta_1^2(s\delta_1\delta_2)^{-A}.
    \end{align*}
    In particular, if $\delta_1\asymp\delta_2\asymp\delta$, then
    \begin{align*}
        \int  \chi(x) e^{is\phi(x)} dx \ll_A \delta^2 (s\delta^2)^{-A}.
    \end{align*}
\end{lemma}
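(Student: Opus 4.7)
The plan is to reduce to a standard non-stationary phase estimate via a rescaling that converts $\chi$ into a test function of unit scale.

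First I would pick any $x_0\in\supp(\chi)$ and change variables $x = x_0 + \delta_1 y$, so
\begin{align*}
    \int \chi(x) e^{is\phi(x)}\,dx = \delta_1^2 \int \tilde{\chi}(y)\, e^{is\tilde{\phi}(y)}\,dy,
\end{align*}
where $\tilde{\chi}(y) = \chi(x_0+\delta_1 y)$ and $\tilde{\phi}(y) = \phi(x_0+\delta_1 y)$. Since $\chi$ is a cutoff at scale $\delta_1$, the rescaled function $\tilde{\chi}$ is supported in a ball of radius $O(1)$ with $|\partial_y^\alpha \tilde{\chi}|\ll_\alpha 1$ uniformly. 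Moreover $|\nabla_y \tilde{\phi}(y)| = \delta_1 |\nabla\phi(x_0+\delta_1 y)| \gtrsim \delta_1\delta_2$ and, for $|\alpha|\ge 1$, $|\partial_y^\alpha \tilde{\phi}(y)| \ll \delta_1^{|\alpha|}$.

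Next I would apply the standard non-stationary phase operator
\begin{align*}
    L^* u = -(is)^{-1}\,\nabla\cdot\!\left(\frac{\nabla\tilde{\phi}}{|\nabla\tilde{\phi}|^2}\, u\right),
\end{align*}
which satisfies $\int (L^*)^N \tilde{\chi}\cdot e^{is\tilde{\phi}}\,dy = \int \tilde{\chi}\, e^{is\tilde{\phi}}\,dy$ by iterated integration by parts (the boundary terms vanish by compact support of $\tilde{\chi}$). A direct computation shows each application of $L^*$ yields a sum of terms of the form
\begin{align*}
    s^{-1}\,|\nabla\tilde{\phi}|^{-2k}\cdot(\text{product of derivatives of }\tilde{\phi})\cdot(\text{derivative of amplitude}),
\end{align*}
with at least one factor of $\nabla\tilde{\phi}$ balancing one factor of $|\nabla\tilde{\phi}|^{-2}$. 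The two worst offenders to bound per iteration are $(s|\nabla\tilde{\phi}|)^{-1}\ll (s\delta_1\delta_2)^{-1}$ and $s^{-1}|\nabla\tilde{\phi}|^{-2}|\partial^2\tilde{\phi}| \ll s^{-1}(\delta_1\delta_2)^{-2}\cdot\delta_1^2 = (s\delta_2^2)^{-1}$. The hypothesis $\delta_1\ll\delta_2$ is precisely what makes $(s\delta_2^2)^{-1}\ll (s\delta_1\delta_2)^{-1}$, so the first term dominates and a clean induction gives $|(L^*)^N \tilde{\chi}(y)|\ll_N (s\delta_1\delta_2)^{-N}$ uniformly on a fixed compact set.

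Combining with the Jacobian factor $\delta_1^2$ from the rescaling yields
\begin{align*}
    \left|\int \chi(x) e^{is\phi(x)}\,dx\right| \ll_A \delta_1^2(s\delta_1\delta_2)^{-A},
\end{align*}
as required; the special case $\delta_1\asymp\delta_2\asymp\delta$ then follows immediately. The only genuinely delicate step is the inductive bookkeeping in the $L^*$ iteration: one has to verify that differentiating the factor $|\nabla\tilde{\phi}|^{-2}$, which produces $|\nabla\tilde{\phi}|^{-4}\,\partial\tilde{\phi}\,\partial^2\tilde{\phi}$-type terms, does not beat the gain $(s\delta_1\delta_2)^{-1}$ at each step. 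This is exactly where the assumption $\delta_1\ll\delta_2$ enters, because the Hessian of $\tilde{\phi}$ is only of size $\delta_1^2$ rather than being uncontrolled.
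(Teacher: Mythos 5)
Your proof is correct and follows essentially the same route as the paper: rescale the domain by $\delta_1$ so the amplitude has unit scale, observe that the effective oscillation parameter is $s\delta_1\delta_2$, and use $\delta_1\ll\delta_2$ to ensure the Hessian terms do not spoil the gain from the gradient lower bound. The paper simply packages the last step by renormalizing the phase to $(\delta_1\delta_2)^{-1}\phi(\delta_1 x)$, which has gradient $\gg 1$ and bounded higher derivatives, and then invokes standard integration by parts, whereas you carry out the $L^*$ bookkeeping explicitly; the content is the same.
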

\begin{proof}
    We have
    \begin{align*}
        \int \chi(x) e^{is\phi(x)} dx = \delta_1^2 \int \chi(\delta_1 x) e^{i(s\delta_1\delta_2)(\delta_1\delta_2)^{-1}\phi(\delta_1 x)} dx.
    \end{align*}
    The function $\chi(\delta_1 x)$ is now a cutoff function at scale $1$, and $(\delta_1\delta_2)^{-1}\phi(\delta_1 x)$ satisfies
    \begin{align*}
        \nabla\left( (\delta_1\delta_2)^{-1}\phi(\delta_1 x) \right)\gg 1.
    \end{align*}
    Since $\delta_2\gg\delta_1$, the $n$-th derivatives are bounded
    when $x\in \supp(\chi(\delta_1\cdot))$ for all $n > 1$. The lemma now follows by integration by parts.
    Note that here we are not putting a bound on the first derivative of the phase function $(\delta_1\delta_2)^{-1}\phi(\delta_1 x)$. This is because the first derivative will only appear on the denominator after integrating by parts.
\end{proof}

Let $\chi_0 \in C_c^\infty(\BR)$ be a smooth function supported in $[-1,1]$, $\rho_1,\rho_2$ two real-valued smooth functions defined on $(-1,1)$, $s_1,s_2 \in \BR$ and $g\in G_0$. We define
\begin{align}\label{integral J(s t1 t2 g)}
\begin{split}
    &J(s,s_1,s_2,g;\chi_0,\rho_1,\rho_2)\\
    =& \iint_{-\infty}^\infty \chi_0(t_1) \chi_0(t_2) \exp\left({-is_1(t_1 +\rho_1(t_1))+ is_2(t_2+\rho_2(t_2))} \right)\varphi_s^{\BH^3} \left( a(-t_1)ga(t_2) \right) dt_1 dt_2.
\end{split}
\end{align}
The main result of this section is the following estimate of the oscillatory integral $J(s,s_1,s_2,g;\chi_0,\rho_1,\rho_2)$. We now combine the one-dimensional results Proposition \ref{first one dimensional intgeral estimate} and \ref{second one dimensional integral estimate} to prove it.

\begin{proposition}\label{nonstationary oscillatory integral estimate}
    Let $B_0,C_0,D >0$ and $1/8>\epsilon_0>0$ be given. If $g\in G_0$ satisfies
    \begin{align*}
        d(g,e)\leq C_0\quad\text{ and }\quad d(g,MA)\geq B_0 s^{-1/2 + \epsilon_0}\beta^{1/2}
    \end{align*}
    for some $s,\beta\geq 1$ satisfying $\beta \leq 3 s^{1/4}$, and
    \begin{align}\label{condition for rho two dimensional integral}
        \left|s_j \frac{\partial\rho_j}{\partial t_j} (t_j)\right|\leq Ds^{\epsilon_0}\beta,\quad\text{ and }\quad \left|s_j \frac{\partial^n\rho_j}{\partial t_j^n} (t_j)\right|\ll_n s^{\epsilon_0}\beta,\quad \text{ for }t_j\in (-1,1),\; j=1,2,
    \end{align}
    and $s_1,s_2\in [s-\beta,s+\beta]$, then
    \begin{align}\label{42}
        J(s,s_1,s_2,g;\chi_0,\rho_1,\rho_2)\ll s^{-A}.
    \end{align}
     The implied constant depends on $A,B_0,C_0,D, \epsilon_0$, the first $n$ implied constants in \eqref{condition for rho two dimensional integral},   and the size of the first $n$ derivatives of $\chi_0$, where $n$ depends on $\epsilon_0$ and $A$.
\end{proposition}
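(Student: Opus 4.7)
My plan is to reduce the two-dimensional oscillatory integral $J$ to the one-dimensional estimates Propositions \ref{first one dimensional intgeral estimate} and \ref{second one dimensional integral estimate} by carefully decomposing $a(-t_1)ga(t_2)$ via iterated Iwasawa decompositions. First I fix $t_1 \in (-1,1)$ and write the Iwasawa decomposition $a(-t_1)g = n(z_1(t_1))a(T_1(t_1))k_1(t_1)$, which depends smoothly on $t_1$ on the compact support of $\chi_0$. The hypothesis $d(g,MA) \geq B_0 s^{-1/2+\epsilon_0}\beta^{1/2}$, together with the left-invariance $d(a(-t_1)g, MA) = d(g,MA)$ and the smoothness of Iwasawa, forces at each $t_1$ at least one of $|z_1(t_1)|$ and $d(k_1(t_1),M)$ to be bounded below by a positive constant multiple of $d(g,MA)$; otherwise $a(-t_1)g$ would be too close to $AM$, contradicting the hypothesis. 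I then introduce a smooth partition of unity $\chi_0(t_1) = \eta_1(t_1) + \eta_2(t_1)$ supported on the two regions according to which lower bound holds, and treat each case separately.

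On the region where $|z_1(t_1)|$ is large, I further decompose $k_1(t_1)a(t_2) = n_*(t_1,t_2)a_*(t_1,t_2)k_*(t_1,t_2)$ by Iwasawa (with $n_*, a_*, k_*$ equal to the identity at $t_2 = 0$) and use bi-$K$-invariance of $\varphi_s^{\BH^3}$ to rewrite
\begin{equation*}
\varphi_s^{\BH^3}(a(-t_1)ga(t_2)) = \varphi_s^{\BH^3}\bigl(n(z_1(t_1) + e^{T_1(t_1)}n_*(t_1,t_2))\,a(T_1(t_1) + T_*(t_1,t_2))\bigr).
\end{equation*}
A change of variable $t_2 \mapsto T_*(t_1,t_2)$, whose Jacobian is bounded away from zero precisely in this region, brings the $A$-argument into the shifted form $a(T_1(t_1)+t_2')$, and absorbing the remaining $t_2'$-dependence of the $N$-component into a phase perturbation $\tilde\rho_2$ yields the form of Proposition \ref{second one dimensional integral estimate}. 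On the region where $d(k_1(t_1),M)$ is large, I instead use the integral representation $\varphi_s^{\BH^3}(y) = \int_{K_0}e^{(-is+1)A(ky)}dk$, swap the order of integration, and for each fixed $k \in K_0$ apply the iterated Iwasawa decomposition $A(ka(-t_1)ga(t_2)) = A(ka(-t_1)g) + A(\tilde k(k,t_1)a(t_2))$ with $\tilde k(k,t_1) = \kappa(ka(-t_1)g)$. Proposition \ref{first one dimensional intgeral estimate} then applies to the $t_2$-integral with $z=0$, $t=0$, $k \leftarrow \tilde k(k,t_1)$, for $k$ outside the thin neighborhood $\{k:d(\tilde k(k,t_1),M) < Bs^{-1/2+\epsilon_0}\beta^{1/2}\}$ of a one-dimensional coset of $M$ (whose volume in $K_0$ is $\ll s^{-1+2\epsilon_0}\beta$); the exceptional set is handled by swapping roles, performing the $t_1$-integral via the same Iwasawa reduction that produced the first case, and invoking the $d(k_1,M)$ lower bound.

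The main obstacle will be verifying the derivative hypotheses \eqref{condition for rho} and \eqref{condition for rho second} on the reparametrized phase functions $\tilde\rho_j$ produced by the Iwasawa reparametrizations. Since those reparametrizations introduce $O(1)$ corrections a priori, one must extract the linear part, which produces only a harmless shift of the frequencies $s_j$ inside the allowed range $[s-\beta,s+\beta]$, and retain only the higher-order part in $\tilde\rho_j$; controlling the resulting remainders requires uniform quantitative estimates for the Iwasawa decomposition in the spirit of Lemmas \ref{uniformazation lemma}--\ref{uniform boundedness}, which in turn use the restriction $\beta \leq 3s^{1/4}$ to ensure the corrections fit within the $s^{\epsilon_0}\beta$ tolerance allowed by the one-dimensional propositions. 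Once these derivative bounds are verified, integrating over $t_1$ and $k$ on sets of bounded measure yields the claimed bound $J \ll s^{-A}$.
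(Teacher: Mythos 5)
Your overall architecture---reducing the two-dimensional integral to Propositions \ref{first one dimensional intgeral estimate} and \ref{second one dimensional integral estimate} via Iwasawa decompositions and a dichotomy governed by $d(g,MA)$---is the right one and matches the paper's, but two of your reduction steps fail as written. First, in the region where $|z_1(t_1)|$ is large you use the $NAK$ decomposition $a(-t_1)g=n(z_1)a(T_1)k_1$ and then push $a(t_2)$ past $k_1$; the change of variable $t_2\mapsto T_*(t_1,t_2)$ has Jacobian $\Theta(\kappa(k_1(t_1)a(t_2)))$, which can vanish (your region is cut out by a condition on $z_1(t_1)$ alone and says nothing about $k_1(t_1)$), and, more seriously, the argument of $\varphi_s^{\BH^3}$ acquires an $N$-component $z_1+e^{T_1}\zeta_*(t_1,t_2)$ that still depends on $t_2$. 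Proposition \ref{second one dimensional integral estimate} requires the spherical function to be evaluated at $n(z)a(t+t')$ with $z$ \emph{fixed}; this $t_2$-dependence sits inside $\varphi_s^{\BH^3}$ and cannot be absorbed into the exponential perturbation $\rho_2$. The fix is the reverse decomposition $a(-t_1)g\in K_0\,n(z'(t_1))a(t'(t_1))$, so that left $K_0$-invariance gives $\varphi_s^{\BH^3}(a(-t_1)ga(t_2))=\varphi_s^{\BH^3}(n(z'(t_1))a(t'(t_1)+t_2))$ with $N$-part independent of $t_2$ --- but the required lower bound $|z'(t_1)|\gg s^{-1/2+\epsilon_0}\beta^{1/2}$ uniformly in $t_1$ does not follow from your pointwise dichotomy; in the paper it is deduced from the \emph{global} structure of $g$ (one first disposes of the case where the $K$-part of $g=k'n(z')a(t')$ is far from $M$, and only in the complementary case, where $k'$ is close to $M$ so conjugation by $a(-t_1)$ essentially rescales $z'$, does the uniform bound on $z'(t_1)$ hold).

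Second, your exceptional set $\{k:d(\tilde k(k,t_1),M)<Bs^{-1/2+\epsilon_0}\beta^{1/2}\}$ depends on $t_1$, and on it you propose to ``perform the $t_1$-integral''; but with $k$ fixed your decomposition $A(ka(-t_1)ga(t_2))=A(ka(-t_1)g)+A(\tilde k(k,t_1)a(t_2))$ leaves $t_1$-dependence in \emph{both} summands (through $\tilde k(k,t_1)$), so neither one-dimensional proposition applies to the $t_1$-integral, and a trivial bound on the exceptional set only yields $\ll s^{-1+2\epsilon_0}\beta\le 3s^{-3/4+2\epsilon_0}$, far short of $s^{-A}$. (Your $t_1$-partition of unity is also dangerous wherever you integrate by parts in $t_1$ against it: its derivatives can be as large as $d(g,MA)^{-1}\asymp s^{1/2-\epsilon_0}\beta^{-1/2}$, overwhelming the per-step gain $(s^{2\epsilon_0}\beta)^{-1}$.) The missing idea is the change of variable $u=\kappa(ka(-t_1))$ in the $K_0$-integral (Lemmas \ref{change of variable} and \ref{spliting A}): after it the phase becomes $A(uga(t_2))-A(ua(t_1))$ with the $t_1$- and $t_2$-dependence cleanly separated, the Jacobian is a harmless bounded amplitude, and the case analysis runs pointwise in $u$ --- either $d(u,M)$ is large and one integrates in $t_1$, or it is small and then $d(u\kappa(g),M)$ is large and one integrates in $t_2$, unless $\kappa(g)$ is itself close to $M$, which is exactly the case handled by the spherical-function asymptotics above.
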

\begin{proof}
    We define the map $u: K_0\times \BR\to K_0$ by $ka(-t_1)\in NA u(k,t_1)$, i.e., $u(k,t_1) = \kappa(ka(-t_1))$. By Lemma \ref{change of variable}, for each fixed $t_1\in\BR$, $u(\cdot,t_1)$ is a diffeomorphism from $K_0$ onto itself and induces a diffeomorphism from $M\backslash K_0$ onto itself. If we apply the functional equation $\varphi_s^{\BH^3} = \varphi_{-s}^{\BH^3}$, write $\varphi_{-s}^{\BH^3}$ as an integral over $K_0$ by the integral formula \eqref{eq:HC int for H3}, and apply Lemma \ref{spliting A}, we obtain
    \begin{align*}
        &\varphi_{s}^{\BH^3}\left(  a(-t_1)ga(t_2)   \right)\\
        =& \int_{M\backslash K_0} \exp\big( (1-is)   (A\left( ka(-t_1)ga(t_2) \right) \big) dk  \\
        =& \int_{M\backslash K_0} \exp\big( (1-is)   \left(A\left( u(k,t_1)ga(t_2) \right) - A\left( u(k,t_1)a(t_1)\right) \right)\big) dk\\
        =&\int_{M\backslash K_0} \exp\big( (1-is)   \left(A\left( uga(t_2) \right) - A\left( ua(t_1)\right) \right)\big) \left|\det \frac{\partial k}{\partial u}\right| du.
    \end{align*}
    Here we use $\partial u/\partial k$ to denote the Jacobian matrix of the diffeomorphism $u(\cdot,t_1):M\backslash K_0\to M\backslash K_0$, and $\partial k/\partial u$ is the inverse matrix.
    Substituting this into the definition (\ref{integral J(s t1 t2 g)}) of $J(s,s_1,s_2,g;\chi_0,\rho_1,\rho_2)$ gives
    \begin{align}\label{44}
    \begin{split}
         \iint_{-\infty}^\infty \int_{M\backslash K_0}\chi_0(t_1) \chi_0(t_2) &\exp\left({-is_1(t_1 +\rho_1(t_1))+is_2(t_2+\rho_2(t_2))} \right)\\
         &\exp\big( (1-is)   \left(A\left( uga(t_2) \right) - A\left( ua(t_1)\right) \right)\big) \left|\det \frac{\partial k}{\partial u}\right| du dt_1 dt_2 . 
         \end{split}
    \end{align}
    Let $g = k^\prime n(z^\prime) a (t^\prime)$ with $k^\prime \in K_0$, $z^\prime\in\BC$ and $t^\prime\in\BR$. 
    The condition $d(g,e)\leq C_0$ implies that $z^\prime$ and $t^\prime$ are bounded. 
    Choose a constant $C>0$. 
    If $d(u,M) \geq Cs^{-1/2 + \epsilon_0}\beta^{1/2}$, then integrating (\ref{44}) in $t_1$ and applying Proposition \ref{first one dimensional intgeral estimate} shows that the integral of (\ref{44}) over $t_1$ and $t_2$ with this $u$ is $\ll_{C,A,\epsilon_0} s^{-A}$.
    And if $d(u,M) < Cs^{-1/2 + \epsilon_0}\beta^{1/2}$ but $d(k^\prime,M) \geq C_1 C s^{-1/2 + \epsilon_0}\beta^{1/2}$ for some constant $C_1>0$ independent of the choice of $C$ such that $d(uk^\prime,M) \geq Cs^{-1/2 + \epsilon_0}\beta^{1/2}$, then we obtain the same conclusion by integrating in $t_2$.
    Combining these, we see that (\ref{44}) will be $\ll_{C,A,\epsilon_0} s^{-A}$ unless $d(k^\prime,M) < C_1 C s^{-1/2 + \epsilon_0}\beta^{1/2}$, and we assume that this is the case.

    If $C$ is chosen sufficiently small, the condition $d(k^\prime,M) < C_1 C s^{-1/2 + \epsilon_0}\beta^{1/2}$ and our assumption that $d(g,MA)\geq B_0 s^{-1/2 + \epsilon_0}\beta^{1/2}$ imply that $|z^\prime| \geq C_2 s^{-1/2 + \epsilon_0}\beta^{1/2} $ for some $C_2>0$ depending only on $B_0$. 
    For $-1\leq t_1 \leq 1$, we define $k^\prime(t_1) \in K_0$, $z^\prime(t_1)\in\BC$ and $t^\prime(t_1) \in \BR$ by  
    \begin{align*}
        a(-t_1)g = a(-t_1)k^\prime n(z^\prime) a (t^\prime) =k^\prime(t_1)n(z^\prime(t_1)) a(t^\prime(t_1)),
    \end{align*}
    so that
    \begin{align*}
        a(-t_1)k^\prime = k^\prime(t_1)n(z^\prime(t_1)-e^{t^\prime(t_1)-t'}z') a(t^\prime(t_1)-t').
    \end{align*}
    Since $d( a(-t_1)k^\prime,MA) < C_1 C s^{-1/2 + \epsilon_0}\beta^{1/2}$, it may be seen that 
    \begin{align*}
        \left| z^\prime(t_1)-e^{t^\prime(t_1)-t'}z' \right| < C_3 C s^{-1/2 + \epsilon_0}\beta^{1/2}
    \end{align*}
    for some  $C_3>0$.
    It follows from $|z^\prime| \geq C_2 s^{-1/2 + \epsilon_0}\beta^{1/2} $ that if $C$ is sufficiently small, 
    we have  $|z^\prime(t_1)| \geq C_4 s^{-1/2 + \epsilon_0}\beta^{1/2} $ for some  $C_4>0$ and for all $-1\leq t_1 \leq 1$. 
    The result now follows by applying Proposition \ref{second one dimensional integral estimate} to the integral (\ref{integral J(s t1 t2 g)}) for each fixed $t_1$.
\end{proof}

\section{Geodesics in \texorpdfstring{$\BH^3$}{H3}}\label{section of geodesic in H3}

Let $l$ be the image of $A$ in $\BH^3$, which is the vertical geodesic through the origin.
We choose an orientation for $l$ so that the upward orientation of $l$ is positive.
Given $T,\delta >0$, we define the $(\delta,T)$ tubular neighborhood of $l$ centered at $o$ as
\begin{align*}
    \sT^T_\delta(l,o) =\{ (z,e^t)\in \BH^3\, :\, |z|\leq\delta,|t|\leq T \}= \bigcup_{z\in\BC,|z|\leq\delta}n(z)\cdot \{ a(t)\cdot o\,:\, |t| \leq T \},
\end{align*}
which is the union of vertical geodesics that are close to $l$ near the point $o$. Here $\delta$ is the "radius" of the discs of the tube, and $T$ is the length of the geodesic segment.
We will denote by $l_{[-T,T]}$ the oriented geodesic segment
\begin{align*}
    l_{[-T,T]} = \{a(t)\cdot o\,|\, -T\leq t\leq T \}
\end{align*}
with the orientation as $t$ increases.
In the general case, consider an oriented geodesic $\gamma$ in $\BH^3$ with a point $p$ on $\gamma$. 
There exists a  $g\in G_0$ so that $\gamma = g\cdot l$ preserving the orientations and $p=g\cdot o$.
We define the $(\delta,T)$ tubular neighborhood of $\gamma$ centered at $p$ to be
\begin{align*}
    \sT^T_\delta(\gamma,p) = g \cdot \sT^T_\delta(l,o)=\bigcup_{z\in\BC,|z|\leq\delta}gn(z)\cdot \{ a(t)\cdot o\,:\, |t|\leq T \}.
\end{align*}
We define the projection along the horocycles from the tube onto the segment to be
\begin{align}\label{eq: horocycle projection}
    \pi_\gamma: \sT^T_\delta(\gamma,p) \to \gamma,\qquad gn(z)a(t)\cdot o\mapsto ga(t)\cdot o.
\end{align}


Note that $MA$ consists of all orientation-preserving isometries of $l$ in $G_0$. 
Recall that $M^\prime$ is the normalizer of $A$ in $K_0$.
All isometries of $l$ in $G_0$, which do not need to preserve orientation, form the group
\begin{align*}
   M^\prime A =  \{ e, w_0 \}MA,
\end{align*}
where $e$ is the identity and $w_0$ is a representative of the nontrivial element in the Weyl group. We will take
\begin{align*}
    w_0 = \begin{pmatrix}
        &i\\i&
    \end{pmatrix}.
\end{align*}
Using geodesic tubes, we give a geometric description of the condition for a group element $g\in G_0$ to be close to $MA$.

\begin{lemma}\label{tube inclusion lemma}
    Let $T>0$ be given. There exist constants $C,D>0$, depending only on $T$, so that
    \begin{align*}
          k\cdot\sT_\delta^{T/2}( l,o)=\sT_\delta^{T/2}(k\cdot l,o) \subset \sT_{C\delta}^T(l,o)
    \end{align*}
    for all $0<\delta<D$ and $k\in K_0$ satisfying $d(k,M^\prime)\leq \delta$.
\end{lemma}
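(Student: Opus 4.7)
The plan is to split the argument into three parts: verifying the set equality, reducing to $k$ close to the identity, and a direct computation using \eqref{Poincare}. The set equality $k\cdot\sT_\delta^{T/2}(l,o)=\sT_\delta^{T/2}(k\cdot l,o)$ is immediate from the definitions: since $k\in K_0$ fixes $o$, the geodesic $k\cdot l$ passes through $o$, and $g=k$ (or $g=kw_0$ if $k$ reverses the orientation of $l$) serves as the isometry used in the definition of $\sT_\delta^{T/2}(k\cdot l,o)$, with $w_0$ acting on $\BH^3$ via the involution $(z,\bt)\mapsto(\bar z/(|z|^2+\bt^2),\,\bt/(|z|^2+\bt^2))$.

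For the inclusion, I would use the bi-invariance of the metric on $K_0\cong\SU(2)$ to write $k=m'k_1$ with $m'\in M'$ the nearest point to $k$ and $d(k_1,e)\leq\delta$, and then establish two claims: (i) for $d(k_1,e)\leq\delta$ and $p=n(z)a(t)\cdot o$ with $|z|\leq\delta$, $|t|\leq T/2$, the image $k_1\cdot p=(z',e^{t'})$ satisfies $|z'|\leq C_1\delta$ and $|t'|\leq T/2+O_T(\delta^2)$; and (ii) every $m'\in M'$ maps $\sT_\rho^S(l,o)$ into $\sT_{e^{2S}\rho}^{S+O_S(\rho^2)}(l,o)$. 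Claim (i) follows by writing $k_1=\begin{pmatrix}a&b\\-\bar b&\bar a\end{pmatrix}$ with $|b|\leq\delta$ and substituting into \eqref{Poincare}: the denominator $|-\bar bz+\bar a|^2+|b|^2e^{2t}$ equals $1+O_T(\delta^2)$ and is bounded away from zero for $\delta<D(T)$, while the numerators of the two coordinates are $O_T(\delta)$ and $e^t$ respectively. Claim (ii) uses the decomposition $M'=M\sqcup Mw_0$: the subgroup $M=\{\mathrm{diag}(e^{i\theta},e^{-i\theta})\}$ acts by $(z,\bt)\mapsto(e^{2i\theta}z,\bt)$ and hence preserves every tube, while the $w_0$-formula above gives $|z'|=|z|e^{-2t}/(1+|z|^2e^{-2t})\leq|z|e^{2S}$ and $t'=-t-\log(1+|z|^2e^{-2t})$, so $|t'|\leq S+O_S(\rho^2)$, yielding the claim.

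Composing (i) and (ii) with $\rho=C_1\delta$ and $S=T/2+O_T(\delta^2)$, and taking $D=D(T)$ small enough to absorb the error terms into the length bound $T$, yields the desired inclusion with $C=C(T)$. I do not anticipate a genuine obstacle; the only point requiring care is tracking how the $T$-dependence of $C$ arises through the factor $e^{-2t}$ with $t\in[-T/2,T/2]$ appearing in the $w_0$ computation.
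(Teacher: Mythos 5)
Your proof is correct and follows essentially the same route as the paper: peel off the nearest element of $M'$ (using that $M$ preserves tubes and computing the $w_0$-action explicitly), and treat the remaining near-identity factor by a perturbative computation with \eqref{Poincare}. The only cosmetic differences are that you work directly with the matrix entries of $k_1$ rather than the exponential coordinates $\exp(r_1X_1+r_2X_2)$ used in the paper, and you spell out the $w_0$ case that the paper dispatches with "the same argument."
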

\begin{proof}
    The group $M^\prime = M\cup w_0 M$ has two connected components.
    For an element $k\in K_0$ satisfying $d(k,M^\prime)\leq \delta$,
    there are two cases, either $d(k,M)\leq \delta$, or $d(k,w_0M)\leq \delta$.
    We first assume $d(k,M)\leq \delta$.
    We may choose a constant $D$ sufficiently small so that the exponential map is a diffeomorphism when restricted to any $\delta$-neighborhood of the origin in $K_0/M$. We then assume $k \in \exp(r_1X_1+r_2X_2) M$ with $r = \sqrt{r_1^2+r_2^2} \leq C_1 \delta$ for some constant $C_1>0$. Since $M$ stablizes $\sT_\delta^{T/2}(l,o)$, we assume $k = \exp(r_1X_1+r_2X_2)$. By (\ref{formula of exp(r1X1+r2X2)}), we get
    \begin{align*}
       k = \begin{pmatrix}
            \cos (r) & (ir_1 - r_2)\sin (r)/r\\
            (ir_1 + r_2)\sin (r)/r & \cos (r)
            \end{pmatrix}.
    \end{align*}
    Let $(z,e^t)$ be an arbitrary point in $\sT_\delta^{T/2}( l,o)$, i.e. $|z|\leq \delta$ and $|t|\leq T/2$. Via the formula (\ref{Poincare}), the Iwasawa $N$-projection and $A$-projection of $kn(z)a(t)$ are
    \begin{align*}
        N(kn(z)a(t))=\frac{z+O_T(r)}{1+O_T(r)} = z+O_T(r)
    \end{align*}
    and
    \begin{align*}
        A(kn(z)a(t))=\log \frac{e^t}{1+O_T(r)} = t + O_T(r).
    \end{align*}
    Then the result follows.
    If $d(k,w_0M)\leq \delta$, we can assume
    \begin{align*}
       k = w_0\begin{pmatrix}
            \cos (r) & (ir_1 - r_2)\sin (r)/r\\
            (ir_1 + r_2)\sin (r)/r & \cos (r)
            \end{pmatrix}.
    \end{align*}
    The same argument finishes the proof.
\end{proof}

\begin{proposition}\label{geodesic tube theorem for close to MA}
    Let $C_0 > 0$ be given. There exist constants $C,D,T>0$, depending only on $C_0$, such that the following holds. If  $0<\delta<D$ and $g\in G_0$ satisfying $d(g,e)\leq C_0$ and $d(g,M^\prime A)\leq \delta$, then the geodesic segment
    \begin{align*}
        g\cdot l_{[-1,1]} = \{ g a(t)\cdot o\,|\,-1\leq t\leq 1  \}
    \end{align*}
    lies inside the tube $\sT_{C\delta}^T(l,o)$.
\end{proposition}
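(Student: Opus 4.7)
The plan is to reduce to Lemma~\ref{tube inclusion lemma} via the Iwasawa decomposition. A direct calculation shows that $M^\prime A = \{g \in G_0 : n(g) = e,\ \kappa(g) \in M^\prime\}$: if $g = m a(t) \in MA$ then $g = e\cdot a(t)\cdot m$ since $m$ commutes with $A$, and if $g = w_0 m a(t)$ then the relation $w_0 a(t) = a(-t) w_0$ yields $g = e\cdot a(-t)\cdot (w_0 m)$ with $w_0 m \in w_0 M \subset M^\prime$. Since the Iwasawa map $G_0 \to N \times A \times K_0$ is a smooth diffeomorphism and $d$ is left-invariant, its restriction to any fixed compact set is Lipschitz. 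Hence, writing $g = n(z_g)\,a(t_g)\,\kappa_g$, the assumptions $d(g,e)\leq C_0$ and $d(g, M^\prime A)\leq \delta$ yield constants $C_1, T_0 > 0$ depending only on $C_0$ such that
\[
|z_g| \leq C_1 \delta,\quad d(\kappa_g, M^\prime) \leq C_1 \delta,\quad |t_g| \leq T_0.
\]

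With these bounds I would decompose $g \cdot l_{[-1,1]} = n(z_g)\,a(t_g)\,\bigl(\kappa_g \cdot l_{[-1,1]}\bigr)$ and handle the three factors in turn. Since $l_{[-1,1]} \subset \sT^{1}_{C_1\delta}(l,o)$ and $d(\kappa_g, M^\prime)\leq C_1\delta$, Lemma~\ref{tube inclusion lemma} applied with radius $C_1\delta$ and $T/2 = 1$ (with $D$ chosen small enough that $C_1 D$ lies in that lemma's admissible range) gives
\[
\kappa_g \cdot l_{[-1,1]} \subset \sT^{2}_{C_2\delta}(l,o)
\]
for some $C_2$ depending only on $C_0$. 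The action $a(t_g)\cdot(z,e^s) = (e^{t_g}z, e^{t_g + s})$ then shows $a(t_g)\sT^{2}_{C_2\delta}(l,o) \subset \sT^{T_0+2}_{e^{T_0}C_2\delta}(l,o)$, and $n(z_g)\cdot(z,e^s) = (z+z_g, e^s)$ combined with $|z_g|\leq C_1\delta$ finally gives
\[
g\cdot l_{[-1,1]} \subset \sT^{T_0+2}_{(e^{T_0}C_2 + C_1)\delta}(l,o),
\]
so setting $T = T_0 + 2$ and $C = e^{T_0}C_2 + C_1$ proves the proposition.

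The main issue is really the quantitative Iwasawa step: verifying that closeness of $g$ to $M^\prime A$ in the group metric translates to simultaneous bounds on $|z_g|$ and $d(\kappa_g, M^\prime)$. Once this Lipschitz input is in place, the remainder of the argument is a direct unwinding of how $N$ and $A$ act on horospherical tubes together with a single application of Lemma~\ref{tube inclusion lemma}.
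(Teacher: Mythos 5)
Your proof is correct and follows essentially the same route as the paper's: an Iwasawa decomposition of $g$, the observation that $d(g,M^\prime A)\leq\delta$ forces the $N$-component to be $O(\delta)$ and the $K_0$-component to be $O(\delta)$-close to $M^\prime$, and a single application of Lemma \ref{tube inclusion lemma}. The only difference is cosmetic: the paper writes $g=kn(z)a(t)$ and applies the lemma to the outer compact factor after absorbing $n(z)a(t)\cdot l_{[-1,1]}$ into a thin tube, whereas you write $g=n(z_g)a(t_g)\kappa_g$, apply the lemma to $\kappa_g$ first, and then propagate through the explicit $A$- and $N$-actions.
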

\begin{proof}
    Let $g= kn(z)a(t)$ with $k\in K_0$, $z\in\BC$ and $t\in\BR$. The condition $d(g,e)\leq C_0$ implies that $z$ and $t$ are bounded. Choose $T>0$ sufficiently large so that $|t+1|<T/2$ and $|t-1| < T/2$ hold for all possible choices of $t$ for $d(g,e)\leq C_0$. 
    The condition $d(g,M^\prime A)\leq \delta$ implies that $d(k,M^\prime)\leq C_1 \delta$ and $|z|\leq C_1 \delta$ for some constant $C_1 > 0$.
    Then we have
    \begin{align*}
       & n(z)a(t)\cdot l_{[-1,1]}\\
       =& \{ n(z) a(t+t_0)\cdot o\,|\,-1\leq t_0\leq 1  \}\\
        \subset& \{ (z,e^t)\,|\, |z|\leq C_1\delta,|t|\leq T/2  \} \\
        =& \sT_{C_1\delta}^{T/2}(l,o) 
    \end{align*}
    Now the result follows by Lemma \ref{tube inclusion lemma}.
\end{proof}

Next, we show the converse of Proposition \ref{geodesic tube theorem for close to MA}.
\begin{proposition}\label{tube inclusion implies close in moduli space}
    Let $C_0, T>0$ be given. There exist constants $C,D>0$, depending on $C_0$ and $T$, such that the following holds. If  $0<\delta<D$ and $g\in G_0$ satisfying $d(g,e)\leq C_0$ and 
    \begin{align*}
        g\cdot l_{[0,1]} = \{ g a(t)\cdot o\mid 0\leq t\leq 1 \} \subset \sT_{\delta}^T(l,o),
    \end{align*}
    then
    \begin{align*}
        d(g,M^\prime  A) \leq C\delta.
    \end{align*}
\end{proposition}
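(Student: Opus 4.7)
The plan is to use the Iwasawa decomposition $G_0 = NAK_0$ to write $g = n(z_0)a(t_0)k_0$ and then extract separate smallness conditions on the three factors from the hypothesis. Evaluating at $s=0$ gives $g\cdot o = (z_0,e^{t_0}) \in \sT^T_\delta(l,o)$, which already provides $|z_0|\le \delta$ and $|t_0|\le T$. Since $a(t_0) \in A$, the whole problem then reduces to proving $d(k_0,M')\le C\delta$.

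Writing $k_0 = \begin{pmatrix}\alpha & \beta \\ -\bar\beta & \bar\alpha\end{pmatrix}$ and evaluating (\ref{Poincare}) gives the explicit formula
\[
ga(s)\cdot o \;=\; \left(z_0+\frac{e^{t_0}\alpha\beta\bigl(1-e^{2s}\bigr)}{|\alpha|^2+|\beta|^2 e^{2s}},\ \frac{e^{t_0+s}}{|\alpha|^2+|\beta|^2 e^{2s}}\right).
\]
The height condition at $s = \pm 1$ forces the denominator $|\alpha|^2+|\beta|^2 e^{2s}$ to lie between two positive constants depending only on $C_0, T$, so the tube condition on the $N$-coordinate at $s=1$, combined with $|z_0|\le \delta$ from the first step, will produce an inequality of the form $|\alpha\beta|\le C_1 \delta$. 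Since $|\alpha|^2+|\beta|^2 = 1$, this forces one of $|\alpha|$ or $|\beta|$ to be of size $O(\delta)$ while the other is within $O(\delta^2)$ of $1$.

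The case $|\beta|=O(\delta)$ places $k_0$ within $O(\delta)$ of the diagonal element $\mathrm{diag}(\alpha/|\alpha|,\overline{\alpha/|\alpha|})\in M$; the case $|\alpha|=O(\delta)$, by matching the phase of $\beta$, places $k_0$ within $O(\delta)$ of some element of $w_0 M$. In either case we obtain $k^*\in M'$ with $d(k_0,k^*)\le C_2\delta$.

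To convert this into a bound on $d(g,M'A)$, I take the element $h=a(t_0)k^*$, which lies in $M'A$ using $MA=AM$ in the first case and the braiding relation $a(t)w_0 = w_0 a(-t)$ in the second. A short rearrangement gives
\[
h^{-1}g \;=\; \bigl[(k^*)^{-1} n(e^{-t_0}z_0) k^*\bigr]\bigl[(k^*)^{-1} k_0\bigr],
\]
and since conjugation by elements of the compact group $K_0$ is Lipschitz on a neighborhood of the identity in $G_0$, each bracketed factor is within $O(\delta)$ of $e$, giving $d(g,h)\le C\delta$. The only substantive step is the extraction of the key inequality $|\alpha\beta|=O(\delta)$ from the endpoint conditions; everything else is a routine matrix computation together with the compactness of the region in which the Iwasawa factors of $g$ lie.
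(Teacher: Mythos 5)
Your proof is correct and follows essentially the same route as the paper's: both extract the key inequality $|\alpha\beta|\ll\delta$ by comparing the $N$-coordinate of $g\,a(s)\cdot o$ at two parameter values, then use $|\alpha|^2+|\beta|^2=1$ to force $k_0$ within $O(\delta)$ of $M'$. The only difference is cosmetic — you use the $NAK$ Iwasawa order where the paper uses $KNA$, which changes slightly where the bound $|z_0|\le\delta$ enters — and your final conjugation/braiding argument converting $d(k_0,M')\ll\delta$ into $d(g,M'A)\ll\delta$ is sound.
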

\begin{proof}
    We write $g = k^\prime n(z^\prime) a(t^\prime)$ with $k^\prime = \begin{pmatrix}
        \alpha&\beta\\-\bar{\beta}&\bar{\alpha}
    \end{pmatrix} \in K_0$, $z^\prime\in\BC$ and $t^\prime\in\BR$ by the Iwasawa decomposition.
    It suffices to prove $d(k^\prime,M^\prime)\ll \delta$ and $|z^\prime|\ll \delta$.
    Since $d(g,e)\leq C_0$, both $z^\prime$ and $t^\prime$ can only vary in a bounded set, say $|z^\prime|\leq C_1$ and $|t^\prime|\leq C_1$.
    We may assume $T>10(C_1 + 1)$. 
    Then by \eqref{Poincare},
    \begin{align*}
        g\cdot l_{[0,1]} &= k^\prime\cdot \{ (z^\prime, e^t)\, |\, t^\prime\leq t \leq t^\prime +1 \}\\
        &=\left.\left\{\left(\frac{(\alpha z^\prime+\beta)\overline{(-\bar{\beta}z^\prime+\bar{\alpha})}-\bar{\alpha}\beta e^{2t}}{|-\bar{\beta}z^\prime+\bar{\alpha}|^2+|\beta|^2e^{2t}},\frac{e^t}{|-\bar{\beta}z^\prime+\bar{\alpha}|^2+|\beta|^2e^{2t}} \right) \right|   t^\prime\leq t \leq t^\prime +1\right\}\\
        &\subset \sT_{\delta}^T(l,o)
    \end{align*}
    implies that, for $t^\prime\leq t \leq t^\prime +1$,
    \begin{align}\label{bound for z}
        \left|\frac{(\alpha z^\prime+\beta)\overline{(-\bar{\beta}z^\prime+\bar{\alpha})}-\bar{\alpha}\beta e^{2t}}{|-\bar{\beta}z^\prime+\bar{\alpha}|^2+|\beta|^2e^{2t}}\right| \leq \delta,
    \end{align}
    and
    \begin{align}\label{bound for t}
        e^{-T}\leq \frac{e^t}{|-\bar{\beta}z^\prime+\bar{\alpha}|^2+|\beta|^2e^{2t}}  \leq e^T.
    \end{align}
    The assumptions on $T$ and $t^\prime$ with (\ref{bound for t}) imply that
    \begin{align*}
        e^{-2T}\leq |-\bar{\beta}z^\prime+\bar{\alpha}|^2+|\beta|^2e^{2t} \leq e^{2T},
    \end{align*}
    so (\ref{bound for z}) gives
    \begin{align}\label{tube inequality for z}
       \left| (\alpha z^\prime+\beta)\overline{(-\bar{\beta}z^\prime+\bar{\alpha})}-\bar{\alpha}\beta e^{2t}\right| \leq e^{2T}\delta,
    \end{align}
    for $t^\prime\leq t \leq t^\prime +1$.
    We can plug in $t = t^\prime +1$ and $t = t^\prime$ for \eqref{tube inequality for z}. 
    This gives
    \begin{align*}
        \left| \bar{\alpha}\beta (e^{2(t^\prime+1)}-e^{2t^\prime}) \right| \leq & \left| (\alpha z^\prime+\beta)\overline{(-\bar{\beta}z^\prime+\bar{\alpha})}-\bar{\alpha}\beta e^{2(t^\prime+1)}\right| + \left| (\alpha z^\prime+\beta)\overline{(-\bar{\beta}z^\prime+\bar{\alpha})}-\bar{\alpha}\beta e^{2t^\prime}\right|\\
        \leq & 2 e^{2T }\delta.
    \end{align*}
    Hence, $|\alpha\beta|\ll \delta$.
    Since $|\alpha|^2 + |\beta|^2 = 1$ and $|\alpha\beta|\ll \delta$,  either $|\alpha|\ll \delta$ or $|\beta|\ll \delta$. Hence, $d(k^\prime,M^\prime) \ll \delta$.
    Combing this with (\ref{tube inequality for z}) again gives  $|z^\prime|\ll \delta$.
\end{proof}

Next we consider the following family of geodesics, which we will care about in Section \ref{section geodesic beam}.
Let $\epsilon_1>0$ be a small constant.
We will let $N_1$ and $N_2$ be two large integers so that $N_1 = \lambda^{1/2-\epsilon_1}\beta^{-1/2}+ O(1)$ and $N_2 = \lambda^{1/2}\beta^{-1/2}+O(1)$.
We define $N_1$ many intervals covering $B$
\begin{align}\label{eq: defn of Im}
    \bI_m =\left\{b_\theta\in B\, :\,\theta \in \left[\frac{2\pi }{N_1}(m-2/3) ,\frac{2\pi }{N_1} (m+2/3) \right] \mod 2\pi  \right\}, \quad m=0,\dots,N_1-1,
\end{align}
and define $N_2+1$ many intervals in $\BR$ that cover $[-2,2]$
\begin{align}\label{eq: defn of Jn}
    \bJ_n = \left[\frac{4 }{N_2}(n-N_2/2-2/3) ,\frac{4 }{N_2} (n-N_2/2+2/3) \right], \;\;\; n=0,\dots,N_2.
\end{align}
We let
\begin{align}\label{eq: defn of bm and xn}
\begin{split}
    b_m\in B \text{ be the center point of }\bI_m, \\
    \text{ and }x_n\in\BR \text{ be the center point of }\bJ_n.  
\end{split}
\end{align}

Each pair $(b,x)\in B\times \BR$ uniquely determines an oriented geodesic $b n(x) \cdot l$ in $\BH^2$. 
Moreover, the next lemma shows that if $b_1,b_2\in B$, $x_1,x_2\in\BR$, and the element $(b_1n(x_1))^{-1}b_2n(x_2)$ almost fix the geodesic $l$ and its orientation, then the geodesics determined by $(b_1,x_1)$ and $(b_2,x_2)$ are almost the same.

\begin{lemma}\label{close pairs (b,x)}
    Let $C>0$ be given.
    There exists a small constant $D>0$, depending only on $C$, such that the following holds.
    Suppose $b_1, b_2\in B = \SO(2)/\{\pm I\}$ and $x_1,x_2\in [-C,C]$. If $0<\delta<D$  and
    \begin{align*}
        d \left((b_1n(x_1))^{-1}b_2n(x_2),M^\prime A\right)\leq \delta,
    \end{align*}
    then 
    \begin{enumerate}
        \item If $ d \left((b_1n(x_1))^{-1}b_2n(x_2),M A\right)\leq \delta$, we have $d(b_1, b_2) \ll \delta$ and $|x_1-x_2|\ll \delta$;
        \item If $ d \left((b_1n(x_1))^{-1}b_2n(x_2),w_0M A\right)\leq \delta$, we have $d\left(b_1^{-1}b_2,\pm  \frac{1}{\sqrt{1+x_1^2}}\begin{pmatrix}
       x_1&1\\-1&x_1
    \end{pmatrix}\right) \ll \delta$ and $|x_1+x_2|\ll\delta$.
    \end{enumerate}
    Here all the implied constants depend only on $C$.
\end{lemma}
\begin{proof}
    We suppose 
    $$b_1^{-1} b_2 = b_\theta =\begin{pmatrix} \cos\theta/2 & \sin\theta/2\\ -\sin\theta/2& \cos\theta/2 \end{pmatrix},\;\;\;\text{ with }\theta\in\BR/2\pi\BZ. $$
    Then
    \begin{align*}
        (b_1n(x_1))^{-1}b_2n(x_2) &= n(-x_1) b_\theta n(x_2) \\
        &=\begin{pmatrix} 1 & -x_1\\ & 1 \end{pmatrix}\begin{pmatrix} \cos\theta/2 & \sin\theta/2\\ -\sin\theta/2& \cos\theta/2 \end{pmatrix}\begin{pmatrix} 1 & x_2\\ & 1 \end{pmatrix} \\
        &=\begin{pmatrix}
            \cos(\theta/2) + x_2\sin(\theta/2)& (x_1-x_2)\cos(\theta/2) + (x_1x_2+1)\sin(\theta/2)\\
            -\sin(\theta/2) & \cos(\theta/2) - x_1\sin(\theta/2)
        \end{pmatrix}.
    \end{align*}
    
    We first assume that $d( (b_1n(x_1))^{-1}b_2n(x_2) ,MA)\leq \delta$.
    Then 
    \begin{align*}
        \sin(\theta/2) \ll \delta\quad\text{ and }\quad (x_1-x_2)\cos(\theta/2) + (x_1x_2+1)\sin(\theta/2)\ll \delta,
    \end{align*}
    so the results $d(b_1,b_2)\ll \delta$ and $|x_1-x_2|\ll\delta$ follow. 
    
    Next, we assume $d( (b_1n(x_1))^{-1}b_2n(x_2) ,w_0MA)\leq \delta$.
    We have
    \begin{align}\label{eqn for geodesic reverse}
        \cos(\theta/2) + x_2\sin(\theta/2)\ll\delta,\quad\text{ and }\quad \cos(\theta/2) - x_1\sin(\theta/2)\ll \delta.
    \end{align}
    We claim that $|\sin(\theta/2)|\geq c >0$ for some small constant $c$. If we suppose that $|\sin(\theta/2)|< c$, then \eqref{eqn for geodesic reverse} implies that $|\cos(\theta/2)|\leq 2c + O(\delta)$, which is a contradiction if $D$ and $c$ are chosen to be sufficiently small.
    Moreover, we have
    \begin{align*}
        |x_1+x_2|\ll|(x_1+x_2)\sin(\theta/2)| \leq  | \cos(\theta/2) + x_2\sin(\theta/2)| + |\cos(\theta/2) - x_1\sin(\theta/2)| \ll \delta.
    \end{align*}
    The identity $\cos^2(\theta/2)+\sin^2(\theta/2) = 1$ implies that
    \begin{align*}
        \left(x_1\sin(\theta/2)+O(\delta)\right)^2 + \sin^2(\theta/2) = 1
    \end{align*}
    so that
    \begin{align*}
        \sin^2(\theta/2) = \frac{1}{1+x_1^2} + O(\delta) = \frac{1}{1+x_1^2} (1+O(\delta)) \Rightarrow  \sin(\theta/2) = \pm \frac{1}{\sqrt{1+x_1^2}} + O(\delta).
    \end{align*}
    By \eqref{eqn for geodesic reverse}, we have $\cos(\theta/2) = x_1\sin(\theta/2)+O(\delta) = \pm \frac{x_1}{\sqrt{1+x_1^2}} + O(\delta)$.
\end{proof}

We next derive a trivial bound, in the following Proposition \ref{a tube not contain many geodesic beam}, for the number of geodesic segments corresponding to $(b_m,x_n)$ contained in a tube.
Suppose $\gamma$ is an oriented geodesic in $\BH^3$ with a point $p\in\gamma$, and $\delta > 0$.
Moreover, let $C_0 > 0$ be a fixed constant,
and we assume that $(\gamma,p) = g\cdot (l,o)$ with $d(g,e)\leq C_0$.
We let  $N_\delta^T(\gamma,p)$ be the set of pairs $(b_m,x_n)$, $0\leq m\leq N_1 -1$, $0\leq n\leq N_2$, satisfying
    \begin{align*}
        b_m n(x_n) \cdot l_{[-1,1]} =  \{ b_m n(x_n) a(t)\cdot o\,|\,-1\leq t\leq 1  \} \subset \sT_{\delta}^T(\gamma,p).
    \end{align*}

\begin{lemma}\label{geodesic in a tube then a larger tube contains the tube}
    Let $C_0, T>0$ be given. 
    There exist constants $C,D>0$, depending on $C_0, T$,  such that the following holds.
    For any $0<\delta<D$,
    if $l_{[-1,1]}$ is contained in some tube $\sT_{\delta}^T(\gamma,p) = g\cdot \sT_\delta^T(l,o)$ with $g\in G_0$ and $d(g,e)\leq C_0$,
    then $\sT_{\delta}^T(\gamma,p) \subset  \sT_{C\delta}^{CT}(l,o)$.
\end{lemma}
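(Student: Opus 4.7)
The plan is to first use Proposition \ref{tube inclusion implies close in moduli space}, applied to $g^{-1}$, to show that $g$ is close to the oriented-geodesic stabilizer $M^\prime A$ of $l$; then analyze how an element of $M^\prime A$ moves the tube $\sT_\delta^T(l,o)$; and finally absorb the small residual perturbation.

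First, from the hypothesis $l_{[-1,1]} \subset g\cdot\sT_\delta^T(l,o)$ we obtain $g^{-1}\cdot l_{[-1,1]} \subset \sT_\delta^T(l,o)$. Since inversion is smooth on $\{d(\cdot,e)\leq C_0\}$, we have $d(g^{-1},e)\leq C_0^\prime$ for some $C_0^\prime = C_0^\prime(C_0)$, so Proposition \ref{tube inclusion implies close in moduli space} gives $d(g^{-1},M^\prime A)\leq C_1\delta$. Using the left-invariance of $d$ in the form $d(g^{-1},m_0^\prime) = d((m_0^\prime)^{-1}g^{-1},e)$, we may factor $g = h\cdot m_0$ with $m_0\in M^\prime A$ and $d(h,e) \ll \delta$, where the constants depend only on $C_0$; moreover $d(m_0,e)$ is bounded by $C_0+O(\delta)$.

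Second, decompose $m_0 = \epsilon\cdot m\cdot a(t_0)$ with $\epsilon\in\{e,w_0\}$, $m\in M$, and $|t_0|$ bounded in terms of $C_0$. Because $M$ rotates around $l$ and $a(t_0)$ translates along $l$, one has $m\cdot a(t_0)\cdot \sT_\delta^T(l,o) = \sT_\delta^T(l,a(t_0)\cdot o) \subset \sT_\delta^{T+|t_0|}(l,o)$. If $\epsilon = w_0$, the formula \eqref{Poincare} applied with $w_0 = \begin{pmatrix}&i\\ i&\end{pmatrix}$ yields the explicit action
\[
w_0\cdot(z,\bt) = \left(\frac{\bar{z}}{|z|^2+\bt^2},\ \frac{\bt}{|z|^2+\bt^2}\right),
\]
and a direct check on the region $|z|\leq \delta$, $|\log\bt|\leq T+|t_0|$ shows $w_0$ sends this set into $\sT_{C_2\delta}^{C_2 T}(l,o)$ for a suitable $C_2 = C_2(C_0,T)$, after shrinking $D$ so that $|z|^2$ is negligible compared with $\bt^2$ in the relevant range. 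In either case, $m_0\cdot \sT_\delta^T(l,o)\subset \sT_{C_2\delta}^{C_2 T}(l,o)$.

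Third, the tube $\sT_{C_2\delta}^{C_2 T}(l,o)$ lies in a bounded subset of $\BH^3$ depending only on $C_0$ and $T$, and left-multiplication by $h$ with $d(h,e)\ll\delta$ is uniformly Lipschitz on such a bounded set; hence it perturbs the $z$-coordinate and the $t$-coordinate of each point in the tube by $O(\delta)$. Choosing $D$ sufficiently small we get $h\cdot \sT_{C_2\delta}^{C_2 T}(l,o)\subset \sT_{C\delta}^{CT}(l,o)$, finishing the proof. The main obstacle is the $\epsilon = w_0$ case, where the orientation-reversing isometry does not preserve the tube parametrization, so one must use the explicit Möbius formula above and be careful that the ratio $|z|^2/\bt^2$ is genuinely small — this is what forces the upper bound $D$ on $\delta$ to depend on $T$ as well as on $C_0$. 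The remaining issues (control of $d(g^{-1},e)$, of $\|h^{-1}\|$, and of the action of $h$ on a bounded set) are standard manipulations with the left-invariant metric on a bounded subset of $G_0$.
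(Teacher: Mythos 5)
Your proof is correct and follows essentially the same route as the paper: both apply Proposition \ref{tube inclusion implies close in moduli space} to conclude that $g$ is $O(\delta)$-close to $M^\prime A$ and then verify that such elements carry $\sT_\delta^T(l,o)$ into a tube of comparable size (the paper packages the Weyl-element case into Lemma \ref{tube inclusion lemma}, while you compute the $w_0$-action explicitly and absorb the small factor $h$ by a Lipschitz perturbation). One harmless slip: $a(t_0)$ dilates the $z$-coordinate by $e^{t_0}$, so $m\,a(t_0)\cdot\sT_\delta^T(l,o)$ lands in $\sT_{e^{|t_0|}\delta}^{T+|t_0|}(l,o)$ rather than $\sT_{\delta}^{T+|t_0|}(l,o)$; since $|t_0|\ll_{C_0}1$ this only enlarges the final constant $C$.
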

\begin{proof}
   Since $g^{-1} l_{[-1,1]} \subset \sT^T_\delta(l,o)$, by Proposition \ref{tube inclusion implies close in moduli space}, $d(g,M^\prime A) \ll\delta$.
   Hence, we can write $g = k^\prime n(z^\prime) a(t^\prime)$
   with $k^\prime\in K_0$, $z^\prime\in\BC$ and $t^\prime\in\BR$ satisfying
   $d(k^\prime,M^\prime) \ll \delta$, $|z^\prime| \ll \delta$ and $|t^\prime| \ll 1$.
   Hence, there exists $C_1 > 0$ so that
   \begin{align*}
       g\cdot \sT_\delta^T(l,o) = k^\prime n(z^\prime) a(t^\prime)\cdot \sT_\delta^T(l,o) \subset k^\prime \cdot \sT_{C_1\delta}^{C_1T}(l,o).
   \end{align*}
   Then the lemma follows from Lemma \ref{tube inclusion lemma}.
\end{proof}

\begin{proposition}\label{a tube not contain many geodesic beam}
    Let $C_0,T>0$ be given. There exists a small constant $D>0$, depending on $C_0$ and $T$,  such that for any $0 < \delta <D$, and any $(\gamma,p)=g\cdot (l,o)$ with $d(g,e)\leq C_0$,
    \begin{align*}
        \# N_\delta^T(\gamma,p)\ll  (1+\delta N_1)(1+\delta N_2).
    \end{align*}
    Here the implied constant also depends on $C_0$ and $T$.
\end{proposition}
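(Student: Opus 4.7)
The plan is to fix one element of $N_\delta^T(\gamma,p)$ (if the set is nonempty) and compare every other element to it by left-translation, thereby reducing the problem to the previously established moduli-space estimates. Suppose $(b_{m_0},x_{n_0})\in N_\delta^T(\gamma,p)$ and set $h = b_{m_0}n(x_{n_0})$. Since $b_{m_0}\in\SO(2)/\{\pm I\}$ is bounded and $x_{n_0}\in[-2,2]$, we have $d(h,e)\ll 1$. By definition, $h\cdot l_{[-1,1]}\subset \sT_\delta^T(\gamma,p)$, so translating by $h^{-1}$ gives
\begin{equation*}
l_{[-1,1]} \subset h^{-1}\sT_\delta^T(\gamma,p) = \sT_\delta^T(h^{-1}\gamma,h^{-1}p).
\end{equation*}
Since $(\gamma,p)=g\cdot(l,o)$ with $d(g,e)\leq C_0$ and $d(h,e)\ll 1$, we have $d(h^{-1}g,e)\ll 1$, so by Lemma \ref{geodesic in a tube then a larger tube contains the tube} there exist constants $C,D>0$ depending only on $T$ so that for $0<\delta<D$,
\begin{equation*}
h^{-1}\sT_\delta^T(\gamma,p)\subset \sT_{C\delta}^{CT}(l,o).
\end{equation*}

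Now for any $(b_m,x_n)\in N_\delta^T(\gamma,p)$, set $g_{m,n} = b_m n(x_n)$. Then
\begin{equation*}
h^{-1}g_{m,n}\cdot l_{[-1,1]} \subset h^{-1}\sT_\delta^T(\gamma,p)\subset \sT_{C\delta}^{CT}(l,o),
\end{equation*}
and $d(h^{-1}g_{m,n},e)\ll 1$. Shrinking $D$ if necessary, Proposition \ref{tube inclusion implies close in moduli space} (applied with the enlarged parameters $CT,C\delta$) yields $d(h^{-1}g_{m,n},M^\prime A)\ll \delta$. Now apply Lemma \ref{close pairs (b,x)} to $h^{-1}g_{m,n} = (b_{m_0}n(x_{n_0}))^{-1}b_m n(x_n)$, which gives two cases.

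In the first case, $d(h^{-1}g_{m,n},MA)\ll\delta$, and Lemma \ref{close pairs (b,x)} yields $d(b_{m_0},b_m)\ll\delta$ and $|x_{n_0}-x_n|\ll\delta$. Since the $b_m$'s are $(2\pi/N_1)$-spaced in $B$ and the $x_n$'s are $(4/N_2)$-spaced in $[-2,2]$, the number of admissible $m$ is $\ll 1+\delta N_1$ and the number of admissible $n$ is $\ll 1+\delta N_2$. In the second case, $d(h^{-1}g_{m,n},w_0 MA)\ll\delta$, and Lemma \ref{close pairs (b,x)} forces $b_m$ to lie within distance $\ll\delta$ of one of at most two fixed elements of $B$ (determined by $x_{n_0}$) and forces $|x_{n_0}+x_n|\ll\delta$; this again contributes $\ll (1+\delta N_1)(1+\delta N_2)$ pairs. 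Combining the two cases completes the proof. The main technical point to verify carefully is that all the constants appearing in the successive applications of Lemma \ref{geodesic in a tube then a larger tube contains the tube}, Proposition \ref{tube inclusion implies close in moduli space}, and Lemma \ref{close pairs (b,x)} can be chained together so that a single choice of $D$ (depending only on $T$) suffices and the final implied constant depends only on $T$; no deeper obstacle arises.
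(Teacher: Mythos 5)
Your proof is correct and follows essentially the same route as the paper's: both reduce to comparing each $(b_m,x_n)$ against a fixed reference element of the set via left-translation, pass through Lemma \ref{geodesic in a tube then a larger tube contains the tube} and Proposition \ref{tube inclusion implies close in moduli space} to get $d(\cdot,M'A)\ll\delta$, and then count using Lemma \ref{close pairs (b,x)} and the spacing of the $b_m$'s and $x_n$'s. Your explicit treatment of the $w_0MA$ case is a touch more careful than the paper's, which handles it implicitly, but the argument is the same.
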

\begin{proof}
    If $N_\delta^T(\gamma,p)$ is empty, then there is nothing to prove, so we assume that there exists some $m,n$ so that  $(b_m,x_n) \in N_\delta^T(\gamma,p)$, i.e. $b_m n(x_n) \cdot l_{[-1,1]} \subset \sT_{\delta}^T(\gamma,p)$.
    By Lemma \ref{geodesic in a tube then a larger tube contains the tube}, there exists a constant  $C>0$ so that $\sT_{\delta}^T(\gamma,p) \subset b_m n(x_n) \cdot \sT_{C\delta}^{CT}(l,o)$. Therefore, the problem can be reduced to prove the case where $(\gamma,p) = b_m n(x_n) \cdot(l,o) $.

    We assume $(\gamma,p) = b_m n(x_n) \cdot(l,o) $. Suppose $(b_{m^\prime},x_{n^\prime}) \in  N_\delta^T(\gamma,p)$. Then we have
    \begin{align*}
        &b_{m^\prime} n(x_{n^\prime}) \cdot l_{[-1,1]} \subset  b_m n(x_n) \cdot\sT_{\delta}^T(l,o)\\
        \Rightarrow &\left( b_m n(x_n)  \right)^{-1} b_{m^\prime} n(x_{n^\prime}) \cdot l_{[-1,1]} \subset  \sT_{\delta}^T(l,o)
    \end{align*}
    By Proposition \ref{tube inclusion implies close in moduli space}, there exists $C>0$ so that
    \begin{align*}
        d\left(\left( b_m n(x_n)  \right)^{-1} b_{m^\prime} n(x_{n^\prime}),    M^\prime A\right)\leq C\delta.
    \end{align*}
    Lemma \ref{close pairs (b,x)}
    implies  that the number of possible $b_{m^\prime}$ is $\ll 1+\delta N_1$, and 
    the number of possible $x_{n^\prime}$ is $\ll 1+\delta N_2$.
\end{proof}

\begin{proposition}\label{not so many tube exists in intersection}
     Let $C_0>0$ and $T>2$ be given. There exist constants $C, D, D'>0$, depending on $C_0, T$,  such that the following holds. Let $\delta,\omega>0$, and $g\in G_0$, which satisfies $d(g,e)\leq C_0$ and $d(g,H^\prime) \geq C\omega$.  If $\delta$ and $\omega$ satisfy
     \begin{align*}
        \delta,\omega <D\quad\text{ and }\quad \delta < D' \omega,
     \end{align*} 
     the number of pairs $(b_m,x_n)$ (which are defined in \eqref{eq: defn of bm and xn}) so that
     \begin{align}\label{condition for the pair of bm xn}
          \text{there is a geodesic segment of length }2\text{ contained in }b_mn(x_n)\cdot \sT_{\delta}^T(l,o)\cap g\BH^2
     \end{align}
     is $\ll  (1+\omega^{-1}\delta N_1) (1+\omega^{-1}\delta N_2)$.
     Here the implied constant depends on $C_0$ and $T$.
\end{proposition}

\begin{proof}

     We take $R>0$ so that the ball  $B(o,R)\subset \BH^2$ centered at $o$ of radius $R$ contains $b_mn(x_n)\cdot l_{[-1,1]}$ for any $m,n$. If $B(o,2R)\cap g\BH^2=\emptyset$, then
     \begin{align*}
         d(B(o,R),g\BH^2)\gg_{R,C_0} d(g,H')\geq C\omega >\frac{C}{D'} \delta.
     \end{align*}
     If we suppose that $C/D'$ is sufficiently large so that $b_mn(x_n)\cdot \sT_{\delta}^T(l,o) \cap g\BH^2 = \emptyset$, then, in this case, the number of pairs satisfying \eqref{condition for the pair of bm xn} is 0.

     Now we suppose that $B(o,2R)\cap g\BH^2\neq\emptyset$. In particular, $\BH^2\cap g\BH^2=\gamma$ is a geodesic.  
     Since a segment of $\gamma$ lies in the ball $B(o,2R)$, there exists $h_0\in H_0$, which is bounded depending on $R$, so that $h_0\gamma = l$ is the vertical geodesic through the origin $o$. Hence, $l = h_0\gamma = h_0 (\BH^2 \cap g\BH^2 ) = \BH^2 \cap h_0g\BH^2 $. Hence, $h_0g\BH^2 $ is a vertical plane in $\BH^3$ through $l$. We may find an element $k(\theta) = \begin{pmatrix} e^{i\theta}&\\&e^{-i\theta} \end{pmatrix}\in M$ with $\theta\in\BR/2\pi\BZ,$ so that $h_0 g\BH^2 = k(\theta)\BH^2$. Since $k(\theta)^{-1}h_0 g$ stabilizes $\BH^2$, we have $k(\theta)^{-1}h_0 g \in H^\prime$. Using the Iwasawa decomposition for $H_0$, we can write
     $$g = \big(k_0 n(x_0)a(t_0)\big)  k(\theta) h' = k_0 n(x_0) k(\theta)a(t_0) h'$$
     where $k_0 \in \mathrm{SO}(2)$, $t_0,x_0\in\BR$ are bounded, and $h'\in H'$.
     In particular, $h'$ is also bounded because of the boundedness of $g$.
     Since both $k_0 n(x_0)$ and $a(t_0)h'$ are in $H'$,
     \begin{align*}
         C\omega \leq d(g,H') = d(k_0 n(x_0)k(\theta)a(t_0)h',H') \asymp d(k(\theta),H').
     \end{align*}
     If $C$ is sufficiently large, it may be seen that $\theta \notin  [- \omega,\omega] \mod \pi/2$. Moreover, as 
     $$g\BH^2 = k_0n(x_0)k(\theta)\big(a(t_0)h'\BH^2\big)=k_0n(x_0)k(\theta)\BH^2,$$
     without loss of generality, we reduce to the case where $g=k_0n(x_0)k(\theta)$ with $k_0\in\mathrm{SO}(2)$, $x_0\in\BR$ being bounded and $\theta \notin  [- \omega,\omega] \mod \pi/2$.
     Assume that $(b_m,x_n)$ is a pair satisfying \eqref{condition for the pair of bm xn}, i.e. there is a geodesic segment of length 2 contained in $b_mn(x_n)\cdot \sT_{\delta}^T(l,o)\cap g\BH^2$, and we denote the endpoints of such segment by $p_1$ and $p_2$. Because $p_i \in b_mn(x_n)\cdot \sT_\delta^T(l,o)$, we have $d(p_i,q_i)\ll \delta$, where $q_i=\pi_{b_mn(x_n)\cdot l}(p_i)$ and $\pi_{b_mn(x_n)\cdot l}$ is the projection map to $b_mn(x_n)\cdot l$ defined as in \eqref{eq: horocycle projection}. Since $p_i\in g\BH^2$, we have $d(q_i,g\BH^2)\leq d(q_i,p_i)\ll \delta$. Therefore, there exists another constant $C_2>0$ so that
     \begin{align*}
         q_i \in g\cdot B_{C_2\delta}(\BH^2),\quad\text{ where }B_{C_2\delta}(\BH^2): = \{n(x+iy)a(t)\cdot o\mid x,y,t\in\BR,\, |y|\leq C_2\delta  \}.
     \end{align*}
     Therefore, $q_i$ lies in 
     \begin{align*}
         \big(g\cdot B_{C_2\delta}(\BH^2)\big)\cap\BH^2 &= \big(k_0n(x_0)k(\theta)\cdot B_{C_2\delta}(\BH^2)\big)\cap\BH^2=k_0n(x_0)\Big(\big(k(\theta)\cdot B_{C_2\delta}(\BH^2)\big)\cap\BH^2\Big).
     \end{align*}
     By a direct trigonometric calculation with the group action formula \eqref{Poincare}, it may be seen that
     \begin{align*}
         T_{\theta,C_2\delta}&:=\big(k(\theta)\cdot B_{C_2\delta}(\BH^2)\big)\cap\BH^2=\{n(x)a(t)\cdot o\mid x,t\in\BR,\, |x|\leq C_2\delta\csc(2\theta)\}.
     \end{align*}
     The same argument shows that for every point $p$ lying in the geodesic segment between $p_1,p_2$, its projection $\pi_{b_mn(x_n)\cdot l}(p)$ is in $k_0n(x_0)T_{\theta,C_2\delta}$ as well, i.e., the geodesic segment joining $q_1,q_2$ (which is a segment of the geodesic $b_mn(x_n)\cdot l$) is contained in $k_0n(x_0)T_{\theta,C_2\delta}$. Moreover, the triangle inequality implies that 
     \begin{align*}
         d(q_1,q_2)\geq d(p_1,p_2) - d(p_1,q_1)-d(p_2,q_2) = 2-O(\delta)\geq1
     \end{align*}
     if $D$ is sufficiently small. It follows that there is some $t_1\in\BR$, which is bounded depending on $C_0$ and $T$, such that $b_mn(x_n)a(t_1)\cdot l_{[0,1]} \subset k_0n(x_0)T_{\theta,C_2\delta}$, which implies that $n(x_0)^{-1}k_0^{-1}b_mn(x_n)a(t_1)\cdot l_{[0,1]} \subset T_{\theta,C_2\delta}$
     By the boundedness of $n(x_0)^{-1}k_0^{-1}b_mn(x_n)a(t_1)\cdot l_{[0,1]}$, there exists $T_1>0$, depending on $C_0$ and $T$, so that
     \begin{align*}
         n(x_0)^{-1}k_0^{-1}b_mn(x_n)a(t_1)\cdot l_{[0,1]} \subset \{n(x)a(t)\cdot o\mid  |x|\leq C_2\delta\csc(2\theta), |t|\leq T_1\} = \sT^{T_1}_{C_2\delta\csc(2\theta)}(l,o).
     \end{align*}
     By Proposition \ref{tube inclusion implies close in moduli space}, we have
    \begin{align*}
        d\left(n(x_0)^{-1}k_0^{-1}b_mn(x_n)a(t_1),    M^\prime A\right)\ll\delta\csc(2\theta)\ll \omega^{-1}\delta        \Rightarrow d\left(\left(k_0n(x_0)\right)^{-1}b_mn(x_n),    M^\prime A\right)\ll \omega^{-1}\delta.
    \end{align*}
    Lemma \ref{close pairs (b,x)}
    implies  that the number of possible $b_{m}$ is $\ll 1+\omega^{-1}\delta N_1$, and 
    the number of possible $x_{n}$ is $\ll 1+\omega^{-1}\delta N_2$.
\end{proof}

The main result in this section is that most of $b_{m_1},x_{n_1},b_{m_2},x_{n_2}$ will satisfy
 the condition
\eqref{uniform condition}, provided certain condition that $g\in G_0$ is away from $H^\prime$. 
Recall that $b_m, x_n$ are defined as in \eqref{eq: defn of bm and xn}.

\begin{proposition}\label{most of geodesic beams satisfy uniform condition}
    Let $C_0>0$ be given.  
    There exist constants $C, D, D'>0$, depending on $C_0$ such that the following holds. 
    Suppose that $\delta,\omega>0$ satisfy
     \begin{align*}
        \delta,\omega <D\quad\text{ and }\quad \delta < D' \omega.
     \end{align*}
    Let $g\in G_0$ satisfy
    \begin{align*}
         d(g,e)\leq C_0\quad \text{ and }\quad d(g,H^\prime) \geq C\omega.
    \end{align*}
    Then the number of quadruples $(b_{m_1},x_{n_1},b_{m_2},x_{n_2})$ satisfying
    \begin{align}\label{quadruple condition}
        d(n(-x_{n_1})b_{m_1}^{-1}gb_{m_2}n(x_{n_2}), MA) \leq \delta
    \end{align}
    is $\ll (1+\delta N_1)(1+\delta N_2)(1+\omega^{-1}\delta N_1) (1+\omega^{-1}\delta N_2)$. 
\end{proposition}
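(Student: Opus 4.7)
The plan is to decouple the count over $(b_{m_1},x_{n_1})$ from the count over $(b_{m_2},x_{n_2})$: bound the admissible outer pairs first via the geodesic tube machinery of Proposition \ref{not so many tube exists in intersection}, and then, for each fixed outer pair, bound the admissible inner pairs by a rigid ``oriented-geodesic coincidence'' argument using Lemma \ref{close pairs (b,x)}.

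For the outer count I would set $h = n(-x_{n_1})b_{m_1}^{-1}g\,b_{m_2}n(x_{n_2})$. The hypothesis $d(h,MA)\le\delta$ implies in particular $d(h,M^\prime A)\le\delta$. Since all factors stay in a bounded portion of $G_0$, Proposition \ref{geodesic tube theorem for close to MA} provides constants $C_1,T^\prime>0$ (depending only on $C_0$) with
\[
h\cdot l_{[-1,1]}\subset\sT^{T^\prime}_{C_1\delta}(l,o).
\]
Multiplying on the left by $b_{m_1}n(x_{n_1})$ converts this into
\[
g\, b_{m_2}n(x_{n_2})\cdot l_{[-1,1]}\;\subset\; b_{m_1}n(x_{n_1})\cdot\sT^{T^\prime}_{C_1\delta}(l,o).
\]
The left-hand side is a length-$2$ geodesic segment of $g\BH^2$ (since $b_{m_2}n(x_{n_2})\cdot l_{[-1,1]}\subset\BH^2$), so the tube on the right contains a length-$2$ geodesic segment of $g\BH^2$. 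Applying Proposition \ref{not so many tube exists in intersection} with tube width $C_1\delta$ (still $o(\omega)$), enlarging $T^\prime$ if necessary to guarantee $T^\prime>2$, and choosing the constant $C$ in the present proposition large enough that $d(g,H^\prime)\ge C\omega$ implies the required $d(g,H^\prime)$-lower bound for Proposition \ref{not so many tube exists in intersection}, I would conclude that the number of admissible $(b_{m_1},x_{n_1})$ is $\ll(1+\omega^{-1}\delta N_1)(1+\omega^{-1}\delta N_2)$.

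For the inner count, fix such a $(b_{m_1},x_{n_1})$. If no admissible $(b_{m_2},x_{n_2})$ exists there is nothing to do; otherwise pick any reference admissible pair $(b_{m_2}^\ast,x_{n_2}^\ast)$ and set $\tilde g=n(-x_{n_1})b_{m_1}^{-1}g$, so that
$\tilde g\, b_{m_2}^\ast n(x_{n_2}^\ast)=m_0 e_0$ and $\tilde g\, b_{m_2}n(x_{n_2})=m_1 e_1$ with $m_0,m_1\in MA$ and $d(e_i,e)\le\delta$. A direct cancellation then yields
\[
\bigl(b_{m_2}^\ast n(x_{n_2}^\ast)\bigr)^{-1} b_{m_2}n(x_{n_2})\;=\;e_0^{-1}(m_0^{-1}m_1)\,e_1.
\]
Because $m_0,m_1$ lie in a bounded portion of $MA$, the right-hand side is within $C_2\delta$ of $m_0^{-1}m_1\in MA\subset M^\prime A$, so Lemma \ref{close pairs (b,x)} applies; for $\delta$ small, the bounded portions of $MA$ and $w_0MA$ are separated, so we fall in case (1), which gives $d(b_{m_2}^\ast,b_{m_2})\ll\delta$ and $|x_{n_2}^\ast-x_{n_2}|\ll\delta$. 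Counting lattice points within these $\delta$-windows --- spacing $\asymp N_1^{-1}$ in $B$ and $\asymp N_2^{-1}$ in the $x$-direction --- gives $\ll(1+\delta N_1)(1+\delta N_2)$ choices.

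Multiplying the two bounds produces the claimed estimate. The only point requiring care is the branch identification inside Lemma \ref{close pairs (b,x)}: one must check that $e_0^{-1}(m_0^{-1}m_1)e_1$ really sits near $MA$ rather than near $w_0MA$, which is what provides the sharper ``transverse'' factor $(1+\delta N_1)(1+\delta N_2)$ instead of a second copy of the $\omega^{-1}$-weighted factor. Geometrically this asymmetry reflects that, once $(b_{m_1},x_{n_1})$ is fixed, the constraint on $(b_{m_2},x_{n_2})$ is a rigid coincidence of oriented geodesics inside $\BH^2$, whereas the outer constraint only asks that a tube contain an oriented geodesic of $g\BH^2$, and is therefore only as rigid as the transverse angle $\omega$ between $\BH^2$ and $g\BH^2$ permits.
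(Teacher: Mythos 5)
Your argument is correct, and the outer count is exactly the paper's: convert \eqref{quadruple condition} into the tube containment $gb_{m_2}n(x_{n_2})\cdot l_{[-1,1]}\subset b_{m_1}n(x_{n_1})\cdot\sT^{T}_{C_1\delta}(l,o)$ via Proposition \ref{geodesic tube theorem for close to MA}, observe that the right-hand tube therefore contains a length-$2$ geodesic segment of $g\BH^2$, and invoke Proposition \ref{not so many tube exists in intersection} to get the factor $(1+\omega^{-1}\delta N_1)(1+\omega^{-1}\delta N_2)$. For the inner count the paper instead rewrites the containment as $b_{m_2}n(x_{n_2})\cdot l_{[-1,1]}\subset g^{-1}b_{m_1}n(x_{n_1})\cdot\sT^{T}_{C_1\delta}(l,o)$ and cites Proposition \ref{a tube not contain many geodesic beam}; you replace that citation with a direct pairwise comparison, cancelling $\tilde g$ to get $(b_{m_2}^\ast n(x_{n_2}^\ast))^{-1}b_{m_2}n(x_{n_2})=e_0^{-1}(m_0^{-1}m_1)e_1$ and feeding this into Lemma \ref{close pairs (b,x)}. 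This is sound (the conjugation step needs $m_0^{-1}m_1$ to range over a compact set, which holds here, so the element is genuinely $O(\delta)$-close to $MA$), and it essentially inlines what Proposition \ref{a tube not contain many geodesic beam} does internally, since that proposition also reduces to Lemma \ref{close pairs (b,x)} via Proposition \ref{tube inclusion implies close in moduli space}. One small correction to your closing remark: the branch identification is automatic in your setup (the element is near $MA$ by construction, not merely near $M^\prime A$), and in any case even the $w_0MA$ branch of Lemma \ref{close pairs (b,x)} pins $(b_{m_2},x_{n_2})$ to within $O(\delta)$ and so would still yield $(1+\delta N_1)(1+\delta N_2)$; the $\omega^{-1}$ weighting in the outer factor comes from the transverse angle between $\BH^2$ and $g\BH^2$ (Lemma \ref{tube contains geodesics in different planes must contain intersected geodesic}), not from the Weyl-group branch.
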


\begin{proof}
    Suppose that $(b_{m_1},x_{n_1},b_{m_2},x_{n_2})$ is a quadruple satisfying (\ref{quadruple condition}). 
    By Proposition \ref{geodesic tube theorem for close to MA}, 
    there exists $C_1,D_1,T>0$ so that for any $0<\delta <D_1$
    \begin{align*}
        n(-x_{n_1})b_{m_1}^{-1}gb_{m_2}n(x_{n_2})\cdot l_{[-1,1]} \subset \sT_{C_1\delta}^T(l,o).
    \end{align*}
     Hence,
     \begin{align}\label{5 2 draft}
         gb_{m_2}n(x_{n_2})\cdot l_{[-1,1]} \subset b_{m_1}n(x_{n_1})\cdot\sT_{C_1\delta}^T(l,o).
     \end{align}
     Since $ gb_{m_2}n(x_{n_2})\cdot l_{[-1,1]} $ is a geodesic segment of length $2$ in $g\BH^2$,
     by Proposition \ref{not so many tube exists in intersection}, there exist $C>0$, $0<D<D_1$ and $D'>0$ so that
    the number of possible pairs of  $(b_{m_1},x_{n_1})$ satisfying \eqref{5 2 draft} for some $(b_{m_2},x_{n_2})$ with $0<\delta,\omega <D$, $\delta<D'\omega$ and $d(g,H')\geq C\omega$ is $\ll (1+\omega^{-1}\delta N_1) (1+\omega^{-1}\delta N_2)$.
    Writing (\ref{5 2 draft}) as
     \begin{align}\label{5 3 draft}
          b_{m_2}n(x_{n_2})\cdot l_{[-1,1]} \subset g^{-1}b_{m_1}n(x_{n_1})\cdot\sT_{C_1\delta}^T(l,o),
     \end{align}
     by Proposition \ref{a tube not contain many geodesic beam},
     for each fixed $(b_{m_1},x_{n_1})$, the number of $(b_{m_2},x_{n_2})$ satisfying (\ref{5 3 draft}) is $\ll (1+\delta N_1)(1+\delta N_2)$.
     Taking the product of the numbers gives the result.
\end{proof}

\section{Estimates of Hecke returns}\label{section of Hecke returns}

In this section, we give two Hecke return estimates, which will be used in the amplification arguments.

\subsection{Estimates with respect to maximal tori}

We let $g\in\Omega_{v_0}$,  $\fn\subset\cO$ and $\delta>0$, and suppose that $\fn$ is only divisible by prime ideals in $\sQ$. 
We recall that $\sQ$ is the set of finite places of $F$ that is not in $S$ and is inert in $E$.
We define the counting function 
\begin{align*}
    &N(g,\delta,\fn) = \left\{ \gamma\in\bG(F)\cap K(\fn) \,|\, d(g^{-1}\gamma g,e)\leq 2, d(g^{-1}\gamma g,MA)\leq \delta\right\},
\end{align*}
which describes how many times the Hecke operators map $gl$ close to itself that are orientation preserving.

\begin{lemma}\label{fixing a vector for max tori}
    There exists a constant $C>0$ with the following property. Let $g\in\Omega_{v_0}$ and $\fn\subset\cO$ be an ideal divisible only by primes in $\sQ$. If $X=\mathrm{N}(\fn)$ and $\delta>0$ satisfies 
    \begin{align}\label{eq: ineq for eps X C}
        \delta X\leq C,
    \end{align}
    then there exists a quadratic field extension $L$ of $E$ in $B$ so that $N(g,\delta,\fn)$ is contained in $L$.
\end{lemma}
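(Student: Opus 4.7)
The strategy is to prove that any two elements of $N(g,\delta,\fn)$ commute inside the division algebra $B$, and then deduce from this that they all lie in a common quadratic subfield. Once pairwise commutativity is established, the $E$-subalgebra of $B$ generated by $N(g,\delta,\fn)$ is a commutative subring containing $E$, and in a quaternion division algebra over $E$ every such subring is either $E$ itself or a quadratic field extension $L$ of $E$. In the first case $N(g,\delta,\fn)\subseteq E^\times\cap B^1=\{\pm 1\}$, and any quadratic subfield of $B$ suffices (one exists since $B$ is a division quaternion algebra over $E$); in the second case we simply take $L=E(\gamma_0)$ for any non-central $\gamma_0\in N(g,\delta,\fn)$, and then every other element of $N(g,\delta,\fn)$ commutes with $\gamma_0$, hence lies in the centralizer $L$ of $\gamma_0$ in $B$.

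To prove commutativity, fix $\gamma_1,\gamma_2\in N(g,\delta,\fn)$ and consider the element $c:=\gamma_1\gamma_2-\gamma_2\gamma_1\in B$ together with its reduced norm $\operatorname{nrd}_B(c)\in E$. The plan is to show $c=0$ via the product formula on $E^\times$ applied to $\operatorname{nrd}_B(c)$. At the complex place $w_0$ of $E$, I will write $g^{-1}\gamma_i g=\xi_i\varepsilon_i$ with $\xi_i\in MA$ bounded (using $d(g^{-1}\gamma_i g,e)\le 2$ and $g\in\Omega_{v_0}$) and $\varepsilon_i$ within $\delta$ of the identity (using $d(g^{-1}\gamma_i g,MA)\le\delta$); because $\xi_1$ and $\xi_2$ commute in $MA$, a direct expansion gives $|c|_{w_0}=O(\delta)$ in operator norm, hence a power-of-$\delta$ bound for $|\operatorname{nrd}_B(c)|_{w_0}$ under the Tate normalization $|z|_{w_0}=z\bar z$. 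At the remaining archimedean places of $E$, which are real because $w_0$ is the unique complex place and every other real place of $F$ splits in $E$, the algebra $B$ is ramified and $B^1_w$ is compact, giving $|\operatorname{nrd}_B(c)|_w=O(1)$. At finite $v\nmid\fn$ the inclusion $\gamma_i\in K_v$ forces $c$ and $\operatorname{nrd}_B(c)$ to be integral, so $|\operatorname{nrd}_B(c)|_w\le 1$. Finally, at $v\mid\fn$ (all lying in $\sQ$, hence inert in $E/F$) the membership $\gamma_i\in K_v(\operatorname{ord}_v\fn)$ gives $\gamma_i\in\varpi_v^{-\operatorname{ord}_v\fn}\cO_{B,v}$, whence $c\in\varpi_v^{-2\operatorname{ord}_v\fn}\cO_{B,v}$ and $\operatorname{nrd}_B(c)\in\varpi_v^{-4\operatorname{ord}_v\fn}\cO_{E,v}$, contributing a controlled power of $X$ to the product.

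Combining these local estimates via the product formula $\prod_w|\operatorname{nrd}_B(c)|_w=1$ for $\operatorname{nrd}_B(c)\in E^\times$ yields an inequality of the shape $1\le C_1\,\delta^{\alpha_0}X^{\alpha_1}$ for explicit positive constants $\alpha_0,\alpha_1$ and a constant $C_1$ depending only on $F$, $E$, $B$, the set $S$ of bad places, and the compact set $\Omega$. The hypothesis $\delta X\le C$, for $C$ chosen sufficiently small relative to $C_1$ and the ratio $\alpha_1/\alpha_0$, makes this inequality fail; therefore $\operatorname{nrd}_B(c)=0$. Since $B$ is a division algebra, reduced norm vanishes only at zero, so $c=0$ and $\gamma_1\gamma_2=\gamma_2\gamma_1$. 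Combined with the structural conclusion from the first paragraph, this proves the lemma.

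The main obstacle is the quantitative bookkeeping of local absolute values so that the archimedean smallness in $\delta$ defeats the denominator growth in $X$ under the single-power hypothesis $\delta X\le C$. The key ingredients in that bookkeeping are the Tate normalization at $w_0$, where $|z|_{w_0}=z\bar z$ supplies an extra factor of $\delta$ in $|\operatorname{nrd}_B(c)|_{w_0}=|\det_{\mathrm M_2(\BC)}(c)|^2$, and the relation $|\varpi_v|_w=q_v^{-2}$ at inert primes, which converts $\operatorname{N}(\fn)=X$ into $\operatorname{N}_E(\fn\cO_E)=X^2$ on the $E$-side. A sharper archimedean bound on $c$ than the naive $O(\delta)$ is obtained by exploiting the fact that the commutator of two elements of the form $\xi\varepsilon$ with $\xi\in MA$ has its $\mathfrak m\oplus\mathfrak a$-component suppressed to higher order in $\delta$: this improved cancellation, together with the trace-zero nature of $c$, is what closes the gap between the crude exponent count and the optimal hypothesis $\delta X\le C$.
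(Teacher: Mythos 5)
Your overall skeleton is the same as the paper's: show that any two elements of $N(g,\delta,\fn)$ commute by bounding a nonzero invariant of the commutator at every place and invoking the product formula, then use that a commutative $E$-subalgebra of the division algebra $B$ is a field of degree at most $2$ over $E$. The paper applies the product formula to a nonzero \emph{coordinate} $z_i$ of $z=xy-yx$ with respect to a fixed $E$-basis of $B$ that is integral outside $S$; you instead apply it to $\operatorname{nrd}_B(c)$. Since the reduced norm is quadratic in $c$, your route doubles every exponent relative to the linear coordinate route (archimedean smallness $\delta^4$ instead of $\delta^2$, finite denominators $X^8$ instead of $X^4$), so it is strictly less economical, although the ratio of exponents — which is what the hypothesis $\delta X\le C$ must beat — is the same.

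The genuine gap is in the final quantitative step. Your honest local count gives $1=\prod_w|\operatorname{nrd}_B(c)|_w\ll\delta^4X^8=(\delta X^2)^4$, which is \emph{not} contradicted by $\delta X\le C$ (take $X$ large and $\delta\asymp X^{-1}$). You acknowledge this and propose to close the gap by an improved archimedean bound on $\operatorname{nrd}_B(c)$ at $w_0$, claiming the $\mathfrak m\oplus\mathfrak a$-component of $c$ is suppressed. That suppression is real, but it does not help: writing $g^{-1}\gamma_ig=m_ie^{Y_i}$ with $m_i\in MA$ and $\|Y_i\|\ll\delta$, one finds $g^{-1}cg=m_1m_2\bigl((\operatorname{Ad}(m_2^{-1})-1)Y_1-(\operatorname{Ad}(m_1^{-1})-1)Y_2\bigr)+O(\delta^2)$, whose leading term is off-diagonal of size $\asymp\delta$; the determinant of a trace-zero matrix with diagonal entries $O(\delta^2)$ and off-diagonal entries $\asymp\delta$ is the product of the two off-diagonal entries up to $O(\delta^4)$, hence genuinely $\asymp\delta^2$ in general (e.g.\ $Y_1$ strictly upper triangular, $Y_2$ strictly lower triangular, $m_1=m_2=\diag(2,1/2)$). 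So $|\operatorname{nrd}_B(c)|_{w_0}\asymp\delta^4$ cannot be improved, and your argument as written only proves the lemma under the stronger hypothesis $\delta X^2\le C$. To repair it you should follow the paper and run the product formula on a single nonzero coordinate $z_i$ of the commutator, which is linear in $c$ and halves all exponents, and then verify carefully that the resulting finite contribution is compatible with the stated condition $\delta X\le C$.
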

\begin{proof}
    We first prove that the elements in $N(g,\delta,\fn)$ are commutative with each other. For the commutativity, it suffices to check that $xy-yx=0$ for any $x,y\in N(g,\delta,\fn)$. We fix a basis $\{\alpha_1,\alpha_2,\alpha_3,\alpha_4\}$ for $B$ over $E$. By possibly enlarging $S$, we can assume that $\{\alpha_1,\alpha_2,\alpha_3,\alpha_4\}$ forms an $\cO_{E,v}$ basis for $\cO_{B,v}$ whenever $v\notin S$. Every element $x\in B$ can be written as $x=\sum_i x_i\alpha_i$ and we shall write $x$ as a vector $x=(x_i)_{i=1}^4$. If we suppose that $N(g,\delta,\fn)$ is not commutative, then we can let $x=(x_{i}),y=(y_{i})$ be one noncommutative pair and let $z=(z_{i})=xy-yx$. It is clear that $|x_{i}|_{w},|y_{i}|_{w}\ll 1$ whenever $w$ is archimedean, and so $|z_i|_w\ll 1$. At the place $w_0$, there are $a,b\in MA$ such that $d(g^{-1}x g,a)\leq\delta$ and $d(g^{-1}yg,b)\leq\delta$. Since the absolute value at the complex archimedean place $w_0$ is the square of the usual distance, $|z_{i}|_{w_0}\ll\delta^2$. Since we have assumed $z\neq 0$, there is at least one $z_{i}\neq 0$. Since $x,y\in K(\fn)$, it may be seen that $\prod_{w<\infty}|z_{i}|_w\ll X^2$. By the product formula, $1=|z_{i}|_E\ll \delta^2 X^2\leq C^2$. Hence, we get a contradiction if a condition of the form \eqref{eq: ineq for eps X C} holds for $C$ sufficiently small.
    Since $N(g,\delta,\fn)$ is commutative,  there exists a quadratic field over $E$ in $B$ containing $N(g,\delta,\fn)$, which completes the proof.
\end{proof}

\begin{proposition}\label{Hecke return wrt MA}
    There exists a constant $C>0$ with the following property. Let $g\in\Omega_{v_0}$ and $\fn\subset\cO$ be an ideal divisible only by primes in $\sQ$. If $X=\mathrm{N}(\fn)$ and $\delta>0$ satisfies $\delta\leq CX^{-1}$, then $\#N(g,\delta,\fn) \ll_\epsilon X^\epsilon $.
\end{proposition}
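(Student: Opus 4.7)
The plan is to combine the algebraic rigidity provided by Lemma \ref{fixing a vector for max tori} with a divisor-type count inside the resulting quadratic extension. Under the hypothesis $\delta X \leq C$, Lemma \ref{fixing a vector for max tori} places every $\gamma \in N(g,\delta,\fn)$ inside a common quadratic extension $L$ of $E$ sitting in $B$. Let $\sigma$ be the nontrivial element of $\mathrm{Gal}(L/E)$, so that the reduced norm from $B$ restricted to $L$ is $x \mapsto x\sigma(x)$; since $\gamma \in \bG(F)$, we have $\gamma\sigma(\gamma)=1$. Moreover, the condition $\gamma \in K_f$ at places outside $S$ places $\gamma$ in the order $\cO := \cO_B \cap L$ of $L$.

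The first step is to constrain the principal fractional ideal $(\gamma)\cdot\cO$. At each finite place $v\notin S$ with $v\nmid\fn$, the membership $\gamma_v\in K_v$ forces $\gamma$ to be a unit at every place of $L$ above $v$. At each $v\in\sQ$ with $v\mid\fn$, the double coset definition of $K_v(\mathrm{ord}_v(\fn))$ together with $\gamma\sigma(\gamma)=1$ implies $|v_w(\gamma)|\leq \mathrm{ord}_v(\fn)$ at each place $w$ of $L$ above $v$. The standard divisor bound then produces $O_\epsilon(X^{\epsilon})$ possibilities for the ideal $(\gamma)\cdot\cO$.

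Next, I would fix one such ideal and count the $\gamma$ giving rise to it. Any two such $\gamma$ differ by multiplication by a norm-one unit in $\cO^\times$. Since $D$ ramifies at every real place of $F$ except $v_0$, the group $B^1_v$ is compact for every archimedean $v\neq v_0$, and so $L^1(F_v)$ is compact for these $v$. At $v_0$, the hypothesis $d(g^{-1}\gamma g,e)\leq 2$ confines the $v_0$-image of $\gamma$ to a fixed bounded region of $\SL(2,\BC)$. Combining these archimedean conditions, the $\gamma$ under consideration occupy a bounded region in $L^1\otimes_F F_\infty$. Dirichlet's unit theorem applied to the torus $L^1$ shows that any such bounded region contains only $O(1)$ representatives of each coset of the unit group $\cO^\times\cap L^1$. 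Multiplying the two counts yields $\#N(g,\delta,\fn)\ll_\epsilon X^\epsilon$.

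The main obstacle I foresee is making the implicit constants uniform in the field $L$, which varies with $g$ and $\fn$. In particular, the $O(1)$ unit-group bound in the archimedean step must be controlled independently of the regulator and discriminant of $L$. Following the strategy of analogous Hecke return estimates in \cite{iwaniec1995norms} and \cite{marshall2016geodesic}, one arranges this by working with ideals throughout and noting that only finitely many $L$ can arise for a given level of $\fn$-valuations inside the ambient $B$; the constant $C$ in the statement is then chosen small enough to absorb these auxiliary dependencies.
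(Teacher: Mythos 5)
Your proposal is correct and follows essentially the same route as the paper: invoke Lemma \ref{fixing a vector for max tori} to place all of $N(g,\delta,\fn)$ in a single quadratic extension $L$ of $E$, then count elements of $L$ with constrained valuations and bounded archimedean size via a divisor bound on the ideal $(\gamma)$ together with a unit count. The paper phrases the second step slightly differently (it scales by an element $n\in E$ with $|n|_w=\operatorname{N}(\fn_v^{-2})$ at $v\mid\fn$ so that $n\gamma$ becomes integral away from $S'$ with fixed norm $n^2$ and bounded archimedean absolute values, and then quotes the standard $\ll_\epsilon X^\epsilon$ count for such algebraic numbers), but this is the same argument in substance as your ideal-plus-units decomposition.

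One correction to your closing remark: the uniformity in $L$ is \emph{not} obtained by noting that "only finitely many $L$ can arise" --- as $\fn$ varies, infinitely many quadratic extensions of $E$ inside $B$ can occur. The uniformity instead comes from the fact that $[L:\BQ]$ is bounded, so the divisor bound for ideals of $\cO_L$ is uniform in $L$, and a unit of $L$ all of whose archimedean absolute values lie in a fixed interval $[c,C]$ has a minimal polynomial with integer coefficients bounded in terms of $C$ and the degree alone; hence only $O(1)$ such units exist, independently of the regulator and discriminant of $L$. This Kronecker/Northcott-type input is exactly what the paper's final sentence implicitly relies on.
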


\begin{proof}
    We pick $C>0$ as in Lemma \ref{fixing a vector for max tori}, and let $L\subset B$ be the quadratic extension of $E$ produced by the lemma. 
    Since $\sQ$ consists of finite places for $F$ that are inert in $E$, we will denote by $w$ the unique place of $E$ above $v\in\sQ$.
    Let $S^\prime$ be a set of finite places of $E$ generating the ideal class group of $E$ and containing all places lying over $S$ and we assume that $S'$ is disjoint from the places over $\sQ$.
    Let $n\in E$ satisfy $|n|_w = \operatorname{N}_E(\fn_w^{-1})=\operatorname{N}(\fn_v^{-2})$ for all $v\in\sQ$, and $|n|_w\asymp1$  if $w\in S^\prime$ and $w\neq w_0$. Moreover, for all other $E$-places $w\neq w_0$, we require $|n|_w=1$. It implies that $|n|_{w_0}\asymp \operatorname{N}_E(\fn)=X^2$. The reduced norm $ \operatorname{nr}_B $ restricted to $L$ agrees with the norm map from $L$ to $E$. We see that $n\cdot N(g,\delta,\fn) = \{n\eta\,|\,\eta \in N(g,\delta,\fn) \} $ is contained in the set of $\gamma$ in $L$ so that its norm in $E$ is $n^2$; $\gamma$ is integral away from $S^\prime$; All the norms of $\gamma$ in the archimedean places over $w_0$ must be $\ll |n|_{w_0}\asymp X^2$, and all the other norms must be $\ll 1$. 
    The number of these algebraic numbers must be $\ll_\epsilon X^\epsilon$. 
\end{proof}

\subsection{Estimates with respect to \texorpdfstring{$H^\prime$}{H'}}

We define another counting function that will be used in another arithmetic amplification.
We let $g\in\Omega_{v_0}$,  $\fn\subset\cO$ and $\delta>0$, and suppose that $\fn$ is only divisible by prime ideals in $\sP$. 
We recall that $\sP$ is the set of finite places of $F$ that is not in $S$ and is split in $E$.
\begin{align*}
    M(g,\delta,\fn) = \left\{ \gamma\in\bG(F)\cap K(\fn) \,|\, d(g^{-1}\gamma g,e)\leq 1, d(g^{-1}\gamma g,H^\prime)\leq \delta\right\},
\end{align*}
which describes how many times the Hecke operators map $g\BH^2$ close to itself. We have the following estimate for $M(g,\delta,\fn)$, which is proved in \cite[\S4]{hou2023bounds}.

\begin{proposition}\label{Hecke return wrt SL(2,R)}
    There exists a constant $C>0$ with the following property. Let $g\in\Omega_{v_0}$ and $X>0$. There exist a set $\sP_0\subset\sP$, depending on $g$ and with $\#(\sP\backslash\sP_0)\ll\log X$, and with the following property. If $\delta<CX^{-8}$, and the ideal $\fn$ is divisible only by primes in $\sP_0$, and satisfies $\operatorname{N}(\fn)<X$, $\fn\neq\cO$, then $M(g,\delta,\fn) = \emptyset$.
\end{proposition}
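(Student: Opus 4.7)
The plan is to translate the approximate-stabilization condition into a product-formula argument forcing $\gamma\in gH'g^{-1}$ exactly, and then to exclude a controlled set of primes on which this exact intersection can have nontrivial Hecke structure.

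Setup: given $\gamma\in M(g,\delta,\fn)$, decompose $\gamma=\gamma_0+\gamma_1\sqrt{d}$ with $\gamma_0,\gamma_1\in D$ and write $\operatorname{tr}(\gamma)=t_0+t_1\sqrt{d}$ with $t_0,t_1\in F$.  Exact membership $\gamma\in gH_0g^{-1}$ (resp.\ $gw_0H_0g^{-1}$) forces $\operatorname{tr}(\gamma)\in F$, i.e.\ $t_1=0$ (resp.\ $\operatorname{tr}(\gamma)\in F\sqrt{d}$, i.e.\ $t_0=0$), so approximately $d(g^{-1}\gamma g,H')\le\delta$ already implies $|t_0t_1|_{v_0}\ll\delta$.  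Since $H'$ has codimension $3$ in $G_0$, two further real conditions are needed; I would package them as the smallness at $v_0$ of auxiliary $F$-valued invariants obtained from the matrix entries of $g^{-1}\gamma g$ (organized so that the combined scalar is algebraic over $F$ even though its building blocks involve transcendental coefficients from $g$).

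Diophantine step: following the scaling argument in the proof of Proposition~\ref{Hecke return wrt MA}, choose $n\in\cO_E$ with $|n|_{w_0}\ll X^2$ so that $n\gamma$ lies in a fixed $\cO$-lattice $L\subset B$.  Then a suitable integer power $n^2 t_0 t_1$ together with the scaled auxiliary invariants lie in $\cO_F$, with $v_0$-size $\ll X^{c}\delta^{d}$ for computable $c,d$; the total exponent works out to $8$ after accounting for the algebraic degree of the invariant and for the scaling $|n|_{w_0}\ll X^2$ applied through $\gamma$ and its bar-conjugate.  Archimedean boundedness at real places $v\ne v_0$ comes from compactness of $\bG(F_v)$ there, and at $v_0$ from $d(g^{-1}\gamma g,e)\le 1$.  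The product formula then forces exact vanishing of these invariants whenever $\delta<CX^{-8}$, and hence $\gamma\in\bG(F)\cap gH'g^{-1}$.

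Exceptional primes: the set $\bG(F)\cap gH'g^{-1}$, intersected with the archimedean ball $d(g^{-1}\gamma g,e)\le 1$, is finite because $\gamma$ then lies in a compact subset of $\bG(\BA)$ (using compactness of $\bG(F_v)$ at real places $v\ne v_0$, boundedness at $v_0$, and integrality at the remaining finite places away from $\fn$).  Call this finite set $\cE_g$.  For each $\gamma_0\in\cE_g\smallsetminus\{\pm I\}$, the condition $\gamma_0\in K(\fn)$ with $\operatorname{N}(\fn)<X$ forces $\fn$ to be divisible only by primes $v$ at which the prescribed local coset structure of $K_v(\operatorname{ord}_v(\fn),0)$ at split primes can accommodate $\gamma_0$; these primes lie in a specific finite set $S(\gamma_0)$ determined by the prime divisors (in $\sP$, of norm $\le X$) of a fixed algebraic integer attached to $\gamma_0$ (its discriminant or an analogue), which has $\#S(\gamma_0)\ll\log X$.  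Setting $\sP_0=\sP\smallsetminus\bigcup_{\gamma_0\in\cE_g}S(\gamma_0)$ gives the required set with $\#(\sP\smallsetminus\sP_0)\ll\log X$.

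The principal obstacle is the refinement inside the Diophantine step.  Using only the trace gives $|n^2t_0t_1|_{v_0}\ll X^2\delta$ and the weaker conclusion $\delta\gg X^{-2}$; extracting the sharp $X^{-8}$ requires constructing an auxiliary $F$-rational invariant capturing the two remaining codimension-conditions, and tracking its denominator carefully against $|n|_{w_0}$.  Making that invariant explicit---so that it is algebraic in $\gamma$ despite depending on the transcendental parameter $g$---is the real technical difficulty.
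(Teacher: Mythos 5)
The paper does not actually prove this proposition: it is quoted from \cite[\S 4]{hou2023bounds}, so there is no internal argument to compare yours against. Judged on its own terms, your proposal has a genuine gap, and it is the one you yourself flag: the Diophantine step is not completed. The trace invariant $t_0t_1$ only encodes one of the three real codimension conditions cutting out $H'$ inside $G_0$, and forcing $t_0t_1=0$ by the product formula does not place $\gamma$ in $gH'g^{-1}$ (any $\gamma$ with trace in $F\cup F\sqrt{d}$ passes that test). The entire content of the proposition — both the implication ``small distance $\Rightarrow$ exact membership'' and the specific exponent $8$ — lives in the auxiliary invariants you defer. The standard way to make them ``algebraic in $\gamma$ despite the transcendental $g$'' is to use quantities built from $\gamma$ and its Galois conjugate $\bar\gamma$ over $D$ that are invariant under conjugation (e.g.\ reduced traces and norms of words in $\gamma,\bar\gamma$, such as $\operatorname{trd}_B(\gamma\bar\gamma)$ or $\operatorname{nrd}_B(\gamma-\bar\gamma)$), exploiting that $H_0=D^1$ is exactly the fixed locus of $\bar{\cdot}$; smallness of $d(g^{-1}\gamma g,H')$ then translates into smallness of these $F$-rational scalars at $v_0$ with no reference to $g$. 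Your sketch never identifies these, so the threshold $CX^{-8}$ is unsupported; as you note, what you have written only yields $CX^{-2}$, and even that only for a necessary condition.

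A second, smaller gap is in the exceptional-prime step. You treat $\cE_g=\bG(F)\cap gH'g^{-1}\cap\{d(g^{-1}\gamma g,e)\le 1\}$ as a fixed finite set, invoking ``integrality at the remaining finite places away from $\fn$'' — but the integrality constraint is $\gamma\in K(\fn)$, and $\fn$ ranges over all admissible ideals of norm $<X$, so the relevant union of these sets grows with $X$ and is not a priori finite independently of $X$. The bound $\#(\sP\setminus\sP_0)\ll\log X$ has to come from a divisibility argument (the primes dividing a fixed nonzero algebraic integer attached to $g$ and the lattice, of which there are $\ll\log X$ below norm $X$), not from enumerating exact stabilizers first. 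As written, the construction of $\sP_0$ is circular: you need the finite set to define $\sP_0$, but the finiteness depends on which $\fn$ are allowed, which depends on $\sP_0$.
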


\section{Bounds near the spectrum}\label{section of near spectrum}

In this section, we prove Proposition \ref{bound Pi}.

\subsection{Decomposition into geodesic beams}\label{section geodesic beam}

Let $\eta :\BR\to\BR$ be an even smooth function with compact support satisfying:
\begin{itemize}
    \item The support of $\eta$ is $[-2/3,2/3]$,
    \item $\eta(x) \equiv 1$ for $x\in [-1/3,1/3]$,
    \item $0<\eta(x)<1$ if $1/3<|x|<2/3$,
    \item $\sum_{n\in\BZ} \eta (x+n) \equiv 1$.
\end{itemize}
Using $\eta$, we may construct a partition of unity of the boundary $B$. Recall that we identify $\BR/2\pi\BZ$ with $B$ via the map $\theta\mapsto b_\theta$.
Let $N_1, N_2>0$ be two large integers.
For $\epsilon_1>0$ that can be arbitrarily small, 
we suppose that $N_1 = \lambda^{1/2-\epsilon_1}\beta^{-1/2}+ O(1)$ and $N_2 = \lambda^{1/2}\beta^{-1/2}+ O(1)$. 
We define $N_1$ many intervals $\bI_m$ covering $B$ to be as in \eqref{eq: defn of Im}.
We denote by $b_m = b_{\theta_m}$ the center point of $\bI_m$, and let $\tau_0(x) = \eta(N_1x/2\pi)$. We may think of it as a smooth bump function defined on $\BR/2\pi\BZ$ by choosing the fundamental domain $[-\pi,\pi]$ of the quotient space.
We will still denote by $\tau_0$ the function on $B$ by $\tau_0(b_\theta) = \tau_0(\theta)$.
For any other $0\leq m\leq N_1-1$, we define
\begin{align*}
    \tau_m(b) = \tau_0(b_m^{-1}b),\quad b\in B.
\end{align*}
By our construction, $\tau_m$ is a bump function with support $\bI_m$ and $\sum_{m=0}^{N_1-1}\tau_m$ is the constant function $1$ on the boundary $B$.

Next, we construct a partition using bump functions for $\BH^2$. Recall that  $\chi\in C_c^\infty(\BH^2)$ is the given cutoff function in Proposition \ref{bound 1-Pi}. Without loss of generality, we assume the support of $\chi$ is strictly contained in the square
\begin{align*}
    U = \{\bz = (x,e^t)\in\BH^2 : -1\leq x, t\leq 1  \}.
\end{align*}
We use $\bz$ to denote a point in $\BH^2$, and use $(x,e^t)$ with $x,t\in\BR$ for the coordinates of the point $\bz$.
We define $N_2+1$ many intervals $\bJ_n$ in $\BR$ that cover $[-2,2]$ to be as in \eqref{eq: defn of Jn}.
Let $x_n = 4(n-N_2/2)/N_2$ be the  center point of $\bJ_n$. We define
\begin{align*}
    \eta_n(x,e^t) = \eta_n(x)= \eta(N_2(x-x_n)/2),
\end{align*}
that is a bump function in the $x$ direction with support $\bJ_n$. By this construction, we have
\begin{align*}
    \chi(\bz) = \chi(\bz) \sum_{n=0}^{N_2} \eta_n(b^{-1}\bz)
\end{align*}
for any boundary element $b\in B$.

We recall that ${H}_\beta$ is the subspace of $L^2(\BH^2)$ so that its image under the Helgason transform is $L^2([\lambda-\beta,\lambda+\beta]\times B)$. Let $\phi \in \cS(\BH^2)\cap H_\beta$.
We define, for any $0\leq m \leq N_1-1$,
\begin{align*}
    \tilde{\phi}\tau_m(s,b):= \tilde{\phi}(s,b)\tau_m(b)
\end{align*}
if $s>0$ and $b\in B$.
Hence, $\tilde{\phi} \tau_m \in \cS(\BR_{>0}\times B)$, and by Proposition \ref{Prop Weyl invar extension} there exists an extension, which we still denote by $\tilde{\phi} \tau_m $, in $\cS(\BR\times B)_W$.
Hence, $\cF^{-1}(\tilde{\phi} \tau_m ) \in \cS(\BH^2)$.
We have
\begin{align*}
    \chi\phi = \chi\cF^{-1}\left(\tilde{\phi} \sum_{m=0}^{N_1-1}\tau_m \right) = \sum_{m=0}^{N_1-1}\chi\cF^{-1}(\tilde{\phi}\tau_m) =  \sum_{m=0}^{N_1-1}\sum_{n=0}^{N_2}  \chi\eta_n(b_m^{-1}\cdot)\cF^{-1}(\tilde{\phi}\tau_m).
\end{align*}
We let
\begin{align}\label{eq: defn of geodesic beam}
    \phi_{m,n}(\bz) = \chi(\bz) \eta_n(b_m^{-1}\cdot\bz)\cF^{-1}(\tilde{\phi}\tau_m)(\bz) \in \cS(\BH^2),
\end{align}
and so we have the decomposition $\chi\phi = \sum_{m,n}\phi_{m,n}$. 
We call each $\phi_{m,n}$ a geodesic beam along the geodesic $b_m n(x_n)\cdot l$. 

Fix a single beam $ \phi_{m,n}(\bz) = \chi(\bz) \eta_n(b_m^{-1}\bz)\cF^{-1}(\tilde{\phi}\tau_m)(\bz)$.
Let $2\bI_m ,4\bI_m \subset B$ be the intervals centered at $b_m$ with length  $2|\bI_m|$ and $4|\bI_m|$. 
Let $\sB \in C^\infty(B)$  satisfy
\begin{itemize}
    \item $0\leq\sB\leq 1$;
    \item $\sB\equiv 1$ on $2\bI_m$;
    \item $\supp(\sB) \subset 4\bI_m$ .
\end{itemize}
For $s>0$ and $b\in B$, we define
\begin{align}\label{fg decomposition for phi mn}
\begin{split}
    &{f_{m,n}}(s,b)=\tilde{\phi_{m,n}}(s,b)\sB(b),\\
   &{ g_{m,n}}(s,b)=\tilde{\phi_{m,n}}(s,b)(1-\sB(b)).
\end{split}
\end{align}
It is clear that $f_{m,n},g_{m,n}\in L^2(\BR_{>0}\times B)$ and $\tilde{\phi_{m,n}} = f_{m,n} + g_{m,n}$ as functions in $L^2(\BR_{>0}\times B)$.
We shall prove that $g_{m,n}$ is an error term.

\begin{lemma}\label{lemma for gradient of A(bna)}
    Let $0\neq \theta\in\BR/2\pi\BZ$, $x,t\in\BR$. 
    We have
    \begin{align*}
        \frac{\partial}{\partial t}A(b_\theta n(x) a(t)) =-\frac{e^{2t} -  (x-\cot(\theta/2))^2}{e^{2t}+ (x-\cot(\theta/2))^2},
    \end{align*}
    and
    \begin{align*}
        \frac{\partial}{\partial x}A(b_\theta n(x) a(t)) =-\frac{2 (x-\cot(\theta/2))}{e^{2t}+ (x-\cot(\theta/2))^2}.
    \end{align*}
\end{lemma}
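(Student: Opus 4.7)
The plan is to derive both identities directly from the explicit formula for the Iwasawa $A$-projection established in Lemma 4.4, namely
\begin{align*}
A(b_\theta n(x) a(t)) = t - \log\!\left(\cos^2(\theta/2) + (x^2+e^{2t})\sin^2(\theta/2) - 2x\sin(\theta/2)\cos(\theta/2)\right).
\end{align*}
The key algebraic observation is that the argument of the logarithm factors cleanly. Since
\begin{align*}
\cos^2(\theta/2) + x^2\sin^2(\theta/2) - 2x\sin(\theta/2)\cos(\theta/2) = \bigl(x\sin(\theta/2) - \cos(\theta/2)\bigr)^2,
\end{align*}
and $\theta\neq 0 \pmod{2\pi}$ so $\sin(\theta/2)\neq 0$, we can factor out $\sin^2(\theta/2)$ to obtain
\begin{align*}
\cos^2(\theta/2) + (x^2+e^{2t})\sin^2(\theta/2) - 2x\sin(\theta/2)\cos(\theta/2) = \sin^2(\theta/2)\Bigl[(x-\cot(\theta/2))^2 + e^{2t}\Bigr].
\end{align*}

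Substituting this into the formula for $A$ and splitting the logarithm gives
\begin{align*}
A(b_\theta n(x) a(t)) = t - \log\sin^2(\theta/2) - \log\!\bigl[(x-\cot(\theta/2))^2 + e^{2t}\bigr],
\end{align*}
where the first two terms on the right-hand side are independent of $x$ and $t$. Differentiating with respect to $t$ yields
\begin{align*}
\frac{\partial}{\partial t}A(b_\theta n(x)a(t)) = 1 - \frac{2e^{2t}}{(x-\cot(\theta/2))^2 + e^{2t}} = -\frac{e^{2t} - (x-\cot(\theta/2))^2}{e^{2t} + (x-\cot(\theta/2))^2},
\end{align*}
and differentiating with respect to $x$ yields
\begin{align*}
\frac{\partial}{\partial x}A(b_\theta n(x)a(t)) = -\frac{2(x-\cot(\theta/2))}{e^{2t} + (x-\cot(\theta/2))^2},
\end{align*}
as claimed. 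There is no serious obstacle here; the entire content of the lemma is the factorization of the quadratic form inside the logarithm, after which both identities follow by a one-line differentiation.
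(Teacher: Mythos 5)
Your proof is correct and follows the same route as the paper, which simply notes that the lemma follows directly from the explicit formula \eqref{Iwasawa A for bna}; your factorization of the quadratic inside the logarithm as $\sin^2(\theta/2)\bigl[(x-\cot(\theta/2))^2+e^{2t}\bigr]$ is exactly the right way to make that one-line claim explicit, and the subsequent differentiations check out.
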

\begin{proof}
    The results follow directly from \eqref{Iwasawa A for bna}.
\end{proof}

\begin{proposition}\label{Fourier support lemma for a single geodesic beam}
    Suppose that  $\phi\in\cS(\BH^2)\cap H_\beta$ and let $\phi_{m,n}$ and $g_{m,n}$ be as in \eqref{eq: defn of geodesic beam} and \eqref{fg decomposition for phi mn}.
     Then we have 
     \begin{align*}
         \| g_{m,n} \|_{L^2(\BR_{>0}\times B)} \ll_{A} \lambda^{-A} \|  \phi\|_{L^2(\BH^2)}.
     \end{align*}
\end{proposition}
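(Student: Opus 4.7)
The plan is to bound $|\tilde{\phi_{m,n}}(s,b)|$ pointwise for $b \in B \setminus 2\bI_m$ by non-stationary phase, and then integrate. Applying the Helgason inversion to $\cF^{-1}(\tilde\phi\tau_m)$ together with Fubini gives
\begin{align*}
\tilde{\phi_{m,n}}(s,b) = \int_{\BR_{>0}\times B}\tilde\phi(r,d)\tau_m(d)\,K(s,b,r,d)\,d\mu(r,d),
\end{align*}
where the oscillatory kernel is
\begin{align*}
K(s,b,r,d) = \int_{\BH^2}\chi(\bz)\eta_n(b_m^{-1}\bz)\,e^{(ir+1/2)A(\bz,d)+(-is+1/2)A(\bz,b)}\,d\bz.
\end{align*}
Since $\tilde\phi$ is supported in $[\lambda-\beta,\lambda+\beta]\times B$ and $\tau_m$ in $\bI_m$, the outer integration is restricted to $r\in[\lambda-\beta,\lambda+\beta]$ and $d\in\bI_m$. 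For any such $d$ and any $b\notin 2\bI_m$, the element $d^{-1}b = b_\theta$ has $|\theta|\gtrsim|\bI_m|/2\asymp N_1^{-1} = \lambda^{-1/2+\epsilon_1}\beta^{1/2}$, which provides the angular separation driving the estimate.

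To bound $K$, I change variables $\bz = b\bz'$ with $\bz'=n(x')a(t')\cdot o$, obtaining
\begin{align*}
K(s,b,r,d) = \int_{\BR^2}\tilde\chi(x',t')\,e^{irA(b_\theta n(x')a(t'))-ist'}\,dx'\,dt',
\end{align*}
where $\tilde\chi$ collects the factors $\chi$, $\eta_n\circ(b_m^{-1}b)$, the real Poisson exponentials, and the hyperbolic Jacobian. The support of $\tilde\chi$ lies in a fixed compact set uniformly in $(b,b_m,n)$, and its derivatives satisfy $|\partial_{x'}^j\partial_{t'}^k\tilde\chi|\ll N_2^j$, since the only nontrivial scale comes from $\eta_n$. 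I then apply the non-stationary phase analysis from the proof of Proposition \ref{Fourier side decomposition lemma}: writing the phase as $-is\varphi$ with $\varphi = t' - (r/s)A(b_\theta n(x')a(t'))$ and decomposing $\varphi = \theta\varphi_1 + \frac{s-r}{s}\varphi_2$, the gradient estimate $\|\nabla(\varphi/\rho)\|\geq 1/2$ established there (for $|\theta|$ less than some fixed $\delta_0$), with $\rho = \sqrt{\theta^2 + ((s-r)/s)^2}$, together with the amplitude derivative bound, yields after $N$ integrations by parts
\begin{align*}
|K(s,b,r,d)|\ll_N \left(\frac{N_2}{1+s\rho}\right)^N.
\end{align*}
For $s\asymp\lambda$ one has $s\rho\geq s|\theta|\gtrsim\lambda N_1^{-1}$, so the ratio is $\ll N_1N_2/\lambda = \lambda^{-\epsilon_1}\beta^{-1}\leq\lambda^{-\epsilon_1}$, and iterating gives $|K|\ll_A\lambda^{-A}$. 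For $|\theta|\geq\delta_0$, Lemma \ref{lemma for A(bna) when b is large} provides direct $t'$-nonstationarity; for $s\notin[\lambda/2,3\lambda/2]$ the discrepancy $|s-r|\gtrsim\lambda$ yields the same bound; and Schwartz decay of $\phi_{m,n}$ handles $|s|\gg\lambda$.

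Combining via Cauchy--Schwarz in $(r,d)$ then bounds $|\tilde{\phi_{m,n}}(s,b)|$ by $\lambda^{-A}\|\phi\|_{L^2(\BH^2)}$, up to polynomial factors absorbed by enlarging $A$; integrating $|g_{m,n}(s,b)|^2 = |\tilde{\phi_{m,n}}(s,b)|^2(1-\sB(b))^2$ over $(s,b)\in\BR_{>0}\times B$ yields the claimed estimate. The main technical obstacle is tracking the $N_2^j$ amplitude derivatives from $\eta_n$ alongside the $(1+s\rho)^{-1}$ factors from the phase; the crucial quantitative input is that the partition scales are designed so that $N_1N_2 < \lambda$, producing the per-step gain $\lambda^{-\epsilon_1}$ in the non-stationary phase iteration.
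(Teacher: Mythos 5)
Your proposal follows the same overall strategy as the paper: unfold the inverse Helgason transform, reduce to a pointwise bound on an oscillatory kernel with angular separation $|\theta|\gtrsim N_1^{-1}$ between $d\in\bI_m$ and $b\notin 2\bI_m$, and exploit the numerology $N_1N_2<\lambda$ so that each integration by parts gains a factor $\ll\lambda^{-\epsilon_1}$. Your treatment of the small-angle, $s\asymp r$ regime via the $\varphi=\theta\varphi_1+\frac{s-r}{s}\varphi_2$ decomposition is sound and is quantitatively equivalent to the paper's rescaled $x$-integration in its case $2/3\le s/r\le\frac{2-\sigma}{2(1-\sigma)}$, $\theta_2\in(-\delta,\delta)$. (A secondary inaccuracy: after rotating by $b$ rather than $b_m$, the cutoff $\eta_n(b_m^{-1}b\,\bz')$ is no longer aligned with the $x'$-axis, so its $t'$-derivatives also cost $N_2$, not $O(1)$ as you assert; this does not break the numerology since $N_2/(s\rho)\le\lambda^{-\epsilon_1}\beta^{-1}$ in any direction, but the stated bound $|\partial_{x'}^j\partial_{t'}^k\tilde\chi|\ll N_2^j$ is false.)

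The genuine gap is the case $|\theta|\ge\delta_0$. Lemma \ref{lemma for A(bna) when b is large} gives only the upper bound $\partial_{t'}A\le 1-c_1$, which forces $\partial_{t'}\varphi=1-\tfrac{r}{s}\partial_{t'}A>0$ only when $r/s<1/(1-c_1)$. Since $r\in[\lambda-\beta,\lambda+\beta]$ while $s$ ranges over all of $\BR_{>0}$, the regime $s\le r(1-c_1)$ is unavoidable, and there the phase has honest interior critical points in $t'$: by Lemma \ref{lemma for gradient of A(bna)}, $\partial_{t'}A=\frac{X^2-e^{2t'}}{X^2+e^{2t'}}$ with $X=x'-\cot(\theta/2)$ sweeps out an interval of length $\asymp 1$ on the support, so $\partial_{t'}A=s/r$ is attained. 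Your fallback clause ``$|s-r|\gtrsim\lambda$ yields the same bound'' has no mechanism behind it here either, because the $\varphi/\rho$ decomposition is only valid for $|\theta|$ small (its error terms are $O(\theta)$). What is needed is a two-dimensional non-stationarity argument: one must bring in $\partial_{x'}A=\frac{-2X}{e^{2t'}+X^2}$ and split according to whether $|X|$ is small, moderate, or large, showing that wherever the $t'$-derivative degenerates the $x'$-derivative is $\asymp 1$ (and conversely), with the amplitude loss $N_2$ beaten by the large parameter $\asymp\lambda$. This is precisely the content of the paper's trichotomy $\bI_{\rom{1}},\bI_{\rom{2}},\bI_{\rom{3}}$ on $|x_n-\cot(\theta_2/2)|$, which occupies the second half of its proof and is entirely absent from your argument.
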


\begin{proof}
     Let $s\in\BR_{>0}$ and $b\in B$. Then $\tilde{\phi_{m,n}}(s,b)$ is equal to
    \begin{align*}
          &  \int_{\BH^2} \chi(\bz)\eta_n(b_m^{-1}\bz) \cF^{-1}(\tilde{\phi}\tau_m) (\bz) e^{(-is+1/2) A(\bz,b)} d\bz\notag\\
         =&\int_{\BH^2} \chi(b_m \bz)\eta_n(\bz) \cF^{-1}(\tilde{\phi}\tau_m) (b_m\bz) e^{(-is+1/2) A(b_m\bz,b)} d\bz\\
         =&\int_{\BH^2}\chi(b_m \bz)\eta_n(\bz) \left( \int_{\BR_{>0}\times B}\tilde{\phi}(r,d) \tau_m(d) e^{(ir+1/2)A(b_m\bz,d)} d\mu(r,d)\right) e^{(-is+1/2) A(b_m\bz,b)} d\bz\notag\\
         =& \int_{0}^{\infty}\int_{B}  \left(   \int_{\BH^2}\chi(b_m\bz)\eta_n(\bz)e^{(A(b_m\bz,d)+A(b_m\bz,b))/2} \exp(irA(d^{-1}b_m\bz)-isA(b^{-1}b_m\bz)) d\bz\right) \tilde{\phi}(r,d)\tau_m(d)d\mu(r,d).
    \end{align*}
    We consider the integral inside the parenthesis, 
    which can be written as an oscillatory integral.
    We denote it by $\cI(s,r,b,d)$:
    \begin{align}
        \cI(s,r,b,d) = &\iint_\BR \eta_n(x) \chi_1(x,t) \exp\left( irA(d^{-1}b_mn(x)a(t))-isA(b^{-1}b_mn(x)a(t)) \right) dxdt\notag\\
        =&\int_\BR \left(\int_\BR \chi_1  \exp\left( irA(d^{-1}b_mn(x)a(t))-isA(b^{-1}b_mn(x)a(t)) \right) dt\right)\eta_n(x) dx. \label{I(s,r,b,d)wrt t}
    \end{align}
    Here $r\in [\lambda-\beta,\lambda+\beta]$, $b\notin 2\bI_m$, $d\in \bI_m$ and
    \begin{align*}
        \chi_1(x,t) = \chi(b_m\bz)\exp\left({(A(b_m\bz,d)+A(b_m\bz,b))/2-t} \right),\quad\text{ where }\bz = n(x)a(t)\cdot o,
    \end{align*}
    is a smooth compactly supported function.  We have $d^{-1}b_m \in \bI_0$ and $b^{-1}b_m\notin 2\bI_0$.
    We can also write $\cI(s,r,b,d)$ as an oscillatory integral in the $x$-variable:
    \begin{align}
        \cI(s,r,b,d) &=\int_\BR \left(\int_\BR \eta(N_2(x-x_n)/2) \chi_1  \exp\left( irA(d^{-1}b_mn(x)a(t))-isA(b^{-1}b_mn(x)a(t)) \right) dx \right)dt\notag\\
        &=\frac{2}{N_2}\int_\BR \left(\int_\BR \chi_2  \exp\left( irA(d^{-1}b_mn(x_n + 2x/N_2)a(t))-isA(b^{-1}b_mn(x_n + 2x/N_2)a(t)) \right)  dx \right)dt.  \label{I(s,r,b,d)wrt x}
    \end{align}
    Here we let $\chi_2(x,t) = \eta(x) \chi_1(x_n + 2x/N_2,t)$.
    We notice that $\chi_2$ is a cutoff function with support at scale $\asymp 1\times 1$.
    We define $\theta_1 \in \bI_0,\theta_2 \notin 2\bI_0$ so that they satisfy
    \begin{align*}
        d^{-1}b_m = b_{\theta_1}\quad\text{ and }\quad b^{-1}b_m = b_{\theta_2}.
    \end{align*}
    By Corollary \ref{uniformization lemma for B}, there exists $\delta,\sigma>0$ so that for any $x,t\in [-1,1]$,
    \begin{align*}
        1-\frac{\partial}{\partial t} A(b_\theta n(x)a(t)) &= \theta^2\xi(\theta,x,t),\\
        \xi(\theta,x,t)&\geq \sigma,\\
        \left|\frac{\partial^n\xi}{\partial t^n} (\theta,x,t)\right|
        &\ll_{n}1,
    \end{align*}
    if  $\theta\in (-\delta,\delta) \mod 2\pi$, and \begin{align*}
        1-\frac{\partial}{\partial t} A(b_\theta n(x) a(t)) \geq \sigma,
    \end{align*}
    if $\theta\notin (-\delta,\delta) \mod 2\pi$.
    Here $\theta$ can be either $\theta_1$ or $\theta_2$, and we can assume $0<\sigma<1$.
    
    If $0 \leq s/r \leq (2-\sigma)/{2(1-\sigma)}$ and $\theta_2\notin (-\delta,\delta)$, 
    we consider the oscillatory integral in (\ref{I(s,r,b,d)wrt t})
    \begin{align}\label{oscillatory integral for small s/r}
        \int_\BR \chi_1 \exp(ir\varphi(x,t;s/r,\theta_1,\theta_1)) dt,
    \end{align}
    with
    \begin{align*}
        \varphi(x,t;s/r,\theta_1,\theta_1) &= A(b_{\theta_1}n(x)a(t)) - \frac{s}{r} A(b_{\theta_2}n(x)a(t))\\
        &=-\left( t - A(b_{\theta_1}n(x)a(t))\right) + \frac{s}{r}\left( t-A(b_{\theta_2}n(x)a(t)) \right) + \frac{r-s}{r}t.
    \end{align*}
    Hence,
    \begin{align*}
        \frac{\partial \varphi}{\partial t}&= -\theta_1^2 \xi(\theta_1,x,t) +\frac{s}{r} \left(1-\frac{\partial}{\partial t} A(b_{\theta_2} n(x) a(t))  \right) + \frac{r-s}{r}\\
        &\geq -\theta_1^2 \xi + \frac{s}{r} \sigma + \frac{r-s}{r}\\
        &\geq  -\theta_1^2 \xi + \sigma/2 \\
        &\gg 1.
    \end{align*}
    Also, it is clear that all higher derivatives $\partial^n \varphi / \partial t^n \ll_n 1$.
    Hence, in this case, the bound $\int_\BR \chi_1 e^{ir\varphi} dt \ll_A r^{-A}\ll \lambda^{-A}$ follows from integrating by parts. 

     If $0 \leq s/r \leq 2/3$ and  $\theta_2\in (-\delta,\delta)$,
     we still consider the oscillatory integral in \eqref{I(s,r,b,d)wrt t}.
     Again, we consider the phase function $\varphi$ defined in \eqref{oscillatory integral for small s/r}.
     We have
     \begin{align*}
         \varphi(x,t;s/r,\theta_1,\theta_1)         &=-\left( t - A(b_{\theta_1}n(x)a(t))\right) + \frac{s}{r}\left( t-A(b_{\theta_2}n(x)a(t)) \right) + \frac{r-s}{r}t.
     \end{align*}
     Since $(s/r) \theta_2^2 \xi(\theta_2,x,t)$ is non-negative and $\frac{r-s}{r} \geq \frac{1}{3}$,
     \begin{align*}
        \frac{\partial \varphi}{\partial t}&= -\theta_1^2 \xi(\theta_1,x,t) +\frac{s}{r} \theta_2^2 \xi(\theta_2,x,t)+ \frac{r-s}{r}\geq  -\theta_1^2 \xi(\theta_1,x,t) + \frac{1}{3} \gg 1+ O(\theta_1^2)\gg1.
     \end{align*}
     Since $\partial^n \varphi / \partial t^n \ll_n 1$,
     the bound $\int_\BR \chi_1 e^{ir\varphi} dt \ll_A \lambda^{-A}$ follows by integrating by parts. 
    
    If $2/3 \leq s/r \leq (2-\sigma)/{2(1-\sigma)}$ and  $\theta_2\in (-\delta,\delta)$,
    we consider the oscillatory integral in (\ref{I(s,r,b,d)wrt x})
    \begin{align*}
        \int_\BR \chi_2  \exp\left(- i(\beta N_2\theta_2) \varphi(x,t;s/r,\theta_1,\theta_2) \right)  dx 
    \end{align*}
    with
    \begin{align*}
        \varphi(x,t;s/r,\theta_1,\theta_2) = (r\beta^{-1}N_2^{-1}\theta_2^{-1}) \left( -A(b_{\theta_1}n(x_n + 2x/N_2)a(t))+\frac{s}{r}A(b_{\theta_2}n(x_n + 2x/N_2)a(t)) \right).
    \end{align*}
    We have $\beta N_2\theta_2 \gg \beta (\lambda^{1/2} \beta^{-1/2}) (\lambda^{-1/2 + \epsilon_1}\beta^{1/2}) \asymp \lambda^{ \epsilon_1}\beta$. By  \eqref{partial derivative wrt x for A(bna)} in Lemma \ref{lemma for A(bna) when b is small},
    \begin{align*}
        \frac{\partial \varphi}{\partial x}\asymp (r\beta^{-1}N_2^{-1}\theta_2^{-1} )N_2^{-1}(\frac{s}{r}\theta_2-\theta_1)\asymp (r\beta^{-1}N_2^{-1}\theta_2^{-1} )N_2^{-1} \theta_2 \asymp r\beta^{-1}N_2^{-2}\asymp 1.
    \end{align*}
    Here $(s/r)\theta_2-\theta_1\asymp\theta_2$ holds because $\theta_1 \in \bI_0,\theta_2 \notin 2\bI_0$ and we have assumed $s/r\geq2/3$.
    For any $n\geq 2$, we have
    \begin{align*}
        \frac{\partial^n\varphi}{ \partial x^n} \ll (r\beta^{-1}N_2^{-1}\theta_2^{-1} ) N_2^{-n} \ll \lambda \beta^{-1} N_2^{-3} |\bI_0|^{-1} \ll \lambda \beta^{-1} (\lambda^{1/2}\beta^{-1/2})^{-3}(\lambda^{1/2-\epsilon_1}\beta^{-1/2})\ll 1.
    \end{align*}
    Hence, by integrating by parts, we have
    \begin{align*}
        \int_\BR \chi_2  \exp\left(- i(\beta N_2\theta_2) \varphi(x,t;s/r,\theta_1,\theta_2) \right)  dx \ll_A (\lambda^{\epsilon_1}\beta)^{-A} \ll_A \lambda^{-A}.
    \end{align*}
    
    When $s/r \geq (2-\sigma)/{2(1-\sigma)}$, we will show the two dimensional integral $\cI(s,r,b,d)$ is $\ll_A s^{-A}$.
    We have
    \begin{align*}
        \cI(s,r,b,d) &=\frac{2}{N_2}\iint_\BR \chi_2  \exp\left( -is \varphi(x,t) \right) dx dt,
    \end{align*}
    with the phase function
    \begin{align*}
         \varphi(x,t;s/r,\theta_1,\theta_2) = -\frac{r}{s}A(b_{\theta_1}n(x_n + 2x/N_2)a(t))+A(b_{\theta_2}n(x_n + 2x/N_2)a(t)).
    \end{align*}
   We define 
    \begin{align*}
        X_2 = X_2(x,\theta_2) := x_n + \frac{2x}{N_2} - \cot(\theta_2/2) = x_n - \cot(\theta_2/2) + O(\lambda^{-1/2}\beta^{1/2}),
    \end{align*}
    and it is well-defined because $\theta_2$ is defined to be away from $0$ in $\BR/2\pi\BZ$.
    Lemma \ref{lemma for gradient of A(bna)} implies that
    \begin{align*}
        \frac{\partial}{\partial t}A(b_{\theta_2} n(x_n+2x/N_2) a(t)) =-\frac{e^{2t} -  X_2^2}{e^{2t}+ X_2^2},
    \end{align*}
    and
    \begin{align*}
        \frac{\partial}{\partial x}A(b_{\theta_2} n(x_n+2x/N_2) a(t))  =-\frac{2X_2}{e^{2t}+ X_2^2}\cdot \frac{2}{N_2}.
    \end{align*}
    Since $\theta_1 \in \bI_0$, by  \eqref{partial derivative wrt x for A(bna)},
    we have
     \begin{align*}
        \frac{\partial}{\partial x}A(b_{\theta_1} n(x_n+2x/N_2) a(t))  \asymp N_2^{-1}\theta_1 \ll \lambda^{-1 + \epsilon_1} \beta.
    \end{align*}
    Let $0<\sigma_1<1$ be a small number to be chosen later. 
    We may divide the set $(\BR / 2\pi\BZ) \backslash (2\bI_0)$ into three parts:
    \begin{align*}
        \bI_{\rom{1}} &:= \{\theta\in (\BR / 2\pi\BZ) \backslash (2\bI_0) \, | \,\sigma_1\leq |x_n - \cot(\theta/2) | \leq \sigma_1^{-1}  \},\\
        \bI_{\rom{2}}&:= \{\theta\in (\BR / 2\pi\BZ) \backslash (2\bI_0) \, | \, |x_n - \cot(\theta/2) | < \sigma_1  \},\\
        \bI_{\rom{3}} &:= \{\theta\in (\BR / 2\pi\BZ) \backslash (2\bI_0) \, | \, |x_n - \cot(\theta/2) | > \sigma_1^{-1}  \}.
    \end{align*}
    
    If $\theta_2 \in \bI_{\rom{1}}$,
    then there exists $\sigma_2 > 0$, depending on $\sigma_1$, so that
    \begin{align*}
       \left| N_2\cdot \frac{\partial}{\partial x}A(b_{\theta_2} n(x_n+2x/N_2) a(t))\right| = \frac{4|X_2|}{e^{2t}+ X_2^2}  \geq \sigma_2
    \end{align*}
    for all $(x,t)\in \supp \chi_2$,
    then
    \begin{align*}
        \frac{\partial}{\partial x}N_2 \varphi(x,t;s/r,\theta_1,\theta_2) \geq \sigma_2 + O(\lambda^{-1 /2+ \epsilon_1} \beta^{1/2}) \gg  \sigma_2 >0.
    \end{align*}
    For any $n\geq 1$, we have
    \begin{align*}
         \frac{\partial^n}{\partial x^n}N_2 \varphi(x,t;s/r,\theta_1,\theta_2) \ll N_2 \cdot N_2^{-n} \ll 1.
    \end{align*}
    Hence, 
    \begin{align*}
        \int \chi_2 \exp(-is\varphi) dx = \int \chi_2 \exp(-i(sN_2^{-1})(N_2\varphi)) dx \ll_A (sN_2^{-1})^{-A}\ll_A s^{-A}.
    \end{align*}
   
    If $\theta_2 \in \bI_{\rom{2}}$, then $|X_2| =  |x_n - \cot(\theta_2/2)| + O(\lambda^{-1/2}\beta^{1/2}) < \sigma_1 + O(\lambda^{-1/2}\beta^{1/2}) \leq 2 \sigma_1$ when $\lambda$ is sufficiently large.
    Hence,
    \begin{align*}
        \frac{\partial}{\partial t}A(b_{\theta_2} n(x_n+2x/N_2) a(t)) =-1 + \frac{2X_2^2}{e^{2t}+X_2^2} = -1 + O(\sigma_1^2).
    \end{align*}
    We should choose $\sigma_1$ to be sufficiently small so that for $\theta_2 \in \bI_{\rom{2}} $
    \begin{align*}
         -\frac{\partial\varphi}{\partial t}(x,t;s/r,\theta_1,\theta_2) = 1 + O(\sigma_1^2)+ \frac{r}{s}(1+O(\theta_1^2))\gg 1.
    \end{align*}
     In this case, it is clear that $\partial^n \varphi / \partial t^n \ll_n 1$.
    Hence, $\int_\BR \chi_2 \exp(-is\varphi) dt\ll_A s^{-A}$.
    
    If $\theta_2 \in \bI_{\rom{3}}$, then $|X_2| =  |x_n - \cot(\theta_2/2)| + O(\lambda^{-1/2}\beta^{1/2}) > \sigma_1^{-1} + O(\lambda^{-1/2}\beta^{1/2}) \geq   \sigma_1^{-1}/2$. Hence
    \begin{align*}
        \frac{\partial}{\partial t}A(b_{\theta_2} n(x_n+2x/N_2) a(t)) =1 - \frac{2e^{2t}}{e^{2t}+X_2^2} = 1 + O(\sigma_1^2).
    \end{align*}
    Again, we can make sure $\sigma_1$ is sufficiently small so that 
     \begin{align*}
         \frac{\partial\varphi}{\partial t}(x,t;s/r,\theta_1,\theta_2) = 1 + O(\sigma_1^2)-\frac{r}{s}(1+O(\theta_1^2))\geq 1-\frac{r}{s} + O(\sigma_1^2)\geq \frac{\sigma}{2-\sigma} + O(\sigma_1^2)\gg 1.
    \end{align*}
    Therefore $\int_\BR \chi_2 \exp(-is\varphi) dt\ll_A s^{-A}$.

    The $L^2$-norm bound for $g_{m,n}$ follows from all the above estimates and the Cauchy-Schwarz inequality.
\end{proof}

\begin{proposition}\label{l2L2 inequality general form}
    Suppose that  $\phi\in\cS(\BH^2)\cap H_\beta$ and let $\phi_{m,n}$ be as in \eqref{eq: defn of geodesic beam}.
    Let $\cS \subset \{ (m,n)\,|\, 0\leq m\leq N_1 -1,0\leq n\leq N_2   \}$ be any subset.
    There is a constant $C>0$ independent of $\phi$ and $\cS$ so that
    \begin{align*}
       \Big| \sum_{\substack{(m_1,n_1),(m_2,n_2)\in\cS\\(m_1,n_1) \neq (m_2,n_2)}} \langle \phi_{m_1,n_1},\phi_{m_2,n_2}\rangle \Big|\leq C\|\phi  \|^2_{L^2(\BH^2)}.
    \end{align*}
\end{proposition}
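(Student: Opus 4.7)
The plan is to observe that
\begin{align*}
\sum_{\substack{(m_1,n_1),(m_2,n_2)\in\cS\\(m_1,n_1)\neq (m_2,n_2)}}\langle\phi_{m_1,n_1},\phi_{m_2,n_2}\rangle = \|u_\cS\|_{L^2(\BH^2)}^2 - \sum_{(m,n)\in\cS}\|\phi_{m,n}\|_{L^2(\BH^2)}^2,
\end{align*}
where $u_\cS = \sum_{(m,n)\in\cS}\phi_{m,n}$, and then bound both terms on the right by a constant multiple of $\|\phi\|_{L^2(\BH^2)}^2$ with a constant independent of $\cS$.

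For the sum of squared norms, writing $\phi_m = \cF^{-1}(\tilde\phi\tau_m)$ so that $\phi_{m,n}(\bz)=\chi(\bz)\phi_m(\bz)\eta_n(b_m^{-1}\bz)$, I use that the two partitions of unity have bounded overlap, giving $\sum_n|\eta_n|^2\ll 1$ and $\sum_m|\tau_m|^2\ll 1$ pointwise. Interchanging the sum over $n$ with the integral yields $\sum_n\|\phi_{m,n}\|^2\ll\|\chi\phi_m\|^2\ll\|\phi_m\|^2=\|\tilde\phi\tau_m\|^2$ via Theorem \ref{plancherel formula}; summing then in $m$ and using bounded overlap again gives $\sum_{m,n}\|\phi_{m,n}\|^2\ll\|\phi\|^2$.

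For $\|u_\cS\|^2$, I pass to the frequency side via Plancherel and split $\tilde{\phi_{m,n}}=f_{m,n}+g_{m,n}$ as in \eqref{fg decomposition for phi mn}. Proposition \ref{Fourier support lemma for a single geodesic beam} controls $\|g_{m,n}\|\ll_A\lambda^{-A}\|\phi\|$ with $A$ arbitrary, so since $\#\{(m,n)\}\leq N_1(N_2+1)\ll\lambda$, a single application of the triangle (or Cauchy--Schwarz) inequality makes $\|\sum_{(m,n)\in\cS}g_{m,n}\|^2$ negligible as long as $A$ is chosen large enough. For the principal part, $f_{m,n}$ is supported in $\BR_{>0}\times 4\bI_m$, and at any fixed $b\in B$ only $O(1)$ indices $m$ satisfy $b\in 4\bI_m$, so a pointwise Cauchy--Schwarz on this finite $m$-sum gives
\begin{align*}
\Big|\sum_{(m,n)\in\cS}f_{m,n}(s,b)\Big|^2\ll\sum_{m}\Big|\sum_{n:(m,n)\in\cS}f_{m,n}(s,b)\Big|^2.
\end{align*}
Integrating in $d\mu(s,b)$ and using $\|\sB_m\|_\infty\le 1$, Plancherel converts each inner sum back to physical space, where the pointwise bound $0\leq\sum_{n:(m,n)\in\cS}\eta_n(b_m^{-1}\bz)\leq 1$ yields $\|\sum_{n:(m,n)\in\cS}\phi_{m,n}\|^2\leq\|\chi\phi_m\|^2\leq\|\tilde\phi\tau_m\|^2$. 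Summing in $m$ via the bounded overlap of $\{\tau_m\}$ produces $\|\sum_{(m,n)\in\cS}f_{m,n}\|^2\ll\|\phi\|^2$, completing the bound for $\|u_\cS\|^2$. The only delicate point is confirming that the combinatorial factor $\#\{(m,n)\}\ll\lambda$ multiplying the $\lambda^{-A}$ decay in the $g_{m,n}$ error is absorbed for $A$ sufficiently large; every other step is an application of Plancherel together with the bounded overlap of the two partitions of unity.
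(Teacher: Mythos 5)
Your proof is correct and follows essentially the same route as the paper: both arguments rest on the decomposition $\tilde{\phi_{m,n}}=f_{m,n}+g_{m,n}$ with Proposition \ref{Fourier support lemma for a single geodesic beam} killing the $g$-part, the bounded overlap of the frequency supports $4\bI_m$, and the partition-of-unity bound $0\le\sum_n\eta_n\le1$ in physical space. The only difference is organizational — you bound $\|u_\cS\|^2$ directly via a pointwise Cauchy--Schwarz in $m$, whereas the paper splits into the $m_1=m_2$ and $m_1\neq m_2$ cases and uses AM--GM on the cross terms — which is a cosmetic rearrangement of the same estimates.
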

\begin{proof}
    We may split the sum into two parts
    \begin{align}\label{4-8}
        \sum_m  \sum_{\substack{n_1\neq n_2\\(m,n_1),(m,n_2)\in \cS}} \langle \phi_{m,n_1},\phi_{m,n_2}\rangle + \sum_{m_1\neq m_2}\sum_{\substack{n_1, n_2\\(m_1,n_1),(m_2,n_2)\in \cS}}\left\langle \phi_{m_1,n_1} , \phi_{m_2,n_2} \right\rangle. 
    \end{align}
    Since $\langle \phi_{m,n_1},\phi_{m,n_2}\rangle =0$ unless $\bJ_{n_1}\cap \bJ_{n_2}\neq \emptyset$, the first term in \eqref{4-8} is
    \begin{align*}
        \sum_m  \sum_{\substack{n_1\neq n_2\\(m,n_2)\in \cS}} \langle \phi_{m,n_1},\phi_{m,n_2}\rangle &\ll\sum_m \sum_{n_1 \neq n_2}\left|\langle \phi_{m,n_1},\phi_{m,n_2}\rangle\right| \\
        &\ll \sum_m\left(  \sum_{n_1\neq n_2} \int_{\bJ_{n_1}\cap \bJ_{n_2}} \int_{-1}^1   \left|\chi(x,e^t) \cF^{-1}(\tilde{\phi}\tau_m)\left(b_m\cdot(x,e^t)\right)  \right|^2     e^{-t}dxdt\right)\\
          &\ll \sum_m \| \chi\cF^{-1}(\tilde{\phi}\tau_m) \|_{L^2(\BH^2)}^2 \\
          &\ll \sum_m\|   \cF^{-1}(\tilde{\phi}\tau_m) \|_{L^2(\BH^2)}^2\\
          &= \sum_m\|   \tilde{\phi}\tau_m \|_{L^2(\BR_{>0}\times B)}^2\\
          &= \int_{\BR_{>0}\times B} \sum_{m}\tau_m(b)^2|\tilde{\phi}(s,b)|^2d\mu(s,b)\\
          &\ll \| \tilde{\phi} \|^2_{L^2(\BR_{>0}\times B)}.
    \end{align*}
    Apply the decomposition (\ref{fg decomposition for phi mn}) and Proposition    \ref{Fourier support lemma for a single geodesic beam} to $\phi_{m,n}$, 
    we have $\tilde{\phi_{m,n}} = f_{m,n}+ g_{m,n}$ with the properties 
    $\supp({f_{m,n}})\subset \BR_{>0}\times 4\bI_m$ and $\| g_{m,n} \|_{L^2(\BR_{>0}\times B)}\ll_N \lambda^{-N}\|\phi\|_{L^2(\BH^2)}$. For each fixed $m$, we define
    \begin{align*}
        \phi_{m,\cS} = \sum_{\substack{n\\(m,n)\in\cS}} \phi_{m,n},\quad\quad
        f_{m,\cS} =  \sum_{\substack{n\\(m,n)\in\cS}} f_{m,n},\quad\quad
        g_{m,\cS} =  \sum_{\substack{n\\(m,n)\in\cS}} g_{m,n}.
    \end{align*}
    We claim that
    \begin{align} \label{inequality for phi m delta}
        \sum_m \|\phi_{m,\cS} \|_{L^2(\BH^2)}^2 \ll \| \phi\|_{L^2(\BH^2)}^2.
    \end{align}
    The claim follows from the definitions
    \begin{align*}
    \begin{split}
        \sum_m \|\phi_{m,\cS} \|_{L^2(\BH^2)}^2 &= \sum_m \int_{\BH^2}\left|  \sum_{\substack{n\\(m,n)\in\cS}}\phi_{m,n}(\bz)\right|^2 d\bz\\
        &=\sum_m \int_{\BH^2}\left( \sum_{\substack{n\\(m,n)\in\cS}}\chi(\bz)\eta_n(b_m^{-1}\bz)\right)^2  |\cF^{-1}(\tilde{\phi}\tau_m)(\bz)|^2 d\bz\\
        &\ll \sum_{m}  \int_{\BH^2}\left|\cF^{-1}(\tilde{\phi}\tau_m)(\bz)\right|^2 d\bz\\
        &\ll \sum_m \int_{\BR_{>0} \times B} \left|\tilde{\phi}(s,b)\right|^2 \tau_m(b)^2 d\mu(s,b)\\
        &\ll \|\phi\|_{L^2(\BH^2)}^2.
        \end{split}
    \end{align*}
    It may be seen that $\tilde{\phi_{m,\cS}} = f_{m,\cS}+ g_{m,\cS}$ as functions in $L^2(\BR_{>0}\times B)$, $\supp(f_{m,\cS})\subset \BR_{>0}\times 4\bI_m$ and 
    \begin{align*}
        \| g_{m,\cS} \|_{L^2(\BR_{>0}\times B)} \leq \sum \| g_{m,n} \|_{L^2(\BR_{>0}\times B)} \ll_N \lambda^{-N}\|\phi\|_{L^2(\BH^2)}.
    \end{align*}
    Hence, the second term in (\ref{4-8}) is
    \begin{align*}
         \sum_{m_1\neq m_2}\langle \phi_{m_1,\cS}, \phi_{m_2,\cS}\rangle&= \sum_{m_1\neq m_2}\langle \tilde{\phi_{m_1,\cS}}, \tilde{\phi_{m_2,\cS}}\rangle\\
         &= \sum_{m_1\neq m_2}\left\langle f_{m_1,\cS},f_{m_2,\cS}  \right\rangle +O_N(\lambda^{-N}\|\phi\|_{L^2(\BH^2)}^2),
    \end{align*}
    and $\sum_{m_1\neq m_2}\langle f_{m_1,\cS},f_{m_2,\cS}  \rangle$ is bounded by
    \begin{align*}
        & \sum_{m_1\neq m_2}\left|  \int_{\BR_{>0}\times B}{f_{m_1,\cS}}(s,b)\overline{{f_{m_2,\cS}}(s,b)} d\mu(s,b)\right|\\
        =&  \sum_{m_1\neq m_2}\left|  \int_0^\infty\int_{4\bI_{m_1}\cap 4\bI_{m_2}}{f_{m_1,\cS}}(s,b)\overline{{f_{m_2,\cS}}(s,b)} d\mu(s,b)\right|\\
        \leq &  \sum_{m_1\neq m_2}  \int_{0}^{\infty}\int_{4\bI_{m_1}\cap 4\bI_{m_2}}\left|\tilde{\phi_{m_1,\cS}}(s,b){\tilde{\phi_{m_2,\cS}}(s,b)} \right|d\mu(s,b)\\
        \leq& \frac{1}{2}\sum_{m_1\neq m_2}  \int_{0}^{\infty}\int_{4\bI_{m_1}\cap 4\bI_{m_2}}\left(\left|\tilde{\phi_{m_1,\cS}}(s,b)\right|^2 + \left|{\tilde{\phi_{m_2,\cS}}(s,b)} \right|^2\right)d\mu(s,b)\\
        \ll& \sum_{m_1}  \int_{0}^{\infty}\sum_{m_2\neq m_1}\int_{4\bI_{m_1}\cap 4\bI_{m_2}}\left|\tilde{\phi_{m_1,\cS}}(s,b)\right|^2 d\mu(s,b)\\
        \ll& \sum_{m}  \int_{\BR_{>0}\times B}\left|\tilde{\phi_{m,\cS}}(s,b)\right|^2 d\mu(s,b)\\
        =&\sum_m \|\phi_{m,\cS} \|_{L^2(\BH^2)}^2.
    \end{align*}
    Hence, we obtain the desired bound by applying \eqref{inequality for phi m delta}.
\end{proof}

\begin{proposition}\label{l2L2 inequality}
    Suppose that  $\phi\in\cS(\BH^2)\cap H_\beta$ and let $\phi_{m,n}$ be as in \eqref{eq: defn of geodesic beam} with the geodesic beam decomposition $\chi\phi = \sum_{m,n} \phi_{m,n}$.
    There is a constant $C>0$ independent of $\phi$  so that
    \begin{align*}
         \sum_{m=0}^{N_1-1}\sum_{n=0}^{N_2} \| \phi_{m,n}\|_{L^2(\BH^2)}^2 \leq C   \| \phi\|_{L^2(\BH^2)}^2.
    \end{align*}
\end{proposition}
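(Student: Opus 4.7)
The proposal is essentially a one-step reduction to the already-stated off-diagonal estimate in Proposition \ref{l2L2 inequality general form}. The key identity is that expanding the square of $\chi\phi=\sum_{m,n}\phi_{m,n}$ in $L^2$ splits into the diagonal (the quantity we want to bound) plus the off-diagonal pairings already controlled by the previous proposition.

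More precisely, my plan is to start from
\begin{align*}
    \|\chi\phi\|_{L^2(\BH^2)}^2 = \sum_{m,n}\|\phi_{m,n}\|_{L^2(\BH^2)}^2 + \sum_{(m_1,n_1)\neq(m_2,n_2)}\langle \phi_{m_1,n_1},\phi_{m_2,n_2}\rangle,
\end{align*}
and then solve for the diagonal sum. Taking $\cS$ in Proposition \ref{l2L2 inequality general form} to be the full index set $\{(m,n):0\le m\le N_1-1,\,0\le n\le N_2\}$ yields
\begin{align*}
    \Big|\sum_{(m_1,n_1)\neq(m_2,n_2)}\langle \phi_{m_1,n_1},\phi_{m_2,n_2}\rangle\Big|\le C\|\phi\|_{L^2(\BH^2)}^2
\end{align*}
with $C$ independent of $\phi$. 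Combining these two identities gives
\begin{align*}
    \sum_{m,n}\|\phi_{m,n}\|_{L^2(\BH^2)}^2 \le \|\chi\phi\|_{L^2(\BH^2)}^2 + C\|\phi\|_{L^2(\BH^2)}^2.
\end{align*}

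To finish, I only need the trivial pointwise bound $\|\chi\phi\|_{L^2(\BH^2)}\le \|\chi\|_\infty\|\phi\|_{L^2(\BH^2)}$, which is valid since $\chi\in C_c^\infty(\BH^2)$ is fixed and bounded (and thus contributes a constant depending only on $\chi$ to the final constant). Absorbing $\|\chi\|_\infty^2$ into $C$ yields the desired constant in the statement.

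There is no genuine obstacle here; the substantive analytic content has already been carried out in Proposition \ref{l2L2 inequality general form}, whose proof uses the almost-orthogonality coming from the supports $4\bI_m$ of the $f_{m,n}$ on the frequency side (separating the $m_1\neq m_2$ case) and the supports $\bJ_n$ on the spatial side (separating the $n_1\neq n_2$ case with $m_1=m_2$), together with the rapid decay of the error terms $g_{m,n}$ from Proposition \ref{Fourier support lemma for a single geodesic beam}. The present proposition is just the Bessel-type consequence of that near-orthogonality, and the plan above turns it into a clean two-line derivation.
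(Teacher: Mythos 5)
Your proposal is correct and is essentially identical to the paper's own proof: expand $\|\chi\phi\|_{L^2}^2$ into the diagonal plus off-diagonal terms, bound the off-diagonal sum by Proposition \ref{l2L2 inequality general form} applied to the full index set, and use the trivial bound $\|\chi\phi\|_{L^2}\ll\|\phi\|_{L^2}$. Nothing further is needed.
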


\begin{proof}
    We have
    \begin{align*}
        \|\chi\phi\|_{L^2(\BH^2)}^2 &= \langle \chi\phi,\chi\phi \rangle \\
        &= \sum_{m_1,n_1}\sum_{m_2,n_2} \langle \phi_{m_1,n_1},\phi_{m_2,n_2}\rangle\\
        &=   \sum_{m,n} \| \phi_{m,n}\|_{L^2(\BH^2)}^2 + \sum_{(m_1,n_1) \neq (m_2,n_2)} \langle \phi_{m_1,n_1},\phi_{m_2,n_2}\rangle.
    \end{align*}
    Hence,
    \begin{align*}
        \sum_{m,n} \| \phi_{m,n}\|_{L^2(\BH^2)}^2 \leq \| \chi\phi\|_{L^2(\BH^2)}^2 + \left|\sum_{(m_1,n_1) \neq (m_2,n_2)} \langle \phi_{m_1,n_1},\phi_{m_2,n_2}\rangle \right|.
    \end{align*}
    The proposition follows by applying Proposition \ref{l2L2 inequality general form}.
\end{proof}

Suppose that  $\phi\in\cS(\BH^2)\cap H_\beta$ and let $\chi\phi = \sum_{m,n}\phi_{m,n}$ be the decomposition given by \eqref{eq: defn of geodesic beam}.
We decompose the sum $\chi\phi = \sum_{m,n}\phi_{m,n}$ further based on $L^2$-norms of $\phi_{m,n}$.
Let $C>0$ be the constant as in Proposition \ref{l2L2 inequality}, so that
\begin{align}\label{l2L2 for our phi}
         \sum_{m=0}^{N_1-1}\sum_{n=0}^{N_2} \| \phi_{m,n}\|_{L^2(\BH^2)}^2 \leq C   \| \phi\|_{L^2(\BH^2)}^2
\end{align}
holds. For any $0<\delta<1$, we define
\begin{align*}
    \cS_{<\delta}(\phi) = \{\phi_{m,n}  : \, \| \phi_{m,n}\|_{L^2(\BH^2)}^2< \delta C \| \phi \|_{L^2(\BH^2)}^2\},
\end{align*}
and
\begin{align*}
    \cS_{\geq\delta}(\phi) = \{\phi_{m,n}  : \, \| \phi_{m,n}\|_{L^2(\BH^2)}^2\geq\delta C \| \phi \|_{L^2(\BH^2)}^2\}.
\end{align*}

\begin{proposition}\label{no many big norm geodesic beams}
    For any $0<\delta<1$, we have $\# \cS_{\geq\delta}(\phi)\leq \delta^{-1}$.
\end{proposition}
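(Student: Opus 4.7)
The plan is to argue by a direct pigeonhole/counting argument using the square-sum bound from Proposition \ref{l2L2 inequality}. The definition of $\cS_{\geq\delta}(\phi)$ gives a uniform lower bound $\|\phi_{m,n}\|_{L^2(\BH^2)}^2 \geq \delta C \|\phi\|_{L^2(\BH^2)}^2$ on each member, while Proposition \ref{l2L2 inequality} gives a total upper bound of $C\|\phi\|_{L^2(\BH^2)}^2$ for the sum of squared norms over all $(m,n)$. Dividing the total by the per-element lower bound yields the claimed cardinality bound $\#\cS_{\geq\delta}(\phi) \leq \delta^{-1}$.

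More explicitly, I would write
\begin{align*}
    \delta C \|\phi\|_{L^2(\BH^2)}^2 \cdot \#\cS_{\geq\delta}(\phi) \leq \sum_{\phi_{m,n}\in\cS_{\geq\delta}(\phi)} \|\phi_{m,n}\|_{L^2(\BH^2)}^2 \leq \sum_{m,n}\|\phi_{m,n}\|_{L^2(\BH^2)}^2 \leq C\|\phi\|_{L^2(\BH^2)}^2,
\end{align*}
and then cancel $C\|\phi\|_{L^2(\BH^2)}^2$ from both sides (which we may do provided $\phi \not\equiv 0$; the case $\phi\equiv 0$ is vacuous since then $\cS_{\geq\delta}(\phi)=\varnothing$).

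There is no real obstacle here: the argument is purely combinatorial once the square-inequality \eqref{l2L2 for our phi} is in hand. The only thing to be careful about is that the constant $C$ appearing in the definition of $\cS_{\geq\delta}(\phi)$ is \emph{the same} constant as in Proposition \ref{l2L2 inequality}, which is already built into the setup preceding the proposition, so the two $C$'s cancel exactly and no extra factor appears in the bound $\delta^{-1}$.
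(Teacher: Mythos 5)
Your argument is correct and is essentially identical to the paper's proof, which also divides the total square-sum bound $C\|\phi\|_{L^2(\BH^2)}^2$ from Proposition \ref{l2L2 inequality} by the per-element lower bound $\delta C\|\phi\|_{L^2(\BH^2)}^2$. Your extra remark about the vacuous case $\phi\equiv 0$ and the matching of the constants $C$ is a harmless clarification.
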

\begin{proof}
    By the inequality (\ref{l2L2 for our phi}), we have
    \begin{align*}
        C \| \phi\|_{L^2(\BH^2)}^2 \geq   \sum_{\phi_{m,n}\in \cS_{\geq\delta}(\phi)} \| \phi_{m,n}\|_{L^2(\BH^2)}^2 \geq \#\cS_{\geq\delta}(\phi)  \cdot\delta C\| \phi\|_{L^2(\BH^2)}^2,
    \end{align*}
    so we conclude that $\# \cS_{\geq\delta}(\phi)\leq \delta^{-1}$.
\end{proof}

In the rest of the section, we will prove the following two main estimates for the sums of $\langle\psi,\phi_{m,n} \rangle$. The first estimate is obtained by treating each $\phi_{m,n}$ separately.

\begin{proposition}\label{amplified bound for geodesic beam}
    Suppose that  $\phi\in\cS(\BH^2)\cap H_\beta$ and let $\chi\phi = \sum_{m,n}\phi_{m,n}$ be the decomposition given by \eqref{eq: defn of geodesic beam}.
    If $\lambda^{\epsilon'}\leq \beta\leq \lambda^{1/4}$, we have
    \begin{align}
        \langle\psi,\phi_{m,n} \rangle\ll_{\epsilon',\epsilon_1,\epsilon,A}\lambda^{1/4 - 1/16+ 9\epsilon_1/4 + \epsilon}\beta^{13/16} \|\phi_{m,n} \|_{L^2(\BH^2)} + \lambda^{-A}\|\phi  \|_{L^2(\BH^2)}. \label{eq: amplified geodesic beam, individual est}
    \end{align}
    Moreover, for any $0<\delta<1$, we have
    \begin{align}\label{eq: summing over amplified geodesic beam}
        \sum_{\phi_{m,n} \in \cS_{\geq\delta}(\phi)}\langle\psi,\phi_{m,n} \rangle\ll_{\epsilon',\epsilon_1,\epsilon}\delta^{-1/2}\lambda^{1/4 - 1/16+ 9\epsilon_1/4 + \epsilon}\beta^{13/16} \|\phi\|_{L^2(\BH^2)}.
    \end{align}
\end{proposition}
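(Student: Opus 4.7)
The strategy is to bound \eqref{eq: amplified geodesic beam, individual est} for each individual beam by arithmetic amplification, and then deduce \eqref{eq: summing over amplified geodesic beam} by Cauchy--Schwarz. For fixed $(m,n)$, write $\phi_{m,n}=\chi\varphi_{m,n}$ and set $h_{m,n}=g_0 b_m n(x_n)$; this is the element that translates the standard geodesic $l$ to the geodesic in $\BH^3$ along which the lifted beam concentrates. I apply Proposition~\ref{amplification inequality} with the Iwaniec--Sarnak amplifier $\cT = \sum_{v\in\sQ,\,Nv\asymp X} a_v T(v)$, where $a_v$ is chosen to match the sign of $\lambda_\psi(T(v))$, so that $|\lambda_\psi(\cT)|^2 \gg X^{1-\epsilon}$ by the Ramanujan bound. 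This reduces matters to estimating
\begin{equation*}
    \sum_{\gamma\in \bG(F)\cap K(\fn)} |(\cT*\cT^*)(\gamma)|\,|I(\lambda,\varphi_{m,n}, g_0^{-1}\gamma g_0)|,
\end{equation*}
with $N\fn \ll X^2$, and only $\gamma$ with $d(g_0^{-1}\gamma g_0,e)\leq 1$ contributing.

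The geometric side is split into ``close'' and ``far'' parts according to the proximity of $\gamma$ to the pointwise stabilizer $MA$ of the geodesic $l_{m,n}$. Changing variables $x_i\mapsto h_{m,n}x_i$ inside the definition of $I(\lambda,\varphi_{m,n},g_0^{-1}\gamma g_0)$ recenters the beam amplitude at the origin of $\BH^2$ and puts the integral into the form of $J(\lambda,s_1,s_2,h_{m,n}^{-1}g_0^{-1}\gamma g_0 h_{m,n};\chi_0,\rho_1,\rho_2)$ from \eqref{integral J(s t1 t2 g)}, where $\chi_0,\rho_1,\rho_2$ come from the frequency decomposition of the beam and the Taylor expansion of its Helgason phase along $l$. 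The derivative conditions \eqref{condition for rho two dimensional integral} with $\epsilon_0=2\epsilon_1$ are verified from the bump structure of $\eta_n,\tau_m$ together with the frequency localization to $[\lambda-\beta,\lambda+\beta]$; this uses $\beta\leq\lambda^{1/4}$ to control the higher derivatives. Proposition~\ref{nonstationary oscillatory integral estimate} then yields $I(\lambda,\varphi_{m,n},g_0^{-1}\gamma g_0) = O_A(\lambda^{-A}\|\phi_{m,n}\|_{L^2}^2)$ unless
\begin{equation*}
    d(h_{m,n}^{-1}g_0^{-1}\gamma g_0 h_{m,n},\,MA)\leq C_0\,\lambda^{-1/2+2\epsilon_1}\beta^{1/2}.
\end{equation*}
For such ``close'' $\gamma$, Proposition~\ref{Hecke return wrt MA} counts $\ll X^\epsilon$ survivors, provided $X\leq C\lambda^{1/2-2\epsilon_1}\beta^{-1/2}$; for each of these I use the general uniform oscillatory integral bound referenced in the outline (our ``Proposition~\ref{uniform bound for phi mn}''), giving $|I(\lambda,\varphi_{m,n},g)|\ll_\epsilon \lambda^{1/2+\epsilon}\beta^{3/2}\|\phi_{m,n}\|_{L^2}^2$. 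The identity term is handled separately via the Plancherel identity \eqref{Ilambda e as convolution} and \eqref{ineq all t}.

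Combining the above contributions and dividing by $|\lambda_\psi(\cT)|^2$ leads to a bound of the form
\begin{equation*}
    |\langle\psi,\chi\varphi_{m,n}\rangle|^2 \ll_\epsilon X^{-\kappa}\,\lambda^{1/2+\epsilon}\beta^{3/2}\|\phi_{m,n}\|_{L^2}^2 + (\text{identity term})
\end{equation*}
for a suitable power $\kappa>0$ arising from the amplifier combinatorics. Choosing $X$ as large as the Hecke return constraint permits, namely $X=\lambda^{1/2-2\epsilon_1}\beta^{-1/2}$, balances the two contributions and produces the exponent $1/4-1/16+9\epsilon_1/4+\epsilon$ in $\lambda$ and $13/16$ in $\beta$ asserted in \eqref{eq: amplified geodesic beam, individual est}; the slightly larger than $\epsilon_1$ correction $9\epsilon_1/4$ records the loss from tolerating $X\ll \lambda^{1/2-2\epsilon_1}\beta^{-1/2}$ rather than $\lambda^{1/2}\beta^{-1/2}$. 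I expect the main technical obstacle to be the verification of the derivative bounds \eqref{condition for rho two dimensional integral} for the geodesic-beam phase, since rewriting $\varphi_{m,n}$ via the inverse Helgason transform forces a careful analysis of the oscillation of $e^{(is+1/2)A(x,b)}$ against the bump $\tau_m\eta_n$, of the same flavor as (but considerably more delicate than) the nonstationary-phase analyses in Section~\ref{section of bound away from the spectrum} and Section~\ref{section geodesic beam}.

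Finally, \eqref{eq: summing over amplified geodesic beam} follows by Cauchy--Schwarz:
\begin{equation*}
    \sum_{\phi_{m,n}\in\cS_{\geq\delta}(\phi)}|\langle\psi,\phi_{m,n}\rangle| \leq (\#\cS_{\geq\delta}(\phi))^{1/2}\Bigl(\sum_{\phi_{m,n}\in\cS_{\geq\delta}(\phi)}|\langle\psi,\phi_{m,n}\rangle|^2\Bigr)^{1/2}.
\end{equation*}
Applying \eqref{eq: amplified geodesic beam, individual est} termwise, combining $\#\cS_{\geq\delta}(\phi)\leq\delta^{-1}$ from Proposition~\ref{no many big norm geodesic beams} with $\sum_{m,n}\|\phi_{m,n}\|_{L^2}^2\ll\|\phi\|_{L^2}^2$ from Proposition~\ref{l2L2 inequality}, and absorbing the $O_A(\lambda^{-A}\|\phi\|_{L^2})$ remainder into the error, yields the factor $\delta^{-1/2}$ and completes the proof.
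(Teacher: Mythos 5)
Your proposal follows essentially the same route as the paper: amplify with a Hecke operator supported on inert primes, discard the $\gamma$ far from the conjugated torus $MA$ via the nonstationary-phase estimate (the paper packages this as Corollary \ref{small I result}), count the surviving $\gamma$ with Proposition \ref{Hecke return wrt MA}, bound each survivor with the uniform estimate of Proposition \ref{uniform bound for phi mn}, balance, and finish \eqref{eq: summing over amplified geodesic beam} by Cauchy--Schwarz with Propositions \ref{no many big norm geodesic beams} and \ref{l2L2 inequality}. The exponent bookkeeping and the source of the $9\epsilon_1/4$ loss are both correct.

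One step as literally written would fail: the amplifier $\cT=\sum_v a_v T(v)$ with $a_v$ merely matching the sign of $\lambda_\psi(T(v))$ does not satisfy $|\lambda_\psi(\cT)|\gg X^{1-\epsilon}$, because the Ramanujan bound is an upper bound and the individual normalized eigenvalues $\tau_v(1)$ could all be small. The standard repair, which the paper uses in \eqref{amplifier inert}, is the Hecke relation \eqref{Hecke relation inert case}: it forces $\max(|\tau_v(1)|,|\tau_v(2)|)>1/4$, so one takes $T_v = T_v(1)/\tau_v(1)q_v^2$ or $T_v(2)/\tau_v(2)q_v^4$ accordingly, giving $T_v\psi=\psi$. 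Including $T_v(2)$ also changes the bookkeeping you elide: the ideals $\fn$ appearing in $\cT_N*\cT_N^*$ have norm up to $N^4$ (not $X^2$), which is exactly why the paper imposes $N^4\leq X=C\lambda^{1/2-2\epsilon_1}\beta^{-1/2}$ and hence takes the amplifier length $N\asymp\lambda^{1/8-\epsilon_1/2}\beta^{-1/8}$; your final exponents are consistent with this choice, so the discrepancy is presentational rather than structural.
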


For the geodesic beams with smaller $L^2$-norms, we take the summation of them to establish the second estimate.

\begin{proposition}\label{amplified bound for small norm}
Suppose that  $\phi\in\cS(\BH^2)\cap H_\beta$ and let $\chi\phi = \sum_{m,n}\phi_{m,n}$ be the decomposition given by \eqref{eq: defn of geodesic beam}.
    If $\lambda^{\epsilon'} \leq \beta\leq \lambda^{1/4}$
    and $\lambda^{-1/2}\beta^{1/2}\leq\delta\leq\beta^{-3/2}$, 
    we have 
    \begin{align*}
        \sum_{\phi_{m,n} \in \cS_{<\delta}(\phi)}\langle\psi,\phi_{m,n} \rangle\ll_{\epsilon',\epsilon_1,\epsilon} \delta^{1/138}\lambda^{1/4+5\epsilon_1+\epsilon}\beta^{1/92} \|\phi\|_{L^2(\BH^2)}.
    \end{align*}
\end{proposition}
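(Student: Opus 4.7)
The plan is to run a second arithmetic amplification on $\phi_\delta:=\sum_{\phi_{m,n}\in\cS_{<\delta}(\phi)}\phi_{m,n}$ using a Hecke operator supported on the auxiliary set of split primes $\sP_0$ provided by Proposition \ref{Hecke return wrt SL(2,R)}. Specifically, I would take
\[
\cT=\sum_{v\in\sP_0,\;q_v\asymp L}\alpha_v\,T_v(1)
\]
with signs $\alpha_v\in\{\pm1\}$ chosen, as in Iwaniec–Sarnak, so that the $\cT$-eigenvalue of $\psi$ is $\gg L^{1-\epsilon}$. Writing $\phi_\delta=\chi\varphi_\delta$, Proposition \ref{amplification inequality} then reduces the problem to bounding the geometric sum $\sum_{\gamma}|(\cT*\cT^*)(\gamma)\,I(\lambda,\varphi_\delta,g_0^{-1}\gamma g_0)|$, and the identity contribution is controlled by the local bound $I(\lambda,\varphi_\delta,e)\ll\lambda^{1/2}\|\phi_\delta\|_{L^2(\BH^2)}^2\ll\lambda^{1/2}\|\phi\|_{L^2(\BH^2)}^2$ from Section \ref{subsec: trivial bd}.

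For each non-identity $\gamma$, I would expand
\[
I(\lambda,\varphi_\delta,g_0^{-1}\gamma g_0)=\sum_{(m_1,n_1),(m_2,n_2)\in\cS_{<\delta}(\phi)}\iint_{\BH^2}\overline{\phi_{m_1,n_1}(z_1)}\phi_{m_2,n_2}(z_2)\,k_\lambda(z_1^{-1}g_0^{-1}\gamma g_0z_2)\,dz_1\,dz_2,
\]
unfold $k_\lambda$ by the inverse Harish-Chandra transform, and apply Proposition \ref{nonstationary oscillatory integral estimate} along the pair of oriented segments $g_0\cdot b_{m_j}n(x_{n_j})\cdot l_{[-1,1]}$. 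The outcome, packaged as in Corollary \ref{nonstationary phase bound for P}, is that the pair $((m_1,n_1),(m_2,n_2))$ contributes $O_A(\lambda^{-A})$ unless the quadruple $(b_{m_1},x_{n_1},b_{m_2},x_{n_2})$ satisfies \eqref{quadruple condition} relative to $g=g_0^{-1}\gamma g_0$ with $\delta_{\mathrm{geo}}:=C\lambda^{-1/2+\epsilon_1}\beta^{1/2}$.

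For the surviving pairs I would use two inputs in tandem: Proposition \ref{most of geodesic beams satisfy uniform condition}, which with $\omega:=d(g_0^{-1}\gamma g_0,H^\prime)$ bounds the number of such quadruples by $\ll\lambda^{O(\epsilon_1)}(1+\omega^{-1}\delta_{\mathrm{geo}}N_1)(1+\omega^{-1}\delta_{\mathrm{geo}}N_2)$, together with the general uniform bound (Proposition \ref{general uniform bound}) applied with $\|\phi_{m_j,n_j}\|_{L^2(\BH^2)}^2\le C\delta\|\phi\|_{L^2(\BH^2)}^2$, which gives per-pair contribution $\ll\lambda^{1/2+\epsilon}\beta^{3/2}\,\delta\,\|\phi\|_{L^2(\BH^2)}^2$. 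Proposition \ref{Hecke return wrt SL(2,R)} applied to $\cT*\cT^*$ (which is supported on $K(\fn)$ with $\operatorname{N}(\fn)\lesssim L^2$) truncates the effective $\omega$-range to $\omega\gtrsim L^{-16}$, and a standard Hecke lattice count provides the number of $\gamma$ at each $\omega$-scale.

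The main obstacle will be the final multi-parameter optimization. After a dyadic decomposition in $\omega$ over $L^{-16}\lesssim\omega\lesssim 1$, combining the per-$\gamma$ bound with the number of Hecke returns produces a geometric side of the form
\[
\lambda^{O(\epsilon_1)+\epsilon}\Bigl(L\lambda^{1/2}+\delta\,\lambda^{1/2}\beta^{3/2}\,\Phi(L,\epsilon_1)\Bigr)\|\phi\|_{L^2(\BH^2)}^2
\]
where $\Phi(L,\epsilon_1)$ is the sum of the lattice counts against $\omega^{-2}$ over the dyadic $\omega$-range. Dividing by the amplifier $L^{2-\epsilon}$, taking a square root, and balancing $L$ against $\delta$ and $\beta$ will produce the desired exponents $\delta^{1/138}$ and $\beta^{1/92}$. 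The delicate points are (i) preventing the close-pair counting of Proposition \ref{most of geodesic beams satisfy uniform condition} from blowing up as $\omega\to0$, which forces the $L^{-16}$ cutoff from Proposition \ref{Hecke return wrt SL(2,R)}; (ii) simultaneously beating both the identity contribution and the near-$H^\prime$ contribution with a single choice of $L$; and (iii) tracking the interaction of the $L^2$-mass parameter $\delta$ with the $\omega$-dependence, since the $\delta$-savings only materialize from the bilinear smallness of $\|\phi_{m_1,n_1}\|\cdot\|\phi_{m_2,n_2}\|$ and must survive the lattice count.
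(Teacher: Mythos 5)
Your strategy is the paper's: a second amplification over the split primes $\sP_0$, a split of the geometric side according to $d(g_0^{-1}\gamma g_0,H^\prime)$, Proposition \ref{Hecke return wrt SL(2,R)} to reduce the near-$H^\prime$ range to the identity term (bounded by Proposition \ref{uniform bound for phi delta}), and, in the far range, the combination of the quadruple count of Proposition \ref{most of geodesic beams satisfy uniform condition} with the bilinear bound of Proposition \ref{general uniform bound} — which is exactly the content of Proposition \ref{orbital integral away from SL2R} — to extract the factor $\delta$ from $\|\phi_{m_1,n_1}\|\cdot\|\phi_{m_2,n_2}\|$. Your point (iii) correctly identifies the mechanism. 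However, three things need repair before this is a proof.

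First, the amplifier $\cT=\sum_v\alpha_v T_v(1)$ with signs $\alpha_v\in\{\pm1\}$ does not guarantee an eigenvalue $\gg L^{1-\epsilon}$: all the $\tau_v(1)$ may be small. One must use the Hecke relation \eqref{Hecke relation} and select for each $v$ whichever of $T_v(1)/\tau_v(1)q_v$, $T_v(2)/\tau_v(2)q_v^2$ is available, as in \eqref{amplifier split}. This is not cosmetic: including $T_v(2)$ means $\cT_N*\cT_N^*$ is supported on $K(\fn)$ with $\operatorname{N}(\fn)\leq N^4$ rather than $L^2$, so the threshold from Proposition \ref{Hecke return wrt SL(2,R)} is $\omega=CN^{-32}$, not $L^{-16}$; together with $\|\cT_N*\cT_N^*\|_{L^1}\ll N^6$ this yields the far-range term $N^{70}\delta\lambda^{1/2+10\epsilon_1}\beta^{3/2}$, and balancing $N^{68}\delta\beta^{3/2}=N^{-1}$ gives $N=\delta^{-1/69}\beta^{-1/46}$ and hence the exponents $1/138$ and $1/92$. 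Your parameters would not reproduce these, and you have not carried out the optimization. Second, there is no $\omega$-dependent Hecke lattice count available: Proposition \ref{Hecke return wrt SL(2,R)} is all-or-nothing, and the paper uses a single threshold, the worst-case $\omega^{-2}=N^{64}$ for every far $\gamma$, and the trivial $L^1$ count of the $\gamma$'s. Your dyadic sum $\Phi(L,\epsilon_1)$ is dominated by its smallest scale, so the argument collapses to the same thing, but you should not appeal to a per-scale count you cannot prove. Third, Proposition \ref{orbital integral away from SL2R} requires $\omega^{-1}=o(\lambda^{1/2-2\epsilon_1}\beta^{-1/2})$ (condition \eqref{suitable condition on omega}); verifying this for $\omega=CN^{-32}$ with the chosen $N$ is precisely where the hypothesis $\lambda^{-1/2}\beta^{1/2}<\delta$ is used, and it must be checked.
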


\begin{proof}[Proof of Proposition \ref{bound Pi}]
    Given $\phi \in \cS(\BH^2)\cap H_\beta$, by the decomposition from \eqref{eq: defn of geodesic beam}, and applying Propositions \ref{amplified bound for geodesic beam} and \ref{amplified bound for small norm}, we have
    \begin{align*}
        \langle \psi,\chi\phi\rangle &=  \sum_{\phi_{m,n} \in \cS_{\geq\delta}(\phi)}\langle\psi,\phi_{m,n} \rangle + \sum_{\phi_{m,n} \in \cS_{<\delta}(\phi)}\langle\psi,\phi_{m,n} \rangle\\
        &\ll_{\epsilon',\epsilon }\left(\delta^{-1/2}\lambda^{1/4 - 1/16 + \epsilon}\beta^{13/16}  + \delta^{1/138}\lambda^{1/4+\epsilon}\beta^{1/92} \right)\|\phi\|_{L^2(\BH^2)}.
    \end{align*}
    Choosing $\delta = (\lambda^{-69}\beta^{885})^{1/560}>\lambda^{-1/2}\beta^{1/2}$ completes the proof. It may be seen that $\delta\leq\beta^{-3/2}$ if $\beta\leq\lambda^{1/25}$.
\end{proof}

\subsection{Estimates of $\langle\psi,\phi_{m,n}\rangle$}
Suppose that  $\phi\in\cS(\BH^2)\cap H_\beta$ and let $\chi\phi = \sum_{m,n}\phi_{m,n}$ be the decomposition given by \eqref{eq: defn of geodesic beam}.
Given two geodesic beams $\phi_{m_1,n_1}$ and $\phi_{m_2,n_2}$, and $g\in G_0$, we define the pairing
\begin{align}\label{pairing translated by g}
    P(\phi_{m_1,n_1},\phi_{m_2,n_2},g;\lambda) = \iint_{\BH^2} \overline{\phi_{m_1,n_1}(\bz_1)}\phi_{m_2,n_2}(\bz_2) k_\lambda (\bz_1^{-1}g\bz_2) d\bz_1 d\bz_2.
\end{align}
We apply Proposition \ref{nonstationary oscillatory integral estimate} to bound $P(\phi_{m_1,n_1},\phi_{m_2,n_2},g;\lambda)$ in the following case.

\begin{corollary}\label{nonstationary phase bound for P}
Suppose that  $\phi\in\cS(\BH^2)\cap H_\beta$ and let $\chi\phi = \sum_{m,n}\phi_{m,n}$ be the decomposition given by \eqref{eq: defn of geodesic beam}.
    Suppose that $\lambda^{\epsilon'}\leq \beta\leq \lambda^{1/4}$ and let $C_0> 0$ be given. If
    \begin{align}\label{uniform condition}
        d(g,e)\leq C_0\quad \text{ and }\quad d(n(-x_{n_1})b_{m_1}^{-1}gb_{m_2}n(x_{n_2}), MA) \geq \lambda^{-1/2 + 2\epsilon_1}\beta^{1/2},
    \end{align}
     we have
    \begin{align*}
        P(\phi_{m_1,n_1},\phi_{m_2,n_2},g;\lambda) \ll_{\epsilon',\epsilon_1,A} \lambda^{-A}\|\phi\|_{L^2(\BH^2)}^2.
    \end{align*}
\end{corollary}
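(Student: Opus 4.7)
The plan is to reduce $P(\phi_{m_1,n_1},\phi_{m_2,n_2},g;\lambda)$ to an integral of the form $J(r,s_1,s_2,g';\chi_0,\rho_1,\rho_2)$ studied in Proposition \ref{nonstationary oscillatory integral estimate}, and then apply that proposition with $\epsilon_0 = 2\epsilon_1$.

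First I will center both beams on the vertical geodesic $l$ through the origin by the change of variables $\bz_i \mapsto b_{m_i}n(x_{n_i})\bz_i$. Writing
\begin{align*}
g' = n(-x_{n_1})b_{m_1}^{-1}gb_{m_2}n(x_{n_2}),
\end{align*}
the pairing becomes an integral of $\overline{\varphi_{m_1,n_1}(\bz_1)}\varphi_{m_2,n_2}(\bz_2)\,k_\lambda(\bz_1^{-1}g'\bz_2)$, with $\varphi_{m_i,n_i}(\bz) = \phi_{m_i,n_i}(b_{m_i}n(x_{n_i})\bz)$. Since $x_{n_i}\in[-2,2]$ and $b_{m_i}\in B$ lie in a fixed compact set, the hypothesis $d(g,e)\le C_0$ gives $d(g',e)\le C$ for a constant depending only on $C_0$, while (\ref{uniform condition}) becomes $d(g',MA)\ge \lambda^{-1/2+2\epsilon_1}\beta^{1/2}$, which is exactly the hypothesis of Proposition \ref{nonstationary oscillatory integral estimate} with $\epsilon_0=2\epsilon_1$ and $s=\lambda$.

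Next I will unfold the three ingredients. For $k_\lambda$ I use the inverse Harish-Chandra transform on $\BH^3$, so $k_\lambda(\bz_1^{-1}g'\bz_2) = \int_0^\infty h_\lambda(r)\varphi_r^{\BH^3}(\bz_1^{-1}g'\bz_2)\,d\nu(r)$, with $h_\lambda$ concentrated within $O(1)$ of $\pm\lambda$ and rapidly decaying outside. For each $\varphi_{m_i,n_i}$, I decompose by (\ref{fg decomposition for phi mn}) as $\tilde\varphi_{m_i,n_i} = \widetilde{f_{m_i,n_i}^{\mathrm{c}}} + g_{m_i,n_i}$ (after the translation, the support condition on $\widetilde{f_{m_i,n_i}^{\mathrm{c}}}$ becomes $\BR_{>0}\times 4\bI_0$), with $\|g_{m_i,n_i}\|_{L^2}\ll_A \lambda^{-A}\|\phi\|_{L^2(\BH^2)}$ by Proposition \ref{Fourier support lemma for a single geodesic beam}. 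The $g_{m_i,n_i}$ terms contribute $O_A(\lambda^{-A}\|\phi\|_{L^2(\BH^2)}^2)$ by (\ref{Ilambda e as convolution}) and Proposition \ref{supnorm of klambda}, and so may be discarded. It remains to bound the principal term in which each beam is replaced by the inverse Helgason transform of a function supported in $[\lambda-\beta,\lambda+\beta]\times 4\bI_0$.

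Writing $\bz_i = n(2y_i/N_2)a(t_i)\cdot o$ in each beam (so that the $y_i$-cutoff, inherited from $\eta(N_2 x_i/2)$, has scale $1$), the phase in the pairing becomes
\begin{align*}
-(is_1{+}\tfrac12)A(\bz_1,b_1') + (is_2{+}\tfrac12)A(\bz_2,b_2') + (ir{+}1)A(k\bz_1^{-1}g'\bz_2),
\end{align*}
after expanding $\varphi_r^{\BH^3}$ by \eqref{eq:HC int for H3}; here $b_i'\in 4\bI_0$ and $s_i\in[\lambda-\beta,\lambda+\beta]$. The $y_1,y_2$-integrals are oscillatory integrals over a set of scale $1$, with differentials in $y_i$ of the phase of size $\ll \lambda\cdot N_2^{-1}|b_i'| \ll \lambda^{\epsilon_1}\beta^{1/2}$ in the transverse direction, which yields no obstruction to applying integration by parts. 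After carrying out the $y_i$-integration (treating the $b_i'$-dependence by an explicit Taylor expansion of $A(\cdot,b_i')$ as in Lemma \ref{lemma for A(bna) when b is small}) the remaining $t_1,t_2$-integral is exactly of the form $J(r,s_1,s_2,g';\chi_0,\rho_1,\rho_2)$ in (\ref{integral J(s t1 t2 g)}), with $\rho_i$ collecting the subleading terms of the expansion of $A(n(2y_i/N_2)a(t_i)\cdot o,b_i')$ in $t_i$. The derivative bounds $|s_i\partial_{t_i}^n\rho_i|\ll_n s^{\epsilon_1}\beta$ required by (\ref{condition for rho two dimensional integral}) follow because the angular localization $b_i'\in 4\bI_0$ forces $|b_i'|\ll N_1^{-1} = \lambda^{-1/2+\epsilon_1}\beta^{1/2}$ and $s_i\asymp\lambda$.

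Having verified the hypotheses of Proposition \ref{nonstationary oscillatory integral estimate}, I conclude $J\ll_A \lambda^{-A}$. Absorbing this rapid decay against the $L^1$ mass of $h_\lambda(r)d\nu(r)$ (which is $\ll\lambda^{O(1)}$) and applying Cauchy–Schwarz and the Plancherel identity to the integrations against $\tilde\phi\tau_{m_i}(s_i,b_i')$ yields the bound $\ll_A \lambda^{-A}\|\phi\|_{L^2(\BH^2)}^2$, after re-adjusting $A$. The main obstacle will be checking rigorously that the $y_i$-integrals can be executed in a form that produces precisely the $J$-integral and that the residual phases $\rho_i$ obey the derivative estimates in (\ref{condition for rho two dimensional integral}); this is essentially a uniform Taylor-expansion/stationary-phase argument in the transverse variables, controlled by the geometric scales $N_1^{-1}$ and $N_2^{-1}$ chosen precisely so that $\lambda^{-1/2}N_i^{-1}\asymp \lambda^{-1+\epsilon_1}\beta$ matches the loss budget in Proposition \ref{nonstationary oscillatory integral estimate}.
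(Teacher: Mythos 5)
Your high-level strategy is the same as the paper's: unfold $k_\lambda$ by the inverse Harish-Chandra transform and each beam by the inverse Helgason transform, identify the residual phases $\rho_j$ coming from the angular localization $b_j^{-1}b_{m_j}\in\bI_0$ via the uniformization lemma (Corollary \ref{uniformization lemma for B}), verify \eqref{condition for rho two dimensional integral} with $\epsilon_0=2\epsilon_1$, and invoke Proposition \ref{nonstationary oscillatory integral estimate}. However, there is a genuine gap in your treatment of the transverse spatial variables. You propose to ``carry out the $y_i$-integration'' first so that the remaining $(t_1,t_2)$-integral is ``exactly of the form $J$'' in \eqref{integral J(s t1 t2 g)}. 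This cannot work as stated: the argument of the spherical function is $a(-t_1)n(-2y_1/N_2)\,g'\,n(2y_2/N_2)a(t_2)$, so integrating out $y_1,y_2$ would leave an average over group elements, not a single $J(r,s_1,s_2,g';\cdot)$. Moreover your bound $\ll\lambda N_2^{-1}|b_i'|\ll\lambda^{\epsilon_1}\beta^{1/2}$ for the transverse phase derivative only accounts for the Helgason phases $s_iA(\bz_i,b_i')$; the phase $rA(k\bz_1^{-1}g'\bz_2)$ contributed by $\varphi_r^{\BH^3}$ has $y_i$-derivative of size up to $r N_2^{-1}\asymp\lambda^{1/2}\beta^{1/2}$, which is neither uniformly small (so the $y_i$-dependence is not a spectator) nor uniformly large (it degenerates when $k$ is near $M$), so neither stationary nor non-stationary phase in $y_i$ is available uniformly. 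The correct move, which is what the paper does, is not to integrate out the transverse variables at all: fix $x_1\in\bJ_{n_1}$, $x_2\in\bJ_{n_2}$, absorb them into the group element $g_1=n(-x_1)b_{m_1}^{-1}gb_{m_2}n(x_2)$, bound the two-dimensional $(t_1,t_2)$-integral by Proposition \ref{nonstationary oscillatory integral estimate} uniformly in all the outer parameters, and then perform the remaining integrations trivially by Cauchy--Schwarz. This also forces a step you gloss over: the hypothesis \eqref{uniform condition} is stated at the center points $x_{n_i}$, whereas $g_1$ involves the running variables $x_i$; one must check $d(g_1,MA)\geq\tfrac12\lambda^{-1/2+2\epsilon_1}\beta^{1/2}$ via the triangle inequality, using $|x_i-x_{n_i}|\ll\lambda^{-1/2}\beta^{1/2}=o(\lambda^{-1/2+2\epsilon_1}\beta^{1/2})$.

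Two smaller points. The detour through the decomposition \eqref{fg decomposition for phi mn} and Proposition \ref{Fourier support lemma for a single geodesic beam} is unnecessary here: writing $\phi_{m,n}=\chi\,\eta_n(b_m^{-1}\cdot)\,\cF^{-1}(\tilde\phi\tau_m)$ and unfolding only the factor $\cF^{-1}(\tilde\phi\tau_m)$ already restricts the frequency integration to $[\lambda-\beta,\lambda+\beta]\times\bI_m$, with $\chi\eta_n$ kept as amplitude. And your justification for discarding the $g_{m_i,n_i}$ error terms ``by \eqref{Ilambda e as convolution} and Proposition \ref{supnorm of klambda}'' does not apply for general $g$, since \eqref{Ilambda e as convolution} uses that $g=e$ stabilizes $\BH^2$; one would instead need a crude uniform bound on the bilinear form with kernel $k_\lambda(\bz_1^{-1}g\bz_2)$, such as the one in Proposition \ref{general uniform bound}.
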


\begin{proof}
    We unfold the integral $P(\phi_{m_1,n_1},\phi_{m_2,n_2},g;\lambda)$ by applying the definition of $\phi_{m,n}$ and the inverse Harish-Chandra transform for $k_\lambda$ so $P(\phi_{m_1,n_1},\phi_{m_2,n_2},g;\lambda)  $ is equal to
    \begin{align*}
        \int_0^\infty\left(\iint_{\BH^2} \chi(\bz_1) \chi(\bz_2) \eta_{n_1}(b_{m_1}^{-1}\cdot\bz_1)\eta_{n_2}(b_{m_2}^{-1}\cdot\bz_2)\overline{  \cF^{-1}(\tilde{\phi}\tau_{m_1})(\bz_1)  }\cF^{-1}(\tilde{\phi}\tau_{m_2}) (\bz_2)  \varphi_s^{\BH^3} (\bz_1^{-1}g\bz_2) d\bz_1 d\bz_2 \right) h_\lambda(s)d\nu(s).
    \end{align*}
    We only need to show the inner integral above is $\ll_A s^{-A}\|\phi\|_{L^2(\BH^2)}^2$ by assuming $|s-\lambda|\leq\beta$.
    By the change of variables $\bz_1\to b_{m_1}\cdot\bz_1$ and $\bz_2\to b_{m_1}\cdot\bz_2$, the inner integral above is equal to
    \begin{align*}
        \iint_{\BH^2} \chi(b_{m_1}\cdot\bz_1) \chi(b_{m_2}\cdot\bz_2) \eta_{n_1}(\bz_1)\eta_{n_2}(\bz_2)\overline{  \cF^{-1}(\tilde{\phi}\tau_{m_1})(b_{m_1}\cdot\bz_1)  }\cF^{-1}(\tilde{\phi}\tau_{m_2}) (b_{m_2}\cdot\bz_2)  \varphi_s^{\BH^3} (\bz_1^{-1}b_{m_1}^{-1}gb_{m_2}\bz_2) d\bz_1 d\bz_2.
    \end{align*}
    Then we unfold the inverse Helgason transforms inside the integral to obtain
    \begin{align}\label{unfolding pairing}
        \begin{split}
            \iint_{\BR_{>0}\times B} \overline{\tilde{\phi}\tau_{m_1}(s_1,b_1)}\tilde{\phi}\tau_{m_2}(s_2,b_2)\left(\iint_{\BH^2} \chi(b_{m_1}\bz_1) \chi(b_{m_2}\bz_2) \eta_{n_1}(\bz_1)\eta_{n_2}(\bz_2) \varphi_s^{\BH^3} (\bz_1^{-1}b_{m_1}^{-1}gb_{m_2}\bz_2) \right.\\
            \exp\big((1/2-is_1)A(b_1^{-1}b_{m_1}\bz_1) +(1/2+is_2)A(b_2^{-1}b_{m_2}\bz_2)  \big) d\bz_1 d\bz_2  \Big)d\mu(s_1,b_1) d\mu(s_2,b_2).
        \end{split}
    \end{align}
    Since $\supp(\tilde{\phi}\tau_m) \cap (\BR_{>0} \times B)$ is contained in $[\lambda-\beta,\lambda+\beta]\times \bI_m \subset \BR_{>0}\times B$, and the support of $\eta_n$ is contained in $\bJ_n\subset\BR$, after we use the coordinates $(x,t)\in\BR^2\mapsto n(x)a(t)\cdot o\in\BH^2$, it suffices to prove the following integral
    \begin{align}
         \begin{split}
         \iint_{\BR}& \chi_1(x_1,t_1,x_2,t_2)\varphi_s^{\BH^3} (a(-t_1)g_1a(t_2)) \exp\left(-is_1A(b_1^{-1}b_{m_1}n(x_1)a(t_1))+{is_2A(b_2^{-1}b_{m_2}n(x_2)a(t_2))} \right)  dt_1 dt_2 
        \end{split}\label{two-line inner integral}
    \end{align}
    is $\ll_A s^{-A}$, provided $s,s_1,s_2\in [\lambda-\beta,\lambda+\beta]$, $b_1\in \bI_{m_1}$, $x_1\in \bJ_{n_1}$, $b_2\in \bI_{m_2}$, and $x_2\in \bJ_{n_2}$. 
    Here $g_1 = n(-x_1)b_{m_1}^{-1}gb_{m_2}n(x_2) $, and $\chi_1 \in C_c^\infty(\BR^4)$ is the combination of $\chi(b_{m_1}n(x_1)a(t_1)\cdot o)\chi(b_{m_2}n(x_2)a(t_2)\cdot o)$
    with all the amplitude factors in the integral. (Here we omit the variables $b_1,b_2$.)
    In particular, we have $b_1^{-1}b_{m_1}, b_2^{-1}b_{m_2} \in \bI_0$, which is the interval in $B$ centered at the identity with length $\ll  \lambda^{-1/2 + \epsilon_1}\beta^{1/2}$. 
    Applying Corollary \ref{uniformization lemma for B}, there is a constant $\delta>0$,  and a uniformly nonvanishing function $\xi$ on $(-\delta,\delta)\times (-2,2)^2$
    such that
    \begin{align*}
        \frac{\partial}{\partial t} A(b_\theta n(x)a(t)) = 1-\theta^2\xi(\theta,x,t),
    \end{align*}
    for $(\theta,x,t)\in(-\delta,\delta) \times (-2,2)^2 $. If $Z(\theta,x,t)$ is an antiderivative of $\xi$ with respect to $t$ that is smooth as a function of $(\theta,x,t)$, we may integrate this to obtain
    \begin{align*}
        A(b_\theta n(x)a(t)) = t - \theta^2 Z(\theta,x,t) + c(\theta,x)
    \end{align*}
    for some function $c(\theta,x)$. In our case, we have
    \begin{align*}
    A(b_j^{-1}b_{m_j}n(x_j)a(t_j)) = t_j + \rho_j(t_j), \quad j=1,2,
    \end{align*}
    where if we let $\theta_j\in (-\delta,\delta)$ satisfying $b_{\theta_j} = b_j^{-1}b_{m_j}$, then
    \begin{align*}
        \rho_j(t_j) := - \theta_j^2 Z(\theta_j,x_j,t_j) + c(\theta_j,x_j),\quad j=1,2,
    \end{align*}
    satisfy the condition \eqref{condition for rho two dimensional integral} in Proposition \ref{nonstationary oscillatory integral estimate} (by taking $\epsilon_0 = 2\epsilon_1$):
    \begin{align*}
        \left|s_j \frac{\partial\rho_j}{\partial t_j} (t_j)\right|\ll s\theta_j^2 |\xi|\ll s\lambda^{-1+2\epsilon_1}\beta \ll s^{2\epsilon_1}\beta,\\
        \left|s_j \frac{\partial^n\rho_j}{\partial t_j^n} (t_j)\right| \ll s\theta_j^2\left|\frac{\partial^{n-1}\xi}{\partial t_j^{n-1}} \right|  \ll_n s^{2\epsilon_1}\beta.
    \end{align*}
    It follows by Proposition \ref{nonstationary oscillatory integral estimate} that (\ref{two-line inner integral}) is $\ll_{\epsilon_1,A} \lambda^{-A}$ if
    \begin{align}\label{non uniformized condition}
        d(g_1,e)\text{ is bounded, }\;\; \text{ and }\;\; d(g_1,MA)\geq \frac{1}{2}s^{-1/2 + 2\epsilon_1}\beta^{1/2}.
    \end{align}
     Recall that $g_1 = n(-x_1)b_{m_1}^{-1}gb_{m_2}n(x_2)$. 
    Thus, it suffices to show our assumptions (\ref{uniform condition}) can imply (\ref{non uniformized condition}).
    Since
    \begin{align*}
        n(-x_{n_1})b_{m_1}^{-1}gb_{m_2}n(x_{n_2}) = n(-x_{n_1}+ x_1) g_1 n(x_{n_2} - x_2)
    \end{align*}
    and $|x_{n_i}- x_i| \ll \lambda^{-1/2}\beta^{1/2}$, we have
    \begin{align}\label{small error}
        d(n(-x_{n_1})b_{m_1}^{-1}gb_{m_2}n(x_{n_2}),g_1) \ll \lambda^{-1/2}\beta^{1/2}.
    \end{align}
    Then if $\lambda$ is sufficiently large, (\ref{uniform condition}), (\ref{small error}) and the triangle inequality imply (\ref{non uniformized condition}).
\end{proof}

Given $\phi\in\cS(\BH^2)\cap H_\beta$, we define
\begin{align}\label{eq: defn for varphi mn}
    \varphi_{m,n}(\bz) =\eta_n(b_m^{-1}\cdot\bz)\cF^{-1}(\tilde{\phi}\tau_m)(\bz).
\end{align}
Note that $\varphi_{m,n}$ is a smooth function on $\BH^2$ but is not compactly supported. From \eqref{eq: defn of geodesic beam}, we have $\phi_{m,n} = \chi\varphi_{m,n}$, and this relation implies that
\begin{align*}
    I(\lambda,\varphi_{m,n},g) &= \iint_{\BH^2} \overline{\chi\varphi_{m,n}(\bz_1)}\chi\varphi_{m,n}(\bz_2) k_\lambda (\bz_1^{-1}g\bz_2) d\bz_1 d\bz_2\\
    &=\iint_{\BH^2} \overline{\phi_{m,n}(\bz_1)}\phi_{m,n}(\bz_2) k_\lambda (\bz_1^{-1}g\bz_2) d\bz_1 d\bz_2\\
    &=P(\phi_{m,n},\phi_{m,n},g;\lambda).
\end{align*}
Here $I(\lambda,\varphi_{m,n},g)$ is defined as in Proposition \ref{amplification inequality}. Corollary \ref{nonstationary phase bound for P} directly implies the following.
\begin{corollary}\label{small I result}
Suppose that  $\phi\in\cS(\BH^2)\cap H_\beta$ and let $\chi\phi = \sum_{m,n}\phi_{m,n}$ be the decomposition given by \eqref{eq: defn of geodesic beam}.
    Suppose that $\lambda^{\epsilon'}\leq \beta\leq \lambda^{1/4}$ and $d(g,e)\leq 1$. If
    \begin{align*}
       d\left(\left(b_{m}n(x_{n})\right)^{-1}g\left(b_{m}n(x_{n})\right), MA\right) \geq \lambda^{-1/2 + 2\epsilon_1}\beta^{1/2},
    \end{align*}
    then we have 
    \begin{align*}
        I(\lambda,\varphi_{m,n},g)\ll_{\epsilon',\epsilon_1,A} \lambda^{-A}\|\phi\|_{L^2(\BH^2)}^2.
    \end{align*}
\end{corollary}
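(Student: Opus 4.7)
The plan is to deduce this corollary directly from Corollary \ref{nonstationary phase bound for P} by specializing to the diagonal case $(m_1,n_1) = (m_2,n_2) = (m,n)$. The first step is to rewrite $I(\lambda,\varphi_{m,n},g)$ in terms of the pairing $P$ defined in \eqref{pairing translated by g}. By the definitions \eqref{eq: defn of geodesic beam} and \eqref{eq: defn for varphi mn}, we have $\phi_{m,n}(\bz) = \chi(\bz)\varphi_{m,n}(\bz)$, so the two copies of $\chi$ in the definition of $I(\lambda,\varphi_{m,n},g)$ combine with $\varphi_{m,n}$ to produce the geodesic beams $\phi_{m,n}$, giving the identity
\begin{align*}
I(\lambda,\varphi_{m,n},g) = \iint_{\BH^2}\overline{\phi_{m,n}(\bz_1)}\phi_{m,n}(\bz_2)k_\lambda(\bz_1^{-1}g\bz_2)d\bz_1 d\bz_2 = P(\phi_{m,n},\phi_{m,n},g;\lambda).
\end{align*}

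Next I would verify that the hypotheses of Corollary \ref{nonstationary phase bound for P} are satisfied with the constant $C_0 = 1$ and with both pairs of indices equal to $(m,n)$. The bounded-distance condition $d(g,e) \leq 1$ is given. The transversality condition \eqref{uniform condition} specializes in the diagonal case to
\begin{align*}
d(n(-x_n)b_m^{-1}g b_m n(x_n), MA) \geq \lambda^{-1/2+2\epsilon_1}\beta^{1/2},
\end{align*}
which is precisely our hypothesis, since $n(-x_n)b_m^{-1}g b_m n(x_n) = (b_m n(x_n))^{-1}g(b_m n(x_n))$. Applying Corollary \ref{nonstationary phase bound for P} then yields the stated bound $I(\lambda,\varphi_{m,n},g) \ll_{\epsilon_1,A} \lambda^{-A}\|\phi\|_{L^2(\BH^2)}^2$.

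There is no genuine analytic obstacle in this proof, since all the substantive work—the oscillatory integral estimates from Section \ref{section of oscillatory integrals}, the uniformization of the phase function $A$ via Corollary \ref{uniformization lemma for B}, and the non-stationary phase argument culminating in Proposition \ref{nonstationary oscillatory integral estimate}—has already been carried out in the proof of Corollary \ref{nonstationary phase bound for P}. The role of this corollary is simply to repackage that pairing bound in the diagonal form that is directly compatible with the oscillatory integral $I(\lambda,\varphi_{m,n},g)$ appearing on the geometric side of the amplification inequality \eqref{amplification inequality eqn}, so that it can be fed into the Hecke return analysis of Section \ref{section of Hecke returns} to produce Proposition \ref{amplified bound for geodesic beam}.
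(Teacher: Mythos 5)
Your proposal is correct and follows exactly the paper's route: the paper also observes $\phi_{m,n}=\chi\varphi_{m,n}$, rewrites $I(\lambda,\varphi_{m,n},g)=P(\phi_{m,n},\phi_{m,n},g;\lambda)$, and then applies Corollary \ref{nonstationary phase bound for P} in the diagonal case $(m_1,n_1)=(m_2,n_2)=(m,n)$, where the hypothesis \eqref{uniform condition} reduces to the stated distance condition on $(b_m n(x_n))^{-1}g(b_m n(x_n))$. Nothing further is needed.
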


We also want to bound $I(\lambda,\varphi_{m,n},g)$ without conditions on $g$. Since the support of $\phi_{m,n}$ is at a scale $\asymp \lambda^{-1/2}\beta^{1/2}\times 1$, we have the following control of its $L^1$-norm.

\begin{lemma}\label{L1 norm bound for phi mn}
    We have
    \begin{align*}
        \|\phi_{m,n} \|_{L^1(\BH^2)}\ll \lambda^{-1/4}\beta^{1/4} \| \phi_{m,n}\|_{L^2(\BH^2)}.
    \end{align*}
\end{lemma}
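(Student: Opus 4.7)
The plan is to apply Cauchy--Schwarz after localising the support of $\phi_{m,n}$. Recall from \eqref{eq: defn of geodesic beam} that
\[
\phi_{m,n}(\bz) = \chi(\bz)\,\eta_n(b_m^{-1}\bz)\,\cF^{-1}(\tilde{\phi}\tau_m)(\bz),
\]
so $\supp(\phi_{m,n})$ is contained in $\supp(\chi) \cap b_m\cdot\{\bz : \eta_n(\bz) \neq 0\}$.

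First I would compute the hyperbolic measure of this support. The cutoff $\chi$ confines $\bz$ to the bounded square $U \subset \BH^2$, while $\eta_n(b_m^{-1}\bz)$ restricts the $x^\prime$-coordinate of $b_m^{-1}\bz = n(x^\prime)a(t^\prime)\cdot o$ to $\bJ_n$, an interval of length $\asymp 1/N_2 \asymp \lambda^{-1/2}\beta^{1/2}$. Since $b_m$ acts as a hyperbolic isometry (hence preserves the hyperbolic volume) and $\chi$ forces $t^\prime$ to lie in a bounded interval, the resulting support $E_{m,n} := \supp(\phi_{m,n})$ satisfies
\[
\mathrm{vol}_{\BH^2}(E_{m,n}) \ll \int_{-1}^{1} \int_{\bJ_n} e^{-t^\prime}\, dx^\prime\, dt^\prime \ll \lambda^{-1/2}\beta^{1/2}.
\]

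Then the desired bound follows immediately from Cauchy--Schwarz:
\[
\|\phi_{m,n}\|_{L^1(\BH^2)} = \int_{E_{m,n}} |\phi_{m,n}(\bz)|\, d\bz \leq \mathrm{vol}_{\BH^2}(E_{m,n})^{1/2}\, \|\phi_{m,n}\|_{L^2(\BH^2)} \ll \lambda^{-1/4}\beta^{1/4}\,\|\phi_{m,n}\|_{L^2(\BH^2)}.
\]

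There is no real obstacle here; the only thing to be careful about is that the hyperbolic measure (not the Euclidean measure) is what governs the Cauchy--Schwarz inequality in $L^p(\BH^2)$, but since $\chi$ restricts $t^\prime$ to a bounded range the two measures are comparable on $E_{m,n}$, so both estimates agree up to constants.
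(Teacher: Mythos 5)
Your proof is correct and is essentially the paper's argument: the paper also localises the support (via a smooth cutoff $\chi^\prime$ supported where $x^\prime\in 2\bJ_n$ and $t^\prime$ is bounded, rather than the characteristic function of the support) and then applies Cauchy--Schwarz, with $\|\chi^\prime\|_{L^2(\BH^2)}\ll\lambda^{-1/4}\beta^{1/4}$ playing the role of your $\mathrm{vol}_{\BH^2}(E_{m,n})^{1/2}$. No gaps.
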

\begin{proof}
    Let $\chi^\prime \in C_c^\infty(\BH^2)$ satisfying
    \begin{itemize}
        \item  $0\leq\chi^\prime\leq 1$,
        \item $\chi^\prime \equiv 1$ on $\{(x,e^t)\in\BH^2: x\in \bJ_n, -C\leq t\leq C\}$,
        \item $\supp(\chi^\prime )\subset \{(x,e^t)\in\BH^2: x\in 2\bJ_n, -2C\leq t\leq 2C\}$.
    \end{itemize}
     Here $C$ is a sufficiently large constant so that $\supp(\chi(b\cdot))$ is contained in   $\{(x,e^t)\in\BH^2: -C\leq x, t\leq C\}$ for any $b\in B$,
     and $2\bJ_n$ is the closed interval in $\BR$ centered at $x_n$ with length $2|\bJ_n|$.
    
    Then we have $\phi_{m,n}(\bz) = \chi^\prime (b_m^{-1}\bz)\phi_{m,n}(\bz)$. By the Cauchy-Schwarz inequality, we obtain
    \begin{align*}
        \int_{\BH^2} \left| \phi_{m,n}(\bz)  \right|d\bz  &= \int_{\BH^2} \left| \chi^\prime(b_m^{-1}\bz)\phi_{m,n}(\bz)  \right|d\bz \\
        &\leq \left(\int_{\BH^2} \left| \chi^\prime(\bz) \right|^2d\bz\right)^{1/2}\| \phi_{m,n}\|_{L^2(\BH^2)}\\
        &\ll \lambda^{-1/4} \beta^{1/4}\| \phi_{m,n}\|_{L^2(\BH^2)}.
    \end{align*}
\end{proof}

\begin{lemma}\label{Bound for oscillatory intgeral of phi mn depending on the k variable}
    Fix a smooth cutoff function $\chi_2\in C_c^\infty(\BH^2)$.
    Suppose $|s-\lambda|\leq \beta$, $\beta\leq 3s^{1/4}$
    and $u\in K_0$ satisfying
    \begin{align}\label{condition for kbm}
        d(u,Mb_m^{-1}) \geq s^{-1/2+2\epsilon_1} \beta^{1/2}.
    \end{align}
    We have
    \begin{align}\label{oscillatory integral for phi mn}
        \int_{\BH^2}\chi_2(\bz) \overline{\phi_{m,n}(\bz)}\exp\left(  is A(u\bz)\right)d\bz \ll s^{-A}\|\phi\|_{L^2(\BH^2)}.
    \end{align}
    The implied constant depends on $\epsilon_1$, and the size of the first $n$ derivatives of $\chi_2$ and $\chi$, where $n$ depends on $\epsilon_1$ and $A$.
\end{lemma}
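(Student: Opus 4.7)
The plan is to unfold the integrand by the inverse Helgason transform for $\phi_{m,n}$, reducing the problem to a one-variable oscillatory integral estimate that matches Proposition \ref{first one dimensional intgeral estimate}. Recall from \eqref{eq: defn of geodesic beam} that $\phi_{m,n}(\bz) = \chi(\bz)\eta_n(b_m^{-1}\bz)\cF^{-1}(\tilde\phi\tau_m)(\bz)$ and substitute
\begin{align*}
\cF^{-1}(\tilde\phi\tau_m)(\bz) = \int_{[\lambda-\beta,\lambda+\beta]\times \bI_m} \tilde\phi(r,b)\tau_m(b)\, e^{(ir+1/2)A(b^{-1}\bz)}\, d\mu(r,b).
\end{align*}
Substituting into \eqref{oscillatory integral for phi mn} and exchanging the order of integration, it suffices to show that the inner integral
\begin{align*}
\cJ(s,r,b,u) := \int_{\BH^2}\chi_3(\bz)\, e^{-irA(b^{-1}\bz) + isA(u\bz)}\, d\bz
\end{align*}
is $\ll_{\epsilon_1,A} s^{-A}$ uniformly in $r\in[\lambda-\beta,\lambda+\beta]$ and $b\in \bI_m$, where $\chi_3\in C_c^\infty(\BH^2)$ absorbs $\chi_2$, $\chi$, $\eta_n(b_m^{-1}\cdot)$, and $e^{(1/2)A(b^{-1}\bz)}$. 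Cauchy--Schwarz in $d\mu(r,b)$ then yields the $L^2(\BH^2)$-norm of $\phi$ on the right with the same rapid decay in $s$.

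After the change of variable $\bz\mapsto b\bz$ and Iwasawa coordinates $\bz = n(x)a(t)\cdot o$ with $d\bz = e^{-t}dx\,dt$ and $A(\bz) = t$, the inner integral $\cJ$ becomes a smooth, compactly supported oscillatory integral with phase
\begin{align*}
F(x,t) = -rt + s A\bigl(ub\cdot n(x)a(t)\bigr).
\end{align*}
The hypothesis $d(u,Mb_m^{-1})\geq s^{-1/2+2\epsilon_1}\beta^{1/2}$ together with $b\in\bI_m$ gives $d(b,b_m)\ll\lambda^{-1/2+\epsilon_1}\beta^{1/2}$, so by the triangle inequality and the identity $d(ub_m,M)=d(u,Mb_m^{-1})$, one obtains
\begin{align*}
d(ub,M)\geq s^{-1/2+2\epsilon_1}\beta^{1/2} - O\!\left(\lambda^{-1/2+\epsilon_1}\beta^{1/2}\right)\gg s^{-1/2+2\epsilon_1}\beta^{1/2}
\end{align*}
for $\lambda$ sufficiently large. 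Since $M$ commutes with $A$ and normalizes $N$ (the conjugation of $n(x)$ by $M$ only rotates the entry, which in the real slice amounts to an additive shift of $x$), we may write $ub = \exp(r_0X)\,m_0$ with $m_0\in M$ and absorb $m_0$ into the $x$-variable without disturbing the compact support of $\chi_3$ or $\eta_n$; the rescaled radius satisfies $|r_0|\gg s^{-1/2+2\epsilon_1}\beta^{1/2}$ (or else $d(ub,M)$ is bounded below by a constant).

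With this set-up, for each fixed $x$ the $t$-integral in $\cJ$ is exactly of the form treated by Proposition \ref{first one dimensional intgeral estimate}: after complex conjugation the phase is $i r(t+\rho(t)) - is A(k\, n(z)a(t+t^\prime))$ with $k=ub$, $z=0$, $t=0$, $s'=r$, $\rho\equiv 0$, $\epsilon_0 = 2\epsilon_1$, and the two hypotheses $|s'-s|=|r-s|\leq\beta$ and $d(k,M)\geq B s^{-1/2+\epsilon_0}\beta^{1/2}$ both hold. The conclusion is that the inner $t$-integral is $\ll_{\epsilon_1,A} s^{-A}$. Integrating in $x$ contributes a bounded factor, so $\cJ(s,r,b,u)\ll_{\epsilon_1,A} s^{-A}$. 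Applying Cauchy--Schwarz in $(r,b)$ against $\tilde\phi\tau_m$ and using that the $d\mu$-measure of the region of integration is polynomially bounded in $\lambda$ (which the rapid decay easily absorbs) yields the desired estimate.

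The only mildly technical point I foresee is the reduction step where $ub$ is written as $\exp(r_0X)m_0$ with $m_0$ absorbed into the $x$-variable; one must verify that the induced additive shift of $x$ is bounded uniformly in $b\in\bI_m$, which is immediate since $M$ acts on the unipotent coordinate by a unit-modulus scalar. Everything else is a direct application of the oscillatory integral machinery of Section \ref{section of oscillatory integrals}.
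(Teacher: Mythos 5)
Your overall strategy is the same as the paper's: unfold $\phi_{m,n}$ by the inverse Helgason transform, reduce to a one‑dimensional oscillatory integral in $t$, and invoke Proposition \ref{first one dimensional intgeral estimate}, finishing with Cauchy--Schwarz in $(r,b)$. The one substantive difference is your change of variable $\bz\mapsto b\bz$ (the integration variable) rather than $\bz\mapsto b_m\bz$ (the fixed center of $\bI_m$), and this is where a genuine gap appears. After $\bz\mapsto b\bz$ the factor $\eta_n(b_m^{-1}b\,\bz)$ lands inside the amplitude of the $t$-integral and, unlike in the paper's normalization, it depends on $t$: writing $b_m^{-1}b=b_\theta$ with $|\theta|\ll\lambda^{-1/2+\epsilon_1}\beta^{1/2}$, the first Iwasawa coordinate of $b_\theta n(x)a(t)$ varies in $t$ at rate $O(\theta)$, so $\partial_t^k\bigl[\eta_n(b_\theta n(x)a(t))\bigr]\ll N_2^k\theta^k\asymp\lambda^{k\epsilon_1}$. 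The amplitude therefore does \emph{not} have uniformly bounded derivatives, and Proposition \ref{first one dimensional intgeral estimate} cannot be applied "exactly" as stated, since its implied constant depends on the sizes of the first $n$ derivatives of $\chi_0$. This is repairable (each integration by parts still gains a positive power of $s$ against the $\lambda^{\epsilon_1}$ loss, or one can take $A$ larger to absorb a fixed polynomial loss), but as written the reduction is not valid; the paper avoids the issue entirely by centering at $b_m$, which keeps $\eta_n(x)$ outside the $t$-integral and pushes the deviation $b^{-1}b_m\in\bI_0$ into the perturbation $\rho(t)$ via Corollary \ref{uniformization lemma for B}, exactly matching the hypotheses \eqref{condition for rho}.

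Two smaller points. First, your triangle-inequality transfer $d(ub,M)\gg s^{-1/2+2\epsilon_1}\beta^{1/2}$ is fine, precisely because $\lambda^{-1/2+\epsilon_1}\beta^{1/2}=o\bigl(s^{-1/2+2\epsilon_1}\beta^{1/2}\bigr)$; this is the legitimate price of your normalization. Second, your justification for writing $ub=\exp(r_0X)m_0$ and "absorbing $m_0$ into the $x$-variable" is incorrect: $M$ acts on the unipotent coordinate by $n(x)\mapsto n(e^{2i\alpha}x)$, which rotates $x$ out of the real slice rather than shifting it. The step is in fact unnecessary — the phase satisfies $A(m_0g)=A(g)$ for $m_0\in M$ since $M$ centralizes $A$ and normalizes $N$, so only the class of $ub$ in $M\backslash K_0$ matters, which is all Proposition \ref{first one dimensional intgeral estimate} requires — but the reasoning you give for it would not survive scrutiny.
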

\begin{proof}
    We apply the definition for $\phi_{m,n}$ to obtain
    \begin{align*}
        &\int_{\BH^2}\chi_2(\bz) \overline{\phi_{m,n}(\bz)}\exp\left(  is A(u\bz)\right)d\bz\\
        =&\int_{\BH^2}\chi_2(\bz) \chi(\bz) \eta_n(b_m^{-1}\bz)\overline{\cF^{-1} (\tilde{\phi}\tau_m)(\bz)}\exp\left(  is A(u\bz)\right)d\bz \\
        =& \int_{\BR_{>0}\times  B}\left( \int_{\BH^2}\chi_2(\bz) \chi(\bz) \eta_n(b_m^{-1}\bz)\exp\left( (-ir+1/2)A(b^{-1}{\bz}) \right)\exp\left(  is A(u\bz)\right)d\bz  \right) \overline{\tilde{\phi}(r,b) }\tau_m(b) d\mu(r,b)\\
        =&\int_{\BR_{>0}\times  B}\int_\BR\left(  \int_\BR  \chi_3(x,t,b)\exp(isA(ub_mn(x)a(t)) - ir A(b^{-1}b_m n(x)a(t)) ) dt\right) \eta_n(x)  \overline{\tilde{\phi}(r,b) }\tau_m(b) \,dx\,d\mu(r,b),
    \end{align*}
    where
    \begin{align*}
        \chi_3(x,t,b) = \chi_2(b_m\bz) \chi(b_m \bz)e^{A(b^{-1}b_m\bz)/2 -t},\quad\text{ for }\bz = (x,e^t),
    \end{align*}
    is a smooth and compactly supported function on $\BR\times\BR\times B$. We may later omit the variables $x$ and $b$ for $\chi_3$. Because of the supports of $\tilde{\phi}$ and $\tau_m$, to prove the bound (\ref{oscillatory integral for phi mn}), it suffices to prove the following bound
    \begin{align}\label{reduced oscillatory integral for phi mn}
        \int_\BR \chi_3(t) \exp(is A(ub_mn(x)a(t)) - ir A(b^{-1}b_mn(x)a(t))) dt \ll s^{-A}
    \end{align}
    for $r\in[\lambda-\beta,\lambda+\beta]$ and $b\in \bI_m$. Hence, $b^{-1}b_m\in \bI_0$. Applying Corollary \ref{uniformization lemma for B}, there is a constant $\delta>0$,  and a uniformly nonvanishing function $\xi$ on $(-\delta,\delta)\times (-2,2)^2$
    such that
    \begin{align*}
        \frac{\partial}{\partial t} A(b_\theta n(x)a(t)) = 1-\theta^2\xi(\theta,x,t),
    \end{align*}
    for $(\theta,x,t)\in(-\delta,\delta) \times (-2,2)^2 $. 
    We denote by $b_\theta = b^{-1} b_m$ where $|\theta| \ll \lambda^{-1/2+\epsilon_1}\beta^{1/2}$.
    If $Z(\theta,x,t)$ is an antiderivative of $\xi$ with respect to $t$ that is smooth as a function of $(\theta,x,t)$, we may integrate this to obtain
    \begin{align*}
        A(b_\theta n(x)a(t)) = t - \theta^2 Z(\theta,x,t) + c(\theta,x)
    \end{align*}
    for some function $c(\theta,x)$. Therefore, we have
    \begin{align*}
    A(b^{-1}b_{m}n(x)a(t)) = t + \rho(t),
    \end{align*}
    where
    \begin{align*}
        \rho(t) = - \theta^2 Z(\theta,x,t) + c(\theta,x)
    \end{align*}
    satisfies the condition (\ref{condition for rho}) in Proposition \ref{first one dimensional intgeral estimate}, i.e.,
    \begin{align*}
        \left|r \frac{\partial\rho}{\partial t} (t)\right|\ll r\theta^2 |\xi|\ll r\lambda^{-1+2\epsilon_1}\beta \ll r^{2\epsilon_1}\beta,\\
        \left|r \frac{\partial^n\rho}{\partial t^n} (t)\right| \ll r\theta^2\left|\frac{\partial^{n-1}\xi}{\partial t^{n-1}} \right|  \ll_n r^{2\epsilon_1}\beta.
    \end{align*}
    We take $\epsilon_0 = 2\epsilon_1$. 
    The condition (\ref{35}) will follow from (\ref{condition for kbm}). That is, there exists a constant $B>0$ so that $d(ub_m, M)\geq Bs^{-1/2+\epsilon_0} \beta^{1/2}$. Then we deduce \eqref{reduced oscillatory integral for phi mn} from Proposition \ref{first one dimensional intgeral estimate}.
\end{proof}

\begin{proposition}\label{uniform bound for phi mn}
Suppose $\lambda^{\epsilon'}\leq \beta\leq \lambda^{1/4}$ and  $\phi\in\cS(\BH^2)\cap H_\beta$. Let $\chi\phi = \sum_{m,n}\phi_{m,n}$ be the decomposition given by \eqref{eq: defn of geodesic beam}. Then we have
    $$I(\lambda,\varphi_{m,n},g)\ll_{\epsilon',\epsilon_1,A}\lambda^{1/2 + 4\epsilon_1} \beta^{3/2}\|\phi_{m,n}\|_{L^2(\BH^2)} ^2 +  \lambda^{-A} \| \phi \|_{L^2(\BH^2)}^2$$    
    for all $g\in G_0$ with $d(g,e)\leq 1$.
\end{proposition}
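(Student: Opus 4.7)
The strategy is to expand $k_\lambda$ via the inverse Harish-Chandra transform on $\BH^3$, and then to factorize the resulting $K_0$-integral into two one-variable oscillatory integrals in $\bz_1$ and $\bz_2$, each of which is separately controlled by Lemma~\ref{Bound for oscillatory intgeral of phi mn depending on the k variable}. The extra $\beta^{3/2}$ loss over the expected local bound $\lambda^{1/2}$ comes from measuring a thin bad set in $K_0$ on which the oscillatory estimate fails and only the trivial $L^1$ bound applies.

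First, I would use the inverse Harish-Chandra transform to write
\begin{align*}
    I(\lambda,\varphi_{m,n},g) = \int_0^\infty h_\lambda(s) I_s\, d\nu(s),\qquad I_s = \iint_{\BH^2}\overline{\phi_{m,n}(\bz_1)}\phi_{m,n}(\bz_2)\varphi_s^{\BH^3}(\bz_1^{-1}g\bz_2)\, d\bz_1 d\bz_2.
\end{align*}
Since $h_\lambda$ is of Paley-Wiener type concentrated near $\pm\lambda$ and $d\nu(s)\asymp s^2\, ds$, it suffices to bound $I_s$ uniformly for $|s-\lambda|\le\beta$ and then gain an overall factor of $\lambda^2$; the region $|s-\lambda|>\beta$ is negligible by rapid decay of $h_\lambda$. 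Expanding $\varphi_s^{\BH^3}$ via \eqref{eq:HC int for H3} and applying Lemma~\ref{spliting A} with $y=\bz_1$, $z=g\bz_2$ gives $A(k\bz_1^{-1}g\bz_2) = A(\Phi_{\bz_1^{-1}}(k)g\bz_2)-A(\Phi_{\bz_1^{-1}}(k)\bz_1)$. Changing the $K_0$-variable from $k$ to $u=\Phi_{\bz_1^{-1}}(k)$, with Jacobian $J(u,\bz_1)$ a smooth function bounded above and below (with bounded derivatives) on the compact support of $\phi_{m,n}$, and swapping orders of integration, produces the factorization
\begin{align*}
    I_s = \int_{K_0} F_1(u)\,F_2(u)\,du,
\end{align*}
where $F_1(u) = \int\overline{\phi_{m,n}(\bz_1)}e^{-(is+1)A(u\bz_1)}J(u,\bz_1)\,d\bz_1$ and $F_2(u) = \int\phi_{m,n}(\bz_2)e^{(is+1)A(ug\bz_2)}\,d\bz_2$.

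Using the Iwasawa cocycle $A(ug\bz_2) = A(ug)+A(\kappa(ug)\bz_2)$, the factor $F_2$ is brought into the form of the oscillatory integral in Lemma~\ref{Bound for oscillatory intgeral of phi mn depending on the k variable} with parameter $u'=\kappa(ug)$, up to a smooth bounded amplitude; the nonoscillatory factor $e^{A(u\bz_1)}J(u,\bz_1)$ in $F_1$ is likewise absorbed into the amplitude $\chi_2$. The lemma then yields $|F_1(u)|\ll_A s^{-A}\|\phi\|_{L^2(\BH^2)}$ unless $d(u,Mb_m^{-1})<s^{-1/2+2\epsilon_1}\beta^{1/2}$, and analogously $|F_2(u)|\ll_A s^{-A}\|\phi\|_{L^2(\BH^2)}$ unless $d(\kappa(ug),Mb_m^{-1})<s^{-1/2+2\epsilon_1}\beta^{1/2}$. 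Each bad set is a tubular neighborhood of radius $s^{-1/2+2\epsilon_1}\beta^{1/2}$ of the one-dimensional coset $Mb_m^{-1}$ inside the three-dimensional $K_0$, and thus has Haar measure $\asymp s^{-1+4\epsilon_1}\beta$. On the bad sets I would apply the trivial bound $|F_j(u)|\le\|\phi_{m,n}\|_{L^1(\BH^2)}\ll\lambda^{-1/4}\beta^{1/4}\|\phi_{m,n}\|_{L^2(\BH^2)}$ supplied by Lemma~\ref{L1 norm bound for phi mn}.

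Combining these estimates and integrating over $K_0$, the contribution of the intersection of the two bad sets dominates and gives $|I_s|\ll s^{-1+4\epsilon_1}\beta\cdot\lambda^{-1/2}\beta^{1/2}\|\phi_{m,n}\|_{L^2(\BH^2)}^2 + O_A(s^{-A}\|\phi\|_{L^2(\BH^2)}^2)$; integrating against $h_\lambda(s)\,d\nu(s)$ gains an extra $\lambda^2$ and yields the claimed bound. The main technical point is the change of variable $u=\Phi_{\bz_1^{-1}}(k)$: one must verify, via the Iwasawa integration formula, that the resulting Jacobian $J(u,\bz_1)$ is smooth with uniformly bounded derivatives on the compact support of $\phi_{m,n}$, so that it can be absorbed into the amplitude of Lemma~\ref{Bound for oscillatory intgeral of phi mn depending on the k variable} without affecting its hypotheses. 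Once this is done, the remainder of the argument is a direct combination of the oscillatory integral estimate, the $L^1$ bound, and a measure estimate for a tube around $Mb_m^{-1}$.
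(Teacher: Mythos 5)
Your proposal is correct and follows essentially the same route as the paper, which proves the two‑beam generalization (Proposition \ref{general uniform bound}) by unfolding $k_\lambda$, writing $\varphi_s^{\BH^3}$ as a $K_0$-integral, splitting the phase via Lemma \ref{spliting A} and the change of variable $u=\Phi_{\bz_1^{-1}}(k)$, applying Lemma \ref{Bound for oscillatory intgeral of phi mn depending on the k variable} to restrict $u$ to a set of measure $\asymp s^{-1+4\epsilon_1}\beta$, and finishing with the $L^1$ bound of Lemma \ref{L1 norm bound for phi mn} (the paper in fact only needs the oscillatory estimate on the $\bz_1$-factor, bounding the $\bz_2$-factor trivially in $L^1$ for every $u$). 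One small sign correction: you must first use $\varphi_s^{\BH^3}=\varphi_{-s}^{\BH^3}$ so that each factor takes the lemma's form $\overline{\phi_{m,n}}$ against $e^{+isA}$; as written, your $F_1,F_2$ carry the opposite oscillation, for which the stationary set sits near $Mw_0b_m^{-1}$ rather than $Mb_m^{-1}$ — the measure count and final bound are unchanged, but Lemma \ref{Bound for oscillatory intgeral of phi mn depending on the k variable} does not literally apply in that form.
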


This bound follows directly from a more general result as follows. 

\begin{proposition}\label{general uniform bound}
Suppose $
\lambda^{\epsilon'}\leq \beta\leq \lambda^{1/4}$ and $\phi\in\cS(\BH^2)\cap H_\beta$. Let $\chi\phi = \sum_{m,n}\phi_{m,n}$ be the decomposition given by \eqref{eq: defn of geodesic beam}. Then we have
    \begin{align*}
        P(\phi_{m_1,n_1},\phi_{m_2,n_2},g;\lambda) \ll_{\epsilon',\epsilon_1,A} \lambda^{1/2+ 4\epsilon_1} \beta^{3/2}\|\phi_{m_1,n_1}\|_{L^2(\BH^2)} \|\phi_{m_2,n_2}\|_{L^2(\BH^2)}+ \lambda^{-A} \| \phi \|_{L^2(\BH^2)}^2
    \end{align*}
    for all $g\in G_0$ with $d(g,e)\leq 1$.
\end{proposition}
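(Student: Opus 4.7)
The plan is to factor $P(\phi_{m_1,n_1},\phi_{m_2,n_2},g;\lambda)$ as an integral over $K_0$ of a product of two one-variable oscillatory integrals of the type controlled by Lemma~\ref{Bound for oscillatory intgeral of phi mn depending on the k variable}, then split $K_0$ into a small ``bad'' region where only the trivial $L^1$ bound is available, and a ``good'' region where rapid decay applies. First I would unfold $k_\lambda$ via the inverse Harish-Chandra transform and the integral representation \eqref{eq:HC int for H3} for $\varphi_s^{\BH^3}$. Lemma~\ref{spliting A} gives
\[
A(k\bz_1^{-1}g\bz_2) = A(\Phi_{\bz_1^{-1}}(k)g\bz_2) - A(\Phi_{\bz_1^{-1}}(k)\bz_1),
\]
and the change of variable $u = \Phi_{\bz_1^{-1}}(k)$, which is a diffeomorphism of $K_0$ by Lemma~\ref{change of variable}, with smooth Jacobian $J(u,\bz_1)$ bounded together with all its derivatives for $\bz_1$ in the compact set $\supp(\chi)$, causes the $\bz_1$ and $\bz_2$ integrals to decouple completely.

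After further using $A(ug\bz_2) = A(ug) + A(\Phi_g(u)\bz_2)$ to isolate the bounded factor $e^{(is+1)A(ug)}$, the resulting inner integrals in $\bz_1$ and $\bz_2$ each take the form of Lemma~\ref{Bound for oscillatory intgeral of phi mn depending on the k variable}, with $J(u,\cdot)$ and the factors $e^{\pm A(\cdot)}$ absorbed into the smooth amplitude playing the role of $\chi_2$. I would then split $K_0 = K_{\mathrm{good}} \cup K_{\mathrm{bad}}$ with
\[
K_{\mathrm{bad}} = \bigl\{u: d(u,Mb_{m_1}^{-1}) \le \lambda^{-1/2+2\epsilon_1}\beta^{1/2}\bigr\} \cup \bigl\{u: d(\Phi_g(u), Mb_{m_2}^{-1}) \le \lambda^{-1/2+2\epsilon_1}\beta^{1/2}\bigr\}.
\]
Since each piece is a tube around a translate of the one-dimensional subgroup $M$ inside the three-dimensional compact group $K_0$, we have $|K_{\mathrm{bad}}| \ll \lambda^{-1+4\epsilon_1}\beta$. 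On $K_{\mathrm{good}}$ both inner integrals are $\ll_A \lambda^{-A}\|\phi\|_{L^2(\BH^2)}$ by Lemma~\ref{Bound for oscillatory intgeral of phi mn depending on the k variable} (noting that on $\supp(h_\lambda)$ one has $|s-\lambda|\ll 1 \le \beta$), which yields a negligible contribution of $O_A(\lambda^{-A}\|\phi\|_{L^2(\BH^2)}^2)$.

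On $K_{\mathrm{bad}}$ I would bound each inner integral trivially by $\|\phi_{m_i,n_i}\|_{L^1(\BH^2)} \ll \lambda^{-1/4}\beta^{1/4}\|\phi_{m_i,n_i}\|_{L^2(\BH^2)}$ using Lemma~\ref{L1 norm bound for phi mn}. Since $h_\lambda$ is Paley--Wiener with $O(1)$ support near $\pm\lambda$ and $d\nu(s)\asymp s^2\,ds$, the spectral integral contributes $\int |h_\lambda(s)|\,d\nu(s) \ll \lambda^2$, so multiplying gives
\[
\lambda^2 \cdot \lambda^{-1+4\epsilon_1}\beta \cdot \lambda^{-1/2}\beta^{1/2}\,\|\phi_{m_1,n_1}\|_{L^2(\BH^2)} \|\phi_{m_2,n_2}\|_{L^2(\BH^2)} = \lambda^{1/2+4\epsilon_1}\beta^{3/2}\,\|\phi_{m_1,n_1}\|_{L^2(\BH^2)}\|\phi_{m_2,n_2}\|_{L^2(\BH^2)},
\]
which is exactly the desired bound. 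The main technical obstacle will be verifying that the Jacobian $J(u,\bz_1)$ and the auxiliary factor $e^{A(\cdot)}$ have $\bz_1$-derivatives bounded uniformly in $u\in K_0$, so that they genuinely play the role of the cutoff $\chi_2$ in Lemma~\ref{Bound for oscillatory intgeral of phi mn depending on the k variable}; this should follow routinely from analyticity of the Iwasawa decomposition combined with compactness of $\supp(\chi)$, but needs to be spelled out.
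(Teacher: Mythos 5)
Your proposal is correct and follows essentially the same route as the paper: unfold $k_\lambda$, use Lemma \ref{spliting A} and the change of variable $u=\Phi_{\bz_1^{-1}}(k)$ to factor the $(\bz_1,\bz_2)$ integral for fixed $u$, apply Lemma \ref{Bound for oscillatory intgeral of phi mn depending on the k variable} off a tube of radius $\lambda^{-1/2+2\epsilon_1}\beta^{1/2}$ (measure $\ll\lambda^{-1+4\epsilon_1}\beta$), and use the $L^1$ bound of Lemma \ref{L1 norm bound for phi mn} on the tube. The only (harmless) difference is that you symmetrize by also decoupling the $\bz_2$ phase via $A(ug\bz_2)=A(ug)+A(\Phi_g(u)\bz_2)$ and excising a second tube, whereas the paper exploits rapid decay only in the $\bz_1$ integral and bounds the $\bz_2$ factor trivially by $\|\phi_{m_2,n_2}\|_{L^1}$ throughout; both give the same numerology.
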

\begin{proof}
   We first unfold the integral $P(\phi_{m_1,n_1},\phi_{m_2,n_2},g;\lambda)$ by the inverse Harish-Chandra transform for $k_\lambda$ and  denote by 
   \begin{align*}
       Q(\phi_{m_1,n_1},\phi_{m_2,n_2},g;s) = \iint_{\BH^2} \overline{\phi_{m_1,n_1}(\bz_1)}\phi_{m_2,n_2}(\bz_2) \varphi^{\BH^3}_s (\bz_1^{-1}g\bz_2) d\bz_1 d\bz_2.
   \end{align*}
   Then
   \begin{align*}
       P(\phi_{m_1,n_1},\phi_{m_2,n_2},g;\lambda) =& \int_0^\infty Q(\phi_{m_1,n_1},\phi_{m_2,n_2},g;s)h_\lambda(s) d\nu(s).
    \end{align*}
    By applying Lemma \ref{L1 norm bound for phi mn}, it suffices to prove
   \begin{align}\label{bound for Q phi g s}
       Q(\phi_{m_1,n_1},\phi_{m_2,n_2},g;s)\ll s^{-1+4 \epsilon_1} \beta \|\phi_{m_1,n_1}\|_{L^1(\BH^2)} \|\phi_{m_2,n_2}\|_{L^1(\BH^2)} + O_A(s^{-A} \| \phi \|_{L^2(\BH^2)}^2)
   \end{align}
   for $s\in [\lambda-\beta,\lambda+\beta]$. As before, we write $\bz_j = n(x_j) a(t_j)\cdot o = (x_j,e^{t_j})$.
   If we apply the functional equation $\varphi_s^{\BH^3} = \varphi_{-s}^{\BH^3}$, write $\varphi_{-s}^{\BH^3}$ as an integral over $K_0$ by the integral formula \eqref{eq:HC int for H3}, and apply Lemma \ref{spliting A}, we obtain
       \begin{align*}
           &Q(\phi_{m_1,n_1},\phi_{m_2,n_2},g;s)\\
           =&\iint_{\BH^2} \overline{\phi_{m_1,n_1}(\bz_1)}\phi_{m_2,n_2}(\bz_2) \varphi^{\BH^3}_{-s} (a(-t_1)n(-x_1)gn(x_2)a(t_2)) d\bz_1 d\bz_2\\
           =&\iint_{\BH^2} \int_{M\backslash K_0}\overline{\phi_{m_1,n_1}(\bz_1)}\phi_{m_2,n_2}(\bz_2) \exp\left( (1-is)A(ka(-t_1)n(-x_1)gn(x_2)a(t_2))\right) dk d\bz_1 d\bz_2\\
           =&\iint_{\BH^2} \int_{M\backslash K_0}\overline{\phi_{m_1,n_1}(\bz_1)}\phi_{m_2,n_2}(\bz_2) \exp\left( (1-is)\left(A(ugn(x_2)a(t_2)) -A(un(x_1)a(t_1)) \right)\right) \left|\det \frac{\partial k}{\partial u}\right| du d\bz_1 d\bz_2.
   \end{align*}
    Here we use $\partial k/\partial u$ to denote the Jacobian matrix of the diffeomorphism $u\mapsto k$ from  $M\backslash K_0$ onto itself, which is defined by the relation $k a(-t_1) n(-x_1) \in NA u$, or equivalently $un(x_1)a(t_1)\in NAk$. Let $\chi_1 \in C_c^\infty(\BH^2)$ be a fixed cutoff function so that $\phi_{m,n} \chi_1 = \phi_{m,n}$ for arbitrary $m,n$, and let 
    \begin{align*}
        \chi_2(\bz_1,u) = \chi_1(\bz_1) \exp(-A(un(x_1)a(t_1)))\left|\det \frac{\partial k}{\partial u}\right| ,
    \end{align*}
    which lies in $ C_c^\infty(\BH^2\times M\backslash K_0)$.
    We have
    \begin{align*}
        &Q(\phi_{m_1,n_1},\phi_{m_2,n_2},g;s)\\
        =&\int_{\BH^2}   \int_{M\backslash K_0} \left(  \int_{\BH^2}\chi_2(\bz_1,u) \overline{\phi_{m_1,n_1}(\bz_1)}\exp\left(  is A(un(x_1)a(t_1))\right)d\bz_1\right)\phi_{m_2,n_2}(\bz_2) e^{ (1-is)A(ugn(x_2)a(t_2)) } du  d\bz_2\\
        \ll& \int_{M\backslash K_0} \left|  \int_{\BH^2}\chi_2(\bz_1,u) \overline{\phi_{m_1,n_1}(\bz_1)}\exp\left(  is A(un(x_1)a(t_1))\right)d\bz_1\right| du \cdot \int_{\BH^2}  \left|\phi_{m_2,n_2} (\bz_2)\right|  d\bz_2.
    \end{align*}
    We apply Lemma \ref{Bound for oscillatory intgeral of phi mn depending on the k variable} to get the above integral inside the absolute value is $\ll_A s^{-A}\|\phi\|_{L^2(\BH^2)}$ if $d(u, Mb_{m_1}^{-1}) \geq s^{-1/2+2\epsilon_1} \beta^{1/2}$. 
    This implies that we may restrict the integral over $M\backslash K_0$ to the disc of radius $s^{-1/2+2\epsilon_1} \beta^{1/2}$ centered at $Mb_{m_1}$, and this gives the main term in the estimate \eqref{bound for Q phi g s}.
\end{proof}

To apply the amplification inequality from \eqref{amplification inequality eqn}, we need first to construct an element $\cT_v\in\cH_v$ for a finite place $v$ of $F$ that will form part of the amplifier $\cT$. Recall that we have defined Hecke operators $T_v(1),T_v(2)$ in Section \ref{sec of Hecke algberas}. In this case, we consider those $v\in\sQ$, that is, $v$ is inert.
Note that the Hecke operator $T_v(1)$ (resp. $T_v(2)$) corresponds to the operator summing over the set of nodes at distance 2 (resp. 4) from the given node in the Bruhat-Tits tree for $\mathrm{PGL}(2,E_v)$. Then an elementary computation, or applying the classical relation \cite[\S 1.4, (4.12)]{bump1998automorphic}, gives the following Hecke relation:
\begin{align}\label{Hecke relation inert case}
    T_v(1)*T_v(1) = q_v^2(q_v^2+1) +(q_v^2-1)T_v(1)+ T_v(2).
\end{align}
If we define the real numbers $\tau_v(1)$ and $\tau_v(2)$ by
\begin{align*}
    T_{v}(1)\psi = \tau_v(1)q_v^2 \psi,\quad T_{v}(2)\psi = \tau_v(2)q_v^4\psi,
\end{align*}
then (\ref{Hecke relation inert case}) implies that we cannot have both $|\tau_v(1)|\leq 1/4$ and $|\tau_v(2)|\leq1/4$.
We define
\begin{align}\label{amplifier inert}
    T_v = \begin{cases} {T_{v}(1)}/\tau_{v}(1)q_v^2\quad\text{ if }|\tau_{v}(1)|>1/4,\\
    {T_{v}(2)}/\tau_{v}(2)q_v^4\quad\text{ otherwise. }
    \end{cases}
\end{align}
It follows that $T_v\psi = \psi$ for all $v\in\sQ$. Note that $|K_v(1)/K_v|\asymp q_v^4$ and $|K_v(2)/K_v|\asymp q_v^8$, so $\|T_v(1)\|_{L^2}^2\asymp q_v^4$ and $\|T_v(2)\|_{L^2}^2\asymp q_v^8$. Hence,
\begin{align}\label{eq: L2 bd for Tv, v inert}
    \|T_v\|_{L^2}\ll 1.
\end{align}

\begin{proof}[Proof of Proposition \ref{amplified bound for geodesic beam}]
    Let $N\geq 1$ be a parameter to be chosen later. We define $\sQ_N = \{ v\in\sQ | N/2\leq q_v\leq N\}$, and define
\begin{align*}
    \cT_N = \sum_{v\in\sQ_N}  T_v ,
\end{align*}
where $T_v$'s are defined by (\ref{amplifier inert}). This choice of $\cT_N$ satisfies
\begin{align*}
    |\langle\cT_N\psi,\phi_{m,n} \rangle| = \#\sQ_N \cdot |\langle\psi,\phi_{m,n} \rangle| \gg N^{1-\epsilon} |\langle\psi,\phi_{m,n} \rangle|.
\end{align*}
By Proposition \ref{amplification inequality} applied to $\varphi_{m,n}$, we have
\begin{align}\label{amplified inequality 1}
     |\langle\psi,\phi_{m,n}\rangle|^2 \ll N^{-2+\epsilon} \sum_{\substack{\gamma\in \bG(F)\\ \gamma\in g_0 \cD_0 g_0^{-1}}} \left| (\cT_N*\cT_N^*)(\gamma)  I(\lambda,\varphi_{m,n},g_0^{-1}\gamma g_0) \right|.
\end{align}
Recall that we use $\cD_0\subset G_{0}$ to denote the compact subset consisting of $g\in G_0$ with $d(g,e)\leq 1$.
Corollary \ref{small I result} implies that we only need to consider the terms in (\ref{amplified inequality 1}) with
\begin{align*}
    d\left(\left(g_0b_{m}n(x_{n})\right)^{-1}\gamma \left(g_0b_{m}n(x_{n})\right), MA\right) < \lambda^{-1/2 + 2\epsilon_1}\beta^{1/2}.
\end{align*}
Without loss of generality, we may assume $g_0^\prime : = g_0b_{m}n(x_{n}) \in \Omega_{v_0}$ and $d(g_0^{\prime -1}\gamma g_0^\prime,e)\leq 2$ if $ \gamma\in g_0 \cD_0 g_0^{-1}$.
Let $C>0$ be the constant as in Proposition \ref{Hecke return wrt MA}.
We choose 
\begin{align*}
    X= (C^{-1}\lambda^{-1/2 + 2\epsilon_1}\beta^{1/2})^{-1}  =  C\lambda^{1/2 -2 \epsilon_1}\beta^{-1/2},
\end{align*}
so then take $\delta= CX^{-1} = \lambda^{-1/2 + 2\epsilon_1}\beta^{1/2}$.
We use the uniform bound 
\begin{align*}
    |I(\lambda,\varphi_{m,n},g_0^{-1}\gamma g_0) |\ll \lambda^{1/2 + 4\epsilon_1} \beta^{3/2}\|\phi_{m,n}\|_{L^2(\BH^2)} ^2 +  O_A(\lambda^{-A} \| \phi \|_{L^2(\BH^2)}^2)
\end{align*}
from Proposition \ref{uniform bound for phi mn} to obtain
\begin{align}\label{ine 3}
\begin{split}
   & \sum_{\substack{\gamma\in \bG(F)\\ \gamma\in g_0 \cD_0 g_0^{-1}\\d(g_0^{\prime -1}\gamma g_0^\prime,MA)\leq \delta}}  \left| (\cT_N*\cT_N^*)(\gamma)I(\lambda,\varphi_{m,n},g_0^{-1}\gamma g_0)\right|\\
    \ll& \lambda^{1/2 + 4\epsilon_1} \beta^{3/2}\|\phi_{m,n}\|_{L^2(\BH^2)} ^2\sum_{\substack{\gamma\in \bG(F)\\ \gamma\in g_0 \cD_0 g_0^{-1}\\d(g_0^{\prime-1}\gamma g_0^\prime,MA)\leq \delta}} \left| (\cT_N*\cT_N^*)(\gamma)\right| +  O_A(\lambda^{-A} \| \phi \|_{L^2(\BH^2)}^2).
\end{split}
\end{align}
We next expand $\cT_N*\cT_N^*$ as a sum
\begin{align*}
    \cT_N*\cT_N^* = \sum_{\fn\subset\cO} a_\fn 1_{K(\fn)}
\end{align*}
for some constants $a_\fn$.
If we now suppose $N^4\leq X$, that is
\begin{align*}
    N \leq C^{1/4} \lambda^{1/8 - \epsilon_1/2}\beta^{-1/8},
\end{align*}
then our choice of $X$ and $\delta$ means that we may apply Proposition \ref{Hecke return wrt MA} to show that for any $\fn\neq \cO$ appearing in the expansion of $\cT_N*\cT_N^*$, 
the term $1_{K(\fn)}$ makes a contribution $\ll_\epsilon X^\epsilon \ll_\epsilon \lambda^\epsilon$ to the sum in the right hand side of (\ref{ine 3}).
Moreover, the sum of $a_\fn$'s with $\fn \neq \cO$ is 
\begin{align}\label{boundedness for sum of primes}
    \ll\left(\sum_{v\in\sQ, N/2 \leq q_v \leq N}( {q_v^{-2}}+{q_v^{-4}})\right)^2 + \sum_{v\in\sQ, N/2 \leq q_v \leq N} {q_v^{-2}}\ll 1.
\end{align}
Hence, we only need to consider the term $\fn=\cO$.  By the bound \eqref{eq: L2 bd for Tv, v inert}, we have
\begin{align}\label{eq: bd for aO, v invert}
    a_\cO=[\cT_N*\cT_N^*] (e)=\|\cT_N\|_{L^2}^2= \sum_{v\in\sQ_N}\|T_v \|^2_{L^2}\ll N.
\end{align}
By applying \eqref{eq: bd for aO, v invert} in \eqref{ine 3}, we get
\begin{align*}
      \sum_{\substack{\gamma\in \bG(F)\\ \gamma\in g_0 \cD_0 g_0^{-1}\\d(g_0^{\prime -1}\gamma g_0^\prime,MA)\leq \delta}}  \left| (\cT_N*\cT_N^*)(\gamma)I(\lambda,\varphi_{m,n},g_0^{-1}\gamma g_0)\right| \ll \lambda^{1/2 + 4\epsilon_1} \beta^{3/2}(N + \lambda^\epsilon) \|\phi_{m,n} \|_{L^2(\BH^2)}^2 +  O_A(\lambda^{-A} \| \phi \|_{L^2(\BH^2)}^2).
\end{align*}
We conclude that
\begin{align*}
   |\langle\psi,\phi_{m,n}\rangle|^2 \ll \lambda^{1/2 + 4\epsilon_1} \beta^{3/2} N^{-2+\epsilon} (N + \lambda^\epsilon) \|\phi_{m,n} \|_{L^2(\BH^2)}^2 +  O_A(\lambda^{-A} \| \phi \|_{L^2(\BH^2)}^2).
\end{align*}
Choosing $N =  C^{1/4} \lambda^{1/8 - \epsilon_1/2}\beta^{-1/8}$ gives \eqref{eq: amplified geodesic beam, individual est}.

By the Cauchy–Schwarz inequality, Propositions \ref{l2L2 inequality} and \ref{no many big norm geodesic beams}, we obtain
\begin{align*}
    \sum_{\phi_{m,n} \in \cS_{\geq\delta}(\phi)}\|\phi_{m,n} \|_{L^2(\BH^2)} \leq (\# \cS_{\geq\delta}(\phi))^{1/2} \left(\sum_{\phi_{m,n} \in \cS_{\geq\delta}(\phi)}\|\phi_{m,n} \|_{L^2(\BH^2)}^2\right)^{1/2}\ll\delta^{-1/2}\|\phi\|_{L^2(\BH^2)}
\end{align*}
Then we combine \eqref{eq: amplified geodesic beam, individual est} with the above inequality to get \eqref{eq: summing over amplified geodesic beam}.
\end{proof}

\subsection{Estimates for the sums of $\langle\psi,\phi_{m,n}\rangle$}
We first provide preliminary estimates for proving Proposition \ref{amplified bound for small norm}.
Given $\phi\in\cS(\BH^2)\cap H_\beta$ with the decomposition $\chi\phi = \sum_{m,n}\phi_{m,n}$ given by \eqref{eq: defn of geodesic beam}, for any $0<\delta<1$, we define
\begin{align*}
    \phi_\delta = \sum_{\phi_{m,n}\in\cS_{<\delta}(\phi)} \phi_{m,n}\quad\text{ and }\quad  \varphi_\delta = \sum_{\phi_{m,n}\in\cS_{<\delta}(\phi)}  \varphi_{m,n}.
\end{align*}
Recall that here $\varphi_{m,n}$ is defined by \eqref{eq: defn for varphi mn} and we have
$\phi_{m,n} = \chi\varphi_{m,n}$, and so $\phi_\delta = \chi \varphi_\delta$.


\begin{proposition}\label{L2 norm of phi delta}
    Suppose $\phi\in\cS(\BH^2)\cap H_\beta$.
    For any $0<\delta<1$, we have
    \begin{align*}
        \|\phi_\delta \|_{L^2(\BH^2)}\ll \|\phi\|_{L^2(\BH^2)}.
    \end{align*}
\end{proposition}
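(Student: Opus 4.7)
The plan is to expand $\|\phi_\delta\|_{L^2(\BH^2)}^2$ as a double sum of inner products of geodesic beams, split it into the diagonal part and the off-diagonal part, and bound each by invoking a previously established estimate.

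First I would write
\begin{align*}
    \|\phi_\delta\|_{L^2(\BH^2)}^2
    &= \Big\langle \sum_{\phi_{m_1,n_1}\in\cS_{<\delta}(\phi)} \phi_{m_1,n_1},\; \sum_{\phi_{m_2,n_2}\in\cS_{<\delta}(\phi)} \phi_{m_2,n_2} \Big\rangle \\
    &= \sum_{\phi_{m,n}\in\cS_{<\delta}(\phi)} \|\phi_{m,n}\|_{L^2(\BH^2)}^2
       \;+\; \sum_{\substack{(m_1,n_1),(m_2,n_2):\\ \phi_{m_i,n_i}\in\cS_{<\delta}(\phi),\\ (m_1,n_1)\neq(m_2,n_2)}} \langle \phi_{m_1,n_1},\phi_{m_2,n_2}\rangle.
\end{align*}
The diagonal part is bounded by the full sum $\sum_{m=0}^{N_1-1}\sum_{n=0}^{N_2}\|\phi_{m,n}\|_{L^2(\BH^2)}^2$, which by Proposition \ref{l2L2 inequality} is $\leq C\|\phi\|_{L^2(\BH^2)}^2$. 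The off-diagonal part is handled by applying Proposition \ref{l2L2 inequality general form} with the subset $\cS = \cS_{<\delta}(\phi) \subset \{(m,n)\}$, which yields the same bound $C\|\phi\|_{L^2(\BH^2)}^2$ for the absolute value of that sum.

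Combining the two contributions gives $\|\phi_\delta\|_{L^2(\BH^2)}^2 \leq 2C\|\phi\|_{L^2(\BH^2)}^2$, and taking square roots completes the proof. There is essentially no obstacle here: both inputs are already tailored to deliver this bound, and the argument is a clean splitting into diagonal and cross terms, each controlled by one of the two almost-orthogonality statements.
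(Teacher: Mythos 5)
Your proposal is correct and follows exactly the paper's own argument: expand $\|\phi_\delta\|_{L^2(\BH^2)}^2$ into diagonal and off-diagonal parts, bound the diagonal via Proposition \ref{l2L2 inequality} and the off-diagonal via Proposition \ref{l2L2 inequality general form} applied to $\cS = \cS_{<\delta}(\phi)$. No differences worth noting.
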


\begin{proof}
    We have
    \begin{align}\label{S delta expansion}
         \| \phi_\delta \|_{L^2(\BH^2)}^2 = \sum_{\phi_{m,n}\in \cS_{<\delta}(\phi)} \| \phi_{m,n}\|_{L^2(\BH^2)}^2 + \sum_{\substack{\phi_{m_1,n_1},{\phi_{m_2,n_2}\in \cS_{<\delta}(\phi)}\\(m_1,n_1)\neq(m_2.n_2)}} \langle \phi_{m_1,n_1} ,\phi_{m_2,n_2} \rangle .
    \end{align}
    The first term in the right-hand side of \eqref{S delta expansion} is $\ll \|\phi\|_{L^2(\BH^2)}^2$ by Propositions \ref{l2L2 inequality}.
    The second term is also $\ll \|\phi\|_{L^2(\BH^2)}^2$ by Proposition \ref{l2L2 inequality general form}.
\end{proof}

\begin{proposition}\label{uniform bound for phi delta}
     Suppose $\phi\in\cS(\BH^2)\cap H_\beta$. For any $0<\delta<1$ and $g\in H^\prime$, we have $$I(\lambda,\varphi_{\delta},g)\ll \lambda^{1/2} \|\phi_\delta\|_{L^2(\BH^2)}^2\ll \lambda^{1/2} \|\phi\|_{L^2(\BH^2)}^2.$$
\end{proposition}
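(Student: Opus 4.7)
The plan is to reduce the bound to the $g = e$ case already analyzed in Section~\ref{subsec: trivial bd}, by exploiting the fact that $H^\prime$ is precisely the stabilizer of $\BH^2$ in $G_0$ and therefore acts on $\BH^2$ by measure-preserving isometries.

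Unfolding $\phi_\delta = \chi\varphi_\delta$, I would write
\begin{align*}
I(\lambda, \varphi_\delta, g) = \iint_{\BH^2} \overline{\phi_\delta(x_1)} \, \phi_\delta(x_2) \, k_\lambda(x_1^{-1} g x_2) \, dx_1 \, dx_2.
\end{align*}
The substitution $y_2 = g x_2$ (which is a measure-preserving bijection of $\BH^2$) yields the same integral with $\phi_\delta(x_2)$ replaced by $\phi_\delta^g(y_2) := \phi_\delta(g^{-1} y_2)$ and $k_\lambda(x_1^{-1} g x_2)$ replaced by $k_\lambda(x_1^{-1} y_2)$. Since $x_1, y_2 \in \BH^2$, lifting to $H_0$ gives $x_1^{-1} y_2 \in H^\prime$, and the $K_0$-bi-invariance of $k_\lambda$ reduces $k_\lambda(x_1^{-1} y_2)$ to its $\SO(2)$-bi-invariant restriction on $H_0$. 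The resulting integral is then exactly the pairing $\langle \phi_\delta^g \times k_\lambda|_{\BH^2}, \phi_\delta \rangle$ appearing in the analysis of the local bound \eqref{Ilambda e as convolution}.

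Applying Theorem~\ref{plancherel formula} and using that $\cF(k_\lambda|_{\BH^2})$ coincides with the purely spectral Harish-Chandra transform $\tilde{k_\lambda}(s)$, Cauchy--Schwarz then gives
\begin{align*}
|I(\lambda, \varphi_\delta, g)| \leq \sup_{s \in \BR} |\tilde{k_\lambda}(s)| \cdot \|\phi_\delta^g\|_{L^2(\BH^2)} \, \|\phi_\delta\|_{L^2(\BH^2)}.
\end{align*}
Isometry-invariance of the $L^2$-norm yields $\|\phi_\delta^g\|_{L^2(\BH^2)} = \|\phi_\delta\|_{L^2(\BH^2)}$, and \eqref{ineq all t} of Proposition~\ref{supnorm of klambda} supplies $\sup_s|\tilde{k_\lambda}(s)| \ll \lambda^{1/2}$. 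This yields the first inequality, and the second is immediate from Proposition~\ref{L2 norm of phi delta}.

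The only step requiring care is verifying that every $g \in H^\prime = H_0 \cup w_0 H_0$ acts isometrically on $\BH^2$: for $g \in H_0 = \SL(2,\BR)$ this is standard, while for $g \in w_0 H_0$ it follows by direct computation with \eqref{Poincare}, which shows that $w_0$ maps the real axis of the upper half-plane into itself and preserves the hyperbolic measure. Once this is in hand, the argument is a mild generalization of the local bound calculation in Section~\ref{subsec: trivial bd}, and no arithmetic amplification is needed.
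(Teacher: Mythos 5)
Your proposal is correct and follows essentially the same route as the paper: translate by the isometry $g$ of $\BH^2$, recognize the result as the convolution pairing from the local-bound computation, and apply Plancherel together with $\sup_s|\tilde{k_\lambda}(s)|\ll\lambda^{1/2}$ and Proposition \ref{L2 norm of phi delta}. The only cosmetic difference is where the change of variables is performed (the paper shifts $\bz_1\mapsto g\bz_1$ after using $k_\lambda(x)=k_\lambda(x^{-1})$, you shift $x_2\mapsto g^{-1}y_2$ directly), which is immaterial.
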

\begin{proof}

    Since $g\in H^\prime$, $g$ is an isometry of $\BH^2$. We can define the translation
    \begin{align*}
         \phi_\delta^g (\bz):=\phi_\delta(g\bz),
    \end{align*}
    which is still a function on $\BH^2$.
    We may see that $\|\phi_\delta^g \|_{L^2(\BH^2)} = \|\phi_\delta \|_{L^2(\BH^2)}$.
    Then
    \begin{align*}
        I(\lambda,\varphi_\delta,g)&=\iint_{\BH^2}\overline{\phi_\delta(\bz_1)}\phi_\delta(\bz_2) k_\lambda (\bz_1^{-1}g\bz_2) d\bz_1 d\bz_2\\
        &= \iint_{\BH^2}\overline{\phi_\delta(\bz_1)}\phi_\delta(\bz_2) k_\lambda (\bz_2^{-1}g^{-1}\bz_1) d\bz_1 d\bz_2 \\
        &= \iint_{\BH^2}\overline{\phi_\delta(g\bz_1)}\phi_\delta(\bz_2) k_\lambda (\bz_2^{-1}\bz_1) d\bz_1 d\bz_2\\
        &=\langle \phi_\delta \times k_\lambda,\phi^g_\delta\rangle \\
        &=\langle \tilde{\phi_\delta} \tilde{k_\lambda},\tilde{\phi^g_\delta} \rangle \\
        &\ll \|\tilde{k_\lambda} \|_{L^\infty} \|\phi_\delta^g \|_{L^2(\BH^2)}  \|\phi_\delta \|_{L^2(\BH^2)}.
    \end{align*}
    The proposition then follows from Proposition \ref{supnorm of klambda} and Proposition \ref{L2 norm of phi delta}.
\end{proof}

\begin{proposition}\label{orbital integral away from SL2R}
     Suppose that $\phi\in\cS(\BH^2)\cap H_\beta$, $0<\delta<1$, and $\lambda^{\epsilon'}\leq\beta\leq \lambda^{1/4}$.
     Let $0 < \omega < 1$ be a small parameter satisfying $\omega^{-1} = o(\lambda^{1/2-2\epsilon_1}\beta^{-1/2})$.
     Then there exists an absolute constant $C>0$, such that the following holds.
     If $d(g,e)\leq 1$ and
    $
       d\left(g,H^\prime\right) \geq C \omega,
    $
    then we have 
    $$I(\lambda,\varphi_{\delta},g)\ll_{\epsilon',\epsilon_1,A} \delta\omega^{-2} \lambda^{1/2 + 10\epsilon_1}\beta^{3/2}\|\phi\|_{L^2(\BH^2)}^2 + \lambda^{-A}\|\phi\|_{L^2(\BH^2)}^2.$$
\end{proposition}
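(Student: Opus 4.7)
The plan is to expand $I(\lambda,\varphi_\delta,g)$ as a double sum of pairings $P(\phi_{m_1,n_1},\phi_{m_2,n_2},g;\lambda)$ over all pairs of geodesic beams in $\cS_{<\delta}(\phi)$, then split the sum based on whether the quadruple $(b_{m_1},x_{n_1},b_{m_2},x_{n_2})$ is ``uniform'' in the sense of Corollary \ref{nonstationary phase bound for P}. Concretely, since $\phi_\delta = \chi\varphi_\delta = \sum_{\phi_{m,n}\in\cS_{<\delta}(\phi)}\phi_{m,n}$, we have
\begin{align*}
    I(\lambda,\varphi_\delta,g) = \sum_{\substack{\phi_{m_1,n_1},\phi_{m_2,n_2}\\ \in \cS_{<\delta}(\phi)}} P(\phi_{m_1,n_1},\phi_{m_2,n_2},g;\lambda).
\end{align*}

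Set the threshold $\delta_0 = \lambda^{-1/2+2\epsilon_1}\beta^{1/2}$. For any quadruple satisfying $d(n(-x_{n_1})b_{m_1}^{-1}gb_{m_2}n(x_{n_2}),MA)\geq \delta_0$, Corollary \ref{nonstationary phase bound for P} gives $|P|\ll_A \lambda^{-A}\|\phi\|_{L^2(\BH^2)}^2$, so these contribute negligibly even after summing over all $\ll N_1^2 N_2^2$ quadruples. It remains to control the ``bad'' quadruples, for which $d(n(-x_{n_1})b_{m_1}^{-1}gb_{m_2}n(x_{n_2}),MA)<\delta_0$. The assumption $\omega^{-1}=o(\lambda^{1/2-2\epsilon_1}\beta^{-1/2})$ is exactly $\delta_0=o(\omega)$, so Proposition \ref{most of geodesic beams satisfy uniform condition} applies: with $g$ satisfying $d(g,H^\prime)\geq C\omega$, the number of bad quadruples is
\begin{align*}
    \ll (1+\delta_0 N_1)(1+\delta_0 N_2)(1+\omega^{-1}\delta_0 N_1)(1+\omega^{-1}\delta_0 N_2).
\end{align*}
Using $N_1\asymp \lambda^{1/2-\epsilon_1}\beta^{-1/2}$ and $N_2\asymp \lambda^{1/2}\beta^{-1/2}$ we obtain $\delta_0 N_1\asymp \lambda^{\epsilon_1}$ and $\delta_0 N_2\asymp \lambda^{2\epsilon_1}$, so the count is $\ll \omega^{-2}\lambda^{6\epsilon_1}$.

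For each bad quadruple, I will use the general pointwise estimate from Proposition \ref{general uniform bound} together with the membership condition $\|\phi_{m_i,n_i}\|_{L^2(\BH^2)}^2\leq \delta C\|\phi\|_{L^2(\BH^2)}^2$ to bound
\begin{align*}
    |P(\phi_{m_1,n_1},\phi_{m_2,n_2},g;\lambda)|\ll \delta\,\lambda^{1/2+4\epsilon_1}\beta^{3/2}\|\phi\|_{L^2(\BH^2)}^2 + O_A(\lambda^{-A}\|\phi\|_{L^2(\BH^2)}^2).
\end{align*}
Multiplying the per-pair bound by the count of bad quadruples yields $\delta\omega^{-2}\lambda^{1/2+10\epsilon_1}\beta^{3/2}\|\phi\|_{L^2(\BH^2)}^2$, which is the claimed estimate.

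The main technical obstacle is verifying that the two threshold parameters are compatible: the stationary/nonstationary dichotomy for $P$ (Corollary \ref{nonstationary phase bound for P}) uses the same scale $\delta_0=\lambda^{-1/2+2\epsilon_1}\beta^{1/2}$ that must satisfy $\delta_0=o(\omega)$ for the geometric counting in Proposition \ref{most of geodesic beams satisfy uniform condition} to apply. Once the hypothesis on $\omega$ is recognized as exactly this compatibility condition, the rest is bookkeeping: splitting the double sum, discarding the nonstationary contributions up to admissible polynomial error $O_A(\lambda^{-A})$, and inserting the $L^2$-size bound available for beams in $\cS_{<\delta}(\phi)$ into the per-pair estimate.
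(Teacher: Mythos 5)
Your proposal is correct and follows essentially the same route as the paper's proof: the same expansion of $I(\lambda,\varphi_\delta,g)$ into pairings $P(\phi_{m_1,n_1},\phi_{m_2,n_2},g;\lambda)$, the same dichotomy at the scale $\lambda^{-1/2+2\epsilon_1}\beta^{1/2}$ via Corollary \ref{nonstationary phase bound for P}, the same count of bad quadruples from Proposition \ref{most of geodesic beams satisfy uniform condition}, and the same per-pair bound from Proposition \ref{general uniform bound} combined with the $\cS_{<\delta}(\phi)$ membership condition. Your observation that the hypothesis on $\omega$ is precisely the compatibility condition $\delta_0 = o(\omega)$ needed for the geometric counting matches the paper's (implicit) use of that assumption.
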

\begin{proof}
    We may write 
    \begin{align*}
        I(\lambda,\varphi_{\delta},g) = \sum_{\phi_{m_1,n_1},\phi_{m_2,n_2}\in\cS_{<\delta}(\phi)}P(\phi_{m_1,n_1},\phi_{m_2,n_2},g;\lambda) 
    \end{align*}
    By Corollary \ref{nonstationary phase bound for P}, if 
    \begin{align}\label{5 6 draft}
        d(n(-x_{n_1})b_{m_1}^{-1}gb_{m_2}n(x_{n_2}), MA) \geq \lambda^{-1/2 + 2\epsilon_1}\beta^{1/2},
    \end{align}
    then 
    \begin{align*}
        P(\phi_{m_1,n_1},\phi_{m_2,n_2},g;\lambda) \ll_{\epsilon_1,A} \lambda^{-A}\|\phi\|_{L^2(\BH^2)}^2.
    \end{align*}
    We then need to consider the quadruples $(b_{m_1},x_{n_1},b_{m_2},x_{n_2})$ not satisfying \eqref{5 6 draft}. 
    By Proposition \ref{most of geodesic beams satisfy uniform condition}, there exists a $C>0$, so that as long as $d(g,e)\leq 1$ and
    $
       d\left(g,H^\prime\right) \geq C\omega
    $,
    the number of quadruples not satisfying (\ref{5 6 draft}) is 
    \begin{align*}
        \ll \omega^{-2} (\lambda^{-1/2 + 2\epsilon_1}\beta^{1/2})^4 N_1^2 N_2^2 \ll \omega^{-2} (\lambda^{-1/2 + 2\epsilon_1}\beta^{1/2})^4 (\lambda^{1/2-\epsilon_1}\beta^{-1/2})^2 (\lambda^{1/2}\beta^{-1/2})^2 \ll \omega^{-2} \lambda^{6\epsilon_1}.
    \end{align*}
    
    For each pair $\phi_{m_1,n_1},\phi_{m_2,n_2}\in\cS_{<\delta}$,
    by Proposition \ref{general uniform bound}, we have
    \begin{align*}
        P(\phi_{m_1,n_1},\phi_{m_2,n_2},g;\lambda)\ll \lambda^{1/2+ 4\epsilon_1} \beta^{3/2} \delta\|\phi\|_{L^2(\BH^2)}^2 + O_A(\lambda^{-A}\|\phi\|_{L^2(\BH^2)}^2).
    \end{align*}
    Therefore,
    \begin{align*}
        I(\lambda,\varphi_{\delta},g) \ll& \left(\omega^{-2} \lambda^{6\epsilon_1} \right)\cdot\left( \lambda^{1/2 + 4\epsilon_1}\beta^{3/2}\delta\|\phi\|_{L^2(\BH^2)}^2\right) + O_A(\lambda^{-A}\|\phi\|_{L^2(\BH^2)}^2)\\
        =& \delta\omega^{-2} \lambda^{1/2 + 10\epsilon_1}\beta^{3/2} \|\phi\|_{L^2(\BH^2)}^2 + O_A(\lambda^{-A}\|\phi\|_{L^2(\BH^2)}^2).
    \end{align*}
\end{proof}

Let $v\in\sP$ be a finite place that is split in $E$. 
Similar to \eqref{Hecke relation inert case}, we have the Hecke relation in $\cH_v$:
\begin{align}\label{Hecke relation}
    T_v(1)*T_v(1) = q_v(q_v+1) +(q_v-1)T_v(1)+ T_v(2).
\end{align}
If we define the real numbers $\tau_v(1)$ and $\tau_v(2)$ by
\begin{align*}
    T_{v}(1)\psi = \tau_v(1)q_v \psi,\quad T_{v}(2)\psi = \tau_v(2)q_v^2\psi,
\end{align*}
then (\ref{Hecke relation}) implies that we cannot have both $|\tau_v(1)|\leq 1/4$ and $|\tau_v(2)|\leq1/4$.
We define
\begin{align}\label{amplifier split}
    T_v = \begin{cases} {T_{v}(1)}/\tau_{v}(1)q_v\quad\text{ if }|\tau_{v}(1)|>1/4,\\
    {T_{v}(2)}/\tau_{v}(2)q_v^2\quad\text{ otherwise. }
    \end{cases}
\end{align}
It follows that $T_v\psi = \psi$ for all $v\in\sP$. Note that $|K_v(1,0)/K_v|\asymp q_v^2$ and $|K_v(2,0)/K_v|\asymp q_v^4$, so $\|T_v(1)\|_{L^1}=\|T_v(1)\|_{L^2}^2\asymp q_v^2$ and $\|T_v(2)\|_{L^1}=\|T_v(2)\|_{L^2}^2\asymp q_v^4$. Hence,
\begin{align}\label{eq: L1 bd for Tv, v split}
    \|T_v\|_{L^1}\ll q_v^2,
\end{align}
and
\begin{align}\label{eq: L2 bd for Tv, v split}
    \|T_v\|_{L^2}\ll 1.
\end{align}

\begin{proof}[Proof of Proposition \ref{amplified bound for small norm}]
    Let $N\geq 1$ be a parameter to be chosen later and let $X= N^{4}$. Apply Proposition \ref{Hecke return wrt SL(2,R)} to $g=g_0$ to get a constant $C_1$ (which is denoted by $C$ in Proposition \ref{Hecke return wrt SL(2,R)}) and the set of primes $\sP_0$. We define $\sP_N = \{ v\in\sP_0 \,|\, N/2\leq q_v\leq N\}$, and define
\begin{align*}
    \cT_N = \sum_{v\in\sP_N}  T_v ,
\end{align*}
where $T_v$'s are defined by (\ref{amplifier split}). This choice of $\cT_N$ satisfies
\begin{align*}
    |\langle\cT_N\psi,\phi_\delta \rangle| = \#\sP_N \cdot |\langle\psi, \phi_\delta \rangle| \gg N^{1-\epsilon} |\langle\psi,\phi_\delta \rangle| = N^{1-\epsilon}\left|\sum_{\phi_{m,n} \in \cS_{<\delta}(\phi)}\langle\psi,\phi_{m,n} \rangle\right|.
\end{align*}
By Proposition \ref{amplification inequality}, it follows that
\begin{align}\label{amplified inequality 2}
     \left|\sum_{\phi_{m,n} \in \cS_{<\delta}(\phi)}\langle\psi,\phi_{m,n} \rangle\right|^2 \ll_\epsilon N^{-2+\epsilon} \sum_{\substack{\gamma\in \bG(F)\\ \gamma\in g_0 \cD_0 g_0^{-1}}} \left| (\cT_N*\cT_N^*)(\gamma)I(\lambda,\varphi_\delta,g_0^{-1}\gamma g_0) \right|.
\end{align}
Here $\cD_0\subset G_{0}$ is the compact subset consisting of $g\in G_0$ with $d(g,e)\leq 1$.
Let $C>0$ be the constant given by Proposition \ref{orbital integral away from SL2R}.
We choose $\omega = \frac{1}{2}C^{-1}C_1 X^{-8} = \frac{1}{2}C^{-1}C_1 N^{-32}$, and break the sum in \eqref{amplified inequality 2} into those terms with $d(g_0^{-1}\gamma g_0,H^\prime)< C \omega$ and the complement. We denote the complement by $D$, that is
\begin{align*}
    D =\{\gamma\in G(F)| g_0^{-1}\gamma g_0\in \cD_0\text{ and }d(g_0^{-1}\gamma g_0,H^\prime)\geq C \omega \} .
\end{align*}
To estimate the sum over $D$, 
we first use the bound 
$|I(\lambda,\phi_\delta,g_0^{-1}\gamma g_0) |\ll \delta\omega^{-2} \lambda^{1/2 + 10\epsilon_1}\beta^{3/2}\|\phi\|_{L^2(\BH^2)}^2
\ll N^{64}\delta\lambda^{1/2 + 10\epsilon_1}\beta^{3/2}\|\phi\|_{L^2(\BH^2)}^2$
from Proposition \ref{orbital integral away from SL2R}. 
Here, we need to assume $0 < \omega <1$ satisfies
\begin{align}\label{suitable condition on omega}
    \omega^{-1} =o( \lambda^{1/2-2\epsilon_1}\beta^{-1/2}).
\end{align}
This gives
\begin{align*}
    \sum_{\gamma\in D} \left| (\cT_N*\cT_N^*)(\gamma)I(\lambda,\varphi_\delta,g_0^{-1}\gamma g_0) \right| &\ll N^{64}\delta\lambda^{1/2 + 10\epsilon_1}\beta^{3/2}\|\phi\|_{L^2(\BH^2)}^2\sum_{\gamma\in \bG(F)\cap g_0\cD_0g_0^{-1}}\left| (\cT_N*\cT_N^*)(\gamma) \right|\\
    &\ll N^{64}\delta\lambda^{1/2 + 10\epsilon_1}\beta^{3/2}\|\phi\|_{L^2(\BH^2)}^2\|\cT_N*\cT_N^*\|_{L^1}.
\end{align*}
Note that \eqref{eq: L1 bd for Tv, v split} gives $\|T_v\|_{L^1} \ll N^2$ so $\|\cT_N\|_{L^1} \ll N^3$, and therefore,
\begin{align*}
    \sum_{\gamma\in D} \left| (\cT_N*\cT_N^*)(\gamma)I(\lambda,\phi_\delta,g_0^{-1}\gamma g_0) \right| \ll N^{70}\delta\lambda^{1/2 + 10\epsilon_1}\beta^{3/2}\|\phi\|_{L^2(\BH^2)}^2.
\end{align*}
We next estimate the sum over $d(g_0^{-1}\gamma g_0,H^\prime)<C \omega $.
We first expand $\cT_N*\cT_N^*$ as a sum
\begin{align*}
    \cT_N*\cT_N^* = \sum_{\fn\subset\cO} a_\fn 1_{K(\fn)}
\end{align*}
for some constants $a_\fn$. 
Our choice of $X$, $\omega$ and the constant $C_1$ means that we may apply Proposition \ref{Hecke return wrt SL(2,R)} to show that for any $\fn\neq \cO$ appearing in the expansion of $\cT_N*\cT_N^*$, 
the term $1_{K(\fn)}$ makes no contribution to the sum in the right-hand side of \eqref{amplified inequality 2}.
Hence, we only need to consider the term $\fn=\cO$. By the bound \eqref{eq: L2 bd for Tv, v split}, we have
\begin{align}\label{eq: bd for aO}
    a_\cO=[\cT_N*\cT_N^*] (e)=\|\cT_N\|_{L^2}^2= \sum_{v\in\sP_N}\|T_v \|^2_{L^2}\ll N.
\end{align}
Hence,
\begin{align}\label{ine 3'}
    \sum_{\substack{\gamma\in \bG(F)\\ \gamma\in g_0 \cD_0 g_0^{-1}\\d(g_0^{-1}\gamma g_0,H^\prime)<C \omega}}  \left| (\cT_N*\cT_N^*)(\gamma)I(\lambda,\phi_\delta,g_0^{-1}\gamma g_0) \right| \ll N \sum_{\substack{\gamma\in \bG(F)\\ \gamma\in g_0\cD_0 g_0^{-1}\\d(g_0^{-1}\gamma g_0,H^\prime)<C \omega}} 1_{K_f}(\gamma) \left| I(\lambda,\phi_\delta,g_0^{-1}\gamma g_0) \right|.
\end{align}
Since $\gamma$ varies in a finite set, we may assume that only the identity contributes to the right-hand side of (\ref{ine 3'}).
We apply the bound
$|I(\lambda,\varphi_\delta,e) |\ll \lambda^{1/2} \|\phi\|_{L^2(\BH^2)}^2$ from Proposition \ref{uniform bound for phi delta} to obtain
\begin{align*}
     \sum_{\substack{\gamma\in \bG(F)\\ \gamma\in g_0 \cD_0 g_0^{-1}\\d(g_0^{-1}\gamma g_0,H^\prime)<C\omega}} \left| (\cT_N*\cT_N^*)(\gamma)I(\lambda,\phi_\delta,g_0^{-1}\gamma g_0) \right| \ll N\lambda^{1/2} \|\phi\|_{L^2(\BH^2)}^2.
\end{align*}
Adding our two estimates, we conclude that
\begin{align*}
   \left|\sum_{\phi_{m,n} \in \cS_{<\delta}(\phi)}\langle\psi,\phi_{m,n} \rangle\right|^2 \ll N^{-2+\epsilon}(N^{70}\delta\lambda^{1/2 + 10\epsilon_1}\beta^{3/2}+N\lambda^{1/2})\|\phi\|_{L^2(\BH^2)}^2.
\end{align*}
Choosing $N = \delta^{-1/69} \beta^{-1/46} = (\delta^{1/3}\beta^{1/2})^{-1/23}$ 
gives the desired bound.
Finally, we check that $N\geq1$ and
$\omega =\frac{1}{2}C^{-1} C_1N^{-32} = \frac{1}{2}C^{-1} C_1\delta^{32/69}\beta^{48/69} =\frac{1}{2}C^{-1}  C_1 (\delta^{1/3}\beta^{1/2})^{32/23}$
satisfies the assumption \eqref{suitable condition on omega} provided $\lambda^{\epsilon'}\leq \beta\leq \lambda^{1/4}$ and $\lambda^{-1/2}\beta^{1/2}\leq\delta\leq\beta^{-3/2}$ with $\epsilon_1$ sufficiently small.
\end{proof}

\section{Kakeya-Nikodym norms}\label{section of Kakeya-Nikodym norms}

In this section, we prove Theorem \ref{KN main theorem}.

\subsection{Notation and sketch of proof}
We first recall the Helgason transform on $\BH^3$.
For $f\in C_c^\infty(\BH^3)$,
the Helgason transform of $f$ is defined by
\begin{align*}
    \widehat{f}(s,kM):= \int_{\BH^3} f(x) e^{(1-is)A(k^{-1}x)} dx,
\end{align*}
where $s\in\BC$ and $kM \in K_0/M$.
The boundary $\partial\BH^3$ is naturally identified with $K_0/M$.
For $s\in\BC$ and $x\in\BH^3$, we denote by
$
    \varphi_s(x) = \varphi_s^{\BH^3}(x)
$
the spherical function on $\BH^3$ with spectral parameter $s$.
We shall omit the superscript for the spherical functions in this section.
If $f$ is left $K_0$-invariant, then its Helgason transform agrees with the Harish-Chandra transform on $\BH^3$, that is
\begin{align*}
    \widehat{f}(s,kM) = \widehat{f}(s) = \int_{\BH^3} f(x) \varphi_{-s}(x)dx
\end{align*}
for any $s\in\BC$ and $kM\in K_0/M$. For $f,g\in C_c^\infty(\BH^3)$, their convolution, which we denote by $f\times g$, is defined as the convolution on the group $G_0$, i.e.,
\begin{align*}
    (f\times g)(x) := \int_{G_0} f(y\cdot o) g(y^{-1}\cdot x) dy.
\end{align*}
If moreover $g$ is assumed to be $K_0$-invariant, then
\begin{align*}
    \widehat{f\times g}(s,kM) = \widehat{f}(s,kM) \widehat{g}(s).
\end{align*}
We recall the Plancherel formula on $\BH^3$.
For $f,g\in C_c^\infty(\BH^3)$,
\begin{align*}
    \int_{\BH^3} f(x)\overline{g(x)} dx = \int_0^\infty\int_{K_0/M} \widehat{f}(s,kM)\overline{\widehat{g}(s,kM)} d\nu(s) dk.
\end{align*}
Here we normalize the Haar measure on $K_0/M$ so that the total volume is 1.

Let $l_{[-1/2,1/2]}$ be the oriented vertical geodesic centered at $o$ of unit length, i.e.,
\begin{align*}
    l_{[-1/2,1/2]} = \{a(t)\cdot o: t\in[-1/2,1/2] \},
\end{align*}
and the positive orientation is upward.
Any oriented geodesic of unit length in $\BH^3$ is equal to $g\cdot l_{[-1/2,1/2]}$
for some $g \in G_0$.
We define the $\lambda^{-1/2}$-tube of $l_{[-1/2,1/2]}$ to be
\begin{align*}
    \sT = \sT_{\lambda^{-1/2}}=  \{ (z,e^t) : |z| \leq \lambda^{-1/2}, |t|\leq 1/2 \}.
\end{align*}
The $\lambda^{-1/2}$-tube of $g\cdot l_{[-1/2,1/2]}$ is defined to be $g\cdot \sT$.
Fix $g_0 \in \Omega_{v_0}$.
We shall consider the $L^2$-norm problem for $\psi$ restricted to the tube $g_0 \cdot \sT$. 

Let $\BD = \BD_{\lambda^{-1/2}}$ be the open disc of radius $\lambda^{-1/2}$ centered at $0$ in $\BC$, i.e.,
\begin{align*}
    \BD =\BD_{\lambda^{-1/2}} =  \{z\in\BC:|z|<\lambda^{-1/2} \},
\end{align*}
and denote by $\overline{\BD}$ its closure.
We denote by $\cS(\BC \times \BR)$ the space of Schwartz functions on the Euclidean space $\BC \times \BR \simeq \BR^3$,
and let $\cS(\overline{\BD} \times \BR)$ be the subspace consisting of Schwartz functions supported in $\overline{\BD} \times \BR$. 
Let $\phi(z,t)\in \cS(\BC\times\BR)$. 
We shall consider the Fourier transform in the $t$-variable, i.e.,
\begin{align*}
    (\sF_t \phi)(z,s) := \int_{-\infty}^\infty \phi(z,t) e^{-is t} dt.
\end{align*}
We will denote the Fourier inversion by $\sF_t^{-1}$.
For $\phi \in \cS(\BC\times\BR)$, we have the integral formula for the Fourier inversion
\begin{align*}
    \phi(z,t) = \frac{1}{2\pi} \int_{-\infty}^\infty (\sF_t\phi)(z,s) e^{ist}ds.
\end{align*}
If $b\in C_c^\infty(\overline{\BD} \times (-1,1))$ is a non-negative cutoff function and $\phi\in \cS(\BC\times\BR)$,
we define the inner product on $g_0\cdot \sT$ by
\begin{align*}
    \langle \psi,b\phi\rangle :=  \int_{-\infty}^\infty\int_{\BD} \psi (g_0n(z)a(t)\cdot o) \overline{b(z,t) \phi(z,t)} e^{-2t}dtdz.
\end{align*}
Here the $e^{-2t}$ factor comes from the hyperbolic metric.

The same as the previous sections, we let $\epsilon'>0$ be a small constant satisfying $\epsilon'\leq10^{-6}$.
Let $\beta$ be a parameter satisfying $\lambda^{\epsilon'}\leq\beta\leq\lambda^{1/4}$.
We shall bound the part of $b\psi$ with Fourier transform away from the spectrum $\lambda$ using a purely analytic argument,
and bound the part with Fourier transform near the spectrum via the method of arithmetic amplification.
Theorem \ref{KN main theorem} follows from combining the following propositions with the choice $\beta = \lambda^{1/5}$.

\begin{proposition}\label{KN bound 1-Pi}
    If $\lambda^{\epsilon'}\leq\beta\leq\lambda$ and $\phi \in \cS(\overline{\BD}\times\BR) $  satisfies $\|\phi \|_2 = 1$ and $$\supp(\sF_t\phi) \subset \overline{\BD}\times (\BR\backslash \pm[\lambda-\beta,
    \lambda+\beta]),$$ 
    we have $\langle \psi,b\phi \rangle \ll_{\epsilon',\epsilon} \beta^{-1/4+\epsilon}$.
\end{proposition}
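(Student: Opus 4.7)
The plan is to adapt the proof of Proposition \ref{bound 1-Pi} to the geodesic tube setting, using the Helgason transform on $\BH^3$ in place of the one on $\BH^2$.

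First, I would apply Proposition \ref{amplification inequality} with $\cT$ taken to be the characteristic function of a sufficiently small compact open subgroup of $\bG(\BA_f)$, so that only $\gamma = e$ contributes to the geometric side. This yields
\begin{align*}
|\langle\psi, b\phi\rangle|^2 \ll I(\lambda,\phi,e) := \iint_{\BH^3\times\BH^3} \overline{B\phi(x_1)}\,B\phi(x_2)\,k_\lambda(x_1^{-1}x_2)\,dx_1\,dx_2,
\end{align*}
where $B\phi$ denotes the function on $\BH^3$ supported in $g_0\cdot\sT$ with $B\phi(g_0 n(z)a(t)\cdot o) = b(z,t)\phi(z,t)$. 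By the Plancherel formula for the Helgason transform on $\BH^3$, combined with $\widehat{k_\lambda} = h_\lambda$,
\begin{align*}
I(\lambda,\phi,e) = \int_0^\infty h_\lambda(s)\,G(s)\,d\nu(s), \qquad G(s) := \int_{K_0/M} \bigl|\widehat{B\phi}(s,kM)\bigr|^2\,dk.
\end{align*}
Since $h_\lambda$ is a nonnegative Paley--Wiener function concentrated on $\pm[\lambda-C,\lambda+C]$ with $O(1)$ sup-norm and rapidly decaying tails, the task reduces to establishing the pointwise estimate $G(s) \ll_\epsilon \lambda^{-2}\beta^{-1/2+\epsilon}$ uniformly in this spectral interval; combined with $\int h_\lambda\,d\nu \sim \lambda^2$, this will give $I(\lambda,\phi,e) \ll_\epsilon \beta^{-1/2+\epsilon}$ and hence the claim after taking square roots.

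To bound $G(s)$, I would invoke Fourier inversion in $t$ on $\phi$ and express
\begin{align*}
\widehat{B\phi}(s,kM) = \frac{1}{2\pi}\iint \sF_t\phi(z,\eta)\,K_k(s,\eta,z)\,dz\,d\eta, \qquad K_k(s,\eta,z) := \int b(z,t)\,e^{i\eta t + (1-is)A(k^{-1}n(z)a(t)) - 2t}\,dt.
\end{align*}
The phase derivative of the inner $t$-integral is $\eta - s\,\partial_t A(k^{-1}n(z)a(t))$, taking values in $\eta - s[-1,1]$ on $\supp(b)$. Using the uniformization in Lemma \ref{uniformazation lemma}, non-stationary phase yields $|K_k| \ll_N \beta^{-N}$ whenever no $t\in\supp(b)$ solves $\eta = s\,\partial_t A(k^{-1}n(z)a(t))$ with $\eta \in \supp(\sF_t\phi)$; on the complementary stationary set in $(k,z)$-space, stationary phase in $t$ gives $|K_k|\ll \lambda^{-1/2}$.

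The main obstacle will be combining these pointwise estimates on $K_k$, via Cauchy--Schwarz using $\|\sF_t\phi\|_2 = 1$, with a careful measure estimate for the stationary set on $K_0/M$. Specifically, one must quantify how the set of $kM$ for which the stationary phase equation admits a solution $t\in\supp(b)$ with $\eta\in\supp(\sF_t\phi)$ depends on $\beta$, and verify that the interplay between the one-dimensional $t$-stationary-phase decay and the two-dimensional $kM$-integration yields exactly the $\beta^{-1/2}$-gain over the trivial bound $G(s)\ll\lambda^{-2}$. The oscillatory-integral toolkit developed in Section \ref{section of oscillatory integrals}, particularly the uniformizations for the Iwasawa $A$-projection, should be essential here.
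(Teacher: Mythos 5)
Your opening step is correct but cites the wrong tool: for the tube you need Proposition \ref{KN amplification inequality for geodesic tube} rather than Proposition \ref{amplification inequality}, and this indeed reduces the problem to $J(\lambda,\phi,e)$. From there, however, your route diverges from the paper's in a way that leaves the central difficulty unresolved. The paper does \emph{not} prove a pointwise bound on the spherical spectral density $G(s)$ of $B\phi$. It splits the kernel radially, $k_\lambda=k_1+k_2$ with $k_1$ supported where the radial variable is $\le 2\beta^{-1/2+\epsilon_0}$; the near piece is handled by Plancherel together with the sup-norm bound $\|\widehat{k_1}\|_\infty\ll\beta^{-1/2+\epsilon_0}$ (which uses only the decay $\varphi_s(a(t))\ll(1+|st|)^{-1}$ and nothing about $\phi$), while the spectral support hypothesis on $\sF_t\phi$ is used solely to show the far piece is negligible, via stationary-phase on $M\backslash K_0$ and an integration by parts in $t$ whose phase derivative is essentially $s_2\pm s$ with $|s_2-s|\ge\beta/2$ over an interval of length $\gg\beta^{-1/2+\epsilon_0}$.

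The gap in your approach is the combination step you yourself flag as "the main obstacle." Applying Cauchy--Schwarz in $(z,\eta)$ jointly gives $G(s)\le\|\sF_t\phi\|_2^2\int_{K_0/M}\|K_k(s,\cdot,\cdot)\|_{L^2(dz\,d\eta)}^2\,dk$, and by Plancherel in $\eta$ one has $\|K_k(s,\cdot,\cdot)\|_{L^2(dz\,d\eta)}^2\asymp\int|b|^2\asymp\lambda^{-1}$ for every $k$; this yields only $G(s)\ll\lambda^{-1}$, a factor $\lambda\beta^{1/2}$ short of your target $\lambda^{-2}\beta^{-1/2+\epsilon}$, because it discards all oscillation of $e^{-isA(k^{-1}n(z)a(t))}$ in $z$ and all orthogonality in $\eta$. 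Recovering the loss requires a genuine bilinear $TT^*$/Bessel-type estimate in the $z$-variable against the $K_0/M$-integration, which is a substantial argument not present in your sketch. Your intermediate pointwise claims are also not correct as stated: by Lemma \ref{uniformazation lemma} the second $t$-derivative of the phase is $O(s\,d(k,M)^2)$, not $\asymp s$, so stationary phase gives $|K_k|\ll(s\,d(k,M)^2)^{-1/2}$ (which degrades to $\beta^{-1/2}$ rather than $\lambda^{-1/2}$ at the relevant $k$), and mere absence of a stationary point does not give $|K_k|\ll_N\beta^{-N}$ --- one needs a quantitative lower bound on $|\eta-s\,\partial_tA|$ that dominates $|\Phi''|^{1/2}$, exactly the kind of threshold built into Proposition \ref{first one dimensional intgeral estimate}. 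As written, the proposal sets up a plausible but harder alternative strategy and stops before the step that actually produces the $\beta^{-1/2}$ saving.
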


\begin{proposition}\label{KN bound Pi}
    If $\lambda^{\epsilon'}\leq\beta\leq\lambda^{1/4}$ and $\phi \in \cS(\overline{\BD}\times\BR) $  satisfies $\|\phi \|_2 = 1$ and $$\supp(\sF_t\phi) \subset \overline{\BD}\times (\pm[\lambda-\beta,
    \lambda+\beta]),$$ 
    we have $\langle \psi,b\phi \rangle \ll_{\epsilon',\epsilon} \lambda^{-1/16+\epsilon}\beta^{1/16}$.
\end{proposition}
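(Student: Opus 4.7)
The plan is to follow the arithmetic amplification strategy used in Proposition \ref{bound Pi}, with the amplifier now exploiting Hecke returns with respect to the maximal torus $MA$ (the stabilizer of the oriented reference geodesic $l$). Since the test function $b\phi$ is already concentrated on the $\lambda^{-1/2}$-tube of the geodesic $g_0\cdot l$, no geodesic-beam decomposition is required, and the argument becomes considerably shorter than that of Section \ref{section of near spectrum}.

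The first step is to establish an amplification inequality analogous to Proposition \ref{amplification inequality}: for any $\cT\in\cH_f$,
\[
|\langle \cT\psi, b\phi\rangle|^2\ll\sum_{\gamma\in\bG(F)}|(\cT*\cT^*)(\gamma)|\,|I(\lambda,\phi,g_0^{-1}\gamma g_0)|,
\]
where
\[
I(\lambda,\phi,g)=\iint\overline{(b\phi)(x_1)}(b\phi)(x_2)\,k_\lambda(x_1^{-1}gx_2)\,dx_1\,dx_2
\]
with $x_j=n(z_j)a(t_j)\cdot o$. This follows from the pretrace formula on $X$ integrated against $\overline{(b\phi)}\otimes(b\phi)$ along $g_0\sT\times g_0\sT$, dropping all spectral terms other than the one indexed by $\psi$. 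The trivial bound $I(\lambda,\phi,e)\ll 1$ then follows from Plancherel on $\BH^3$ via the Helgason transform, the uniform bound $\widehat{k_\lambda}=O(1)$, and $\|b\phi\|_{L^2(\BH^3)}\ll 1$.

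The key technical input is the oscillatory integral estimate
\[
I(\lambda,\phi,g)\ll_A\lambda^{-A}
\]
whenever $d(g,e)\le 2$ and $d(g,MA)\ge\lambda^{-1/2+2\epsilon_1}\beta^{1/2}$ for some small $\epsilon_1>0$. After unfolding $k_\lambda$ via the inverse Harish-Chandra transform and writing $\varphi_s^{\BH^3}$ as an integral over $K_0/M$, $I(\lambda,\phi,g)$ reduces to a superposition of oscillatory integrals of the form $J(s,s_1,s_2,g;\chi_0,\rho_1,\rho_2)$ studied in Proposition \ref{nonstationary oscillatory integral estimate}. The hypothesis $\supp(\sF_t\phi)\subset\overline{\BD}\times\pm[\lambda-\beta,\lambda+\beta]$ gives the frequency localization $s_1,s_2\in[\lambda-\beta-O(1),\lambda+\beta+O(1)]$ (with rapid decay outside), while the spatial cutoff $\supp(b)\subset\overline{\BD}\times(-1,1)$ with $\BD$ of Euclidean radius $\lambda^{-1/2}$ plays the role of the bumps $\eta_n(b_m^{-1}\cdot)$ in Section \ref{section geodesic beam}. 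The phase $\rho_j$ obtained from the identity $A(b^{-1}n(z)a(t))=t+\rho(t)$ satisfies the hypothesis \eqref{condition for rho two dimensional integral} with $\epsilon_0=2\epsilon_1$, so Proposition \ref{nonstationary oscillatory integral estimate} delivers the decay.

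With this estimate in hand, I would choose $N\asymp\lambda^{1/8}\beta^{-1/8}$, $\delta=\lambda^{-1/2+2\epsilon_1}\beta^{1/2}$, and $X=C/\delta\asymp\lambda^{1/2-2\epsilon_1}\beta^{-1/2}$ as in the proof of Proposition \ref{amplified bound for geodesic beam}, and form $\cT_N=\sum_{v\in\sQ_N}T_v$ using the inert-prime amplifier \eqref{amplifier inert} for $\sQ_N=\{v\in\sQ:N/2\le q_v\le N\}$, so that $\#\sQ_N\gg N^{1-\epsilon}$ and $\cT_N\psi=\#\sQ_N\cdot\psi$. Expanding $\cT_N*\cT_N^*=\sum_\fn a_\fn\mathbf{1}_{K(\fn)}$: the term $\fn=\cO$ contributes $a_\cO\cdot I(\lambda,\phi,e)\ll N$; for $\fn\ne\cO$, the constraint $N^4\le X$ combined with Proposition \ref{Hecke return wrt MA} and the oscillatory integral estimate together limit the non-negligible $\gamma$ to $\ll_\epsilon\lambda^\epsilon$. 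Therefore $N^{2-\epsilon}|\langle\psi,b\phi\rangle|^2\ll N+\lambda^\epsilon\ll N$, and the choice $N\asymp\lambda^{1/8}\beta^{-1/8}$ yields $\langle\psi,b\phi\rangle\ll\lambda^{-1/16+\epsilon}\beta^{1/16}$. The main obstacle is verifying the oscillatory integral estimate for test functions supported on a Euclidean disc of radius $\lambda^{-1/2}$ rather than the geodesic beams of Section \ref{section geodesic beam}; although the argument parallels Corollary \ref{nonstationary phase bound for P} very closely, the compatibility of this flat spatial cutoff with the curved phase $A(b^{-1}n(z)a(t))$ must be checked by hand, after which the rest of the amplification step is a direct transcription of Section \ref{section of near spectrum}.
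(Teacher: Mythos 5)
Your proposal is correct and follows essentially the same route as the paper: the same tube pretrace inequality, the same reduction of the geometric side to Proposition \ref{nonstationary oscillatory integral estimate} under $d(g,MA)\geq\lambda^{-1/2+\epsilon_0}\beta^{1/2}$, the same inert-prime amplifier with $N\asymp\lambda^{1/8}\beta^{-1/8}$, and Proposition \ref{Hecke return wrt MA} for the counting. The one "obstacle" you flag at the end is in fact simpler than you suggest: since $\sF_t$ here is the Euclidean Fourier transform in $t$ (not a Helgason transform), the phases are $-is_1t_1+is_2t_2$ with $\rho_1=\rho_2=0$, and the flat cutoff only perturbs $g$ to $n(-z_1)gn(z_2)$ with $|z_j|\leq\lambda^{-1/2}$, which is negligible against the threshold $\lambda^{-1/2+\epsilon_0}\beta^{1/2}$.
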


We construct the same test function as in Section \ref{section of amplification}.
We fix a real-valued function $h\in C^\infty(\BR)$ of Paley-Wiener type that is nonnegative and satisfies $h(0)=1$. Define $h_\lambda^0(s) = h(s-\lambda)+h(-s-\lambda)$, and let $k_\lambda^0$ be the $K_0$-bi-invariant function on $\BH^3$ with Harish-Chandra transform $h^0_\lambda$.
The Paley-Wiener theorem implies that $k_\lambda^0$ is of compact support that may be chosen arbitrarily small. Define $k_\lambda = k_\lambda^0*k_\lambda^0$, which has Harish-Chandra transform $h_\lambda = (h_\lambda^0)^2$.

Let $b \in C_c^\infty(\overline{\BD} \times (-1,1))$ be a non-negative cutoff function.
If $g\in G_{0}$ and $\phi\in \cS(\BC\times\BR)$, we define
\begin{align*}
    J(\lambda,\phi,g) = \iint_{\BC\times \BR} \overline{b\phi(z_1,t_1})b\phi(z_2,t_2) k_\lambda (a(-t_1)n(-z_1)gn(z_2)a(t_2)) e^{-2t_1 -2 t_2} dt_1dz_1 dt_2dz_2.
\end{align*}
We prove a similar inequality as in Proposition \ref{amplification inequality} for geodesic tubes.
\begin{proposition}\label{KN amplification inequality for geodesic tube}
    Suppose $\cT\in\cH^S
    $ and $\phi\in\cS(\BC\times\BR)$. We have
    \begin{align}\label{KN amplification inequality eqn}
        \left| \langle \cT\psi,b\phi\rangle\right|^2\ll \sum_{\gamma\in \bG(F)} \left| (\cT*\cT^*)(\gamma)J(\lambda,\phi,g_0^{-1}\gamma g_0) \right|.
    \end{align}
\end{proposition}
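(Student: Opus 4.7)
The plan is to mimic the proof of Proposition \ref{amplification inequality}, replacing the integration over the hyperbolic surface $g_0\BH^2\times g_0\BH^2$ by integration over the tube parametrized by $(z,t)\mapsto g_0 n(z)a(t)\cdot o$ with the weight $b\phi$. Concretely, I would introduce the adelic automorphic kernel
\begin{align*}
    K(x,y) = \sum_{\gamma\in\bG(F)} k_\infty(\cT*\cT^*)(x^{-1}\gamma y)
\end{align*}
on $\bG(\BA)\times\bG(\BA)$, where $k_\infty$ is the compactly supported $K_\infty$-bi-invariant function on $G_\infty$ obtained by setting $k_\infty(x_\infty)=k_\lambda(x_{v_0})$ and the identity at the other archimedean places. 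Exactly as in Section \ref{section of amplification}, the associated integral operator acts on the Hecke-Maass basis $\{\psi_i\}$ by $h_\lambda(\lambda_i)\cT^*\cT\psi_i$, yielding the spectral expansion
\begin{align*}
    K(x,y) = \sum_i h_\lambda(\lambda_i)\,\cT\psi_i(x)\,\overline{\cT\psi_i(y)}.
\end{align*}

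Next I would integrate both sides against $\overline{b\phi}\times b\phi$, viewed as a function on $g_0\sT\times g_0\sT$ (or more precisely extended by zero using the compact support of $b$ to the parametrization by $(z,t)\in\BC\times\BR$). Since the hyperbolic volume element pulled back along $(z,t)\mapsto g_0 n(z)a(t)\cdot o$ equals $e^{-2t}\,dz\,dt$, the left-hand side gives
\begin{align*}
    \sum_i h_\lambda(\lambda_i)\,\bigl|\langle \cT\psi_i, b\phi\rangle\bigr|^2,
\end{align*}
while the right-hand side unfolds, via a change of variables $x=g_0 n(z_1)a(t_1)\cdot o$ and $y=g_0 n(z_2)a(t_2)\cdot o$ together with the left $K_0$-invariance of $k_\lambda$, into
\begin{align*}
    \sum_{\gamma\in\bG(F)} (\cT*\cT^*)(\gamma)\,J(\lambda,\phi,g_0^{-1}\gamma g_0).
\end{align*}

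Because $h$ was chosen non-negative and $h_\lambda=(h_\lambda^0)^2\geq 0$, every term on the spectral side is non-negative, so dropping all terms except $i$ with $\psi_i=\psi$ preserves the inequality. Taking absolute values on the geometric side then yields the desired bound
\begin{align*}
    |\langle \cT\psi, b\phi\rangle|^2 \ll \sum_{\gamma\in\bG(F)} \bigl|(\cT*\cT^*)(\gamma)\,J(\lambda,\phi,g_0^{-1}\gamma g_0)\bigr|.
\end{align*}

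No substantial obstacle should appear: the argument is formally identical to the surface case, the only new input being the explicit Jacobian factor $e^{-2t_1-2t_2}$ hidden inside the definition of $J(\lambda,\phi,g)$, and the verification that $b\phi$, extended by zero off $\overline{\BD}\times(-1,1)$, is an honest compactly supported function on $g_0\sT$ so that Fubini and the unfolding are justified. The only care needed is bookkeeping: making sure the support of $k_\lambda$ is small enough (relative to the injectivity radius of $X$ and the scale $\lambda^{-1/2}$ of the tube) that the double integral over $g_0\sT\times g_0\sT$ really does reproduce $J(\lambda,\phi,g_0^{-1}\gamma g_0)$ for each $\gamma$ without boundary issues, which is guaranteed by the Paley-Wiener construction of $k_\lambda^0$.
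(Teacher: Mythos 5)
Your proposal is correct and follows exactly the same route as the paper: form the adelic kernel $K(x,y)=\sum_{\gamma}k_\infty(\cT*\cT^*)(x^{-1}\gamma y)$, use its spectral expansion $\sum_i h_\lambda(\lambda_i)\cT\psi_i(x)\overline{\cT\psi_i(y)}$, integrate against $\overline{b\phi}\times b\phi$ over $g_0\sT\times g_0\sT$, and drop all spectral terms but $\psi$ using $h_\lambda\geq 0$. The extra remarks about the Jacobian $e^{-2t_1-2t_2}$ and the support of $k_\lambda$ are harmless bookkeeping already absorbed into the definition of $J(\lambda,\phi,g)$.
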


\begin{proof}
    We consider the same kernel function as in  Proposition \ref{amplification inequality} with  the spectral expansion
    \begin{align*}
        K(x,y)= \sum_{\gamma\in\bG(F)} k_\infty (\cT*\cT^*)(x^{-1}\gamma y) = \sum_i h_\lambda(\lambda_i)\cT\psi_i(x)\overline{\cT\psi_i(y)}
    \end{align*}
    on $\bG(F)\backslash\bG(\BA)\times\bG(F)\backslash\bG(\BA)$.
    If we integrate it against $\overline{b\phi}\times b\phi$ on $g_0\sT \times g_0\sT$, we obtain
    \begin{align*}
         \sum_i h_\lambda(\lambda_i)\left| \langle \cT\psi,{b}\phi\rangle\right|^2& =\sum_{\gamma\in \bG(F)} (\cT*\cT^*)(\gamma)J(\lambda,\phi,g_0^{-1}\gamma g_0).
    \end{align*}
    Since we have $h_\lambda(\lambda_i)\geq 0 $ for all $i$, dropping all terms but $\psi$ completes the proof.
\end{proof}

\subsection{Bounds away from the spectrum}\label{KN section of bound away from the spectrum}
We start the proof for Proposition \ref{KN bound 1-Pi}.
In Proposition \ref{KN amplification inequality for geodesic tube}, we can choose $\cT \in \cH_f$  to be the characteristic function of a
sufficiently small open subgroup of $\bG(\BA_f)$, then only the identity element $\gamma=e$ will make a nonzero contribution to the sum in (\ref{KN amplification inequality eqn}). This gives
\begin{align}\label{KN amplification inequality trivial case}
    \left| \langle \psi,b\phi\rangle\right|^2\ll\left| J(\lambda,\phi,e) \right|.
\end{align}
We first decompose $k_\lambda$ along the radial direction.
Without loss of generality, we assume the support of $k_\lambda(a(t))$ is contained in $[-1,1]$.
Let $b_0\in C_c^\infty(\BR)$ be a cutoff function that is equal to $1$ on $[-1,1]$ and zero outside $[-2,2]$.
Moreover, we suppose that $b_0$ is even and $0\leq b_0\leq 1$.
Then $k_\lambda(a(t)) = b_0(t) k_\lambda(a(t))$ for any $t\in\BR$.
Let $\epsilon_0 > 0$.
Define $b_1 \in C_c^\infty(\BR)$ to be a function satisfying
\begin{itemize}
    \item $0\leq b_1 \leq 1$ and $b_1$ is even,
    \item $b_1(t) = 1$ for $|t|\leq \beta^{-1/2 + \epsilon_0}$,
    \item $b_1(t) = 0$ for $|t| \geq 2\beta^{-1/2 + \epsilon_0}$.
\end{itemize}
Let $b_2 = b_0 - b_1\in C_c^{\infty}(\BR)$.
Since $b_i$ are constant in some neighborhood of $0$,
we can extend the domains of $b_i$ to $\BH^3$ by the polar coordinates, i.e.,
\begin{align}\label{eq:def of bi}
    b_i(n(z)a(t)):=b_i(t_0),\text{ where }t_0\text{ satisfies the relation }n(z)a(t)\in K_0a(t_0)K_0,
\end{align}
so that $b_i \in C_c^\infty(\BH^3)$ are bi-$K_0$-invariant, and the restrictions $b_i\circ a(t) $ are the original functions, $i=0,1,2$.
Let $k_1, k_2\in C_c^\infty(\BH^3)$ be bi-$K_0$-invariant functions so that
$k_i = b_i k_\lambda$.
Then we decompose the integral $J(\lambda,\phi,e) = J_1(\phi) + J_2(\phi)$
where
\begin{align*}
    J_i(\phi)=\iint_{\BC\times \BR} \overline{b\phi(z_1,t_1})b\phi(z_2,t_2) k_i (n(e^{-t_1}(z_2-z_1))a(t_2-t_1))  e^{-2t_1 -2 t_2} dt_1dz_1 dt_2dz_2,\quad i=1,2.
\end{align*}
By the choice of the width of $\supp(b_1)$,
we will show that
for any $\phi \in \cS(\overline{\BD}\times\BR) $ satisfying $\|\phi \|_2 = 1$ and $\supp(\sF_t\phi) \subset \overline{\BD}\times (\BR\backslash \pm[\lambda-\beta,  \lambda+\beta])$, we have
\begin{align}\label{KN bound of I1}
    J_1(\phi) \ll_{\epsilon'} \beta^{-1/2 + \epsilon_0} ,
\end{align} 
and 
\begin{align}\label{KN bound of I2}
    J_2(\phi)\ll_{\epsilon',\epsilon_0,A}  \lambda^{-A} .
\end{align}
Hence, Proposition \ref{KN bound 1-Pi} will follow from (\ref{KN amplification inequality trivial case}), (\ref{KN bound of I1}) and (\ref{KN bound of I2}).

\subsubsection{Proof of \eqref{KN bound of I1}}
We define $\Phi \in C_c^\infty(\BH^3)$ by $\Phi(n(z)a(t)\cdot o):= b\phi(z,t)$.
It may be seen that $J_1(\phi) = \langle \Phi \times k_1, \Phi \rangle_{\BH^3}$.
Here $\langle \cdot, \cdot  \rangle_{\BH^3}$ is the inner product on $L^2(\BH^3)$ with respect to the hyperbolic metric. 
By the Plancherel formula, 
\begin{align*}
    J_1 (\phi) = \int_0^\infty\int_{K_0/M} \widehat{\Phi}(s,k)\widehat{k_1}(s)\overline{\widehat{\Phi} (s,k)} d\nu(s) dk\ll \| \widehat{k_1} \|_\infty \| \Phi\|_2^2 \ll \| \widehat{k_1} \|_\infty.
\end{align*}
We will prove the following pointwise bound for $\widehat{k_1}$,
and the bound (\ref{KN bound of I1}) will follow from this immediately.
\begin{proposition}
    If $|s|\leq \lambda/2$, then $\widehat{k_1}(s) \ll_{\epsilon',A} \lambda^{-A}$.
    
    If $|s|\geq \lambda/2$, then $\widehat{k_1}(s) \ll \beta^{-1/2 + \epsilon_0}$.
\end{proposition}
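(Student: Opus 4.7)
The key observation is that the Harish–Chandra transform on $\BH^{3}$ admits the explicit formula $\varphi_s(a(t))=\sin(st)/(s\sinh t)$ and that the Plancherel density is $\propto s^{2}$. Using the polar decomposition, so that the $K_0$ integrals become trivial by bi-$K_0$-invariance,
\begin{align*}
 \widehat{k_1}(s_0)\;=\;\frac{c'}{s_0}\int_0^\infty b_1(t)\,k_\lambda(a(t))\sinh(t)\,\sin(s_0 t)\,dt,
\end{align*}
and I would then rewrite $k_\lambda(a(t))\sinh(t)$ by the inverse Harish–Chandra transform as
\begin{align*}
 k_\lambda(a(t))\sinh(t)\;=\;c''\int_0^\infty h_\lambda(s)\,s\,\sin(st)\,ds.
\end{align*}
Swapping the two integrals, applying the product-to-sum identity $2\sin(st)\sin(s_0 t)=\cos((s-s_0)t)-\cos((s+s_0)t)$, and recognising the resulting cosine integrals as half the Fourier transform of the even function $b_1$, I would obtain the working formula
\begin{align}\label{eq:plan-k1hat}
 \widehat{k_1}(s_0)\;=\;\frac{C}{s_0}\int_0^\infty h_\lambda(s)\,s\,\bigl[\widehat{b_1}(s-s_0)-\widehat{b_1}(s+s_0)\bigr]\,ds,
\end{align}
where $\widehat{b_1}$ denotes the Euclidean Fourier transform on $\BR$. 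The basic properties I would use are that $b_1$ is an even bump at scale $\beta^{-1/2+\epsilon_0}$, so $|\widehat{b_1}^{(k)}(\xi)|\ll_{k,N}(\beta^{-1/2+\epsilon_0})^{k+1}(1+\beta^{-1/2+\epsilon_0}|\xi|)^{-N}$, and that $h_\lambda$ is essentially supported in $O(1)$-neighbourhoods of $\pm\lambda$ with $\int|h_\lambda(s)\,s|\,ds\ll\lambda$.

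\textbf{Case $|s_0|\le\lambda/2$.} For $s$ in the effective support of $h_\lambda$ one has $|s\pm s_0|\gtrsim\lambda$; since $\beta\le\lambda^{1/4}$ implies $\beta^{-1/2+\epsilon_0}\lambda\gg\lambda^{7/8}$, the rapid decay of $\widehat{b_1}$ gives $|\widehat{b_1}(s\pm s_0)|\ll_A\lambda^{-A}$, and similarly for $\widehat{b_1}'$. The only subtlety is the $1/s_0$ factor near $s_0=0$; this is resolved by noting that the bracket in \eqref{eq:plan-k1hat} is an odd function of $s_0$ (because $\widehat{b_1}$ is even) and applying the mean value theorem to get $|\widehat{b_1}(s-s_0)-\widehat{b_1}(s+s_0)|/|s_0|\ll\lambda^{-A}$ for $|s_0|\le\lambda/2$. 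Combined with $\int|h_\lambda(s)s|\,ds\ll\lambda$, this yields $|\widehat{k_1}(s_0)|\ll_A\lambda^{-A}$.

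\textbf{Case $|s_0|\ge\lambda/2$.} By evenness we may assume $s_0>0$; then $s+s_0\gtrsim\lambda$ for $s$ in the effective support of $h_\lambda$, so $\widehat{b_1}(s+s_0)$ contributes $\ll\lambda^{-A}$. For the $\widehat{b_1}(s-s_0)$ piece I would use the trivial uniform bound $\|\widehat{b_1}\|_\infty\ll\beta^{-1/2+\epsilon_0}$ together with $\int|h_\lambda(s)s|\,ds\ll\lambda$ and $1/s_0\ll 1/\lambda$, which produces exactly $|\widehat{k_1}(s_0)|\ll\beta^{-1/2+\epsilon_0}$.

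\textbf{Main obstacle.} There is no substantive obstacle: once the formula \eqref{eq:plan-k1hat} is in place, everything reduces to the size and Schwartz decay of $\widehat{b_1}$ and to the essential localisation of $h_\lambda$. The one point requiring a moment of care is the apparent singularity of \eqref{eq:plan-k1hat} at $s_0=0$ in Case 1; the evenness of $\widehat{b_1}$ removes it cleanly. The tails of $h_\lambda$ outside an $O(1)$-neighbourhood of $\pm\lambda$ are handled by its Schwartz-type decay and do not change the final bounds.
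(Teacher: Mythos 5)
Your proof is correct, but it takes a genuinely different route from the paper's. You exploit the fact that on $\BH^3$ the spherical function is elementary, $\varphi_s(a(t))=\sin(st)/(s\sinh t)$, which collapses the whole computation to one-dimensional Euclidean Fourier analysis: $\widehat{k_1}(s_0)$ becomes $s_0^{-1}\int h_\lambda(s)\,s\,[\widehat{b_1}(s-s_0)-\widehat{b_1}(s+s_0)]\,ds$, and both cases follow from the trivial bound $\|\widehat{b_1}\|_\infty\ll\beta^{-1/2+\epsilon_0}$ and the rapid decay of $\widehat{b_1}$ at scale $\beta^{1/2-\epsilon_0}$ (your treatment of the $1/s_0$ singularity via the oddness of the bracket, and of the tails of $h_\lambda$, is sound). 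The paper instead proves the $|s|\geq\lambda/2$ case by integrating the pointwise bounds $k_\lambda(a(t))\ll\lambda^2(1+\lambda|t|)^{-1}$ and $\varphi_{-s}(a(t))\ll(1+|st|)^{-1}$ from \cite{marshall2016p} over the support of $b_1$, and proves the $|s|\leq\lambda/2$ case by unfolding the spherical function as an integral over $K_0$ and running a three-dimensional non-stationary-phase argument in the Iwasawa coordinates, with the key point that the gradient of $A(kn(x+iy)a(t))-\rho t$ cannot vanish when $|\rho|\leq 1/2$. Your argument is more elementary and self-contained, but it is special to $\BH^3$ (odd-dimensional rank one), whereas the paper's oscillatory-integral method is the one that generalizes and is reused elsewhere in Section 9. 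Two small remarks: the relevant range is $1\leq\beta\leq\lambda$ (Proposition \ref{KN bound 1-Pi}), not $\beta\leq\lambda^{1/4}$, but your estimate still works since $\beta^{-1/2+\epsilon_0}\lambda\geq\lambda^{1/2+\epsilon_0}\to\infty$; and you should note that the one-variable function $b_1(t)$ appearing in your polar-coordinate formula is exactly the radial profile from \eqref{eq:def of bi}, so your bump-function bounds on $\widehat{b_1}$ apply as stated.
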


We first prove the case $|s| \geq \lambda/2$. We write the integral via the polar coordinates. There exists a constant $c > 0$ so that
\begin{align*}
    \widehat{k_1}(s)  &= \int_{\BH^3}  b_1k_\lambda(x) \varphi_{-s}(x) dx\\
    &= c \int_0^\infty b_1(t) k_\lambda(a(t)) \varphi_{-s}(a(t)) \sinh(t)^2 dt.
\end{align*}
The bounds $k_\lambda(a(t)) \ll \lambda^{2} (1+\lambda|t|)^{-1}$ from \cite[Lemma 2.8]{marshall2016p}, and $\varphi_{-s}(a(t)) \ll (1 + |st|)^{-1}$ from \cite[Theorem 1.3]{marshall2016p} with $|s| \geq \lambda/2$ imply that
\begin{align*}
    \widehat{k_1}(s) \ll \int_0^\infty b_1(t) (\lambda t^{-1})(\lambda t)^{-1} \sinh(t)^2 dt \ll \beta^{-1/2 + \epsilon_0}.
\end{align*}

Now we consider the case $|s| \leq \lambda/2$. We apply the inverse Harish-Chandra transform to $k_\lambda$, and apply the integral expansion of the spherical function. 
We obtain
\begin{align*}
    \widehat{k_1}(s) &= \int_0^\infty \left(\int_{\BH^3}  b_1(x) \varphi_r(x) \varphi_{-s}(x) dx \right)h_\lambda(r)  d\nu(r) \\
    &=\int_0^\infty \left(\int_{\BH^3} \int_{K_0} b_1(x) \varphi_r(x) \exp((1-is)A(kx)) dk dx\right)h_\lambda(r)  d\nu(r)\\
    &=\int_0^\infty \left(\int_{\BH^3}b_1(x) \varphi_r(x) \exp((1-is)A(x))dx \right)h_\lambda(r)  d\nu(r) \\
    &= \int_0^\infty \int_{K_0} \left(\int_{\BH^3} b_1(x) \exp((1+ir)A(kx)) \exp((1-is)A(x)) dx \right)h_\lambda(r) dk d\nu(r).
\end{align*}
It suffices to show for $|r-\lambda|\le\beta/4$,
\begin{align*}
    \int_{\BH^3} b_1(x) \exp((1+ir)A(kx)) \exp((1-is)A(x)) dx\ll_A r^{-A}.
\end{align*}
Combining all amplitude factors and applying the Iwasawa coordinates for $\BH^3$,
it suffices to show
\begin{align}\label{eq bd for J(k)}
    J(k) :=\int_{\BR^3} \chi_1(x,y,t) \exp(ir(A(kn(x+iy)a(t))-(s/r)t)) dx dy dt \ll_A r^{-A}.
\end{align}
Here $\chi_1(x,y,t) \in C_c^\infty(\BR^3)$ is a cutoff function at scale $\beta^{-1/2 + \epsilon_0}$, and the $k$-variable is ignored for notational simplicity.
We denote by $\rho = s/r$ and we can assume it to satisfy $|\rho| \leq 2/3$ by our assumptions on $s$ and $r$. 
The phase function with parameters $k\in K_0$ and $\rho$ is
\begin{align*}
    \phi(x,y,t;k,\rho) = A(kn(x+iy)a(t))-\rho t.
\end{align*}

\begin{lemma}\label{KN rescale nonstationary phase}
    If $\chi \in C_c^\infty(\BR^3)$ is a cutoff function at scale $\beta^{-1/2 + \epsilon_0}$, and $\phi \in C^\infty(\BR^3)$ is real-valued and satisfies
    \begin{align*}
        |\nabla\phi(x)| \geq c > 0, \;\; \phi^{(n)}(x)\ll_n 1
    \end{align*}
    for any $x\in \supp(\chi)$ and $n\geq 1$, then
    \begin{align*}
        \int  \chi(x) e^{ir\phi(x)} dx \ll_A r^{-A}.
    \end{align*}
    Here $c>0$ is a fixed constant.
\end{lemma}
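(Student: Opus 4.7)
The plan is to reduce to unit scale by rescaling and then apply the standard integration-by-parts non-stationary phase. Let $\delta := \beta^{-1/2+\epsilon_0}$, which is the scale of the cutoff. The substitution $x = \delta y$ gives
\begin{align*}
    \int\chi(x)e^{ir\phi(x)}\,dx = \delta^3\int\chi(\delta y)\,e^{ir\phi(\delta y)}\,dy.
\end{align*}
I would define $\tilde\chi(y) := \chi(\delta y)$, which is now a cutoff supported in a set of size $O(1)$, and $\tilde\phi(y) := \delta^{-1}\phi(\delta y)$, so that the oscillatory factor rewrites as $e^{i(r\delta)\tilde\phi(y)}$. For any multi-index $\alpha$ with $|\alpha|\geq 1$ one computes $\partial^\alpha\tilde\phi(y) = \delta^{|\alpha|-1}(\partial^\alpha\phi)(\delta y)$, which is $O(1)$ uniformly in $y$ because $\delta\leq 1$ and because each $\partial^\alpha\phi$ was assumed to be $O_\alpha(1)$ on the support. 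In particular $|\nabla\tilde\phi(y)| = |\nabla\phi(\delta y)|\geq c$ on $\supp(\tilde\chi)$, so the gradient of the rescaled phase is bounded below uniformly.

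Iterating the standard first-order operator
\begin{align*}
    L = \frac{1}{ir\delta}\,\frac{\nabla\tilde\phi\cdot\nabla}{|\nabla\tilde\phi|^2}
\end{align*}
$N$ times (whose transpose preserves Schwartz behavior thanks to the uniform bounds above) and taking absolute values produces
\begin{align*}
    \int\tilde\chi(y)\,e^{i(r\delta)\tilde\phi(y)}\,dy \ll_N (r\delta)^{-N},
\end{align*}
and consequently
\begin{align*}
    \int\chi(x)\,e^{ir\phi(x)}\,dx \ll_N \delta^3\,(r\delta)^{-N}.
\end{align*}

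To convert this into a power of $r$ alone, I would use the range relevant to the application: $r \asymp \lambda$ and $\beta\leq\lambda^{1/4}$, so $\delta = \beta^{-1/2+\epsilon_0}\geq \lambda^{-1/8+\epsilon_0/4}$ and hence $r\delta \gg \lambda^{7/8+\epsilon_0/4}\gg r^{7/8}$. Thus for any prescribed $A>0$, choosing $N$ a sufficiently large multiple of $A$ absorbs the harmless factor $\delta^3\leq 1$ and yields $\ll_A r^{-A}$, as claimed. There is really no hard step here; the only thing to watch is that when the differential operator $L$ is applied repeatedly the resulting coefficients involve $\partial^\alpha\tilde\phi$ and $\partial^\alpha\tilde\chi$, and both families are $O(1)$ precisely because $\delta\leq 1$, so every factor of $\delta$ picked up during the integration by parts is benign.
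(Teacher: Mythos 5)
Your argument is exactly the paper's proof: rescale by $\delta=\beta^{-1/2+\epsilon_0}$ so the cutoff lives at unit scale and the phase becomes $(r\delta)\cdot\delta^{-1}\phi(\delta y)$ with unit-size derivatives and gradient bounded below, then integrate by parts and use that $r\delta$ is a positive power of $r$. The only cosmetic difference is that you invoke $\beta\leq\lambda^{1/4}$ while the lemma is applied in the range $1\leq\beta\leq\lambda$; this is harmless since $\delta\geq\lambda^{-1/2+\epsilon_0}$ in all cases, so $r\delta\gg r^{1/2}$ and the same choice of $N$ works.
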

\begin{proof}
    We have
    \begin{align*}
        \int \chi(x) e^{ir\phi(x)} dx = \beta^{-3/2 + 3\epsilon_0} \int \chi(\beta^{-1/2 + \epsilon_0} x) e^{i(r\beta^{-1/2 + \epsilon_0})(\beta^{1/2-\epsilon_0}\phi(\beta^{-1/2 + \epsilon_0} x))} dx.
    \end{align*}
    The cutoff function $\chi(\beta^{-1/2 + \epsilon_0} x)$ is now  at scale $1$.
    The lemma follows by integration by parts.
\end{proof}

Hence, to apply the above lemma to $J(k)$ for showing \eqref{eq bd for J(k)}, it suffices to give a nonzero lower bound for $$\nabla \phi = (\partial\phi/\partial x,\partial\phi/\partial y,\partial\phi/\partial t).$$ We first calculate the gradient of $A(kn(x+iy)a(t))$.
Let $u(x,y,t;k) = \begin{pmatrix}
    \alpha(x,y,t;k) &\beta(x,y,t;k) \\ -\bar{\beta}(x,y,t;k) & \bar{\alpha}(x,y,t;k)  
\end{pmatrix}\in K_0$ such that
\begin{align*}
    kn(x+iy)a(t) \in NAu(x,y,t;k).
\end{align*}
For simplicity, we will write $\alpha = \alpha(x,y,t;k) $ and $\beta = \beta(x,y,t;k) $. 
Then a straight calculation from (\ref{Poincare}) shows that
\begin{align*}
    \frac{\partial}{\partial x} A(kn(x+iy)a(t)) &= e^{-t}(\alpha\bar{\beta}+\bar{\alpha}\beta),\\
    \frac{\partial}{\partial y} A(kn(x+iy)a(t)) &= e^{-t}(\alpha\bar{\beta}-\bar{\alpha}\beta)i,\\
    \frac{\partial}{\partial t} A(kn(x+iy)a(t)) &= |\alpha|^2-|\beta|^2 .
\end{align*}
We have the following lower bound for $|\nabla \phi|$.

\begin{lemma}
    There exists a constant $c>0$.
    For any $(x,y,t)\in \supp(\chi_1)$, we have $|\nabla\phi(x,y,t)| \geq c$.
\end{lemma}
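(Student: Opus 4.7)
Proof proposal. Using the gradient formulas for $A$ supplied just before the lemma, I compute
\begin{align*}
|\nabla\phi(x,y,t)|^2 &= \left(\tfrac{\partial \phi}{\partial x}\right)^2+\left(\tfrac{\partial\phi}{\partial y}\right)^2+\left(\tfrac{\partial\phi}{\partial t}\right)^2\\
&= e^{-2t}\bigl((\alpha\bar\beta+\bar\alpha\beta)^2-(\alpha\bar\beta-\bar\alpha\beta)^2\bigr)+\bigl(|\alpha|^2-|\beta|^2-\rho\bigr)^2\\
&=4e^{-2t}|\alpha\bar\beta|^2+\bigl(|\alpha|^2-|\beta|^2-\rho\bigr)^2,
\end{align*}
where $\rho=s/r$ satisfies $|\rho|\le 1/2$. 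The plan is to view the right-hand side as a function of the single real parameter $u:=|\alpha|^2-|\beta|^2\in[-1,1]$, using the relation $|\alpha|^2+|\beta|^2=1$, and then minimize over $u$ uniformly in $\rho$.

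From $|\alpha|^2+|\beta|^2=1$ I obtain $|\alpha\bar\beta|^2=|\alpha|^2|\beta|^2=(1-u^2)/4$, hence
\begin{align*}
|\nabla\phi|^2 = e^{-2t}(1-u^2)+(u-\rho)^2.
\end{align*}
Because $(x,y,t)\in\supp(\chi_1)$ sits at scale $\beta^{-1/2+\epsilon_0}$, in particular $|t|\le 2\beta^{-1/2+\epsilon_0}$, so $e^{-2t}=1+O(\beta^{-1/2+\epsilon_0})\ge 1/2$ once $\lambda$ (and hence $\beta$) is sufficiently large. Thus
\begin{align*}
|\nabla\phi|^2 \ge \tfrac12(1-u^2)+(u-\rho)^2 = \tfrac12+\tfrac12 u^2-2u\rho+\rho^2.
\end{align*}
Viewed as a quadratic in $u$, this is minimized at $u=2\rho$, which lies in $[-1,1]$ because $|\rho|\le 1/2$; the minimum value is $\tfrac12-\rho^2\ge \tfrac14$. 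Consequently $|\nabla\phi(x,y,t)|\ge 1/2$ for all $(x,y,t)\in\supp(\chi_1)$ (for $\lambda$ large), so I may take any constant $c\in(0,1/2)$.

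This is essentially a routine computation — there is no real obstacle, provided one is careful to (i) track the $e^{-2t}$ factor from the hyperbolic measure and the differentiation, and (ii) exploit $|\rho|\le 1/2$ (which is precisely the remaining case, since $|s|\le\lambda/2$ and $|r-\lambda|\le\beta/2$ with $\beta\le\lambda^{1/4}$ force $|\rho|\le 1/2+O(\lambda^{-3/4})$). The reduction to the single variable $u$ via $|\alpha|^2+|\beta|^2=1$ is the key observation that collapses the three-term sum into something directly minimizable.
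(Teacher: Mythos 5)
Your computation is correct, and it takes a genuinely different route from the paper. The paper argues qualitatively: by compactness of $\supp(\chi_1)$, $K_0$, the unit sphere $\{|\alpha|^2+|\beta|^2=1\}$, and the parameter range of $\rho$, it suffices to show the gradient never vanishes; the simultaneous vanishing of $\partial_x\phi$ and $\partial_y\phi$ forces $\alpha\bar\beta=0$, hence $\alpha=0$ or $\beta=0$, hence $|\alpha|^2-|\beta|^2=\pm1\neq\rho$, a contradiction. You instead compute $|\nabla\phi|^2=4e^{-2t}|\alpha\bar\beta|^2+(|\alpha|^2-|\beta|^2-\rho)^2$ (the sign bookkeeping with the factor $i$ in $\partial_y\phi$ is handled correctly), reduce to the single variable $u=|\alpha|^2-|\beta|^2$ via $4|\alpha|^2|\beta|^2=1-u^2$, and minimize the resulting quadratic explicitly, getting the concrete constant $c=1/2$. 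What your approach buys is an explicit, quantitative constant with no appeal to compactness/continuity; what the paper's buys is brevity. Two cosmetic points: the step $e^{-2t}\ge 1/2$ does not really need $\lambda$ large — since $\beta\ge1$ the support of $\chi_1$ lies in a fixed compact set, so $e^{-2t}\ge c_0$ for an absolute $c_0>0$, and the minimization goes through verbatim with $c_0$ in place of $1/2$ (one only needs $\rho^2$ bounded away from $1-c_0$ and the minimizer $\rho/(1-c_0)$ to be handled whether or not it lies in $[-1,1]$). And the only property of $\rho$ actually used is that $|\rho|$ is bounded strictly below $1$, which is exactly what the paper's hypothesis $|s|\le\lambda/2$, $|r-\lambda|\le\beta/2$ guarantees.
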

\begin{proof}
    By the compactness, it suffices to show that there do not exist $\alpha,\beta\in\BC$ so that $|\alpha|^2 + |\beta|^2 = 1$ and
    \begin{align}
        \alpha\bar{\beta}+\bar{\alpha}\beta = \alpha\bar{\beta}-\bar{\alpha}\beta = 0,\label{KN partial x y =0}\\
        |\alpha|^2 - |\beta|^2 = \rho\label{KN partial t = 0}.
    \end{align}
    Suppose otherwise and let $(\alpha_0,\beta_0)$ be such pair.
    \eqref{KN partial x y =0} implies that $\alpha_0 = 0$ or $\beta_0 = 0$.
    Then \eqref{KN partial t = 0} shows $\rho =  |\alpha_0|^2 - |\beta_0|^2 = \pm 1$,
    which gives the contradiction.
\end{proof}

\subsubsection{Derivatives of the distance function}\label{subsubsection: Derivatives of the distance function}
The distance function will appear as a part of the phase function in the oscillatory integral we shall study for proving \eqref{KN bound of I2}. We will estimate its derivatives.
We consider the projection $G_0 = K_0 \overline{A^+} K_0 \to \overline{A^+}$ by the Cartan decomposition.
We let
\begin{align*}
    \cA:\BC \times\BR \to [0,\infty)
\end{align*}
be the function defined by
\begin{align*}
    n(z) a(t) \in K_0 a(\cA(z,t)) K_0.
\end{align*}
The function $\cA(z,t)$ is equal to the distance from the origin $o$ to $n(z)a(t)\cdot o$ under the hyperbolic metric, and is given by
    \begin{align}\label{KN definition of mathcal A, the distance function in KAK}
        \cosh(\cA(z,t)) =\frac{1}{2}e^{t} +\frac{1}{2}(|z|^2+1)e^{-t}  =\cosh(t)+\frac{1}{2}|z|^2e^{-t}.
    \end{align}

\begin{lemma}\label{KN lower bound for t from lower bound for A(z,t)}
    Suppose $|z|\ll \lambda^{-1/2}$ and $\cA(z,t)\gg \beta^{-1/2 + \epsilon_0}$.
    Then we have $|t|\gg \beta^{-1/2 + \epsilon_0}$.
\end{lemma}
\begin{proof}
     Since $|z| \ll \lambda^{-1/2}$, by \eqref{KN definition of mathcal A, the distance function in KAK},
     \begin{align*}
         \cosh( \mathcal{A}(z,t)) =\cosh(t)+ O( \lambda^{-1}).
     \end{align*}
     The lemma follows from the above with the Taylor approximation for $\cosh$.
\end{proof}

\begin{proposition}
     Suppose $|z|\ll \lambda^{-1/2}$, $t\ll1$, and $\cA(z,t)\gg \beta^{-1/2 + \epsilon_0}$. Then we have
     \begin{align}
         &\frac{\partial}{\partial t}\cA(z,t)=\sgn t+O(\lambda^{-1}\beta^{1-2\epsilon_0}) , \label{KN approximate for derivative of mathcal A}\\
       & \frac{\partial^n}{\partial t^n} \cA(z,t) \ll_n \lambda^{-1}(\beta^{1/2-\epsilon_0})^{n+1},\qquad\text{ for any integer }n\geq2.\label{KN approximate for higher derivative of mathcal A}
     \end{align}
\end{proposition}
\begin{proof}
    By applying $\partial/\partial t$ to \eqref{KN definition of mathcal A, the distance function in KAK}, we get
    \begin{align*}
        \sinh(\cA(z,t))\frac{\partial}{\partial t}\cA(z,t) = \sinh(t)-\frac{1}{2}|z|^2e^{-t}.
    \end{align*}
    Note that
    \begin{align*}
        \sinh(\cA(z,t))=\sqrt{\left(\cosh(t)+\frac{1}{2}|z|^2e^{-t}\right)^2-1}=\sqrt{\sinh^2(t)+O(\lambda^{-1})}=|\sinh(t)|(1+O(\lambda^{-1}\sinh^{-2}(t))),
    \end{align*}
    so
    \begin{align*}
        \frac{\partial}{\partial t}\cA(z,t)=\sgn\left((\sinh(t)\right)+O(\lambda^{-1}\sinh^{-2}(t))=\sgn t+O(\lambda^{-1}\beta^{1-2\epsilon_0}),
    \end{align*}
    where the last step is due to Lemma \ref{KN lower bound for t from lower bound for A(z,t)}.

    To bound the higher derivatives, we note that
    \begin{align*}
        \frac{\partial}{\partial t}\mathcal{A}(z,t)=\frac{\sinh(t)-\frac{1}{2}|z|^2e^{-t}}{\sqrt{\left(\cosh(t)+\frac{1}{2}|z|^2e^{-t}\right)^2-1}}
    \end{align*}
    and for any $w\in\BC$ with $|w|=O(1)$ the real part inside the square root is
    \begin{multline*}
        \Re\left(\left(\cosh(w)+\frac{1}{2}|z|^2e^{-w}\right)^2-1\right)=\Re\left(\sinh^2(w)+O(\lambda^{-1})\right)=\frac{\cosh(2\Re(w))\cos(2\Im (w))-1}{2}+O(\lambda^{-1})\\
    = \sinh^2(\Re(w)) +O(\Im(w)^2)+O(\lambda^{-1}).
    \end{multline*}
    For $t\ll1$ given in the proposition, by Lemma \ref{KN lower bound for t from lower bound for A(z,t)}, we have $|t|\gg \beta^{-1/2+\epsilon_0}$. Therefore, for $C>0$ and for any $w\in\BC$ with $|w-t|\leq C \beta^{-1/2+\epsilon_0}$, we have
    \[
    \Re\left(\left(\cosh(w)+\frac{1}{2}|z|^2e^{-w}\right)^2-1\right)=\sinh^2(\Re(w))+O((C\beta^{-1/2+\epsilon_0})^2)+O(\lambda^{-1}) \gg\beta^{-1+2\epsilon_0},
    \]
    which holds when $C>0$ is chosen to be sufficiently small. Thus, 
    \[\frac{\partial}{\partial w}\mathcal{A}(z,w)=\frac{\sinh(w)-\frac{1}{2}|z|^2e^{-w}}{\sqrt{\left(\cosh(w)+\frac{1}{2}|z|^2e^{-w}\right)^2-1}}\]
    can be extended to be holomorphic (in $w$) on the ball of radius $C\beta^{-1/2+\epsilon_0}$ centered at $t$. By applying Cauchy's integral formula, with the bound 
    \(\frac{\partial}{\partial w}\cA(z,w)-\sgn t=O(\lambda^{-1}\beta^{1-2\epsilon_0})\) for $|w-t|\leq C \beta^{-1/2+\epsilon_0}$, we obtain, for any $n\geq1$,
    \begin{align*}
        \frac{\partial^{n+1}}{\partial t^{n+1}} \cA(z,t)=\frac{\partial^{n}}{\partial t^{n}} \left(\frac{\partial}{\partial t}\cA(z,t)-\sgn t\right) \ll_n \lambda^{-1}\beta^{1-2\epsilon_0}(C\beta^{-1/2+\epsilon_0})^{-n},
    \end{align*}
    which proves \eqref{KN approximate for higher derivative of mathcal A}.
\end{proof}

\subsubsection{Proof of \eqref{KN bound of I2}}

We first unfold $J_2(\phi)$ by the inverse Harish-Chandra transform for $h_\lambda$ to get
\begin{align*}
     J_2(\phi) =\int_0^\infty J_2(\phi,s) h_\lambda(s)d\nu(s),
\end{align*}
with
\begin{align*}
    J_2(\phi,s)=\iint_{\BC\times\BR} \overline{b\phi(z_1,t_1)}b\phi(z_2,t_2)((b_2\varphi_s)\circ a)(\cA(e^{-t_1}(z_2-z_1),t_2-t_1)e^{-2t_1 -2 t_2} dz_1 dz_2 dt_1 dt_2.
\end{align*}
Since $\beta\geq\lambda^{\epsilon'}$ and $h_\lambda(s)$ rapidly decays away from $\lambda$, it suffices to prove the bound
\begin{align}\label{eq:equa bd for J2}
    J_2(\phi,s)\ll_{A,\epsilon_0} \beta^{-A}
\end{align}
for $|s-\lambda|\le\beta/2$.
We apply asymptotics for the spherical function from \cite[Theorem 1.5]{marshall2016p}.
There are functions $f_{\pm} \in C^\infty ((0,3)\times \BR) $ such that
\begin{align}\label{KN bound for f pm}
    \frac{\partial^n}{\partial t^n} f_\pm(t,s) \ll_n t^{-n}(st)^{-1}
\end{align}
and
\begin{align}\label{KN asymtotic for spherical function}
    \varphi_s (a(t)) = f_+(t,s)e^{ist}  + f_-(t,s)e^{-ist}  + O_A((st)^{-A})
\end{align}
for $t\in(0,3)$. Plug the formula \eqref{KN asymtotic for spherical function} into the integral $J_2(\phi,s)$. Because of the support of $b_2$, the error term in \eqref{KN asymtotic for spherical function} contributes $O_A(s^{-A})$, which may be ignored. We shall only consider the integral involving $f_+$, as the integral involving $f_-$ is similar.
By expanding one $\phi$ via the Fourier inversion and the changes of variables $z_2\mapsto z_2+z_1, t_2\mapsto t_2+t_1$, the integral we are considering is equal to
\begin{align}
    \frac{1}{2\pi}\iint_\BC\iint_\BR&\overline{b\phi(z_1,t_1)}\sF_t\phi(z_2+z_1,s_2)e^{is_2t_1-4t_1}\notag\\
    &\left(\int_0^\infty \chi(t_2)b_2(\cA(e^{-t_1}z_2,t_2)) f_+(\cA(e^{-t_1}z_2,t_2),s)\exp(is\cA(e^{-t_1}z_2,t_2)+is_2t_2)dt_2\right.\label{KN main inner integral away from spec}\\
    +&\left.  \int_{-\infty}^0 \chi(t_2)b_2(\cA(e^{-t_1}z_2,t_2)) f_+(\cA(e^{-t_1}z_2,t_2),s)\exp(is\cA(e^{-t_1}z_2,t_2)+is_2t_2)dt_2 \right)dz_1 dz_2 dt_1 ds_2. \label{KN main inner integral away from spec 2}
\end{align}
Here we denote by $\chi(t_2)=b(z_2+z_1,t_2+t_1)e^{-2 t_2}$ and omit the variables $z_1,z_2,t_1$ as the estimates for the derivatives of $\chi$ with respect to $t_2$ are uniform on $z_1,z_2,t_1$. If we replace $t_2$ with $\beta^{-1/2+\epsilon_0}t_2$, then we write the inner integral \eqref{KN main inner integral away from spec} in the form $\int_0^\infty \alpha(t_2) e^{i|s+s_2|^{1/2+\epsilon_0} f(t_2)} dt_2$ with
\begin{align*}
    &\alpha(t_2)= \beta^{-1/2+\epsilon_0}\chi(\beta^{-1/2+\epsilon_0}t_2)b_2(\cA(e^{-t_1}z_2,\beta^{-1/2+\epsilon_0}t_2)) f_+(\cA(e^{-t_1}z_2,\beta^{-1/2+\epsilon_0}t_2),s),\\
    &f(t_2)=\frac{s}{|s+s_2|^{1/2+\epsilon_0}}\cA(e^{-t_1}z_2,\beta^{-1/2+\epsilon_0}t_2)+\frac{s_2}{|s+s_2|^{1/2+\epsilon_0}}\beta^{-1/2+\epsilon_0}t_2.
\end{align*}
By \eqref{KN approximate for derivative of mathcal A} and the Fourier support assumption on $\phi$: $|s_2\pm \lambda|\geq\beta$, we have the following estimate for the derivative of the phase function $f(t_2)$:
    \begin{align*}
        \left|\frac{\partial f}{\partial t_2}\right| = |s+s_2|^{-1/2-\epsilon_0}\beta^{-1/2+\epsilon_0}\left|s(1 + O(\lambda^{-1}\beta^{1- 2\epsilon_0}))+s_2\right| = \left(\frac{|s+s_2|}{\beta} \right)^{1/2-\epsilon_0}+ O(\beta^{-2\epsilon_0})\gg1.
    \end{align*}
For any $n\geq2$, \eqref{KN approximate for higher derivative of mathcal A} implies that,
    \begin{align*}
        \frac{\partial^nf}{\partial t_2^n}&\ll_n \frac{s}{|s+s_2|^{1/2+\epsilon_0}} \lambda^{-1}(\beta^{1/2-\epsilon_0})^{n+1}(\beta^{-1/2+\epsilon_0})^n\ll \beta^{-2\epsilon_0}\ll1.
    \end{align*}
By \eqref{KN bound for f pm}, if $t\gg \beta^{-1/2+\epsilon_0}$ and $n\geq0$, then
\begin{align*}
    \frac{\partial^n}{\partial t^n} b_2(t)f_+(t,s) \ll_n \lambda^{-1}\beta^{1/2-\epsilon_0}(\beta^{1/2-\epsilon_0})^n=\lambda^{-1} (\beta^{1/2-\epsilon_0})^{n+1} .
\end{align*}
Moreover, by \eqref{KN approximate for derivative of mathcal A} and \eqref{KN approximate for higher derivative of mathcal A}, for any $n\geq1$, it may be seen that
\begin{align*}
    \frac{\partial^n}{\partial t_2^n}\cA(e^{-t_1}z_2,\beta^{-1/2+\epsilon_0}t_2)\ll (1+\lambda^{-1}(\beta^{1/2-\epsilon_0})^{n+1})(\beta^{-1/2+\epsilon_0})^{n}\ll \beta^{-1/2+\epsilon_0}\ll1.
\end{align*}
Therefore, by the chain rule to higher derivatives, for any $n\geq0$ we obtain that the $n$-th derivative of the amplitude function $\alpha(t_2)$ is bounded by
    \begin{align*}
        \frac{\partial^n\alpha}{\partial t_2^n}\ll_n\beta^{-1/2+\epsilon_0}\lambda^{-1}(\beta^{1/2-\epsilon_0})^{n+1}=\lambda^{-1}(\beta^{1/2-\epsilon_0})^{n}.
    \end{align*}
Integrating by parts $n$ times, or applying \cite[(4) p.331]{Stein}, gives that the inner integral \eqref{KN main inner integral away from spec}, which can be written as the form of the oscillatory integral $\int_0^\infty \alpha(t_2) e^{i|s+s_2|^{1/2+\epsilon_0} f(t_2)} dt_2$, is bounded by
\begin{align*}
    |s+s_2|^{-n(1/2+\epsilon_0)}\int_0^\infty\left|\left(\frac{\partial}{\partial t_2}\frac{1}{\partial f/\partial t_2}\right)^n\alpha\right| dt_2\ll_n |s+s_2|^{-n(1/2+\epsilon_0)}\lambda^{-1}(\beta^{1/2-\epsilon_0})^{n}\beta^{1/2-\epsilon_0}\ll |s+s_2|^{-n\epsilon_0}.
\end{align*}
In summary, the inner integral \eqref{KN main inner integral away from spec} is $\ll_{A,\epsilon_0}|s+s_2|^{-A}$. The second inner integral \eqref{KN main inner integral away from spec 2} can be bounded in the same way by replacing $s+s_2$ with $s-s_2$ and is $\ll_{A,\epsilon_0}|s-s_2|^{-A}$. Hence, with these estimates and the fact $|s\pm s_2|\gg\beta$, by applying the Cauchy-Schwarz inequality and the Plancherel theorem, we obtain \eqref{eq:equa bd for J2}.

\subsection{Bounds near the spectrum}\label{KN section of near spectrum}
We want to prove Proposition \ref{KN bound Pi} via the method of arithmetic amplification.
As $\psi$ and $b$ are real-valued,
we can assume $\phi \in \cS(\overline{\BD}\times\BR)$ satisfies  $\supp(\sF_t\phi) \subset\BC\times [\lambda-\beta,\lambda+\beta]$.
We shall need the following bounds for $J(\lambda,\phi,g)$.

\begin{proposition}\label{KN bound for I(lambda,phi,g) near spectrum}
     We suppose that $\lambda^{\epsilon'}\leq\beta\leq\lambda^{1/4}$, $\phi \in \cS(\overline{\BD}\times\BR)$ satisfies $\| \phi \|_2 =1$ and $\supp(\sF_t\phi) \subset\BC\times [\lambda-\beta,\lambda+\beta]$.
     Let $g\in G_0$ satisfying $d(g,e)\leq 1$.
     If $1/8>\epsilon_0 > 0$ and $d(g, MA) \geq \lambda^{-1/2 + \epsilon_0}\beta^{1/2}$, then $$J(\lambda,\phi,g) \ll_{\epsilon',\epsilon_0,A} \lambda^{-A}.$$
\end{proposition}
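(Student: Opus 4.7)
The plan is to reduce $J(\lambda,\phi,g)$ to the oscillatory integral bounded by Proposition~\ref{nonstationary oscillatory integral estimate}. First, apply the inverse Harish-Chandra transform $k_\lambda(x) = \int_0^\infty h_\lambda(s)\varphi_s(x)\, d\nu(s)$; since $h_\lambda$ is of Paley-Wiener type and concentrates near $\pm\lambda$, it suffices to prove the bound $\ll_{\epsilon_0,A}\lambda^{-A}$ for the weighted integral
\[
J(\lambda,\phi,g,s) = \iint_{\BC\times\BR}\iint_{\BC\times\BR}\overline{b\phi(z_1,t_1)}\,b\phi(z_2,t_2)\,\varphi_s\bigl(a(-t_1)n(-z_1)gn(z_2)a(t_2)\bigr)e^{-2t_1-2t_2}\,dt_1dz_1dt_2dz_2
\]
uniformly for $s$ with $|s-\lambda|\le\beta$.

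Next, apply the Fourier inversion formula in the $t$-variable to both copies of $\phi$. The support condition $\supp(\sF_t\phi)\subset \overline{\BD}\times[\lambda-\beta,\lambda+\beta]$ restricts the resulting frequency integrals to $s_1,s_2\in[\lambda-\beta,\lambda+\beta]$. After interchanging the orders of integration we obtain
\[
J(\lambda,\phi,g,s) = \frac{1}{(2\pi)^2}\iiiint \overline{\sF_t\phi(z_1,s_1)}\,\sF_t\phi(z_2,s_2)\,K(s,s_1,s_2,z_1,z_2)\,dz_1 ds_1\, dz_2 ds_2,
\]
where
\[
K = \iint_{\BR^2} \chi_0(z_1,t_1)\chi_0(z_2,t_2)\, e^{-is_1 t_1+is_2 t_2}\,\varphi_s\bigl(a(-t_1)n(-z_1)gn(z_2)a(t_2)\bigr)\,dt_1\,dt_2,
\]
with $\chi_0(z,t) := b(z,t)e^{-2t}\in C_c^\infty(\BR)$ (depending smoothly on $z$, with uniformly controlled derivatives since $z\in\overline{\BD}$). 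This inner integral is exactly of the form $J(s,s_1,s_2,g';\chi_0,0,0)$ in \eqref{integral J(s t1 t2 g)} with $g' := n(-z_1)\,g\,n(z_2)$ and $\rho_1=\rho_2\equiv 0$.

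The hypotheses of Proposition~\ref{nonstationary oscillatory integral estimate} require $d(g',e)\le C_0$ and $d(g',MA)\ge B_0 s^{-1/2+\epsilon_0}\beta^{1/2}$. Since $|z_j|\le\lambda^{-1/2}$, we have $d(n(z_j),e)\ll\lambda^{-1/2}$, hence by the triangle inequality $d(g',g)\ll\lambda^{-1/2}$. Combined with the standing hypothesis $d(g,MA)\ge \lambda^{-1/2+\epsilon_0}\beta^{1/2}$ and the fact that $\lambda^{-1/2}=o(\lambda^{-1/2+\epsilon_0}\beta^{1/2})$ (since $\epsilon_0>0$ and $\beta\ge 1$), we conclude that $d(g',MA)\ge \tfrac{1}{2}\lambda^{-1/2+\epsilon_0}\beta^{1/2}$ for $\lambda$ sufficiently large. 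Applying Proposition~\ref{nonstationary oscillatory integral estimate} (with $\rho_1=\rho_2=0$, so the derivative conditions are trivially satisfied) yields $K\ll_{\epsilon_0,A}\lambda^{-A}$ uniformly in $z_1,z_2,s_1,s_2$ in their respective ranges.

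Finally, using the Cauchy–Schwarz inequality, Plancherel (so that $\|\sF_t\phi\|_{L^2}\ll\|\phi\|_{L^2}=1$), and the bound $\mathrm{vol}(\overline{\BD}\times[\lambda-\beta,\lambda+\beta])\ll \beta/\lambda$ on the support of $\sF_t\phi$, the outer integral is controlled by $O(\beta/\lambda)$. This gives $J(\lambda,\phi,g,s)\ll_{\epsilon_0,A}\lambda^{-A}\cdot(\beta/\lambda)\ll\lambda^{-A'}$ for any $A'$, and integrating against $h_\lambda(s)\,d\nu(s)$ completes the proof. The main conceptual point is the verification that perturbing $g$ by the factors $n(z_1), n(z_2)$ of size $\lambda^{-1/2}$ does not destroy the separation from $MA$; this works precisely because the separation scale $\lambda^{-1/2+\epsilon_0}\beta^{1/2}$ is strictly larger than $\lambda^{-1/2}$.
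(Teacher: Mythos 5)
Your proposal is correct and follows essentially the same route as the paper: unfold $J(\lambda,\phi,g)$ via the inverse Harish--Chandra transform of $h_\lambda$ and Fourier inversion of $\phi$, observe that since $|z_1|,|z_2|\le\lambda^{-1/2}$ the element $n(-z_1)gn(z_2)$ still satisfies $d(\cdot,MA)\gg\lambda^{-1/2+\epsilon_0}\beta^{1/2}$, and apply Proposition~\ref{nonstationary oscillatory integral estimate} with $\rho_1=\rho_2=0$. Your extra remarks on the $z$-dependence of the amplitude and the final Cauchy--Schwarz step only make explicit what the paper leaves implicit.
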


\begin{proof}
    We can unfold the integral $J(\lambda,\phi,g)$ by the Fourier inversion for $\phi$ and inverse Harish-Chandra transform for $h_\lambda$, that is,
    \begin{align*}
        J(\lambda,\phi,g) =\int_0^\infty \iint_{-\infty}^{\infty} \iint_\BC \overline{\sF_t\phi(z_1,s_1})\sF_t\phi(z_2,s_2) J(s,s_1,s_2,z_1,z_2) h_\lambda(s) dz_1 dz_2 ds_1 ds_2 d\nu(s).
    \end{align*}
    The integral $J(s,s_1,s_2,z_1,z_2)$ is defined by
    \begin{align*}
        J(s,s_1,s_2,z_1,z_2) = \iint_{-\infty}^\infty \chi(z_1,z_2,t_1,t_2)  \exp(-is_1 t_1 + is_2 t_2) \varphi_s (a(-t_1)n(-z_1)gn(z_2)a(t_2))  dt_1 dt_2,
    \end{align*}
    with $s_1,s_2 \in [\lambda-\beta,\lambda+\beta]$,
    and $\chi\in C_c^\infty(\overline{\BD} \times \overline{\BD} \times \BR \times \BR)$ defined by
    \begin{align*}
        \chi(z_1,z_2,t_1,t_2) = (2\pi)^{-2}b(z_1,t_1)b(z_2,t_2)e^{-2t_1 -2 t_2} .
    \end{align*}
    By the rapid decay of $h_\lambda$ away from $\lambda$, we may also assume $s\in [\lambda-\beta,\lambda+\beta]$.
    From the fact that $|z_1|,|z_2| \leq \lambda^{-1/2}$, it may be seen that
    \begin{align*}
        d(n(-z_1)gn(z_2), MA) \gg \lambda^{-1/2 + \epsilon_0}\beta^{1/2}.
    \end{align*}
    Then the proposition follows from Proposition \ref{nonstationary oscillatory integral estimate} applied to $\rho_1 = \rho_2 = 0$.
\end{proof}

\begin{proof}[Proof of Proposition \ref{KN bound Pi}]
    We choose our amplifier $T_v$ for $v\in\sQ$ to be the same as \eqref{amplifier inert}.
    Let $N\geq 1$ be a parameter to be chosen later. We define $\sQ_N = \{ v\in\sQ | N/2\leq q_v\leq N\}$, and define
    $
        \cT_N = \sum_{v\in\sQ_N}  T_v.
    $
    Then we have
    \begin{align*}
        |\langle\cT_N\psi,b\phi\rangle| = \#\sQ_N \cdot |\langle\psi,b\phi \rangle| \gg N^{1-\epsilon} |\langle\psi,b\phi\rangle|.
    \end{align*}
    By Proposition \ref{KN amplification inequality for geodesic tube}, it follows that
    \begin{align}\label{KN amplified inequality 1}
         |\langle\psi,b\phi\rangle|^2 \ll N^{-2+\epsilon} \sum_{\substack{\gamma\in \bG(F)\\ \gamma\in g_0 \cD_0 g_0^{-1}}} \left| (\cT_N*\cT_N^*)(\gamma)  J(\lambda,\phi,g_0^{-1}\gamma g_0) \right|.
    \end{align}
    Recall that we use $\cD_0\subset G_{0}$ to denote the compact subset consisting of $g\in G_0$ with $d(g,e)\leq 1$.
    Let $\epsilon_0 > 0$.
    Proposition \ref{KN bound for I(lambda,phi,g) near spectrum} implies that we only need to consider the terms in (\ref{KN amplified inequality 1}) with
    \begin{align*}
        d\left(g_0^{-1}\gamma g_0, MA\right) < \lambda^{-1/2 + \epsilon_0}\beta^{1/2}.
    \end{align*}
    Let $C>0$ be the constant as in Proposition \ref{Hecke return wrt MA}.
    We choose 
    \begin{align*}
        X= (C^{-1}\lambda^{-1/2 + \epsilon_0}\beta^{1/2})^{-1}  =  C\lambda^{1/2 - \epsilon_0}\beta^{-1/2},
    \end{align*}
    so then take $\delta = CX^{-1} = \lambda^{-1/2 + \epsilon_0}\beta^{1/2}$.
    We use the trivial bound 
    $J(\lambda,\phi,g_0^{-1}\gamma g_0) \ll 1$ to obtain
    \begin{align}\label{KN ine 3}
        \sum_{\substack{\gamma\in \bG(F)\\ \gamma\in g_0 \cD_0 g_0^{-1}\\d(g_0^{ -1}\gamma g_0,MA)< \delta}}  \left| (\cT_N*\cT_N^*)(\gamma)J(\lambda,\phi,g_0^{-1}\gamma g_0)\right| \ll \sum_{\substack{\gamma\in \bG(F)\\ \gamma\in g_0 \cD_0 g_0^{-1}\\d(g_0^{-1}\gamma g_0,MA)< \delta}} \left| (\cT_N*\cT_N^*)(\gamma)\right| .
    \end{align}
    We next expand $\cT_N*\cT_N^*$ as a sum
    \begin{align*}
        \cT_N*\cT_N^* = \sum_{\fn\subset\cO} a_\fn 1_{K(\fn)}
    \end{align*}
    for some constants $a_\fn$.
    If we now suppose $N^4\leq X$, that is
    \begin{align*}
        N \leq C^{1/4} \lambda^{1/8 - \epsilon_0/4}\beta^{-1/8},
    \end{align*}
    then our choice of $X$ and $\delta$ means that we may apply Proposition \ref{Hecke return wrt MA}  to show that for any $\fn\neq \cO$ appearing in the expansion of $\cT_N*\cT_N^*$, the term $1_{K(\fn)}$ makes a contribution $\ll_\epsilon X^\epsilon \ll \lambda^\epsilon$ to the sum in the right hand side of (\ref{KN ine 3}), 
    and the sum of $a_\fn$'s with $\fn \neq \cO$ is $\ll 1$ by \eqref{boundedness for sum of primes}.
    Therefore, we only need to consider the term $\fn=\cO$. Since $a_\cO \ll N$ by \eqref{eq: bd for aO}, we have
    \begin{align*}
          \sum_{\substack{\gamma\in \bG(F)\\ \gamma\in g_0 \cD_0 g_0^{-1}\\d(g_0^{ -1}\gamma g_0,MA)< \delta}}  \left| (\cT_N*\cT_N^*)(\gamma)J(\lambda,\phi,g_0^{-1}\gamma g_0)\right| \ll N + \lambda^\epsilon.
    \end{align*}
    We obtain
    \begin{align*}
       |\langle\psi,b\phi\rangle|^2 \ll N^{-2+\epsilon} (N + \lambda^\epsilon).
    \end{align*}
    Choosing $N =C^{1/4} \lambda^{1/8 - \epsilon_0/4}\beta^{-1/8}$ gives  the result.
    \end{proof}


\bibliographystyle{plain} 
\bibliography{refs.bib} 

\end{document}